\newcommand\reallywidehat[1]{%
\savestack{\tmpbox}{\stretchto{%
  \scaleto{%
    \scalerel*[\widthof{\ensuremath{#1}}]{\kern.1pt\mathchar"0362\kern.1pt}%
    {\rule{0ex}{\textheight}}
  }{\textheight}%
}{2.4ex}}%
\stackon[-6.9pt]{#1}{\tmpbox}%
}
\DeclareSymbolFont{rsfs}{U}{rsfs}{m}{n}
\DeclareSymbolFontAlphabet{\mathscrsfs}{rsfs}
\numberwithin{equation}{section}
\newtheoremstyle{myexample} 
    {\topsep}                    
    {\topsep}                    
    {\rm }                   
    {}                           
    {\bf }                   
    {.}                          
    {.5em}                       
    {}  
\newtheoremstyle{myremark} 
    {\topsep}                    
    {\topsep}                    
    {\rm}                        
    {}                           
    {\bf}                        
    {.}                          
    {.5em}                       
    {}  
\newtheorem{claim}{Claim}[section]
\newtheorem{lemma}[claim]{Lemma}
\newtheorem{assumption}{Assumption}[]
\newtheorem{theorem}{Theorem}
\newtheorem{proposition}[claim]{Proposition}
\newtheorem{corollary}[claim]{Corollary}
\newtheorem{definition}[claim]{Definition}
\theoremstyle{myremark}
\newtheorem{remark}{Remark}[section]
\theoremstyle{myremark}
\theoremstyle{myexample}
\definecolor{darkgreen}{rgb}{0.0, 0.5, 0.0}
\newcommand{\bea}{\begin{eqnarray}}
\newcommand{\eea}{\end{eqnarray}}
\newcommand{\<}{\langle}
\renewcommand{\>}{\rangle}
\newcommand{\E}{{\mathbb E}}
\def\sTV{\mbox{\tiny \rm TV}}
\def\salg{\mbox{\tiny \rm alg}}
\def\sspher{\mbox{\tiny \rm spher}}
\def\oS{{\overline S}}
\def\eps{{\varepsilon}}
\def\id{{\boldsymbol{I}}}
\def\cuE{\mathscrsfs{E}}
\def\cuU{\mathscrsfs{U}}
\def\cuL{\mathscrsfs{L}}
\def\ind{{\mathbb I}}
\def\toLx{\stackrel{L^1_{\xi}}{\longrightarrow}}
\def\SF{{\sf SF}}
\def\bsigma{{\boldsymbol{\sigma}}}
\def\bfe{{\boldsymbol{e}}}
\def\bZ{{\boldsymbol{Z}}}
\def\bP{{\boldsymbol{P}}}
\def\bC{{\boldsymbol{C}}}
\def\bQ{{\boldsymbol{Q}}}
\def\bM{{\boldsymbol{M}}}
\def\bT{{\boldsymbol{T}}}
\def\bV{{\boldsymbol{V}}}
\def\bm{{\boldsymbol m}}
\def\bg{{\boldsymbol{g}}}
\def\cF{{\mathcal F}}
\def\cT{{\mathcal T}}
\def\cW{{\mathcal W}}
\def\blambda{{\boldsymbol \lambda}}
\def\tH{\tilde{H}}
\def\op{\mbox{\tiny\rm op}}
\def\naturals{{\mathbb N}}
\def\reals{{\mathbb R}}
\def\integers{{\mathbb Z}}
\def\normal{{\sf N}}
\def\sT{{\sf T}}
\def\bv{{\boldsymbol{v}}}
\def\bz{{\boldsymbol{z}}}
\def\bx{{\boldsymbol{x}}}
\def\bq{{\boldsymbol{q}}}
\def\ba{{\boldsymbol{a}}}
\def\bb{{\boldsymbol{b}}}
\def\bA{\boldsymbol{A}}
\def\bB{\boldsymbol{B}}
\def\Par{{\sf P}}
\def\de{{\rm d}}
\def\bX{\boldsymbol{X}}
\def\bY{\boldsymbol{Y}}
\def\bW{\boldsymbol{W}}
\def\prob{{\mathbb P}}
\def\E{{\mathbb E}}
\def\<{\langle}
\def\>{\rangle}
\def\sign{{\rm sign}}
\def\ed{\stackrel{{\rm d}}{=}}
\def\cN{{\cal N}}
\def\cV{{\cal V}}
\def\cL{{\cal L}}
\def\by{{\boldsymbol{y}}}
\def\P{\mathbb{P}}
\def\cP{{\mathcal P}}
\def\toP{\stackrel{p}{\longrightarrow}}
\def\blambda{{\boldsymbol{\lambda}}}
\def\bD{{\boldsymbol{D}}}
\def\bu{{\boldsymbol{u}}}
\def\b0{{\boldsymbol{0}}}
\def\Var{{\rm Var}}
\def\bfone{{\boldsymbol 1}}
\def\bF{{\boldsymbol F}}
\def\bG{{\boldsymbol G}}
\DeclareMathOperator*{\plim}{p-lim}
\def\SOS{{\sf SOS}}
\def\OPT{{\sf OPT}}
\def\VAL{{\small \sf VAL}}
\def\cU{{{\mathcal U}}}
\def\tcL{{\tilde {\mathcal L}}}
\def\tcT{{\tilde {\mathcal T}}}
\def\cS{{\mathcal S}}
\def\cP{{\mathcal P}}
\def\balpha{{\boldsymbol \alpha}}
\def\bfzero{\boldsymbol{0}}
\def\ons{\mathbf{ons}}
\def\ONS{\mathbf{ONS}}
\def\X{{\boldsymbol{X}}}
\def\ZZ{{\boldsymbol{Z}}}
\def\W{{\boldsymbol{W}}}
\def\A{{\boldsymbol{A}}}
\def\F{{\boldsymbol{F}}}
\def\AMP{{\sf AMP}}
\def\LAMP{{\sf LAMP}}
\def\qq{{\boldsymbol{q}}}
\def\VV{{\boldsymbol{V}}}
\def\vv{{\boldsymbol{v}}}
\def\M{{\boldsymbol{m}}}
\newcommand{\x}{{\boldsymbol x}}
\newcommand{\z}{{\boldsymbol z}}
\newcommand{\f}{{\boldsymbol f}}
\renewcommand{\u}{{\boldsymbol u}}
\renewcommand{\v}{{\boldsymbol v}}
\renewcommand{\b}{{\boldsymbol b}}
\newcommand{\m}{{\boldsymbol m}}
\def\beps{{\boldsymbol \eps}}
\def\psim{\stackrel{p}{\simeq}}
\def\bcG{{\boldsymbol \cal G}}
\def\Sym{{\rm Sym}}
\def\VF{{\mathcal J}}
\def\sp{{\sf p}}
\def\sP{{\sf P}}
\def\sE{{\sf E}}
\def\sVar{{\sf Var}}
\def\hbm{{\hat{\boldsymbol m}}}
\def\tbm{{\tilde{\boldsymbol m}}}
\def\hm{\hat{m}}
\def\tH{\tilde{H}}
\def\bLambda{{\boldsymbol \Lambda}}
\def\spn{{\rm span}}
\def\km{D}
\def\hZZ{\hat{\boldsymbol{Z}}}
\newcommand{\R}{\mathbb{R}}
\def\hbA{\hat{\boldsymbol A}}
\def\tbA{\tilde{\boldsymbol A}}
\def\hbz{\hat{\boldsymbol z}}
\def\bfone{{\boldsymbol 1}}
\title{Optimization of Mean-field Spin Glasses}
\author{Ahmed El Alaoui\thanks{Department of Electrical Engineering, Stanford University}, \;\; Andrea Montanari\thanks{Department of Electrical Engineering and
  Department of Statistics, Stanford University}, \;\; Mark Sellke\thanks{Department of Mathematics, Stanford University}}
\date{}
\begin{document}

\maketitle

\begin{abstract}
Mean-field spin glasses are families of random energy functions (Hamiltonians) on high-dimensional product spaces. In this paper we consider the case
of Ising mixed $p$-spin models, namely Hamiltonians $H_N:\Sigma_N\to \reals$ on the Hamming hypercube $\Sigma_N = \{\pm 1\}^N$, which are defined by the property that
$\{H_N(\bsigma)\}_{\bsigma\in \Sigma_N}$ is a centered Gaussian process with covariance $\E\{H_N(\bsigma_1)H_N(\bsigma_2)\}$ depending only on the scalar 
product $\<\bsigma_1,\bsigma_2\>$. 

The asymptotic value of the optimum $\max_{\bsigma \in \Sigma_N}H_N(\bsigma)$ was characterized in terms of a variational principle known as the Parisi formula, first proved by 
Talagrand and, in a more general setting, by Panchenko. The structure of superlevel sets is extremely rich and has been studied by a number of authors. Here we ask 
 whether a near optimal configuration $\bsigma$ can be computed in polynomial time.

 We develop a message passing algorithm whose complexity per-iteration is of the same order as the complexity
 of evaluating the gradient of $H_N$, and  characterize the typical energy value it achieves.
 When the $p$-spin model $H_N$ satisfies a certain no-overlap gap assumption, for any $\eps>0$,
 the algorithm outputs $\bsigma\in\Sigma_N$ such that $H_N(\bsigma)\ge (1-\eps)\max_{\bsigma'} H_N(\bsigma')$, with high probability.
 The number of iterations is bounded in $N$ and depends uniquely on $\eps$. More generally, regardless of whether the no-overlap gap assumption holds, the energy achieved is given by an extended  variational principle, which generalizes the Parisi formula.
\end{abstract}

\section{Introduction}
\label{sec:Introduction}

Let $\bW^{(k)} \in (\reals^N)^{\otimes k}$, $k\ge 2$,  be an standard symmetric Gaussian tensor of order $k$
with entries $\bW^{(k)} \equiv  (W^{(k)}_{1\le i_1,\cdots,i_k})_{i_1,\cdots,i_k\le N}$. 
Namely, if $\{G^{(k)}_{i_1,\dots, i_k}\}_{k\ge 2; 1 \le  i_1,\cdots,i_k\le N}\sim_{iid}\normal(0,1)$ is a collection of standard normal random variables, we set
$\bW^{(k)} \equiv N^{-(k-1)/2} \sum_{\pi \in S_k} (\bG^{(k)})^{\pi}$
where the sum is over the group of permutations of $k$ objects, and $(\bG^{(k)})^{\pi}$ is obtained by permuting the indices of  $\bG^{(k)}$ according to $\pi$.
In particular, $\E\{W^{(k)}_{i_1,\dots,i_k})^2 \} = k!/N^{k-1}$ for $i_1<i_2<\dots< i_k$.

We consider the problem of optimizing a polynomial with coefficients given by the tensors $\bW^{(k)}$ over the hypercube $\Sigma_N=\{-1,+1\}^N$:
\begin{align}\label{eq:hamiltonian}
\OPT_N & = \frac{1}{N}\max\Big\{ H_N(\bsigma) \, : \;\;\bsigma\in \Sigma_N \Big\}\, ,\\
H_N(\bsigma) &=  \sum_{k=2}^{\infty} \frac{c_k}{k!}  \<\bW^{(k)},\bsigma^{\otimes k}\>\, ,\;\;\;\;\;\;
\<\bW^{(k)},\bsigma^{\otimes k}\>\equiv
\sum_{1\le i_1 ,\cdots , i_k\le N} W_{i_1,\cdots,i_p}\sigma_{i_1}\cdots\sigma_{i_k}\, .
\end{align}
The parameters $(c_k)_{k\ge 2}$ are customarily encoded in the function  $\xi(x) \equiv \sum_{k\ge 2}c_k^2x^k$ which we henceforth call the \emph{mixture} of the model. We will assume throughout that $\xi(1+\eps)<\infty$ for some $\eps>0$. This implies $|c_k|\le c_*\alpha^k$ for some $c_*>0, \alpha\in (0,1)$, so that the sum defining $H_{N}$ is almost surely finite.
(In fact, there is very little loss of generality in assuming $c_k=0$ for all $k$ larger than some absolute constant  $k_M$.) 

We would like to develop an algorithm that accepts as input the tensors $(\bW^{(k)})_{k\ge 2}$ and returns a vector $\bsigma^*\in \Sigma_N$
such that, with high-probability,  $H_N(\bsigma^{\salg})/N\ge\rho\cdot\OPT_N$ for an approximation factor $\rho\in [0,1]$ as close to one as possible.
From a worst case point of view, this objective is hopeless: achieving any $\rho>1/(\log N)^c$ (for $c$ a small constant) 
is NP-hard already in the case of quadratic polynomials \cite{arora2005non}. 
For higher-order polynomials, the task is known to be even more difficult. For instance,  \cite{barak2012hypercontractivity} 
proves that obtaining $\rho> \exp(-(\log N)^c\}$ is hard already for the case in which a single term $c_k$, $k\ge 3$ is non-vanishing
and the combinatorial constraint  $\bsigma \in \Sigma_N$ is relaxed to $\|\bsigma\|_2^2\le N$. 

Worst-case hardness results do not have direct implications on random instances, as described above. 
However, standard optimization methods based on semidefinite programming (SDP) relaxations appear to fail 
on such random instances. These methods typically produce an efficiently computable upper bound on $\OPT_N$. For the case
of an homogeneous polynomial (i.e., $c_k=1$ for some $k\ge 3$, and $c_{k'}=0$ for all $k'\neq k$), \cite{bhattiprolu2016sum}
shows that a level-$k$ sum-of-squares relaxation produces an upper bound that is polynomially larger  than
$\OPT_N$: $\SOS_N(k) \gtrsim N^{(k-2)/4}\cdot \OPT_N$.
In contrast, significant progress has been achieved recently for \emph{search} algorithms, i.e., algorithms that produce a feasible solution $\bsigma^{\salg}$ 
but not a certificate of (near-)optimality. In particular, Subag \cite{subag2018following} developed an algorithm for the 
spherical mixed $p$-spin model in which the constraint
$\bsigma\in\Sigma_N$ is replaced by $\|\bsigma\|_2^2 = N$,
and proved that  it achieves any approximation factor $\rho = (1-\eps)$, $\eps>0$, provided $t\mapsto \xi''(t)^{-1/2}$ is concave.
In \cite{montanari2019optimization}, one of the authors developed an algorithm for the Sherrington-Kirkpatrick model, 
which corresponds to the quadratic case ($c_k=0$ for $k\ge 3$), with $\bsigma\in \Sigma_N$. Under a widely believed conjecture about the so-called Parisi formula,
 the algorithm of \cite{montanari2019optimization} also achieves a $(1-\eps)$-approximation for any $\eps>0$.

The main result of this paper is a characterization of the optimal value achieved by a class of low-complexity message passing algorithms 
that generalize the approach of \cite{montanari2019optimization}. As special cases, we recover the results of \cite{subag2018following} and
\cite{montanari2019optimization}. For a given approximation error $\eps>0$, the algorithm complexity is of the same order as evaluating
the gradient $\nabla H_N(\bx)$ at a constant number $C(\eps)$ of points.
 Its output  $\bsigma^{\salg}$ satisfies $H_N(\bsigma^{\salg})/N\ge(1-\eps)\cdot\OPT_N$ with high probability
whenever the corresponding Parisi formula satisfies a certain `no-overlap gap' condition.
Even more interestingly, we characterize the optimal value achieved by message passing algorithms in terms of an extended variational principle which generalizes
the Parisi formula. This points at a possible general picture for the optimal approximation ratio in ensembles of random optimization problems.

The random energy function $H_N$  has been studied for over forty years in statistical physics and probability theory, and is known 
as the Hamiltonian of the mixed $p$-spin model \cite{sherrington1975solvable,SpinGlass,TalagrandVolI,panchenko2013sherrington}. 
With the above definitions, it is easy to see that $\{H(\bsigma)\}_{\bsigma\in \Sigma_N}$ is a centered Gaussian process on the hypercube,
with covariance
\begin{align}
\E\big\{H_N(\bsigma)H_N(\bsigma')\big\} = N \xi\big(\langle\bsigma,\bsigma'\rangle/N\big)\, .
\end{align}

The asymptotic value of $\OPT_N$ was first derived by physicists using the non-rigorous replica method \cite{parisi1979infinite} and
subsequently established by Talagrand \cite{talagrand2006parisi} and
Panchenko \cite{panchenko2013parisi,panchenko2013sherrington}. 
This asymptotic value is characterized 
in terms of a variational principle known as the `Parisi formula.' While the Parisi formula allows to compute the asymptotic
free energy associated to the Hamiltonian $H_N$, it can be specialized to the zero temperature case, to compute the asymptotics 
of $\OPT_N$. The resulting characterization was established by
Auffinger and Chen in \cite{auffinger2017parisi} and it is useful to recall it for the reader's convenience.

Let $\cuU$ be the following subset of functions $\gamma:[0,1)\to \reals_{\ge 0}$:
\begin{align}
\cuU \equiv\Big\{\gamma:[0,1) \to\reals_{\ge 0}: \;\; \gamma\mbox{ non-decreasing }, \int_{0}^1\gamma(t)\,\de t<\infty\Big\} \, .
\end{align}
For $\gamma\in \cuU$, let $\Phi_{\gamma}:[0,1]\times \reals\to \reals$ be the solution of the following PDE, known as \emph{the Parisi PDE},
with terminal condition at $t=1$:
\begin{align}\label{eq:PDEFirst}
\begin{split}
\partial_t \Phi_{\gamma}(t,x)+\frac{1}{2}\xi''(t) \Big(\partial_x^2\Phi_{\gamma}(t,x)+\gamma(t) (\partial_x\Phi_{\gamma}(t,x))^2\Big) = 0\, ,\\
\Phi_{\gamma}(1,x) = |x|\, .
\end{split}
\end{align}
We refer to Section \ref{sec:ExtendedPrinciple} and Section \ref{sec:PropertiesVariational} for a construction of solutions of this
PDE. 

The Parisi functional $\Par:\cuU\to \reals$ is then defined by
\begin{align}
\Par(\gamma) \equiv \Phi_{\gamma}(0,0)-\frac{1}{2}\int_{0}^1 t\xi''(t)\gamma(t)\, \de t\, .
\end{align}
\begin{theorem}[\cite{auffinger2017parisi}]
The following limit holds almost surely
\begin{align}
\lim_{N\to\infty}\OPT_N = \inf_{\gamma\in \cuU}\Par(\gamma)\, .
\end{align}
\end{theorem}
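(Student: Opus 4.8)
The plan is to derive this zero-temperature statement from the positive-temperature Parisi formula of Talagrand and Panchenko by sending the inverse temperature $\beta\to\infty$. For each $\beta>0$ the free energy $F_N(\beta):=\frac1N\log\sum_{\bsigma\in\Sigma_N}e^{\beta H_N(\bsigma)}$ converges almost surely to $F(\beta):=\inf_{\mu}\mathcal{P}_{\beta}(\mu)$, where the infimum is over probability measures $\mu$ on $[0,1]$, $\mathcal{P}_{\beta}(\mu)=\Phi^\beta_\mu(0,0)-\frac{\beta^2}{2}\int_0^1 t\,\xi''(t)\,\mu([0,t])\,\de t$, and $\Phi^\beta_\mu$ solves $\partial_t\Phi+\frac{\beta^2}{2}\xi''(t)\big(\partial_x^2\Phi+\mu([0,t])(\partial_x\Phi)^2\big)=0$ with terminal data $\Phi^\beta_\mu(1,x)=\log(2\cosh x)$. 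For a fixed realization of $H_N$ the map $\beta\mapsto\frac1{N\beta}\log\big(2^{-N}\sum_{\bsigma\in\Sigma_N}e^{\beta H_N(\bsigma)}\big)$ is non-decreasing and increases to $\OPT_N$, so $\OPT_N\le\frac1\beta F_N(\beta)\le\OPT_N+\frac{\log2}{\beta}$ for every $\beta$. Thus once we establish the analytic identity $\lim_{\beta\to\infty}\frac1\beta F(\beta)=\inf_{\gamma\in\cuU}\Par(\gamma)=:P^\star$, letting $N\to\infty$ in this sandwich (using Gaussian--Lipschitz concentration of both $\OPT_N$ and $F_N(\beta)$) gives $\E\,\OPT_N\to P^\star$, hence $\OPT_N\to P^\star$ almost surely by Borel--Cantelli, which is the theorem.

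The bridge between the two variational problems is the rescaling $\Psi(t,x):=\frac1\beta\Phi^\beta_\mu(t,\beta x)$ together with the dictionary $\gamma(t)\leftrightarrow\beta\,\mu([0,t])$: a direct computation shows that $\Psi$ solves $\partial_t\Psi+\frac12\xi''(t)\big(\partial_x^2\Psi+\beta\mu([0,t])(\partial_x\Psi)^2\big)=0$ with $\Psi(1,x)=\frac1\beta\log(2\cosh(\beta x))\to|x|$ uniformly in $x$, and that $\frac1\beta\mathcal{P}_{\beta}(\mu)=\Psi(0,0)-\frac12\int_0^1 t\,\xi''(t)\,\beta\mu([0,t])\,\de t$, which is exactly $\Par(\gamma)$ for $\gamma=\beta\mu([0,\cdot])$, up to the $O(1/\beta)$ discrepancy in the terminal condition. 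This yields the upper bound $\limsup_{\beta\to\infty}\frac1\beta F(\beta)\le\inf_{\gamma\in\cuU}\Par(\gamma)$: fix $\gamma\in\cuU$, reduce to the case of bounded $\gamma$ by the truncation $\gamma\wedge M$ (monotone convergence of $\Phi_{\gamma\wedge M}(0,0)$ and of $\int_0^1 t\xi''(\gamma\wedge M)$ yields $\Par(\gamma\wedge M)\to\Par(\gamma)$), then for $\beta>\|\gamma\|_\infty$ take $\mu_\beta$ with $\mu_\beta([0,t])=\gamma(t)/\beta$, and invoke stability of the Parisi PDE under perturbations of the terminal data to conclude $\frac1\beta\mathcal{P}_{\beta}(\mu_\beta)\to\Par(\gamma)$, whence $\frac1\beta F(\beta)\le\Par(\gamma)+o(1)$.

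For the matching lower bound $\liminf_{\beta\to\infty}\frac1\beta F(\beta)\ge\inf_{\gamma\in\cuU}\Par(\gamma)$ one works with the minimizer $\mu_\beta^\star$ of $\mathcal{P}_{\beta}$ (unique by convexity of the finite-temperature Parisi functional) and sets $\gamma_\beta(t):=\beta\,\mu_\beta^\star([0,t])$, a non-decreasing nonnegative function on $[0,1)$. The key input is an a priori estimate: from the first-order optimality conditions for $\mathcal{P}_{\beta}$ — which characterize $\supp\mu_\beta^\star$ through the identity $\E\big[(\partial_x\Phi^\beta_{\mu_\beta^\star}(t,X_t))^2\big]=t$ there, $(X_t)$ being the auxiliary Parisi diffusion — together with the uniform bound $|\partial_x\Phi^\beta_\mu|\le1$ and heat-kernel regularity of $\Phi$, one shows that $(\gamma_\beta)_\beta$ is bounded on every $[0,t_0]$ with $t_0<1$ and that $\sup_\beta\int_0^1 t\,\xi''(t)\,\gamma_\beta(t)\,\de t<\infty$ (the upper bound on $\frac1\beta\mathcal{P}_{\beta}(\mu_\beta^\star)$ from the previous paragraph feeds into this). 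Helly's selection principle then produces a subsequence along which $\gamma_\beta\to\gamma$ pointwise on $[0,1)$ with $\gamma$ non-decreasing; since $\xi''>0$ on $(0,1]$ and $\xi''(1)<\infty$ (from $\xi(1+\eps)<\infty$), Fatou applied to the weighted bound gives $\int_0^1\gamma<\infty$, so $\gamma\in\cuU$. Passing to the limit in $\frac1\beta\mathcal{P}_{\beta}(\mu_\beta^\star)=\Psi_\beta(0,0)-\frac12\int_0^1 t\xi''(t)\gamma_\beta(t)\,\de t$ — using the comparison principle and the Cole--Hopf/Feynman--Kac representation for the Parisi PDE, and separately controlling the contribution of $t$ near $1$ — yields $\liminf_{\beta\to\infty}\frac1\beta\mathcal{P}_{\beta}(\mu_\beta^\star)\ge\Par(\gamma)\ge\inf_{\gamma'\in\cuU}\Par(\gamma')$. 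Together with the previous paragraph this proves the analytic identity, hence the theorem.

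The main obstacle is precisely the a priori control of the finite-temperature Parisi measures $\mu_\beta^\star$ as $\beta\to\infty$ — how much of their mass can accumulate near $t=1$ — and the accompanying stability of the map $\gamma\mapsto\Phi_\gamma(0,0)$ in a topology strong enough to pass to the limit yet weak enough for the family $(\gamma_\beta)$ to be precompact. This rests on quantitative regularity for the Parisi PDE (Lipschitz and second-derivative estimates, the Cole--Hopf linearization, monotonicity and convexity of the functional), and it is the technical heart of the argument, occupying the bulk of Auffinger and Chen's work. The remaining ingredients — the interchange of the $N\to\infty$ and $\beta\to\infty$ limits, the concentration bounds, and the truncation of $\gamma$ — are comparatively routine.
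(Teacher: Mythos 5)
This theorem is not proved in the paper at all: it is quoted verbatim from Auffinger--Chen \cite{auffinger2017parisi} ``for the reader's convenience,'' so there is no internal proof to compare yours against. Your strategy --- sandwich $\OPT_N\le \beta^{-1}F_N(\beta)\le \OPT_N+\beta^{-1}\log 2$, Gaussian concentration to reduce to expectations, then the analytic identity $\lim_{\beta\to\infty}\beta^{-1}\inf_\mu\mathcal{P}_\beta(\mu)=\inf_{\gamma\in\cuU}\Par(\gamma)$ via the rescaling $\Psi(t,x)=\beta^{-1}\Phi^\beta_\mu(t,\beta x)$ and the dictionary $\gamma=\beta\,\mu([0,\cdot])$ --- is exactly the route taken in the cited work, and the elementary parts of it check out: the rescaled PDE, the $O(1/\beta)$ discrepancy $\beta^{-1}\log(2\cosh(\beta x))-|x|$ handled by the comparison principle, the truncation $\gamma\wedge M$ for the upper bound, and the interchange of the $N\to\infty$ and $\beta\to\infty$ limits are all correct as stated.

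The gap is in the lower bound, and you have located it yourself but not closed it. The claim that $\sup_\beta\int_0^1 t\,\xi''(t)\,\gamma_\beta(t)\,\de t<\infty$ does not follow from the upper bound on $\beta^{-1}\mathcal{P}_\beta(\mu^\star_\beta)$ by the argument you gesture at: bounding $\Psi_\beta(0,0)$ from above using $|\partial_x\Psi|\le 1$ costs a term $\tfrac12\int_0^1(1-t)\xi''(t)\gamma_\beta(t)\,\de t$, so the functional value controls neither $\int t\,\xi''\gamma_\beta$ nor the mass of $\mu^\star_\beta$ near $t=1$ without further input. Controlling exactly that accumulation of mass at $t=1$ (equivalently, the behavior of $\gamma_\beta$ on $[1-\eps,1)$), and proving enough stability of $\gamma\mapsto\Phi_\gamma(0,0)$ to pass to the limit along the Helly subsequence there, is the substance of Auffinger--Chen's proof; deferring it to ``the bulk of Auffinger and Chen's work'' is circular when the statement being proved is their theorem. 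So as a blind proof the proposal is an accurate roadmap with the standard reductions done, but the decisive a priori estimate and limit passage near $t=1$ are missing rather than merely sketched.
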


The optimization problem on the right-hand side of the last formula is expected to be achieved\footnote{Existence has been proved in \cite{auffinger2017parisi}, but uniqueness is only known for positive temperature (finite $\beta$) \cite{auffinger2015parisi,jagannath2016dynamic}.} at a unique function $\gamma_P\in \cuU$
\cite{auffinger2017parisi}, which has a physical interpretation  \cite{SpinGlass}. Consider the (random) Boltzmann distribution 
$p_{\beta}(\bsigma)\propto \exp\{\beta H_N(\bsigma)\}$ at temperature $1/\beta$, and let $\bsigma_1,\bsigma_2\sim p_{\beta}$ be two
independent samples from this distribution, i.e., $(\bsigma_1,\bsigma_2)\sim \E p^{\otimes 2}_{\beta}$. 
Then $\beta^{-1}\gamma_P(t)$ is the asymptotic probability of the event $\{(\bsigma_1,\bsigma_2): |\<\bsigma_1,\bsigma_2\>|/N\le t\}$
(when the limit $\beta\to\infty$ is taken \emph{after} $N\to\infty$.) 
Given this interpretation, the non-decreasing constraint in the definition of $\cuU$ is very natural: it follows from $\gamma_P$ 
being the limit of a sequence of cumulative distribution functions (rescaled by the factor $\beta$). 

As mentioned above, in this paper we describe and analyze a class of algorithms that
aim at finding near-optima, i.e., configurations  $\bsigma^{\salg}\in\Sigma_N$ with $H_N(\bsigma^{\salg})/N$ as close as possible to
$\OPT_N$ (or to its asymptotic value $\inf_{\gamma\in \cuU}\Par(\gamma)$). Our main results can be summarized as follows:
\begin{enumerate}
\item If the infimum in the Parisi formula is achieved at $\gamma_P$ which is strictly increasing over the interval $[0,1)$,
then we provide an efficient algorithm that returns a $(1-\eps)$-optimizer.
This condition correspond to the `no-overlap gap' scenario mentioned above.
\item More generally, we introduce a new extended variational principle, which prescribes to minimize the Parisi functional
$\Par(\gamma)$ of a larger space $\cuL$ of functions $\gamma$ which are \emph{not} necessarily monotone. 
We present an algorithm that achieves $H(\bsigma^{\salg})/N\ge (1-\eps) \inf_{\gamma\in \cuL}\Par(\gamma)$, provided the infimum on the
right-hand side is achieved at some $\gamma_*\in \cuL$. Since $\cuU\subseteq \cuL$, this value is of course no larger
than the value of the global optimum. 

Moreover, under the `no-overlap gap' scenario,  we have $\inf_{\gamma\in \cuU}\Par(\gamma) = \inf_{\gamma\in \cuL}\Par(\gamma)$ and therefore we recover 
the result at the previous point.
\item We show, by a duality argument, that no algorithm in the class of message passing algorithms that we introduce 
can overcome the value $\inf_{\gamma\in \cuL}\Par(\gamma)$. This appears to be an interesting computational threshold,
whose importance warrants further exploration.
\end{enumerate}

\subsection{Further background}

Understanding the average case hardness of random computational problems is an outstanding challenge with numerous ramifications.
The use of spin glass concepts in this context has
a long history, which is impossible to review here. A few  pointers include
\cite{SpinGlass,monasson1999determining,mezard2002analytic,krzakala2007gibbs,MezardMontanari}.
Spin glass theory allows to derive a detailed picture of the structure of superlevel sets of random optimization problems,
or the corresponding Boltzmann distribution $p_{\beta}(\bsigma)\propto\exp\{\beta H_N(\bsigma)\}$.
A central challenge in this area is to understand the connection between this picture and computational tractability.
Which features of the energy landscape $H_N$ are connected to intractability?

Of course, the answer depends on the precise formulation of the question. In this paper we consider the specific problem
of achieving the best approximation factor $\rho$ so that a polynomial-time algorithm can output a feasible solution $\bsigma^{\salg}$
such that $H_N(\bsigma^{\salg})/N\ge \rho \, \OPT_N$ with high probability. This question was addressed in the physics literature from at least
two points of view:
\begin{itemize}
\item Significant effort has been devoted to computing
  the number (and energy) of local optima that are separated by large energy barriers: the energy of the most numerous such local optima is sometimes
  used as a proxy for the algorithmic threshold.
  The exponential growth-rate of the number of such optima is computed  using non-rigorous methods in
  \cite{crisanti2003complexity,crisanti2005complexity,parisi2006computing}.
\item An equally large amount of work was devoted to the study of Glauber or Langevin dynamics, which can be interpreted as greedy optimization algorithms.  In particular \cite{cugliandolo1994out,bouchaud1998out} and follow-up work study the $N\to\infty$ asymptotics of these dynamics, for a fixed time horizon.
\end{itemize}
These two approaches produced an impressive amount of (mostly non-rigorous) information. Despite these advances, no clear picture has been put forward
for the optimum approximation factor $\rho$ (the `algorithmic threshold'), except in particularly simple
cases, such as the pure $p$-spin spherical model. We refer to \cite{folena2019memories} for a recent illustration of the outstanding challenges.

Over the last two years, significant progress was achieved on this question. Apart from \cite{subag2018following,montanari2019optimization}
mentioned above, Addario-Berry and Maillard \cite{addario2018algorithmic} studied this question within the generalized random energy model,
which can be viewed as a stylized model for the energy landscape of mean field spin glasses. They prove that a variant of greedy search
achieves a $(1-\eps)$-approximation of $\OPT$ under a suitable variant of the no-overlap gap assumption.

In a different direction, Gamarnik and co-authors showed in several examples that the existence of an overlap gap
rules out a $(1-\eps)$-approximation for certain classes of polynomial time algorithms \cite{gamarnik2014limits,gamarnik2017performance,chen2019suboptimality}.
In particular, the recent paper \cite{gamarnik2019overlap} proves that approximate message passing algorithms (of the type studied in this paper)
cannot achieve a $(1-\eps)$-approximation of the optimum in pure $p$-spin Ising models, under the assumption that these exhibit an overlap gap.
However \cite{gamarnik2019overlap} does not characterize optimal approximation ratio, which
we instead do here, as a special case of our results.

Finally, two recent papers \cite{kunisky2019tight,mohanty2019lifting} study degree-$4$ sum-of-squares relaxations
for the Sherrington-Kirkpatrick model, and show that they fail at producing a tighter upper bound on $\OPT$ than
simple spectral methods. In conjunction with \cite{montanari2019optimization} these results suggest that
--in the context of spin glass problems-- computing a certifiable upper bound on $\OPT$ is fundamentally
harder than searching for an approximate optimizer.
  
Our approach is based on the construction and analysis of a class of approximate message passing (AMP) algorithms.
Following \cite{montanari2019optimization}, we refer to this family of algorithms as incremental approximate message passing (IAMP).
  AMP algorithms admit an exact asymptotic characterization in terms of a limiting Gaussian process, which is known
  as state evolution. This characterization was first established rigorously by  Bolthausen \cite{bolthausen2014iterative} for a special case,
and subsequently generalized in several papers \cite{BM-MPCS-2011,javanmard2013state,bayati2015universality, berthier2019state}. 
Here we will follow the proof scheme of \cite{berthier2019state} to generalize state evolution to the case of tensors.

\subsection{Notations}

We will typically use lower-case for scalars (e.g., $x,y,\dots$), bold lower-case for vectors (e.g., $\bx,\by,\dots$),
and bold upper case for matrices (e.g., $\bX,\bY,\cdots$). The ordinary scalar product in $\reals^d$ is denoted by
$\<\bx,\by\> = \sum_{i\le d}x_iy_i$, and the corresponding norm by $\|\bx\| = \<\bx,\bx\>^{1/2}$.
Given two vectors $\ba,\bb\in\reals^N$, we will often consider the normalized scalar product
$\<\ba,\bb\>_N=\sum_{i\le N}a_ib_i/N$, and the norm $\|\ba\|_N = \<\ba,\ba\>_N^{1/2}$.
There will be no confusion between this and $\ell_p$ norms which will be rarely used in $\reals^d$.

We will use standard notations for functional spaces, in particular spaces of differentiable functions (e.g. $C^k(\Omega)$,
$C^k_c(\Omega)$, and so on), and spaces of integrable functions (e.g., $L^p(\Omega)$). We refer --for
instance-- to \cite{evans2009partial} for definitions.

Given a sequence of random variables $(Y_n)_{n\ge1}$, and
$Y_{\infty}$, we write $Y_n\toP Y_{\infty}$, or $\plim_{n\to\infty} Y_n = Y_{\infty}$
if $Y_n$ converges in probability to $Y_{\infty}$.

For a function $f:\reals \to \reals$, we denote by $\|f\|_{\sTV(J)}$ the total variation of $f$ on the interval $J$: 
\begin{equation}\label{eq:total_var}
\|f\|_{\sTV(J)} := \sup_n\sup_{t_0<t_1<\dots<t_n, t_i\in J} \sum_{i=1}^n |f(t_i) - f(t_{i-1})|\, ,
\end{equation}
(i.e., the supremum is taken over all partitions of the interval $J$.)
We say that a function $\psi:\reals^d\to\reals$ is \emph{pseudo-Lipschitz} if there exists a constant $L<\infty$
such that, for every $\bx,\by\in\reals^d$, $|\psi(\bx)-\psi(\by)|\le L(1+\|\bx\|+\|\by\|)\|\bx-\by\|$.

Throughout the paper, we write that an event holds \emph{with high probability}, if its probability converges to one as
$N\to\infty$. We use $C$ to denote various constants, whose value can change from line to line.

\section{Achievability}

\subsection{Value achieved by message passing algorithms}

\begin{definition}\label{def:strong_tv}
We say that a function $f : [a,b] \times \reals \to \reals$ has bounded \emph{strong} total variation if there exists $C <\infty$ such that
\begin{equation}\label{eq:strong_tv} 
\sup_n\sup_{a\le t_0<\dots<t_n\le b} \sup_{x_1,\dots,x_n\in\reals} \sum_{i=1}^n |f(t_i,x_i) - f(t_{i-1},x_i)|\le C,
\end{equation}
(the supremum is over all partitions $(t_i)$ of the interval $[a,b]$ and all sequences $(x_i)$ in $\reals$). 
\end{definition}
\begin{assumption}\label{ass:uv}
Let $u, v:[0,1] \times \reals\to \reals$ be two measurable functions, with $u$ non-vanishing, and assume that the following 
holds for some constant $C<\infty$:
\begin{itemize}
\item[\bf{(A1)}] $u$ and $v$ are uniformly bounded: $\sup_{t,x} |u(t,x)| \vee |v(t,x)|\le C$.
\item[\bf{(A2)}] $u$ and $v$ are Lipschitz continuous in space, with uniform (in time) Lipschitz constant: $|u(t,x_1)-u(t,x_2)| \vee |v(t,x_1)-v(t,x_2)|\le C|x_1-x_2|$ for all $x_1,x_2\in\reals$ and $t\in[0,1]$. 
\item[\bf{(A3)}] $u(\, \cdot\, ,x)$ is continuous for all $x \in \reals$.
\item[\bf{(A4)}] $u$ and $v$ have bounded strong total variation. 
\end{itemize}
\end{assumption}
Consider the following stochastic differential equation
\begin{equation}
\label{eq:firstSDE}
\de X_t = v(t,X_t)\,\de t+\sqrt{\xi''(t)}\de B_t\, ,~~~ \mbox{with}~X_0 = 0\, ,\\
\end{equation}
where $(B_t)_{t \in [0,1]}$ is a standard Brownian motion. Under conditions {\bf(A1)} and {\bf(A2)} (pertaining to $v$), the above SDE has a unique strong solution which we denote by $(X_t)_{t\in [0,1]}$~\cite{oksendal2013stochastic}. We define the martingale 
\begin{equation}
\label{eq:magnetization}
M_t \equiv \int_0^t \sqrt{\xi''(s)}\,  u(s,X_s)\, \de B_s \, .
\end{equation}

Finally, it is useful to introduce a slight modification of the Hamiltonian \eqref{eq:hamiltonian}.
Namely, we denote by $\tH_N(\bsigma)$ the function that is obtained by restricting the sums in
$H_N(\bsigma)$ to sets of distinct indices $i_1,\dots,i_k$. (Notice that $\tH_N(\bsigma) = H_N(\bsigma) +o(N)$,
cf. Section \ref{sec:Rounding}.)
\begin{theorem}
\label{thm:value_message_passing}
Let Assumption \ref{ass:uv} hold, and further assume that  $M_{t_*}
\in [-1,1]$ almost surely  and $\E[M_t^2]=t$ for all $t\in[0,t_*]$, for some $t_*\in [0,1]$.

Further denote by $\chi$ the computational complexity of evaluating $\nabla H_N(\bm)$  at a point $\bm\in [-1,1]^N$,
and by $\chi_1$  the complexity of evaluating one coordinate of $\nabla\tH_N(\bm)$ at a point $\bm\in[-1,+1]^N$.

Then for any $\eps>0$ there  exists an randomized algorithm, with complexity $(C/\eps^2)\cdot(\chi+N)+N\chi_1$
which outputs $\bsigma^{\salg}\in\Sigma_N$ such that
\begin{align}
\frac{1}{N}H_N(\bsigma^{\salg}) \ge \int_{0}^{t_*}\xi''(t)\E \{u(t,X_t)\}\, \de t - \eps\, ,
\end{align}
with probability converging to one as $N\to\infty$.
\end{theorem}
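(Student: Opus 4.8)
The plan is to build an incremental approximate message passing (IAMP) algorithm whose iterates track the diffusion $(X_t)$ while their coordinate-wise magnetization tracks the martingale $(M_t)$, and then to round the final iterate to $\Sigma_N$. Fix a step size $\delta = t_*/T$, $T\in\naturals$, and times $t_k = k\delta$. First I would discretize \eqref{eq:firstSDE}, producing an AMP recursion of the form $\bz^{k+1} = \sum_{p\ge 2}\frac{c_p}{(p-1)!}\,\bW^{(p)}\big[f_k(\bz^{\le k})^{\otimes(p-1)}\big] - (\text{Onsager})$, with nonlinearities $f_k$ chosen as explicit functions of $\delta$, $u$ and $v$ so that the state-evolution covariance of the iterates reproduces, in the $N\to\infty$ limit, the Euler discretization of $X_t$ together with the increments of $M_t$. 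The Hamiltonian is evaluated at $\bm^k := \sum_{j<k}\bu^{j}$, where the increment $\bu^{j}$ is the linear combination of $\bz^0,\dots,\bz^{j+1}$ engineered so that, along state evolution, the empirical distribution of the coordinates of $\bm^k$ converges to $\Law(M_{t_k})$ and $\langle\bm^k,\bm^k\rangle_N\to\E[M_{t_k}^2]=t_k$; the hypothesis $\E[M_t^2]=t$ is precisely what makes this magnetization recursion close, and $M_{t_*}\in[-1,1]$ a.s. is what lets the final iterate be (essentially) a point of $[-1,1]^N$.

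The second step is to prove state evolution for this tensor-valued recursion. Here I would follow the scheme of \cite{berthier2019state}, extending it to symmetric Gaussian tensors of all orders $p\ge 2$ and to nonlinearities that are only Lipschitz in space and of bounded strong total variation in time rather than smooth; the bounded-strong-variation hypothesis of Assumption \ref{ass:uv} is exactly what is needed to approximate $u$ and $v$ by finitely many well-behaved pieces and to propagate the resulting errors through the recursion uniformly in $N$. The conclusion is that, for each fixed $T$, every normalized inner product produced by the iteration converges in probability to the deterministic quantity predicted by the diffusion; in particular so does $\tilde H_N(\bm^T)/N$, since expanding $\bm^T$ in the AMP iterates writes it as a fixed polynomial in such inner products (equivalently, telescoping $\tilde H_N(\bm^T) = \sum_k[\tilde H_N(\bm^{k+1})-\tilde H_N(\bm^k)]$ and evaluating each increment through state evolution rather than through crude operator-norm bounds on the derivatives of $\tilde H_N$, which would not suffice).

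The third step identifies this limit. Matching the state-evolution recursion against the Euler scheme for \eqref{eq:firstSDE}, the per-step energy increments form a Riemann sum whose limit, as $N\to\infty$ and then $\delta\to0$, is $\int_0^{t_*}\xi''(t)\,\E\{u(t,X_t)\}\,\de t$ (an Itô-calculus computation using the defining equations of $X_t$ and $M_t$); the discretization error is $O(\sqrt\delta)$, so choosing $T \asymp 1/\eps^2$ makes it at most $\eps/2$, and state evolution at that fixed $T$ then gives the finite-$N$ bound with probability tending to one. Finally I would round: set $\sigma_i = +1$ with probability $(1+m^T_i)/2$, independently across $i$. Because $\tilde H_N$ is multilinear over distinct indices, $\E[\tilde H_N(\bsigma)\mid\bm^T]=\tilde H_N(\bm^T)$ exactly, and a second-moment computation over the independent rounding (using $\xi'(1)<\infty$) gives $\E[(\tilde H_N(\bsigma)-\tilde H_N(\bm^T))^2\mid\bm^T] = O(N)$, hence $|\tilde H_N(\bsigma)-\tilde H_N(\bm^T)| = o(N)$ with high probability; together with $|H_N-\tilde H_N| = o(N)$ (Section \ref{sec:Rounding}) this transfers the bound to $H_N(\bsigma^{\salg})$. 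The complexity is then $T = O(1/\eps^2)$ AMP steps at cost $O(\chi+N)$ each, plus $O(N)$ coordinate evaluations at cost $\chi_1$ for the rounding.

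The main obstacle is the combination of steps two and three: setting up the IAMP recursion so that its fixed-$N$ state evolution exactly reproduces the discretized SDE and the corresponding energy, and then establishing state evolution in this generality — tensors of unbounded order, nonlinearities only Lipschitz and of bounded strong total variation — with error terms that remain controlled both as $N\to\infty$ and, after summing the $O(1/\delta)$ per-step contributions, as $\delta\to 0$. Everything else (the Itô-calculus identification of the limit, the randomized rounding estimate, the $|H_N-\tilde H_N|=o(N)$ bound, and the complexity bookkeeping) is comparatively routine.
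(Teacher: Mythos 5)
Your proposal follows essentially the same route as the paper for the core of the argument: the discretized IAMP iteration whose state evolution tracks $(X_t,M_t)$, the rescaling that enforces $\E[(M^\delta_\ell)^2]=(\ell+1)\delta$ using the hypothesis $\E[M_t^2]=t$, telescoping the energy increments and evaluating the leading term through the AMP iteration identity plus state evolution (with tensor operator-norm bounds only on the higher-order remainders, since crude bounds on the main term would indeed not suffice), yielding the Riemann sum $\int_0^{t_*}\xi''(t)\E\{u(t,X_t)\}\,\de t$ with an $O(\sqrt{\delta})$ error and hence $O(1/\eps^2)$ iterations; this is precisely the content of Propositions \ref{prop:convergence} and \ref{prop:value}. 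The genuine difference is the rounding. The paper first clips $\bm^{\ell_*}$ into $[-1,+1]^N$ (Lemma \ref{lemma:Clipping}, which is where $M_{t_*}\in[-1,1]$ enters) and then rounds deterministically, maximizing the multilinear $\tH_N$ one coordinate at a time, which can only increase $\tH_N$ and accounts for the $N\chi_1$ term in the complexity; you instead sample $\sigma_i$ independently with mean $\hat m_i$, use multilinearity to get $\E[\tH_N(\bsigma)\mid\hbm]=\tH_N(\hbm)$, and rely on a conditional second-moment bound for concentration. This is a valid alternative and even dispenses with the $N\chi_1$ gradient-coordinate evaluations, but two points you only gesture at must be made explicit: the clipping step is needed before sampling, since the probabilities $(1+\hat m_i)/2$ require $\hbm\in[-1,1]^N$ while the raw iterate $\bm^{\ell_*}$ need not lie there (the paper shows clipping costs only $O(\sqrt{\delta})$ in energy); and because $\hbm$ is correlated with the disorder, the claimed $O(N)$ conditional variance must be established uniformly over $[-1,1]^N$ (e.g.\ via an $\eps$-net or bounds on sums of squared partial derivatives of $\tH_N$, in the spirit of Lemmas \ref{lemma:BoundGradient} and \ref{lemma:Multilinear}), after which Chebyshev gives the $o(N)$ loss. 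One small misattribution: the bounded strong total variation in Assumption \ref{ass:uv} is used in coupling the discrete recursion to the SDE (Proposition \ref{prop:convergence}), not in the proof of state evolution itself, which only requires the nonlinearities $f_\ell$ to be Lipschitz for each fixed $\delta$.
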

The proof of this theorem is deferred to Section \ref{sec:PropertiesVariational}.

\begin{remark}
The stated complexity holds in a simplified model of computation whereby real sums and multiplications
have complexity of order one. However, we do not anticipate any difficulty to arise from passing to a finite model.

Typically, computing each gradient has complexity that is linear in the input size, and $N\chi_1$ is of the same order as $\chi$.
For instance,  if the coefficients $c_k$ vanish for $k> k_M$, it is easy to see that $\chi = O(N^{k_M})$,
and $\chi_1=O(N^{k_M-1})$. As a consequence, the dominant term in the complexity is $(C/\eps^2)\chi$.
In words, the algorithm's complexity is of the same order as computing the gradient of the cost function
$C/\eps^2$ times.
We further note that this constant $C$ depends on the regularity constants in Assumption \ref{ass:uv}.
\end{remark}

\begin{remark}\label{rmk:Spherical}
The proof of Theorem \ref{thm:value_message_passing} also establishes a similar result for the \emph{spherical} mixed $p$-spin 
model, where the constraint $\bsigma\in\Sigma_N$ is replaced by $\|\bsigma\|_2^2=N$. The same conclusion of the above theorem holds, whereby the
condition $M_{t_*} \in [-1,1]$ is no longer required.

In this case the choice of the functions $u$ and $v$ is straightforward.  Simply set $u(t,x) = \xi''(t)^{-1/2}$: since this is independent of $x$, the 
choice of $v$ is immaterial. The value achieved in this case is
\begin{align}
\frac{1}{N}H_N(\bsigma^{\sspher}) \ge \int_{0}^{1}\sqrt{\xi''(t)}\, \de t - \eps\, ,  \;\;\; \;\;\;\;\; \|\bsigma^{\sspher}\|_2^2=N\, .
\end{align}
In this case, we recover the energy achieved by the algorithm of Subag \cite{subag2018following}.
\end{remark}

\subsection{The extended variational principle}
\label{sec:ExtendedPrinciple}

For a function $\gamma:[0,1)\to \reals$, we write $\xi''\gamma$ for the pointwise multiplication of $\xi''$ and $\gamma$: $\xi''\gamma(t) = \xi''(t)\gamma(t)$.
We consider the extended space of order parameters
\begin{align}
\cuL \equiv \Big\{\gamma:[0,1)\to \reals_{\ge 0}: \;\; \|\xi''\gamma\|_{\sTV[0,t]}<\infty~ \forall t\in [0,1), \int_0^1\!\xi''\gamma(t)\,\de t < \infty\Big\} \, .\label{eq:LDef}
\end{align}
We metrize this space with the weighted $L^1$ metric
$\|\gamma_1-\gamma_2\|_{1,\xi''} := \|\xi''(\gamma_1-\gamma_2)\|_1 =\int_0^1\xi''(t)|\gamma_1(t)-\gamma_2(t)|\de t$, hence implicitly identifying $\gamma_1$ and $\gamma_2$ if they coincide for
almost every $t\in [0,1)$.
The notation $\| \cdot \|_{\sTV[0,t]}$ for total variation norm is defined in Eq.~\eqref{eq:total_var}.
It follows from the definition that, if   for $\gamma\in \cuL$, $\xi''\gamma(t) = \nu([0,t])$ where $\nu$ is a signed 
measure\footnote{This identification holds possibly apart from a set of values of $t$ of vanishing Lebesgue measure, which will be irrelevant here.}
of bounded total variation on intervals
$[0,1-\eps]$, $\eps>0$.

 It is obvious that the space $\cuL$ is a strict superset of $\cuU$: most crucially, it includes non-monotone functions.
As shown in Section \ref{sec:PropertiesVariational}, the Parisi functional 
$\gamma\mapsto \Par(\gamma)$ can be defined on this larger space. 
\begin{theorem}\label{thm:VarPrinciple}
Assume that the infimum $\inf_{\gamma\in \cuL} \Par(\gamma)$ is achieved at a function $\gamma_*\in\cuL$. 
Further denote by $\chi$ the computational complexity of evaluating $\nabla H_N(\bm)$  at a point $\bm\in [-1,1]^N$,
and by $\chi_1$  the complexity of evaluating one coordinate of $\nabla\tH_N(\bm)$ at a point $\bm\in[-1,+1]^N$.

Then for every $\eps>0$ there exists an algorithm with complexity at most $C(\eps)\cdot (\chi+N)+N\chi_1$ which outputs
$\bsigma^{\salg}\in\Sigma_N$ such that
\begin{align}
\frac{1}{N}H_N(\bsigma^{\salg}) \ge \inf_{\gamma\in\cuL}\Par(\gamma)- \eps\, ,
\end{align}
with probability converging to one as $N\to\infty$.
\end{theorem}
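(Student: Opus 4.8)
The plan is to deduce Theorem~\ref{thm:VarPrinciple} from Theorem~\ref{thm:value_message_passing} by making a good choice of the functions $u,v$ associated to the optimizer $\gamma_*\in\cuL$. The natural candidate comes from the Parisi PDE itself: given $\gamma_*$, let $\Phi_{\gamma_*}$ solve \eqref{eq:PDEFirst}, and set
\begin{align}
v(t,x) &= \xi''(t)\,\gamma_*(t)\,\partial_x\Phi_{\gamma_*}(t,x)\, ,\\
u(t,x) &= \partial_x\Phi_{\gamma_*}(t,x)\, .
\end{align}
With this choice, the SDE \eqref{eq:firstSDE} is exactly the ``optimally tilted'' diffusion whose drift is the Parisi-PDE gradient, and $X_t$ is the process along which the Parisi functional is represented. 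The first step is therefore to verify that this $(u,v)$ satisfies Assumption~\ref{ass:uv}: boundedness {\bf(A1)} and spatial Lipschitzness {\bf(A2)} of $\partial_x\Phi_{\gamma_*}$ and $\xi''\gamma_*\partial_x\Phi_{\gamma_*}$, temporal continuity {\bf(A3)}, and bounded strong total variation {\bf(A4)}. These are regularity properties of solutions of the Parisi PDE with a (possibly non-monotone, signed-measure) coefficient $\xi''\gamma_*$; I expect them to be established in Section~\ref{sec:PropertiesVariational}, where the functional is extended to $\cuL$. In particular {\bf(A1)} should follow from $|\partial_x\Phi|\le 1$ (inherited from the terminal condition $\Phi(1,x)=|x|$ via a maximum principle), and {\bf(A4)} from the bounded-variation structure of $\xi''\gamma_*$ encoded in the definition \eqref{eq:LDef} of $\cuL$.

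The second step is an It\^o-calculus identity: with the above choice, the martingale $M_t=\int_0^t\sqrt{\xi''(s)}\,\partial_x\Phi_{\gamma_*}(s,X_s)\,\de B_s$ coincides in law with the ``magnetization'' process of the Parisi representation, and one verifies $\E[M_t^2]=t$ for all $t\in[0,1]$ and $M_1\in[-1,1]$ a.s.\ (so one may take $t_*=1$). This is the standard fact underlying the Parisi formula: applying It\^o to $\Phi_{\gamma_*}(t,X_t)$ and using the PDE \eqref{eq:PDEFirst} to cancel the drift, the quadratic-variation of $M$ integrates to $t$ precisely because the PDE's nonlinear term matches the drift in \eqref{eq:firstSDE}; the bound $M_1\in[-1,1]$ follows from $M_1=\partial_x\Phi_{\gamma_*}$ evaluated at $t=1$, which lies in $[-1,1]$ since $\Phi(1,\cdot)=|\cdot|$ is $1$-Lipschitz. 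These are exactly the hypotheses needed to invoke Theorem~\ref{thm:value_message_passing}, which then yields
\begin{align}
\frac{1}{N}H_N(\bsigma^{\salg})\ge \int_0^1 \xi''(t)\,\E\{u(t,X_t)\}\,\de t-\eps = \int_0^1\xi''(t)\,\E\{\partial_x\Phi_{\gamma_*}(t,X_t)\}\,\de t-\eps
\end{align}
with high probability, at the claimed complexity $C(\eps)\cdot(\chi+N)+N\chi_1$.

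The third step is purely analytic: identify the integral $\int_0^1\xi''(t)\,\E\{\partial_x\Phi_{\gamma_*}(t,X_t)\}\,\de t$ with $\Par(\gamma_*)=\inf_{\gamma\in\cuL}\Par(\gamma)$. The standard route is to differentiate $\Par((1-s)\gamma_*+s\gamma)$ in $s$ at $s=0$ and use that $\gamma_*$ is a minimizer over the convex set $\cuL$, obtaining a first-order optimality condition; combined with the explicit form $\Par(\gamma)=\Phi_\gamma(0,0)-\tfrac12\int_0^1 t\xi''(t)\gamma(t)\de t$ and the Feynman--Kac-type representation of $\Phi_{\gamma_*}(0,0)$ along $(X_t)$, this gives the desired identity. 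Equivalently, one can establish directly that $\Phi_{\gamma_*}(0,0) = \E\{\Phi_{\gamma_*}(1,X_1)\} - \tfrac12\int_0^1\xi''(t)\gamma_*(t)\E\{(\partial_x\Phi_{\gamma_*}(t,X_t))^2\}\de t = \E|X_1|-\tfrac12\int_0^1\xi''\gamma_*(t)\,\de t$ and then reorganize using integration by parts in $t$, relating $\E\{u(t,X_t)\}$ to the derivative of an energy term. I expect this identification to be carried out in Section~\ref{sec:PropertiesVariational} as part of the construction of $\Par$ on $\cuL$, so here it can be cited.

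The main obstacle is {\bf(A4)}: strong total variation of $u=\partial_x\Phi_{\gamma_*}$ and of $v=\xi''\gamma_*\,\partial_x\Phi_{\gamma_*}$ in the time variable, uniformly in $x$. Unlike {\bf(A1)}--{\bf(A3)}, which are soft consequences of parabolic smoothing and the maximum principle, {\bf(A4)} is where the non-monotonicity of $\gamma_*\in\cuL$ genuinely bites: one must control how $\partial_x\Phi_{\gamma_*}$ varies in $t$ when the coefficient $\xi''\gamma_*$ is only of bounded variation (a signed measure), not Lipschitz. The key estimate should be that $\partial_t\partial_x\Phi_{\gamma_*}$, interpreted distributionally, is a measure in $t$ whose total mass is controlled by $\|\xi''\gamma_*\|_{\sTV[0,t]}$ plus the bound on $\partial_x^2\Phi$; this is precisely the kind of regularity theory that the definition of $\cuL$ in terms of $\|\xi''\gamma\|_{\sTV[0,t]}$ is designed to support, so I expect it to be the technical heart of Section~\ref{sec:PropertiesVariational} and to be invoked here rather than re-proved.
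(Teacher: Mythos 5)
Your overall strategy (plug a PDE-based choice of $u,v$ into Theorem \ref{thm:value_message_passing}, verify Assumption \ref{ass:uv}, then identify the achieved value with $\Par(\gamma_*)$) is the paper's strategy, but your key choice of $u$ is wrong, and this is not a cosmetic slip: the paper takes $u(t,x)=\partial_x^2\Phi_{\gamma_*}(t,x)$ (see Eq.~\eqref{eq:choice_uv2}), not $u=\partial_x\Phi_{\gamma_*}$. The point is that $u$ is the \emph{integrand} in the stochastic-integral representation of the magnetization, and by Lemma \ref{lemma:DPhiX} one has $\partial_x\Phi_{\gamma_*}(t,X_t)=\int_0^t\sqrt{\xi''(s)}\,\partial_x^2\Phi_{\gamma_*}(s,X_s)\,\de B_s$; hence with the correct choice $M_t=\partial_x\Phi_{\gamma_*}(t,X_t)$, which gives $|M_{t_*}|\le 1$ and, via Corollary \ref{coro:ED2}, $\E[M_t^2]=\E[\partial_x\Phi_{\gamma_*}(t,X_t)^2]$. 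With your choice $u=\partial_x\Phi_{\gamma_*}$ none of this holds: $M_t=\int_0^t\sqrt{\xi''}\,\partial_x\Phi_{\gamma_*}(s,X_s)\,\de B_s$ is \emph{not} $\partial_x\Phi_{\gamma_*}(t,X_t)$, so your justification of $M_1\in[-1,1]$ fails; moreover $\E[M_t^2]=\int_0^t\xi''(s)\E[\partial_x\Phi_{\gamma_*}(s,X_s)^2]\,\de s=\int_0^t s\,\xi''(s)\,\de s\neq t$. Worst of all, the value Theorem \ref{thm:value_message_passing} would deliver is $\int_0^{t_*}\xi''(t)\E[\partial_x\Phi_{\gamma_*}(t,X_t)]\,\de t$, and since $\partial_x\Phi_{\gamma_*}(t,\cdot)$ is odd, $X_t$ is symmetric, and $\partial_x\Phi_{\gamma_*}(t,X_t)$ is a mean-zero martingale, this integral is $0$, not $\Par(\gamma_*)$. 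The correct value is $\int_0^{t_*}\xi''(t)\E[\partial_x^2\Phi_{\gamma_*}(t,X_t)]\,\de t$, which the paper relates to $\Par(\gamma_*)$ through the function $\Psi(t,x)=\Phi(t,x)-x\partial_x\Phi(t,x)$ and Lemma \ref{lemma:DPsi}, with an error $\E\Psi(t_*,X_{t_*})\to 0$ as $t_*\to 1$ (one must keep $t_*<1$ and send $t_*\to 1$ with $\eps$, since the regularity of $\partial_x^2\Phi$ degenerates at $t=1$; your proposal to take $t_*=1$ outright is also problematic).

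A second, related gap: you present $\E[M_t^2]=t$ as a ``standard'' It\^o cancellation underlying the Parisi formula. It is not automatic for a given $\gamma$; in the paper it is exactly the first-order stationarity condition of the minimizer $\gamma_*$ over $\cuL$ (Proposition \ref{propo:DerivativeParisi} and Corollary \ref{coro:Stationarity}), and it holds for \emph{all} $t\in[0,t_*]$ only because of the full-support theorem $\oS(\gamma_*)=[0,1)$ (Theorem \ref{thm:FullSupport}), which is a substantial argument and is precisely where minimizing over the extended space $\cuL$ (rather than $\cuU$) is used. So even after correcting $u$ to $\partial_x^2\Phi_{\gamma_*}$, your step 2 cannot be dispatched by It\^o calculus alone; it needs the variational analysis of Section \ref{sec:PropertiesVariational}. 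Your assessment that (A4) is the main obstacle is therefore misplaced: in the paper (A1)--(A4) follow relatively directly from the regularity lemmas (Lemmas \ref{lemma:FirstDerivativePhi}--\ref{lemma:Smoothness} and \ref{lemma:SimpleTV}) on $[0,t_*]$, while the real work is the stationarity/full-support identification of $\E[M_t^2]=t$ and the identity linking $\int_0^{t_*}\xi''\,\E[\partial_x^2\Phi_{\gamma_*}(t,X_t)]\,\de t$ to $\Par(\gamma_*)$.
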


As an important consequence of Theorem \ref{thm:VarPrinciple} we obtain a $(1-\eps)$-approximation of the optimum 
whenever $\inf_{\gamma\in \cuU} \Par(\gamma)$ is achieved on a strictly increasing function. For future reference,
we introduce the following `no-overlap gap' assumption.
\begin{assumption}[No overlap gap at zero temperature]
A mixed $p$-spin model with mixture $\xi$ is said to satisfy the no-overlap gap assumption at zero-temperature
if there exists $\gamma_*\in\cuU$ strictly increasing in $[0,1)$ such that  $\Par(\gamma_*) = \inf_{\gamma\in \cuU} \Par(\gamma)$.
\end{assumption}
The no-overlap gap assumption is expected to hold for some choices of the mixture $\xi$ but not for others.
In particular, it is believed to hold for the Sherrington-Kirkpatrick model, which corresponds to the special case $\xi(t) = c_2^2t^2$, 
but not for the pure $p$-spin model, i.e., $\xi(t) = c_p^2t^p$, $p\ge 3$.
\begin{corollary}\label{coro:NoGap}
Assume the no-overlap gap assumption to hold for the mixture $\xi$. 
Then for every $\eps>0$ there exists an algorithm with the same complexity as in Theorem \ref{thm:VarPrinciple} which outputs
$\bsigma^{\salg}\in \Sigma_N$ such that
\begin{align}
\frac{1}{N}H_N(\bsigma^{\salg}) \ge \OPT_N- \eps\, ,
\end{align}
with probability converging to one as $N\to\infty$.
\end{corollary}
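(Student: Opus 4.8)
The plan is to obtain Corollary~\ref{coro:NoGap} by combining Theorem~\ref{thm:VarPrinciple} with the Auffinger--Chen characterization $\lim_{N\to\infty}\OPT_N=\inf_{\gamma\in\cuU}\Par(\gamma)$ recalled above, through the identity
\[
\inf_{\gamma\in\cuL}\Par(\gamma)=\inf_{\gamma\in\cuU}\Par(\gamma),
\]
which I claim holds under the no-overlap gap assumption (and, along the way, that the left-hand infimum is attained, so that Theorem~\ref{thm:VarPrinciple} is applicable). Granting this, the corollary is immediate: fix $\eps>0$; by Auffinger--Chen, with high probability $\OPT_N\le\inf_{\gamma\in\cuU}\Par(\gamma)+\eps/2$, while Theorem~\ref{thm:VarPrinciple} applied with error $\eps/2$ produces $\bsigma^{\salg}\in\Sigma_N$ with $H_N(\bsigma^{\salg})/N\ge\inf_{\gamma\in\cuL}\Par(\gamma)-\eps/2=\inf_{\gamma\in\cuU}\Par(\gamma)-\eps/2\ge\OPT_N-\eps$ with high probability.

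It remains to prove the identity. Since $\cuU\subseteq\cuL$ one has $\inf_{\cuL}\Par\le\inf_{\cuU}\Par$ for free, so the content is the reverse inequality. I would prove it by showing that the strictly increasing minimizer $\gamma_*\in\cuU$ furnished by the no-overlap gap assumption is in fact a global minimizer of $\Par$ over all of $\cuL$; this also settles attainment. The mechanism is the first variation of $\Par$ on $\cuL$: using the construction of $\Phi_\gamma$ in Section~\ref{sec:PropertiesVariational}, the directional derivative of $\Par$ at a point $\gamma$ in an admissible direction $\eta$ is proportional to $\int_0^1\xi''(t)\,\eta(t)\,G_\gamma(t)\,\de t$, where $G_\gamma$ is a bounded function encoding a self-consistency discrepancy (of the shape ``$t$ minus a second-moment functional of the diffusion associated with $\gamma$''). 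Optimality of $\gamma_*$ over the convex cone $\cuU$ then yields the variational inequality $\int_0^1\xi''(t)\bigl(\gamma(t)-\gamma_*(t)\bigr)G_{\gamma_*}(t)\,\de t\ge0$ for every $\gamma\in\cuU$.

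To turn this into a pointwise statement I would integrate by parts against the nondecreasing measure $\de\gamma$. Setting $\bar G(t):=\int_t^1\xi''(s)G_{\gamma_*}(s)\,\de s$ (absolutely continuous on $[0,1)$, with the integration by parts carried out on $[0,1-\delta]$ and then $\delta\downarrow0$ to handle the endpoint $t=1$), the inequality becomes $\int_0^1\bar G\,\de(\gamma-\gamma_*)\ge0$ for all $\gamma\in\cuU$, which forces $\bar G\ge0$ on $[0,1)$ together with complementary slackness $\bar G=0$ $\de\gamma_*$-almost everywhere. Since $\gamma_*$ is strictly increasing on $[0,1)$, the measure $\de\gamma_*$ has full support, so the continuous function $\bar G$ vanishes identically on $[0,1)$; differentiating, $\xi''G_{\gamma_*}=0$ a.e., hence $G_{\gamma_*}=0$ a.e. on $(0,1)$. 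Therefore the directional derivative of $\Par$ at $\gamma_*$ vanishes in every admissible direction, i.e.\ $\gamma_*$ is a critical point of $\Par$ on $\cuL$; invoking the convexity of $\gamma\mapsto\Par(\gamma)$ on $\cuL$ established in Section~\ref{sec:PropertiesVariational}, a critical point is a global minimizer, so $\inf_{\cuL}\Par=\Par(\gamma_*)=\inf_{\cuU}\Par$ and the infimum over $\cuL$ is attained at $\gamma_*\in\cuU\subseteq\cuL$.

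The main obstacle is the first-variation calculus on the enlarged space $\cuL$: one needs Gateaux differentiability of $\Par$ on $\cuL$ with an explicit, well-behaved gradient, and convexity of $\Par$ on $\cuL$ --- both genuinely require work, and I expect them to be supplied by Section~\ref{sec:PropertiesVariational}. A secondary point demanding care is the behavior near $t=1$: the space $\cuL$ only controls $\xi''\gamma$ on compact subintervals $[0,1-\delta]$, so the variational inequality and the integration by parts should be run on such subintervals and the limit $\delta\downarrow0$ taken at the end. Everything else --- the complementary-slackness/support argument and the assembly with Theorem~\ref{thm:VarPrinciple} and the Auffinger--Chen theorem --- is routine.
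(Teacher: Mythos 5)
Your proposal is correct and follows essentially the same route as the paper: prove that the strictly increasing minimizer $\gamma_*\in\cuU$ satisfies the stationarity identity $\E\{\partial_x\Phi^{\gamma_*}(t,X_t)^2\}=t$ on $[0,1)$ via the first-variation formula of Proposition~\ref{propo:DerivativeParisi}, then use convexity of $\Par$ on $\cuL$ to conclude $\Par(\gamma_*)=\inf_{\cuL}\Par$, and finally combine Theorem~\ref{thm:VarPrinciple} with the Auffinger--Chen formula. The only (minor) difference is how stationarity is extracted from optimality over $\cuU$: the paper uses two-sided local perturbations $\delta(t)=[\gamma_*(t_i)-\gamma_*(t)]\ind_{(t_1,t_2)}(t)$, admissible precisely because $\gamma_*$ is strictly increasing, whereas you use a global variational inequality over the cone, integration by parts, and complementary slackness with the full support of $\de\gamma_*$ --- an equivalent exploitation of the same hypothesis.
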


\begin{remark}
Continuing from Remark \ref{rmk:Spherical}, Theorem  \ref{thm:VarPrinciple} has an analogue for the spherical model $\|\bsigma\|_2^2=N$. In this case, the 
variational principle takes a more explicit form \cite{crisanti1992sphericalp,chen2013aizenman}:
\begin{align}
\Par^{\sspher}(\gamma) = \frac{1}{2}\int_0^1\left(\xi''(t)\Gamma(t) +\frac{1}{\Gamma(t)}\right)\de t\,,\;\;\;\;\;\;\;\;\;
\Gamma(t) \equiv \int_t^1 \gamma(s)\de s\,  .
\end{align}
A simple calculation shows that this is minimized in $\cuL$ at $\gamma_*(t) = -\frac{\de\phantom{t}}{\de t}(\xi''(t)^{-1/2})$. This leads
to the optimal value $\Par^{\sspher}(\gamma_*) = \int_0^1\sqrt{\xi''(t)}\de t$, which we anticipated  in Remark \ref{rmk:Spherical}.
The condition for the minimizer to be in $\cuU$, $\gamma_*\in\cuU$, coincides with the condition that $t\mapsto \xi''(t)^{-1/2}$ is concave.
This is the condition for no-overlap gap in the spherical model, and is also the condition under which the algorithm of  \cite{subag2018following}
achieves a $(1-\eps)$-optimum.
\end{remark}

\section{Message passing algorithms}
\label{sec:message_passing}

In this section we introduce a general class of message passing algorithms that we use to prove Theorem \ref{thm:value_message_passing} and Theorem \ref{thm:VarPrinciple}.
These are generalizations of the algorithm introduced in \cite{montanari2019optimization} for the Sherrington-Kirkpatrick model $\xi(t) = c_2^2 t^2$.

\subsection{The general iteration}
 
For each $\ell \ge 0$, let $f_{\ell} : \reals^{\ell+1} \to \reals$ be
a real-valued Lipschitz  function, and let $f_{-1}  \equiv 0$. For a sequence of vectors
$\z^0,\cdots,\z^\ell \in \reals^N$ we use the notation
$f_{\ell}(\z^0,\cdots,\z^\ell)$ for the vector
$(f_{\ell}(z_i^0,\cdots,z_i^\ell))_{1\le i \le N}$. For a vector $\u
\in \reals^N$ we denote by $ \bW\{\u\}$ the vector $\v \in \reals^N$
with coordinates 
\[v_i = \frac{1}{(p-1)!}\sum_{1\le i_1,\cdots,i_{p-1}\le N}W_{i,i_1,\cdots,i_{p-1}}u_{i_1}\cdots u_{i_{p-1}}.\]     
We let $\langle \u\rangle_N := \frac{1}{N}\sum_{i=1}^N u_i$ and $\langle \u, \v \rangle_N := \frac{1}{N}\sum_{i=1}^N u_i v_i$. Finally, we write $\f_{\ell}$ as shorthand for the vector $f_{\ell}(\z^0,\cdots,\z^\ell) \in \reals^N$. 

Before introducing the actual message passing algorithm, we need define a Gaussian process that captures its asymptotic behavior
as $N\to\infty$. Let $p_0$ be a probability distribution on $\reals$ and let $Z^0 \sim p_0$. For each $\ell\in\integers$, let
$(Z^1,\cdots,Z^\ell)$ be a centered Gaussian vector independent of $Z^0$ with covariance $Q_{j,k} = \E[Z^jZ^k]$ defined recursively by
\begin{align}\label{eq:cov_amp}
\begin{split}
Q_{j+1,k+1} &= \xi'\Big(\E\big[f_{j}(Z^0,\cdots,Z^j)f_{k}(Z^0,\cdots,Z^k) \big]\Big),~~~ \ell,j\ge0.
\end{split}
\end{align}
The message passing algorithm starts with $\z^0$ with coordinates drawn  i.i.d.\ with distribution $p_0$ independently of everything else.       
The general message passing iteration takes the form
\begin{align}\label{eq:general_amp}
\begin{split}
\z^{\ell+1} &= \sum_{p=2}^{\infty} c_p \bW^{(p)}\{f_{\ell}(\z^0,\cdots,\z^\ell)\} - \sum_{j=0}^\ell d_{\ell, j} f_{j-1}(\z^0,\cdots,\z^{j-1}),\\  
d_{\ell,j} &= \xi''\big( \E\big[f_{\ell}(Z^0,\dots,Z^{\ell})f_{j-1}(Z^0,\dots,Z^{j-1}\big]\big) \cdot \E\Big[\frac{\partial f_{\ell}}{\partial z^j}(Z^0,\cdots,Z^\ell)\Big]\, .
\end{split}
\end{align}  

Note that the first term in the update equation is the gradient of $H_N$ at the point $f_{\ell}(\z^0,\cdots,\z^\ell)$. The joint distribution for the first $\ell$ iterates of Eq.~\eqref{eq:general_amp} can be exactly characterized in the $N\to \infty$ limit.
\begin{proposition}[State evolution]\label{prop:state_evolution}  
Assume that $p_0$ has finite second moment and let $\psi : \reals^{\ell+1} \to \reals$ be a pseudo-Lipschitz function. Then
\[\Big\langle \psi\big(\z^0,\cdots,\z^\ell\big)\Big\rangle_N ~\xrightarrow[N\to \infty]{p} ~\E \big[\psi\big(Z^0,\cdots,Z^\ell\big)\big]\, .\]
\end{proposition}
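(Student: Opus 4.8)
The plan is to reduce the tensor iteration \eqref{eq:general_amp} to an ordinary (matrix) AMP iteration for which a state evolution theorem is already available, following the proof scheme of \cite{berthier2019state} as announced in the introduction. The first step is to observe that each term $c_p\bW^{(p)}\{f_\ell(\z^0,\dots,\z^\ell)\}$ is a contraction of a symmetric Gaussian tensor against $p-1$ copies of the same vector $\f_\ell$, and that the Onsager correction $\sum_j d_{\ell,j}\f_{j-1}$ is exactly the term needed to debias the iteration. I would first treat a single pure $p$-spin term (one value of $p$, with $c_{p'}=0$ for $p'\ne p$) and then handle the mixed case by linearity, using that $\xi' = \sum_p c_p^2\, p\, t^{p-1}$ and $\xi'' = \sum_p c_p^2\, p(p-1)\, t^{p-2}$ decompose the covariance recursion \eqref{eq:cov_amp} and the coefficients $d_{\ell,j}$ additively over $p$; the assumption $\xi(1+\eps)<\infty$ controls the tail of the sum so the infinite-sum iteration is a limit (in $L^2$, uniformly in $N$ after normalization) of finite truncations.

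The core of the argument is the symmetrization/Gaussian-conditioning trick. For a symmetric order-$p$ tensor $\bW^{(p)}$, the contraction $\bW^{(p)}\{\u\}$ can be written, after conditioning on the sigma-algebra generated by the previously revealed contractions, as a fresh Gaussian vector plus a deterministic correction expressible through the empirical inner products $\langle \f_j,\f_k\rangle_N$; this is the tensor analogue of Bolthausen's conditioning method \cite{bolthausen2014iterative}. Concretely, I would set up an auxiliary "long" matrix iteration whose state vectors stack $\z^0,\dots,\z^\ell$ and whose nonlinearities incorporate the $f_\ell$'s, show that its state-evolution covariance is governed by $\xi'$ via \eqref{eq:cov_amp}, verify that the Onsager term $d_{\ell,j}$ in \eqref{eq:general_amp} matches the Onsager term produced by the matrix AMP machinery (this is where $\xi''$ and the expected derivative $\E[\partial f_\ell/\partial z^j]$ enter, since $\frac{d}{dt}\xi'(t) = \xi''(t)$ is the relevant chain-rule factor), and then invoke the matrix state evolution theorem. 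The convergence statement $\langle \psi(\z^0,\dots,\z^\ell)\rangle_N \toP \E[\psi(Z^0,\dots,Z^\ell)]$ for pseudo-Lipschitz $\psi$ then transfers directly, by induction on $\ell$; the base case uses the law of large numbers for the i.i.d.\ coordinates of $\z^0$, and the inductive step uses that $f_\ell$ is Lipschitz (hence composing with it preserves the pseudo-Lipschitz class needed to propagate the limit and to control the correction coefficients $Q_{j,k}$, $d_{\ell,j}$).

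The main obstacle I anticipate is the concentration of the tensor contraction and the bookkeeping of higher-order terms in the conditioning step: unlike the matrix case, $\bW^{(p)}\{\u\}$ for $p\ge 3$ involves a sum over $p-1$ free indices, so conditioning on past iterates leaves not just a rank-one correction but a more intricate combinatorial sum of lower-order contractions, and one must show all but the leading term are negligible (of order $o(N)$ in the relevant norm) uniformly in the iteration index. Handling this requires moment/variance estimates for Gaussian tensor contractions and a careful argument that the empirical overlaps $\langle \f_j,\f_k\rangle_N$ concentrate on the deterministic values $\E[f_j(Z^0,\dots,Z^j)f_k(Z^0,\dots,Z^k)]$ so that the feedback through $\xi'$ and $\xi''$ in \eqref{eq:cov_amp}–\eqref{eq:general_amp} is asymptotically exact. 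Once this tensor-specific concentration is in place, the rest is a (lengthy but routine) adaptation of \cite{berthier2019state}; the truncation of the infinite sum over $p$ is controlled by the hypothesis $\xi(1+\eps)<\infty$, which forces geometric decay $|c_p|\le c_*\alpha^p$.
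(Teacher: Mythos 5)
Your plan is in the right spirit --- Gaussian conditioning \`a la Bolthausen, an auxiliary ``long'' iteration, and truncation of the mixture using $|c_p|\le c_*\alpha^p$ are all ingredients of the paper's proof --- but at the two decisive points it relies on steps that do not go through as stated. First, there is no ``matrix state evolution theorem'' you can simply invoke after stacking the iterates: for $p\ge 3$ the random object is a Gaussian \emph{tensor}, and the conditional law of $\bW^{(p)}$ given the past contractions $\bW^{(p)}\{\f_s\}=\by_{s+1}$ is not a fresh Gaussian plus a correction ``expressible through the empirical inner products''. The conditional expectation solves an implicit linear system: in the paper's notation it is determined by $\hZZ_{p,t}+(p-1)\cT_{p,t}(\hZZ_{p,t})=\bY_{p,t}$ (Lemma A.2), and one must invert the operator $\cL_{p,t}=\mathbf{1}+(p-1)\cT_{p,t}$ and show that the resulting projection error is negligible (Lemma A.5). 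This is exactly the ``intricate combinatorial sum'' you flag as the main obstacle, but your proposal offers only ``moment/variance estimates'' where the actual argument needs the explicit tensor-regression formula, the Long AMP construction, and an induction establishing approximate isometry of the iterates. Second, and entirely absent from your plan, the whole conditioning argument requires non-degeneracy: the Gram matrices $\bG_{p,t}$ and the operators $\cL_{p,t}$ must be well-conditioned uniformly in $N$, which can fail for arbitrary Lipschitz $f_\ell$. The paper devotes a separate section to reducing to this case by perturbing the nonlinearities $f_\ell\mapsto f_\ell+\eps\varphi_\ell$ (Lemmas A.7--A.12, including an algebraic-geometry style argument that a suitable rational function does not vanish identically) and then letting $\eps\to 0$. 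Without some substitute for this step your induction has no justification for inverting the conditioning operators or the Gram matrices.

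A smaller but real issue is the claim that the mixed case follows ``by linearity'' from the pure $p$-spin case. The coefficients $d_{\ell,j}$ and the covariance recursion do decompose additively over $p$, but the iterates do not: all tensor orders feed into the same nonlinearity through $\bx^t=\sum_p \bz^{p,t}$, so you cannot superpose solutions of separate pure-$p$ problems. The paper instead runs a single joint induction tracking each $\bz^{p,t}$ separately (Theorem A.1 and Theorem A.3), proving conditional independence and approximate orthogonality across different $p$ along the way; the truncation $D\to\infty$ is then a final approximation step, as you correctly anticipate. So: right scaffolding, but the tensor-specific conditional law, the well-conditioning/perturbation reduction, and the joint treatment of all $p$ are missing ideas rather than routine adaptations.
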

This characterization is known as state evolution \cite{bolthausen2014iterative,BM-MPCS-2011,javanmard2013state,bayati2015universality, berthier2019state}. 
The proof of Proposition~\ref{prop:state_evolution}  follows from the same technique introduced in  \cite{berthier2019state}, and we present
it in Appendix \ref{sec:ProofSE}. We note in passing that a version of this result was announced in \cite{richard2014statistical} without proof; the proof in
Appendix \ref{sec:ProofSE} fills this gap. 

\subsection{Choice of the non-linearities}

We choose the sequence of functions $f_{\ell}$ in a specific way.  Let $u, v:[0,1] \times \reals \to \reals$ be two functions satisfying the conditions of Assumption~\ref{ass:uv}.
Given $z^0,\cdots,z^\ell \in \reals$ we consider the finite difference equation
 \begin{equation}\label{eq:discrete_cavity_field}
x^{j+1} - x^{j} = v(j\delta ; x^{j}) \delta + (z^{j+1}-z^{j}),~~ 0\le j \le \ell-1, \quad \mbox{with}~x^0=0, 
\end{equation}
with driving `noise' $z^0,\cdots,z^\ell$, drift $v$ and `step size' $\delta>0$. 
 This is meant to be a discretization of the SDE~\eqref{eq:firstSDE}, provided that the sequence $z^0,\cdots,z^\ell$ `behaves' like Brownian motion.
We further let the discrete analogue of the martingale $M_t$, Eq.~\eqref{eq:magnetization}, be 
 \begin{equation}\label{eq:discrete_cavity_magnetization}
 m^{\ell} \equiv m^0 + \sum_{j=0}^{\ell-1} u_j^{\delta}(x^{j})(z^{j+1}-z^{j}),~~\mbox{for}~\ell \ge 1~ \mbox{and}~m^{0} = \sqrt{\delta},
 \end{equation}
 where $u^{\delta}_j(x) = a_j u(j \delta; x)$ with $a_j$ a bounded rescaling which will be defined in~Eq.~\eqref{eq:rescaling_u} below.
 
  Note that $x^{\ell}$ is a function of $z^0,\cdots,z^\ell$ and so is $m^{\ell}$. We define the non-linearity $f_{\ell}$ as the function mapping $z^0,\cdots,z^\ell$ to $m^{\ell}$:
 \begin{equation}
 \label{eq:non_linearity}
 f_{\ell} : (z^0,\cdots,z^\ell) \longmapsto m^{\ell} ~~~\mbox{as per Eq.~\eqref{eq:discrete_cavity_field} and Eq.~\eqref{eq:discrete_cavity_magnetization}}.
 \end{equation}

 The algorithm is completely specified by defining the functions $u,v:[0,1]\times \reals \to \reals$. For any choice of such functions,
 Theorem \ref{thm:value_message_passing} predicts what is the value achieved by the algorithm (for small $\delta$). Theorem
 \ref{thm:VarPrinciple} corresponds to a specific choice of these functions. Namely, if $\gamma_*$ minimizes
the Parisi functional over $\cuL$ (i.e. $\Par(\gamma_*)= \inf_{\gamma\in\cuL}\Par(\gamma)$), we let
$\Phi_{\gamma_*}:[0,1]\times\reals\to\reals$ denote the corresponding solution of the PDE \eqref{eq:PDEFirst}.
We  let
\begin{align}\label{eq:choice_uv2}
  v(t,x) = \xi''(t)\gamma_*(t)\partial_x\Phi_{\gamma_*}(t,x)\,,\;\;\;\;\;\; u(t,x) = \partial_x^2\Phi_{\gamma_*}(t,x)\, ,\;\;\; t\in[0,t_*]\, ,
\end{align}
and extend  them as to satisfy the assumptions of Theorem \ref{thm:value_message_passing} for $t\in (t_*,1]$. 
Theorem \label{thm:VarPrinciple} is proved by letting $t_*=t_*(\eps)\to 1$ as $\eps\to 0$.
 We prove in the next section that this choice is optimal: no pair of functions satisfying the hypotheses of 
Theorem \ref{thm:value_message_passing} can achieve a value larger than $\inf_{\gamma\in\cuL}\Par(\gamma)$.

\section{Optimality and stochastic control}
\label{sec:optimality}
In this section we show that the value given by the extended variational principle of Subsection~\ref{sec:ExtendedPrinciple} is the largest achievable by message passing algorithms of the form considered above.     
\begin{theorem}\label{thm:stoch_control}
For $u,v : [0,1]\times \reals \to \reals$ satisfying conditions of Assumption~\ref{ass:uv}, let $\m^{\ell} = f_{\ell}(\z^0,\cdots,\z^\ell)$ be the output of the message passing algorithm~\eqref{eq:general_amp} with non-linearity given by~\eqref{eq:non_linearity}. Then
\[ \lim_{\delta \to 0^+} \, \plim_{N\to \infty}  \frac{H_{N}\big(\m^{\lfloor \delta^{-1}\rfloor}\big)}{N}  \le  \inf_{\gamma \in \cuL} \Par(\gamma).\]
\end{theorem}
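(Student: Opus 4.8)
\emph{Plan.} The proof has two halves. First, using the state evolution of Proposition~\ref{prop:state_evolution} (and the analysis underlying Theorem~\ref{thm:value_message_passing}), one identifies the $N\to\infty$, then $\delta\to 0$, limit of $H_N(\m^{\lfloor\delta^{-1}\rfloor})/N$ as a deterministic functional of the limiting diffusion $(X_t,M_t)$ of Eqs.~\eqref{eq:firstSDE}--\eqref{eq:magnetization}. Second, one shows this functional is dominated by $\Par(\gamma)$ for \emph{every} $\gamma\in\cuL$: here $(u,v)$ plays the role of a control, $X_t$ of the controlled state, and the Parisi solution $\Phi_\gamma$ --- the value function of the associated dynamic program --- serves as a dual certificate, so that a verification computation (Itô's formula together with the PDE~\eqref{eq:PDEFirst}) yields the bound. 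Taking $\inf_{\gamma\in\cuL}$ then gives the theorem. (This is the ``duality argument'' mentioned in the introduction; equivalently it may be cast as a Guerra-type interpolation between $H_N(\m^{L})$ and a hierarchical reference built along the AMP trajectory.)

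\emph{Step 1: the limiting structure.} For fixed $\delta$ I would write $H_N(\m^{L})/N=\sum_{\ell=0}^{L-1}\big(H_N(\m^{\ell+1})-H_N(\m^\ell)\big)/N$, $L=\lfloor\delta^{-1}\rfloor$, expand each increment to second order using $\|\m^{\ell+1}-\m^\ell\|_N=O(\sqrt\delta)$, and apply Proposition~\ref{prop:state_evolution} (or a mild extension tracking the Hessian contractions) to each resulting pseudo-Lipschitz functional; summing and letting $\delta\to0$ produces a functional of $(X_t,M_t)$. The structural facts used later are that $f_\ell$ is built so that $(m^\ell)_\ell$ is a discrete martingale in the filtration of the $(\z^j)$ --- hence $\langle\m^\ell,\m^{\ell'}\rangle_N\to\E[(m^{\ell\wedge\ell'})^2]$ and, in the continuum, $M_t$ is a martingale --- and that $x^{\lfloor t/\delta\rfloor}\to X_t$; the strong-total-variation conditions (A4) of Assumption~\ref{ass:uv} are what make the Euler scheme~\eqref{eq:discrete_cavity_field} converge to~\eqref{eq:firstSDE}. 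One also checks that the $\plim$ over $N$ exists for each fixed $\delta$, as the statement requires.

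\emph{Step 2: the control/duality bound.} Fix $\gamma\in\cuL$ and let $\Phi_\gamma$ solve~\eqref{eq:PDEFirst}, constructed --- since $\xi''\gamma$ is merely of bounded variation and need not be monotone --- by smooth approximation as in Section~\ref{sec:PropertiesVariational}; recall $0\le\partial_x^2\Phi_\gamma$ and $|\partial_x\Phi_\gamma|\le 1$. Applying Itô's formula to $\Phi_\gamma$ evaluated along (a suitable coupling of) the limiting martingale and diffusion, using the PDE to cancel the drift and $\de\langle M\rangle_s=\xi''(s)u(s,X_s)^2\,\de s$ for the quadratic variation, and taking expectations, produces a sum rule relating $\E\Phi_\gamma(1,M_1)=\E|M_1|$, $\Phi_\gamma(0,0)$, the penalty $\tfrac12\int_0^1\xi''\gamma\,\E[(\partial_x\Phi_\gamma)^2]$, and the reward terms of Step~1. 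Rearranging --- using the martingale identity $\E[M_s^2]=s$ so that the quadratic-variation contribution absorbs $\tfrac12\int_0^1 s\,\xi''(s)\gamma(s)\,\de s$, and using convexity and the Lipschitz bound on $\Phi_\gamma$ to show the leftover ``mismatch'' terms (those in which $(u,v)$ enters, a priori unrelated to $\Phi_\gamma$) are non-positive up to a discretization error $\omega(\delta)\to0$ --- gives
\[
\plim_{N\to\infty}\frac{H_N(\m^{\lfloor\delta^{-1}\rfloor})}{N}\ \le\ \Phi_\gamma(0,0)-\tfrac12\int_0^1 s\,\xi''(s)\gamma(s)\,\de s+\omega(\delta)\ =\ \Par(\gamma)+\omega(\delta)\, .
\]
Taking $\inf_{\gamma\in\cuL}$ and then $\delta\to0^+$ gives the theorem. (The choice~\eqref{eq:choice_uv2} is the one that turns every inequality above into an equality, which is why it attains $\inf_{\gamma\in\cuL}\Par$, consistently with Theorem~\ref{thm:VarPrinciple}.)

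\emph{Main obstacle.} I expect the decisive difficulty to be the rearrangement in Step~2 that forces the $(u,v)$-dependent leftover terms to be non-positive: a crude estimate (e.g.\ Cauchy--Schwarz applied to $\xi''\E[u]$) yields only the spherical value $\int_0^1\sqrt{\xi''(t)}\,\de t$, which is strictly too weak, so one must genuinely pair $X_s$ and $M_s$ and use $\E[M_s^2]=s$ in an essential way. A second, more technical, hurdle is the low regularity of $\Phi_\gamma$ for non-monotone $\gamma\in\cuL$: the PDE~\eqref{eq:PDEFirst} need not have a classical solution, so $\Phi_\gamma$ and its derivatives must be handled as limits of smooth approximants and all the above estimates shown to pass to the limit --- this is precisely what Section~\ref{sec:PropertiesVariational} is set up to provide, and it is also where allowing $\gamma$ to range over $\cuL$ (not only $\cuU$) is essential, since the AMP overlap profile from Step~1 may itself be non-monotone. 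Finally one must justify the interchange of $N\to\infty$ with the telescoping sum over the $O(\delta^{-1})$ iterates and then $\delta\to0$, which rests on the uniform boundedness in Assumption~\ref{ass:uv} and a union bound over the (finitely many, for fixed $\delta$) iterates.
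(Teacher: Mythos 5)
Your Step 1 is exactly the paper's route (Proposition~\ref{prop:value}): telescoping $H_N(\m^{k})-H_N(\m^{k-1})$, state evolution, and the coupling of Proposition~\ref{prop:convergence} identify the limit as $\int_0^1\xi''(t)\,\E[u(t,X_t)]\,\de t$ up to $O(\sqrt\delta)$. The gap is in Step 2, and it is precisely the point you flag as the ``decisive difficulty'' without resolving it. As described, the verification does not go through: if you apply It\^o's formula to $\Phi_\gamma(t,X_t)$, the drift of $X$ is the arbitrary $v(t,X_t)$, not $\xi''\gamma\,\partial_x\Phi_\gamma$, so the Parisi PDE does not cancel the drift; if instead you apply it to $\Phi_\gamma(t,M_t)$, the quadratic variation $\de\langle M\rangle_t=\xi''(t)u(t,X_t)^2\de t$ does not match the $\tfrac12\xi''\partial_x^2\Phi_\gamma$ term of the PDE for arbitrary $u$. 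The sum rule you write down (with the penalty $\tfrac12\int\xi''\gamma\,\E[(\partial_x\Phi_\gamma)^2]$ absorbed via $\E[M_s^2]=s$) is valid when $u=\partial_x^2\Phi_\gamma$ and $v=\xi''\gamma\,\partial_x\Phi_\gamma$ — that is the achievability computation in the proof of Theorem~\ref{thm:VarPrinciple}, where $M_t=\partial_x\Phi_\gamma(t,X_t)$ — but for general $(u,v)$ that identity fails, the ``mismatch'' terms are not signed, and convexity plus the Lipschitz bound on $\Phi_\gamma$ do not sign them.

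What the paper adds to close this is: (i) enlarge the class to all progressively measurable controls $(u_s)\in D[0,1]$, with $M^u_t=\int_0^t\sqrt{\xi''(s)}\,u_s\,\de B_s$ — this eliminates $v$ and the feedback structure entirely; (ii) dualize the constraint $\E[(M^u_t)^2]=t$ with multiplier $\tfrac12\xi''\gamma$, giving the relaxed value $\VF_\gamma$ of \eqref{eq:valueBellman}; (iii) use as certificate not $\Phi_\gamma$ itself but its Legendre transform in the martingale variable, $V(t,z)=\Phi_\gamma^*(t,z)-\tfrac12\nu(t)z^2-\tfrac12\int_t^1\nu(s)\de s$, which solves the HJB equation \eqref{eq:HJB} (Lemma~\ref{lem:HJB-sol}); the pointwise supremum over $\lambda$ in HJB is what dominates an arbitrary control value, and It\^o along $M^u$ (the verification argument, Proposition~\ref{prop:equality}) yields $\VF_\gamma(0,0)=\Par(\gamma)$; and (iv) carry this out only for $\gamma\in\SF_+$, where $\Phi_\gamma$ is a classical Cole--Hopf solution, then reach $\inf_{\gamma\in\cuL}\Par(\gamma)$ by $L^1_{\xi''}$-density of simple functions and continuity of $\Par$, rather than working directly with weak solutions for general $\gamma\in\cuL$ as you propose. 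Equivalently: the second-order coefficient of the certificate must be evaluated at the point $x$ where $\partial_x\Phi_\gamma(t,x)=M_t$, not at the algorithm's $X_t$; without the Legendre-transform/HJB device (or an equivalent one) the rearrangement you sketch cannot be completed.
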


The proof of this theorem is deferred to Section
\ref{sec:ProofControl}. Here we outline the basic strategy which formulates the optimality question as a stochastic optimal control problem.

We will prove in Proposition~\ref{prop:value} below that the left-hand side in the above inequality is equal to
\begin{equation}\label{eq:obj_value}
\cuE(u,v) \equiv \int_0^1 \xi''(t) \E\big[u(t,X_t)\big] \de t,
\end{equation}
where $(X_t)$ solves the SDE~\eqref{eq:firstSDE}.
 We will analyze the variational problem consisting in maximizing the objective value~\eqref{eq:obj_value} given the constraints $\E[M_t^2] = t$ for all $t \in [0,1]$ and $M_1 \in (-1,1)$ over $u$ and $v$ satisfying Assumption~\ref{ass:uv}. (We recall that $M_t = \int_0^t \sqrt{\xi''(s)} u(s,X_s) \de B_s$.)
   
For $s \le t$, we define the space of admissible controls $D[s,t]$ on the interval $[s,t]$ as the collection of all stochastic processes $(u_r)_{r \in [s,t]}$  which are progressively measurable with respect to the filtration of the Brownian motion $(B_r)_{r \in [s,t]}$ and such that
\[\E\int_s^t \xi''(r) u_r^2 \,\de r \, < +\infty.\]
We are then led to consider the stochastic control problem 
\begin{align}\label{eq:stochastic_control}
\begin{split}
\VAL ~\equiv ~\sup_{u \in D[0,1]}~~& \E\Big[\int_0^1 \xi''(s) u_s \de s \Big]\\
\mbox{s.t.} ~~&~ \E[(M^{u}_t)^2] = t~~ \forall t \in [0,1], ~\mbox{and}~ M^{u}_1 \in (-1,1) ~~\mbox{a.s.,}
\end{split}
\end{align}
with $M^{u}_t := \int_0^t \sqrt{\xi''(s)} u_s \de B_s$.

Note that $D[0,1]$ is a larger space of controls than the one arising from the original algorithm, cf.
Eqs.~\eqref{eq:firstSDE}, \eqref{eq:magnetization}. Indeed, for any choice of the drift $v$,  the process $(u(t,X_t))_{t\in [0,1]}$ is in $D[0,1]$,
and hence can be encoded in the choice of a stochastic process $(u_t)_{t\in [0,1]} \in D[0,1]$.
The proof of Theorem~\ref{thm:stoch_control} consists in showing $\VAL \le \inf_{\gamma \in \cuL} \Par(\gamma)$. We achieve this by writing the Lagrangian form of the above constrained optimization problem with respect to the equality constraint $\E[(M^{u}_t)^2]=t$ for all $t$. 
We define the space of piecewise constant, or simple, functions:
\begin{align}\label{eq:simple_functions}
\SF_+\equiv\Big\{g=\sum_{i=1}^ma_i\ind_{[t_{i-1},t_i)}:\;\; 0=t_0<t_1<\cdots<t_m=1, a_i\in\reals_{\ge 0}, m\in\naturals\Big\}\, .
\end{align}

Let $\gamma: [0,1] \to \reals_+$, $\gamma \in \SF_+$ (defined by continuity at $t=1$). We claim that the following upper bound holds: 
\begin{equation}\label{eq:relaxed_control}
\VAL \le \VF_{\gamma}(0,0), 
\end{equation}
where $\VF_{\gamma}: [0,1] \times (-1,1) \to \reals$ is defined by
\begin{align}\label{eq:valueBellman}
\begin{split}
\VF_{\gamma}(t,z) := \sup_{u\in D[t,1]}~~& \E\left[ \int_t^1 \xi''(s) u_s \de s  + \frac{1}{2}  \int_t^1 \nu(s) \big( \xi''(s) u_s^2 - 1\big) \de s\right],\\
\mbox{s.t.} ~~&~ z+\int_t^1 \sqrt{\xi''(s)}u_s \de B_s  \in (-1,1) ~~\mbox{a.s.},
\end{split}
\end{align}
where $\nu(t) := \int_t^1 \xi''(s)\gamma(s) \de s$.
Indeed, we have by integration by parts, 
\[\int_0^1 \nu(s) \big( \xi''(s) u_s^2 - 1\big) \de s =  \int_0^1 \xi''(t) \gamma(t)\Big( \int_0^t \xi''(s) u_s^2 \de s - t\Big) \de t.\]
Since $\E[(M^{u}_t)^2] = \E\int_0^t \xi''(s) u_s^2 \de s$, the second term in the definition of $\VF(0,0)$ Eq.~\eqref{eq:valueBellman}
vanishes for any control $(u_s)$ that satisfies the constraints of the problem \eqref{eq:stochastic_control}, thus proving Eq.~\eqref{eq:relaxed_control}.
In  other words, $\VF_{\gamma}(0,0)$ is the Lagrangian associated to the optimization problem~\eqref{eq:stochastic_control} with dual variable $\frac{1}{2}\xi''\gamma$.  

We are now left with the task of relating the value function $\VF$ to the Parisi functional $\Par(\gamma)$: 
\begin{proposition}\label{prop:value_at_zero}
For $\gamma \in \SF_+$, $\VF_{\gamma}(0,0) = \Par(\gamma)$.
\end{proposition}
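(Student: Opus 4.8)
\emph{Proof strategy.} The plan is to recognize $\VF_\gamma$ as the value function of the stochastic control problem \eqref{eq:valueBellman} in the Hamilton--Jacobi--Bellman (HJB) sense, and then to match it, via a Legendre transform in the spatial variable, with the solution $\Phi_\gamma$ of the Parisi PDE \eqref{eq:PDEFirst}. First I would write down the HJB equation for \eqref{eq:valueBellman}: since $\gamma\in\SF_+$, the function $\nu(t)=\int_t^1\xi''(s)\gamma(s)\,\de s$ is continuous and piecewise linear, and on $[0,1)\times(-1,1)$ the value function should satisfy
\[
\partial_t\VF_\gamma(t,z)+\sup_{a\in\reals}\Big\{\xi''(t)a+\tfrac12\nu(t)\big(\xi''(t)a^2-1\big)+\tfrac12\xi''(t)a^2\,\partial_z^2\VF_\gamma(t,z)\Big\}=0,
\]
with terminal data $\VF_\gamma(1,z)=0$ for $z\in(-1,1)$ and the state constraint $Z_1\in(-1,1)$ encoded in the behaviour of $\VF_\gamma$ as $z\to\pm1$. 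The inner maximization is finite exactly when $\nu(t)+\partial_z^2\VF_\gamma(t,z)<0$, with maximizer $a^*=-1/(\nu+\partial_z^2\VF_\gamma)$, and carrying it out reduces the HJB to the scalar equation $\partial_t\VF_\gamma=\tfrac12\nu-\xi''/\big[2(\nu+\partial_z^2\VF_\gamma)\big]$.

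Next I would produce the candidate solution explicitly. Let $\Phi_\gamma^*(t,z):=\sup_{x\in\reals}\{zx-\Phi_\gamma(t,x)\}$ be the Legendre transform of $\Phi_\gamma$ in $x$. Using the regularity of $\Phi_\gamma$ for $\gamma\in\SF_+$ recalled in Section~\ref{sec:PropertiesVariational} (it is a classical solution, even and strictly convex in $x$, with $x\mapsto\partial_x\Phi_\gamma(t,x)$ a smooth increasing bijection $\reals\to(-1,1)$ for each $t<1$, and $\Phi_\gamma(t,x)-|x|$ bounded), the transform $\Phi_\gamma^*$ is finite and smooth on $[0,1)\times(-1,1)$, with $\partial_z\Phi_\gamma^*(t,z)=\eta_t(z):=(\partial_x\Phi_\gamma(t,\cdot))^{-1}(z)$ and $\partial_z^2\Phi_\gamma^*(t,z)=1/\partial_x^2\Phi_\gamma(t,\eta_t(z))>0$. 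Differentiating the Parisi PDE and invoking the envelope theorem yields the dual PDE
\[
\partial_t\Phi_\gamma^*(t,z)=\tfrac12\xi''(t)\Big(\tfrac1{\partial_z^2\Phi_\gamma^*(t,z)}+\gamma(t)\,z^2\Big).
\]
I then set
\[
V(t,z):=-\Phi_\gamma^*(t,z)-\tfrac12\nu(t)z^2-\tfrac12\int_t^1\nu(s)\,\de s,
\]
and check, using $\nu'(t)=-\xi''(t)\gamma(t)$, that $\partial_z^2V=-\partial_z^2\Phi_\gamma^*-\nu<-\nu$ (so the HJB maximization is finite and $\nu+\partial_z^2V=-\partial_z^2\Phi_\gamma^*$), that $V$ solves the scalar HJB equation above on $[0,1)\times(-1,1)$, that $V(1,z)=0$ for $z\in(-1,1)$ (since $\nu(1)=0$ and $\Phi_\gamma^*(1,z)=\sup_x(zx-|x|)=0$), and that $V$ stays bounded as $z\to\pm1$. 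The HJB-optimal feedback is $a^*(t,z)=1/\partial_z^2\Phi_\gamma^*(t,z)=\partial_x^2\Phi_\gamma(t,\eta_t(z))$, which is exactly the nonlinearity $u$ of \eqref{eq:choice_uv2}; this closes the loop with the algorithm.

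With the smooth candidate in hand, a verification argument identifies $\VF_\gamma$ with $V$. For the upper bound, apply It\^o's formula to $V(s,Z_s)$ along an arbitrary admissible control $u\in D[0,1]$ (localizing to handle the blow-up of $\partial_zV(s,z)=-\eta_s(z)-\nu(s)z$ as $z\to\pm1$); the HJB inequality makes the drift of $V(s,Z_s)$ at most the negative of the integrand in \eqref{eq:valueBellman}, so $\E[\,\text{objective}\,]\le V(0,0)-\E[V(1,Z_1)]=V(0,0)$ because $Z_1\in(-1,1)$. For the lower bound, run the feedback control $a^*(t,Z_t)$ on $[0,t_*]$ and switch it to $0$ on $(t_*,1]$ so that $Z_1=Z_{t_*}\in(-1,1)$ strictly; along this process $Z_t=\partial_x\Phi_\gamma(t\wedge t_*,X_{t\wedge t_*})$ with $X$ the solution of \eqref{eq:firstSDE} for the drift $v$ of \eqref{eq:choice_uv2}, the HJB inequality becomes an equality, and letting $t_*\uparrow1$ (dominated convergence, using continuity of $V$ up to $t=1$) recovers $V(0,0)$. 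Combining the two bounds gives $\VF_\gamma(0,0)=V(0,0)$, whence
\[
\VF_\gamma(0,0)=-\Phi_\gamma^*(0,0)-\tfrac12\int_0^1\nu(s)\,\de s=\Phi_\gamma(0,0)-\tfrac12\int_0^1 s\,\xi''(s)\gamma(s)\,\de s=\Par(\gamma),
\]
using $\Phi_\gamma^*(0,0)=-\inf_x\Phi_\gamma(0,x)=-\Phi_\gamma(0,0)$ (evenness and convexity of $\Phi_\gamma(0,\cdot)$) and Fubini, $\int_0^1\nu(s)\,\de s=\int_0^1 s\,\xi''(s)\gamma(s)\,\de s$.

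The step I expect to be the main obstacle is the rigorous verification in the presence of the \emph{terminal} state constraint $Z_1\in(-1,1)$: the natural optimal control pushes $Z_1$ onto the boundary $\{\pm1\}$, so one must introduce the cutoff at $t_*<1$ and pass to the limit $t_*\uparrow1$, controlling the locally unbounded integrand $\partial_zV$ and checking that the relevant local martingales are true martingales after localization. A secondary, more routine, point is justifying the Legendre-duality identities and the envelope-theorem differentiation used to derive the dual PDE; these rest entirely on the strict convexity and smoothness of $\Phi_\gamma$ for $\gamma\in\SF_+$ established in Section~\ref{sec:PropertiesVariational}, so no new analytic input is needed there.
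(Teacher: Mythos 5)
Your proposal is correct and follows essentially the same route as the paper: the same candidate $V(t,z)=\Phi_{\gamma}^*(t,z)-\tfrac12\nu(t)z^2-\tfrac12\int_t^1\nu(s)\,\de s$ (up to your opposite sign convention for the Legendre transform), the same dual PDE obtained by the envelope theorem, the same HJB plus verification argument relying on the Cole--Hopf regularity of $\Phi_\gamma$ for $\gamma\in\SF_+$, and the same evenness/Fubini computation identifying $V(0,0)$ with $\Par(\gamma)$. The only divergence is minor: for the lower bound the paper runs the feedback $u^*_s=\partial_x^2\Phi_\gamma(s,X_s)$ all the way to $t=1$ (only noting $|M^*_1|\le 1$), whereas you truncate at $t_*<1$ and let $t_*\uparrow1$, an arguably more careful treatment of the open terminal constraint, while the paper is slightly more explicit than you about the jump points of $\gamma$, applying It\^o's formula on each interval of constancy and telescoping.
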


 The bound~\eqref{eq:relaxed_control} then implies that 
 \[\VAL \le \inf_{\gamma \in \SF_+} \Par(\gamma),\]
 Since any function in the class $\cuL$ can be approximated with a piecewise constant function with respect to the $L^1$ norm, and $\gamma \mapsto \Par(\gamma)$ is continuous in this norm (see Section~\ref{sec:PropertiesVariational}), the above infimum is no larger than $\inf_{\gamma \in \cuL} \Par(\gamma)$. 
 
We now sketch the first steps in establishing Proposition~\ref{prop:value_at_zero}, relegating a full proof to Section~\ref{sec:ProofControl}. The value function~\eqref{eq:valueBellman} can be (formally) computed by dynamic programming where we search for solutions to the equation 
\begin{equation}\label{eq:bellman}
V(t,z) = \sup_{u \in D[t,\theta]} \E\left[ \int_t^\theta \xi''(s) u_s \de s  + \frac{1}{2}  \int_t^\theta \nu(s) \big(\xi''(s) u_s^2 - 1\big) \de s + V\Big(\theta, z+ \int_t^\theta \sqrt{\xi''(s)}u_s \de B_s\Big)\right],
\end{equation}
valid for all $\theta \in [t,1]$ and $z \in (-1,1)$, with terminal condition $V(1,z) = 0$ for $|z|< 1$. 
 The associated Hamilton-Jacobi-Bellman (HJB) equation, which can be formally obtained from~\eqref{eq:bellman} by letting $\theta \to t^+$ and applying It\^o's formula, is 
\begin{align}\label{eq:HJB}
\begin{split}
\partial_tV(t,z) + \xi''(t) \sup_{\lambda \in \reals}\Big\{  \lambda + \frac{\lambda^2}{2} \big(\nu(t)+\partial_z^2V(t,z)\big) \Big\} - \frac{1}{2} \nu(t) &= 0,~~~ (t,z) \in [0,1)\times (-1,1),\\
V(1,z) &= 0,~~~ z \in (-1,1).
\end{split}
\end{align}
Note that it is a priori unclear whether Eq.~\eqref{eq:bellman} and Eq.~\eqref{eq:HJB} have (classical) solutions and whether they are at all related Eq.~\eqref{eq:valueBellman}: $\VF_{\gamma}$ is not known a priori to be smooth, hence the above derivation is not rigorously justified; it is not clear that the right-hand side of~\eqref{eq:bellman} is even measurable.    
To circumvent this issue, we will ``guess" a solution $V$  to~\eqref{eq:HJB} and use the so-called ``verification argument" to certify that the guessed solution is equal to $\VF_{\gamma}$ as defined in Eq.~\eqref{eq:valueBellman}. En route, we establish that the optimal control process in the stochastic control problem~\eqref{eq:stochastic_control} is given by
\[u^*_t =  \partial_{x}^2\Phi_{\gamma}(t,X_t),\]
where $(X_t)$ solves the SDE~\eqref{eq:firstSDE} with drift $v(t,x) = \xi''(t)\gamma(t) \partial_x\Phi_{\gamma}(t,x)$ and $\Phi_{\gamma}$ solves the Parisi PDE. This confirms in hindsight our choice of the functions $u$ and $v$ used in the message passing algorithm, Eq.~\eqref{eq:choice_uv2}. (See also proof of Theorem~\ref{thm:VarPrinciple}.)

\section{Proof of Theorem \ref{thm:value_message_passing}}

\subsection{The scaling limit} 
Consider the message passing iteration~\eqref{eq:general_amp} with non-linearities $f_{\ell}$ given by~\eqref{eq:non_linearity} and iterate sequence $(\z^0,\z^1\cdots)$ starting from $\z^0 = \bf{0}$. We denote by $(\x^0,\x^1\cdots)$ and  $(\m^0,\m^1\cdots)$ the  two auxiliary sequences obtained from the finite difference equation~\eqref{eq:discrete_cavity_field} and the relation~\eqref{eq:discrete_cavity_magnetization}, respectively. (All maps are applied independently to every coordinate $i \in [N]$.) 
It is clear from Eq.~\eqref{eq:discrete_cavity_magnetization} that $f_{\ell}$ is Lipschitz continuous for each $\ell$, with a Lipschitz constant depending on $\ell$ and $C$ (the uniform bound on $u$), and therefore the conclusion of Proposition~\ref{prop:state_evolution} applies. 
Let $(Z^{\delta}_{\ell})_{\ell\ge 0}$ be the limit of the sequence $(\z^0,\z^1\cdots)$. Since $u,v$ are uniformly Lipschitz in $x$, then $(\x^0,\x^1\cdots)$ and  $(\m^0,\m^1\cdots)$ converge as well in the sense of Proposition~\ref{prop:state_evolution} to stochastic processes $(X^{\delta}_{\ell})_{\ell\ge 0 }$ and $(M^{\delta}_{\ell})_{\ell\ge 0 }$, defined respectively via the formulas~\eqref{eq:discrete_cavity_field} and~\eqref{eq:discrete_cavity_magnetization} by replacing every occurrence of $z^j$ by $Z^\delta_{j}$. 
Define for all $\ell \ge 0$, 
\[q^{\delta}_{\ell} \equiv \E\big[(M^{\delta}_{\ell})^2\big].\]
\begin{lemma}\label{lem:discrete_martingale}
The sequence $(Z^{\delta}_{\ell})_{\ell\ge 0}$ is a Gaussian process starting at $Z^{\delta}_0 =0$. Its increments $\Delta^{\delta}_\ell := Z^{\delta}_{\ell}-Z^{\delta}_{\ell-1}$ are independent, have zero mean and variance
\begin{align*}
\E\big[(\Delta^{\delta}_1)^2\big] &= \xi'(\delta),\\
\E\big[(\Delta^{\delta}_\ell)^2\big] &= \xi'(q^{\delta}_{\ell-1}) - \xi'(q^{\delta}_{\ell-2})~~~\mbox{for all}~\ell\ge 2.
\end{align*}
Furthermore, $(M^{\delta}_{\ell})_{\ell\ge 0}$ is a martingale with respect to the filtration $\big(\cF_\ell = \sigma(Z^{\delta}_0,\dots,Z^{\delta}_{\ell})\big)_{\ell \ge 0}$, and $M^{\delta}_0 = \sqrt{\delta}$.
\end{lemma}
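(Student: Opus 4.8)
The plan is to extract everything from the state evolution characterization (Proposition~\ref{prop:state_evolution}), run a single induction on $\ell$, and read off the claimed structure. Since the iteration is started from $\z^0=\mathbf{0}$ we have $p_0=\delta_0$, hence $Z^{\delta}_0=0$ almost surely; and the limit process $(Z^{\delta}_\ell)_{\ell\ge 0}$ is by construction the centered Gaussian process with covariance $Q_{j,k}$, so its Gaussianity and its starting point are immediate. Moreover $f_\ell$ maps $(z^0,\dots,z^\ell)$ to $m^\ell$, so the recursion~\eqref{eq:cov_amp} reads $Q_{j+1,k+1}=\xi'\big(\E[M^{\delta}_j M^{\delta}_k]\big)$ for $j,k\ge 0$, while $Q_{0,k}=0$ for all $k$. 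It therefore only remains to identify $Q$, to compute the increment variances, and to prove the martingale property.

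I would carry out a joint induction with hypothesis $H_\ell$: \emph{(i)} $(M^{\delta}_j)_{0\le j\le\ell}$ is a martingale with respect to $(\cF_j)_{0\le j\le\ell}$, so that $\E[M^{\delta}_jM^{\delta}_k]=q^{\delta}_{j\wedge k}$ for $j,k\le\ell$; \emph{(ii)} $Q_{j,k}=\xi'\big(q^{\delta}_{(j\wedge k)-1}\big)$ for $1\le j,k\le\ell$, with the convention $q^{\delta}_{-1}:=0$ (note $\xi'(0)=0$); \emph{(iii)} the increments $\Delta^{\delta}_1,\dots,\Delta^{\delta}_\ell$ are pairwise uncorrelated, with $\E[(\Delta^{\delta}_1)^2]=\xi'(\delta)$ and $\E[(\Delta^{\delta}_a)^2]=\xi'(q^{\delta}_{a-1})-\xi'(q^{\delta}_{a-2})$ for $2\le a\le\ell$. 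The base case is trivial: $M^{\delta}_0\equiv\sqrt{\delta}$ by~\eqref{eq:discrete_cavity_magnetization} gives $q^{\delta}_0=\delta$, and $\Delta^{\delta}_1=Z^{\delta}_1$ has variance $Q_{1,1}=\xi'\big(\E[(M^{\delta}_0)^2]\big)=\xi'(\delta)$.

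For the inductive step I would first use \emph{(i)}--\emph{(ii)} at level $\ell$ together with the recursion to compute the new covariance entries: $Q_{j,\ell+1}=\xi'\big(\E[M^{\delta}_{j-1}M^{\delta}_\ell]\big)=\xi'\big(q^{\delta}_{j-1}\big)$ for $1\le j\le\ell+1$ (using $j-1\le\ell$ and the martingale identity $\E[M^{\delta}_{j-1}M^{\delta}_\ell]=\E[(M^{\delta}_{j-1})^2]$). Expanding $\E[\Delta^{\delta}_a\Delta^{\delta}_{\ell+1}]=Q_{a,\ell+1}-Q_{a-1,\ell+1}-Q_{a,\ell}+Q_{a-1,\ell}$ and substituting these values (and $Q_{0,\cdot}=0$) gives $\E[\Delta^{\delta}_a\Delta^{\delta}_{\ell+1}]=0$ for every $a\le\ell$, while $\E[(\Delta^{\delta}_{\ell+1})^2]=Q_{\ell+1,\ell+1}-2Q_{\ell,\ell+1}+Q_{\ell,\ell}=\xi'(q^{\delta}_\ell)-\xi'(q^{\delta}_{\ell-1})$, which is exactly \emph{(iii)} at level $\ell+1$. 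Finally, from~\eqref{eq:discrete_cavity_field}--\eqref{eq:discrete_cavity_magnetization} we have $M^{\delta}_{\ell+1}=M^{\delta}_\ell+u^{\delta}_\ell(X^{\delta}_\ell)\,\Delta^{\delta}_{\ell+1}$ with $M^{\delta}_\ell$ and $X^{\delta}_\ell$ both $\cF_\ell$-measurable; since $(\Delta^{\delta}_1,\dots,\Delta^{\delta}_{\ell+1})$ is a Gaussian vector and $\Delta^{\delta}_{\ell+1}$ is uncorrelated with $\Delta^{\delta}_1,\dots,\Delta^{\delta}_\ell$, it is independent of $\cF_\ell$ and centered, so $\E[M^{\delta}_{\ell+1}\mid\cF_\ell]=M^{\delta}_\ell$ (integrability being clear since $u^{\delta}$ is bounded, whence $M^{\delta}_{\ell+1}\in L^2$), and $\E[M^{\delta}_jM^{\delta}_{\ell+1}]=\E[(M^{\delta}_j)^2]=q^{\delta}_j$ for $j\le\ell$. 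This closes $H_{\ell+1}$, and at the end the pairwise-uncorrelated jointly Gaussian increments are in fact mutually independent, which is the assertion of the lemma.

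The only genuinely delicate point is the apparent circularity between the martingale property and the orthogonality of the increments: the martingale identity at step $\ell+1$ requires $\Delta^{\delta}_{\ell+1}\perp\cF_\ell$, which requires knowing $Q$ out to index $\ell+1$, which via~\eqref{eq:cov_amp} requires $\E[M^{\delta}_jM^{\delta}_k]$ only for $j,k\le\ell$ --- precisely what the martingale property \emph{up to} $\ell$ delivers. Arranging the induction so these implications chain in the right order, and keeping straight the one-step index shift between $Q$ and $M^{\delta}$ together with the convention $q^{\delta}_{-1}=0$, is essentially the whole content; the remaining ingredients are the elementary second-difference bookkeeping above and the two standard facts that a jointly Gaussian family is independent iff uncorrelated, and that $\E[M_jM_k]=\E[M_{j\wedge k}^2]$ for a discrete-time martingale.
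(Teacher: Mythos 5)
Your proposal is correct and follows essentially the same route as the paper: an induction in which the state-evolution covariance recursion $Q_{j+1,k+1}=\xi'(\E[M^{\delta}_jM^{\delta}_k])$, the second-difference computation of increment covariances, and the martingale property of $M^{\delta}$ are chained together, with independence of jointly Gaussian uncorrelated increments closing the loop. The only cosmetic difference is that you carry the martingale property explicitly in the induction hypothesis (and handle $a=1$ via the convention $q^{\delta}_{-1}=0$), whereas the paper's induction hypothesis is the independence of the increments up to $\ell$, from which the martingale property is immediately deduced — the content is the same.
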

\begin{proof}
We proceed by induction. Since $\z^0 = \mathbf{0}$ and $\m^{0} = \sqrt{\delta}\mathbf{1}$, we have $Z^{\delta}_0 = 0$ and $M^{\delta}_0 = \sqrt{\delta}$. We also have for all $j\ge 1$, $\E[Z^{\delta}_1Z^{\delta}_j] = \xi'(\E[M^{\delta}_0 M^{\delta}_{j-1}]) = \xi'(\delta)$. So $\E[\Delta^{\delta}_1\Delta^{\delta}_2] = \E[Z^{\delta}_2 Z^{\delta}_1] - \E[(Z^{\delta}_1)^2] = 0$, and $\E[(\Delta^{\delta}_1)^2] = \xi'(\delta)$.
Now we assume that the increments $(\Delta^{\delta}_j)_{j \le \ell}$ are independent. This implies that $(M^{\delta}_{j})_{j\le \ell}$ is a martingale. Appealing to the state evolution recursion, 
 \begin{align*}
 \E[\Delta^{\delta}_{\ell+1}\Delta^{\delta}_1] &= \E[Z^{\delta}_{\ell+1}Z^{\delta}_1] - \E[Z^{\delta}_{\ell}Z^{\delta}_1] \\
 &= \xi'\big(\E[M^{\delta}_{\ell}M^{\delta}_{0}]\big) - \xi'\big(\E[M^{\delta}_{\ell-1}M^{\delta}_{0}]\big)\\
 &=0,
  \end{align*}
 since $M^{\delta}_{0}=\sqrt{\delta}$ and $\E[M^{\delta}_{\ell}] = \E[M^{\delta}_{\ell-1}]$. For $ 2\le j\le \ell$,
 \begin{align*}
 \E[\Delta^{\delta}_{\ell+1}\Delta^{\delta}_j] &= \xi'\big(\E[M^{\delta}_{\ell}M^{\delta}_{j-1}]\big) - \xi'\big(\E[M^{\delta}_{\ell-1}M^{\delta}_{j-1}]\big) - \xi'\big(\E[M^{\delta}_{\ell}M^{\delta}_{j-2}]\big)+\xi'\big(\E[M^{\delta}_{\ell-1}M^{\delta}_{j-2}]\big)\\
 &= 0
 \end{align*}
 since $(M^{\delta}_{j})_{j\le \ell}$ has independent increments. So $\Delta^{\delta}_{\ell+1}$ is independent from $(\Delta^{\delta}_j)_{j \le \ell}$. This ends the induction argument. The variance identity follows straightforwardly. 
\end{proof}
We define the functions $u^{\delta}$ by the relations 
 \begin{align}\label{eq:rescaling_u}
 \begin{split}
 u^{\delta}_0 \equiv \Big(\frac{\delta}{\xi'(\delta)}\Big)^{1/2}, &\qquad
 u_\ell^{\delta}(x) \equiv \frac{u(\ell \delta; x)}{\Sigma^{\delta}_\ell} ~\mbox{for all}~ \ell\ge 1,\\
\mbox{with}~~~ (\Sigma^{\delta}_\ell)^2 &= \delta^{-1}\big(\xi'(q^{\delta}_{\ell}) - \xi'(q^{\delta}_{\ell-1})\big) \E\big[u(\ell \delta; X^{\delta}_{\ell})^2\big].
 \end{split}
 \end{align}
 \begin{lemma}\label{lem:reparametrization}
Assume $u_{\ell}^{\delta}$ takes the form~\eqref{eq:rescaling_u} for all $\ell \ge 0$. Then $q^{\delta}_{\ell} = (\ell+1)\delta$ for all $\ell \ge 0$. 
 \end{lemma}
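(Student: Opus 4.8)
The plan is to establish $q^\delta_\ell = (\ell+1)\delta$ by induction on $\ell$, using that $(M^\delta_\ell)_{\ell\ge 0}$ is a martingale whose increments are orthogonal in $L^2$. First I would record that, by \eqref{eq:discrete_cavity_magnetization}, $M^\delta_\ell - M^\delta_{\ell-1} = u^\delta_{\ell-1}(X^\delta_{\ell-1})\,\Delta^\delta_\ell$ with $\Delta^\delta_\ell = Z^\delta_\ell - Z^\delta_{\ell-1}$; by Lemma~\ref{lem:discrete_martingale}, $\Delta^\delta_\ell$ is a centered Gaussian independent of $\cF_{\ell-1} = \sigma(Z^\delta_0,\dots,Z^\delta_{\ell-1})$, while $X^\delta_{\ell-1}$ is $\cF_{\ell-1}$-measurable (it is a deterministic function of $z^0,\dots,z^{\ell-1}$ through the finite-difference recursion \eqref{eq:discrete_cavity_field}). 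Hence the martingale property gives
\[
q^\delta_\ell - q^\delta_{\ell-1} \;=\; \E\big[(M^\delta_\ell - M^\delta_{\ell-1})^2\big] \;=\; \E\big[u^\delta_{\ell-1}(X^\delta_{\ell-1})^2\big]\cdot\E\big[(\Delta^\delta_\ell)^2\big]\, ,
\]
and it suffices to show that the right-hand side equals $\delta$ for every $\ell\ge 1$; together with the base case $q^\delta_0 = \E[(M^\delta_0)^2] = \delta$, this yields the claim.

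For $\ell = 1$: by \eqref{eq:rescaling_u}, $u^\delta_0 = (\delta/\xi'(\delta))^{1/2}$ is a constant, and $\E[(\Delta^\delta_1)^2] = \xi'(\delta)$ by Lemma~\ref{lem:discrete_martingale}, so $q^\delta_1 - q^\delta_0 = (\delta/\xi'(\delta))\,\xi'(\delta) = \delta$. For $\ell\ge 2$: assume inductively that $q^\delta_j = (j+1)\delta$ for all $j\le \ell-1$, so in particular $q^\delta_{\ell-1} = \ell\delta$ and $q^\delta_{\ell-2} = (\ell-1)\delta$. Substituting these into the definition of $\Sigma^\delta_{\ell-1}$ in \eqref{eq:rescaling_u},
\[
\E\big[u^\delta_{\ell-1}(X^\delta_{\ell-1})^2\big] \;=\; \frac{\E\big[u((\ell-1)\delta;X^\delta_{\ell-1})^2\big]}{(\Sigma^\delta_{\ell-1})^2} \;=\; \frac{\delta}{\xi'(q^\delta_{\ell-1}) - \xi'(q^\delta_{\ell-2})}\, ,
\]
while Lemma~\ref{lem:discrete_martingale} gives $\E[(\Delta^\delta_\ell)^2] = \xi'(q^\delta_{\ell-1}) - \xi'(q^\delta_{\ell-2})$. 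The two factors cancel, so $q^\delta_\ell - q^\delta_{\ell-1} = \delta$ and hence $q^\delta_\ell = (\ell+1)\delta$, completing the induction.

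Two routine points should be checked along the way. First, $(\Sigma^\delta_{\ell-1})^2 > 0$, so that $u^\delta_{\ell-1}$ is well-defined: this holds because $\xi'$ is strictly increasing on $\reals_{\ge 0}$ (its Taylor coefficients $k c_k^2$ are nonnegative and not all zero) and $q^\delta_{\ell-1} = \ell\delta > (\ell-1)\delta = q^\delta_{\ell-2}$, together with the standing assumption that $u$ is non-vanishing, which forces $\E[u((\ell-1)\delta;X^\delta_{\ell-1})^2]>0$; boundedness of $u$ from \textbf{(A1)} also ensures all the expectations appearing above are finite. Second, $\Sigma^\delta_{\ell-1}$ depends only on $q^\delta_{\ell-1}$ and $q^\delta_{\ell-2}$, i.e.\ on values of $q^\delta$ at indices strictly below $\ell$, so there is no circularity in invoking the inductive hypothesis when evaluating the $\ell$-th increment. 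I do not anticipate any genuine obstacle here: the argument is a short computation, and the main thing to be attentive to is the index bookkeeping between \eqref{eq:discrete_cavity_magnetization} and \eqref{eq:rescaling_u}, in particular the special definition of the $\ell=0$ term $u^\delta_0$.
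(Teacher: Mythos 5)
Your proof is correct and follows essentially the same route as the paper: both arguments combine Lemma~\ref{lem:discrete_martingale} (independent increments and their variances) with the definition of $\Sigma^{\delta}_\ell$ in~\eqref{eq:rescaling_u} so that each increment of $q^{\delta}$ contributes exactly $\delta$, with the same inductive justification that $\Sigma^{\delta}_\ell>0$. The only difference is presentational — the paper telescopes $\E[(M^{\delta}_\ell-M^{\delta}_0)^2]$ in one sum while you add one increment per induction step — so there is nothing substantive to change.
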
  
\begin{proof}
First notice that $u_\ell^{\delta}$ is well defined since $\Sigma^{\delta}_\ell>0$ for all $\ell$. This can be easily shown by induction since
$\xi'$ is strictly increasing and, by the induction hypothesis $q^{\ell}_{\delta}>q^{\ell-1}_{\delta}$, and $X^{\delta}_{\ell}$ is a non-degenerate Gaussian, whence 
$\E\big[u(\ell \delta; X^{\delta}_{\ell})^2\big]>0$ (because by assumption $u$ is non-vanishing).
We have $q^{\delta}_0 = \E[(M_0^{\delta})^2] = \delta$. Let $\ell \ge 1$. Since $Z^{\delta}$ has independent increments, Eq.~\eqref{eq:discrete_cavity_magnetization} implies 
\begin{align*}
\E\big[(M^{\delta}_{\ell}-M^{\delta}_{0})^2\big] &= \sum_{j=0}^{\ell-1} \E\big[u_j^{\delta}(X^{\delta}_{j})^2\big] \cdot \E\big[(\Delta^{\delta}_{j+1})^2\big]\\
&=  \E\big[u_0^{\delta}(X^{\delta}_{0})^2\big] \cdot \xi'(\delta) + \sum_{j=1}^{\ell-1} \E\big[u_j^{\delta}(X^{\delta}_{j})^2\big] \cdot \big(\xi'(q^{\delta}_{j}) - \xi'(q^{\delta}_{j-1})\big)\\
& = \delta + (\ell-1)\delta.
\end{align*}
The second line follows from Lemma~\ref{lem:discrete_martingale}, the last line follows from~\eqref{eq:rescaling_u}. The fact that $M^{\delta}$ is a martingale yields the desired result. 
\end{proof}
Next, we show that under condition~\eqref{eq:non-linear},  $(Z^{\delta}_{j}, X^{\delta}_{j}, M^{\delta}_{j})_{0\le j\le \ell}$ converge to continuous-time stochastic processes $(Z_t, X_t, M_t)_{t \in [0,1]}$ on the interval $[0,1]$ as $\delta \to 0, \ell \to \infty$ and $\ell \le \delta^{-1}$, with $Z_t \equiv \int_0^t \sqrt{\xi''(s)}\de B_s$, $X_t$ is the solution to the SDE~\eqref{eq:firstSDE}  and $M_t \equiv \int_0^t \sqrt{\xi''(s)}u(s,X_s)\de B_s$.
\begin{proposition}\label{prop:convergence}
Assume
\begin{equation}\label{eq:non-linear}
\E\big[M_t^2\big] = t ~~~\mbox{for all } t \in [0,1].
\end{equation}
Then there exists a coupling between the random variables $\{(Z^{\delta}_{\ell}, X^{\delta}_{\ell}, M^{\delta}_{\ell})\}_{\ell \ge 0}$ and the stochastic process 
$\{(Z_{t}, X_{t}, M_{t})\}_{t \ge 0}$ 
such that the following holds. There exists $\delta_0 >0$ and a constant $C>0$ such that for all $\delta \le \delta_0$ and $\ell \le \delta^{-1}$,
\begin{align}
\max_{1 \le j \le \ell} \E \big[|X^{\delta}_j - X_{\delta j}|^{2} \big] &\le C \delta,\label{eq:error-X}\\
\max_{1 \le j \le \ell} \E \big[|M^{\delta}_j - M_{\delta j}|^{2} \big] &\le C \delta.\label{eq:error-M}
\end{align}
\end{proposition}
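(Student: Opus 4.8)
\textbf{Proof plan for Proposition~\ref{prop:convergence}.}

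The plan is to prove the two bounds \eqref{eq:error-X} and \eqref{eq:error-M} simultaneously by a coupled discrete Gronwall argument, comparing the discrete recursions \eqref{eq:discrete_cavity_field}–\eqref{eq:discrete_cavity_magnetization} (with $z^j$ replaced by $Z^\delta_j$) to the continuous SDE \eqref{eq:firstSDE} and the martingale \eqref{eq:magnetization}. First I would set up the coupling: by Lemma~\ref{lem:discrete_martingale} the increments $\Delta^\delta_\ell = Z^\delta_\ell - Z^\delta_{\ell-1}$ are independent centered Gaussians with known variances, and by Lemma~\ref{lem:reparametrization} (which uses the normalization \eqref{eq:rescaling_u} and assumption \eqref{eq:non-linear}) we have $q^\delta_\ell = (\ell+1)\delta$, so that $\E[(\Delta^\delta_\ell)^2] = \xi'((\ell+1)\delta) - \xi'(\ell\delta) = \xi''(\ell\delta)\delta + O(\delta^2)$ uniformly over $\ell\le\delta^{-1}$ (using that $\xi$ is smooth on $[0,1-\eps]$ — here I would restrict attention to $t$ bounded away from $1$, or use the finite-$k_M$ reduction, to keep $\xi''$ bounded and Lipschitz). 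I would then realize $Z_t = \int_0^t\sqrt{\xi''(s)}\,\de B_s$ on the same probability space by choosing $\Delta^\delta_\ell$ to be (a Gaussian with the prescribed variance, coupled to) the Brownian increment $\int_{(\ell-1)\delta}^{\ell\delta}\sqrt{\xi''(s)}\,\de B_s$, so that $\E[|Z^\delta_\ell - Z_{\ell\delta}|^2] = O(\delta)$ — in fact this difference can be made $O(\delta^2)$ per step, hence $O(\delta)$ cumulatively, or even bounded better; the key point is that it is $O(\delta)$ and contributes a lower-order error.

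Next I would run the Gronwall estimate on $e^X_\ell := \E[|X^\delta_\ell - X_{\delta\ell}|^2]$. Writing $X_{\delta(j+1)} - X_{\delta j} = \int_{\delta j}^{\delta(j+1)} v(s,X_s)\,\de s + (Z_{\delta(j+1)} - Z_{\delta j})$ and subtracting \eqref{eq:discrete_cavity_field}, the difference accumulates three kinds of error at each step: (i) the drift discretization error $\int_{\delta j}^{\delta(j+1)}[v(s,X_s) - v(\delta j, X^\delta_j)]\,\de s$, which splits into a time-regularity piece controlled by the strong-total-variation / continuity assumptions {\bf(A3)}, {\bf(A4)} and a space-Lipschitz piece controlled by {\bf(A2)} giving a term $\le C\delta\, e^X_j + C\delta\,\E[|X_s - X_{\delta j}|^2] + (\text{t.v.\ term})\cdot\delta$; (ii) the noise-coupling error from $Z^\delta$ vs.\ $Z$, which is $O(\delta^2)$ per step; (iii) a standard $O(\delta^2)$ contribution from $\E[|X_s - X_{\delta j}|^2] \le C\delta$ on each subinterval (using boundedness of $v$ and the Brownian scaling). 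Summing over $j\le\ell\le\delta^{-1}$ and using discrete Gronwall, $e^X_\ell \le C\delta\exp(C\ell\delta) \le C'\delta$. For the time-regularity term I want to be careful: the sum $\sum_j \|v(\cdot,x)\|$-type increments is where {\bf(A4)} (bounded strong total variation) enters, ensuring $\sum_{j}\sup_x|v((j+1)\delta,x) - v(j\delta,x)|\le C$ so that this source term, multiplied by the per-step factor, stays $O(\delta)$ after summation; one has to interleave this with the space-Lipschitz bound via the triangle inequality $|v(s,X_s) - v(\delta j, X^\delta_j)| \le |v(s,X_s)-v(s,X^\delta_j)| + |v(s,X^\delta_j) - v(\delta j, X^\delta_j)|$ and then $|v(s,X^\delta_j)-v(\delta j,X^\delta_j)|$ is handled by {\bf(A3)}+{\bf(A4)}.

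Finally, \eqref{eq:error-M} follows along the same lines but is somewhat cleaner because $M$ and $M^\delta$ are both (conditionally) centered with independent/orthogonal increments: $\E[|M^\delta_\ell - M_{\delta\ell}|^2] = \sum_{j=0}^{\ell-1}\E[|\,u^\delta_j(X^\delta_j)\Delta^\delta_{j+1} - \int_{\delta j}^{\delta(j+1)}\sqrt{\xi''(s)}u(s,X_s)\,\de B_s\,|^2]$ by orthogonality of martingale increments, and each summand is bounded by $C\delta\cdot(e^X_j + \delta) + (\text{t.v.\ term})\cdot\delta + O(\delta^2)$ using {\bf(A1)}, {\bf(A2)}, {\bf(A4)}, the already-established $e^X_j\le C\delta$, and the fact that $u^\delta_j(x) = u(\delta j,x)/\Sigma^\delta_j$ with $(\Sigma^\delta_j)^2 = \delta^{-1}(\xi'(q^\delta_j)-\xi'(q^\delta_{j-1}))\E[u(\delta j,X^\delta_j)^2] \to \xi''(\delta j)\E[u(\delta j, X_{\delta j})^2]/(\text{denominator})$, i.e.\ $\Sigma^\delta_j = \sqrt{\xi''(\delta j)} + O(\delta)$ after using $q^\delta_j = (j+1)\delta$ — so the rescaling factor $a_j = 1/\Sigma^\delta_j$ converges to $1/\sqrt{\xi''(\delta j)}$ with $O(\delta)$ error and $u^\delta_j(x)\Delta^\delta_{j+1}$ matches $\sqrt{\xi''(\delta j)}u(\delta j,x)\cdot(\text{Brownian increment})$ up to lower order. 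Summing the orthogonal pieces gives $\E[|M^\delta_\ell - M_{\delta\ell}|^2]\le C\delta$ directly, without needing Gronwall (the $M$-errors do not feed back into themselves).

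\textbf{Main obstacle.} I expect the principal difficulty to be the careful bookkeeping of the \emph{time}-discretization error for the drift $v$ and for the coefficient $u$, specifically showing that the accumulated error from the non-smooth time-dependence is $O(\delta)$ rather than $O(1)$. This is exactly what assumption {\bf(A4)} (bounded strong total variation) is designed for, but making it rigorous requires combining it with the continuity {\bf(A3)} and a uniform estimate on how far $X_s$ travels within a step; a naive bound would only give $o(1)$ and not the sharp $O(\delta)$ rate. A secondary technical point is controlling $\Sigma^\delta_\ell$ uniformly away from $0$ and establishing the expansion $\Sigma^\delta_\ell = \sqrt{\xi''(\ell\delta)} + O(\delta)$ uniformly in $\ell\le\delta^{-1}$, which feeds into the $M$-bound; this rests on Lemma~\ref{lem:reparametrization} together with smoothness of $\xi'$ and a lower bound on $\E[u(\ell\delta,X^\delta_\ell)^2]$ coming from non-degeneracy of $X^\delta_\ell$ and the non-vanishing hypothesis on $u$.
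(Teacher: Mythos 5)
Your overall architecture matches the paper's proof: couple the increments $\Delta^\delta_\ell$ to the Brownian increments $\int_{\delta(\ell-1)}^{\delta\ell}\sqrt{\xi''(s)}\,\de B_s$, run a discrete Gronwall argument for \eqref{eq:error-X} using {\bf(A2)} for the space-Lipschitz part and {\bf(A4)} for the time-discretization of $v$, and then get \eqref{eq:error-M} directly from It\^o isometry / orthogonality of martingale increments without a second Gronwall loop. One simplification you miss: by Lemma~\ref{lem:reparametrization} the variance of $\Delta^\delta_\ell$ is exactly $\xi'(\delta\ell)-\xi'(\delta(\ell-1))$, so the coupling gives $Z^\delta_\ell = Z_{\delta\ell}$ with \emph{zero} error rather than an $O(\delta)$ cumulative one; your hedged version still lands at the target order, but the exact identity is what makes the bookkeeping clean.

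There is, however, a genuine gap in your treatment of the rescaling factor $\Sigma^\delta_j$, which is precisely where the hypothesis \eqref{eq:non-linear} must enter. You claim $\Sigma^\delta_j=\sqrt{\xi''(\delta j)}+O(\delta)$, i.e.\ $a_j\to 1/\sqrt{\xi''(\delta j)}$. This is incorrect and, moreover, inconsistent with your own subsequent assertion that $u^\delta_j(x)\Delta^\delta_{j+1}$ matches $\sqrt{\xi''(\delta j)}\,u(\delta j,x)$ times the Brownian increment. From \eqref{eq:rescaling_u} and $q^\delta_j=(j+1)\delta$ one has $(\Sigma^\delta_j)^2\approx \xi''(\delta j)\,\E[u(\delta j, X_{\delta j})^2]$; the assumption $\E[M_t^2]=t$, via It\^o isometry and differentiation in $t$, gives $\xi''(t)\,\E[u(t,X_t)^2]=1$ for all $t$, and hence $|(\Sigma^\delta_j)^2-1|\le C\sqrt{\delta}$ (the rate is $\sqrt{\delta}$, not $\delta$, because it passes through the already-proved $L^2$ bound on $X^\delta_j-X_{\delta j}$ and the Lipschitz property of $u$). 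It is this $\Sigma^\delta_j\to 1$ statement that makes $|u^\delta_j(X^\delta_j)-u(\delta j,X^\delta_j)|\le C\sqrt{\delta}$ and lets the discrete martingale increment track $\int\sqrt{\xi''(s)}\,u(s,X_s)\,\de B_s$. With your claimed limit $\Sigma^\delta_j\approx\sqrt{\xi''(\delta j)}$, the increment $u^\delta_j(X^\delta_j)\Delta^\delta_{j+1}$ would differ from the continuous one by a factor $\xi''(\delta j)^{\pm 1/2}$, an $O(1)$ discrepancy whenever $\xi''\neq 1$, and \eqref{eq:error-M} would fail. So you need to make explicit where \eqref{eq:non-linear} is used — namely to identify $\xi''(t)\E[u(t,X_t)^2]=1$ and hence $\Sigma^\delta_j\approx 1$ — rather than attributing it to Lemma~\ref{lem:reparametrization}, which holds without it.
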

\begin{proof}
Let $(B_t)_{t \in [0,1]}$ be a standard Brownian motion. We couple the increments of $Z^{\delta}$ with $(B_t)$ via the relation  
\begin{equation}\label{eq:coupling}
Z^{\delta}_{\ell}  - Z^{\delta}_{\ell-1} = \int_{\delta(\ell-1)}^{\delta \ell} \sqrt{\xi''(s)}\de B_s ~~\mbox{for all}~\ell \ge 1.
\end{equation}
It\^o's isometry implies $\E\big[(Z^{\delta}_{\ell}  - Z^{\delta}_{\ell-1})^2\big] = \xi'(\delta \ell) - \xi'(\delta(\ell-1))$. By Lemma~\ref{lem:reparametrization}, this is in accordance with the characterization of the law of $Z^{\delta}$ obtained in Lemma~\ref{lem:discrete_martingale}. Moreover we have $Z^{\delta}_{\ell} = Z_{\delta \ell}$ for all $\ell\ge 0$. 
We now show~\eqref{eq:error-X}. Let $\Delta^X_{j} = X^{\delta}_j - X_{\delta j}$. Using~\eqref{eq:firstSDE} and~\eqref{eq:discrete_cavity_field} we have
 \begin{align*}
 \Delta^X_{j}  - \Delta^X_{j-1} &= \int_{(j-1)\delta}^{j \delta} \big(v((j-1)\delta; X^{\delta}_{j}) - v(t; X_{t})\big) \de t + Z^{\delta}_{j}  - Z^{\delta}_{j-1} - \int_{\delta(j-1)}^{\delta j} \sqrt{\xi''(s)}\de B_s\\
 &=  \int_{(j-1)\delta}^{j \delta} \big(v((j-1)\delta; X^{\delta}_{j}) - v(t; X_{t})\big) \de t\\
 &=   \int_{(j-1)\delta}^{j \delta} \big(v((j-1)\delta; X^{\delta}_{j}) - v((j-1)\delta; X_{t})\big) \de t
 + \int_{(j-1)\delta}^{j \delta} \big(v((j-1)\delta; X_{t}) - v(t; X_{t})\big) \de t.
 \end{align*}
The first term is the above equation is bounded in absolute value by $C \int_{(j-1)\delta}^{j \delta}|X_j^{\delta} - X_t|\de t$ since $v$ Lipschitz in space uniformly in time. As for the second term, 
\begin{align*} 
\sum_{k=1}^\ell &\int_{(k-1)\delta}^{k \delta} \big|v((k-1)\delta; X_{t}) - v(t; X_{t})\big| \de t \\
&\le \sum_{k=1}^\ell \int_{(k-1)\delta}^{k \delta} \Big\{ \big|v((k-1)\delta; X_{t}) - v(t; X_{t})\big| +\big|v(t; X_{t})-v(k\delta; X_{t})\big|\Big\} \de t\\
&\le \delta \sum_{k=1}^\ell \sup_{(k-1)\delta \le t \le k \delta}  \Big\{\big|v((k-1)\delta; X_{t}) - v(t; X_{t})\big| + \big|v(t; X_{t})-v(k\delta; X_{t})\big|\Big\}\\
&\le \delta \sup_{t_1,\cdots,t_k} \sum_{k=1}^\ell  \Big\{\big|v((k-1)\delta; X_{t_k}) - v(t_k; X_{t_k})\big| + \big|v(t_k; X_{t_k})-v(k\delta; X_{t_k})\big|\Big\}\\
 &\le C \delta,
\end{align*}
where the last inequality follows from the property of bounded strong total variation of $v$ (see Definition~\ref{def:strong_tv}).
Putting to the two bounds together, summing over $j$, and using $\Delta^X_{0}=0$, we have
\[\big|\Delta^X_{\ell}\big| \le \sum_{j=1}^{\ell} \big|\Delta^X_{j}  - \Delta^X_{j-1}\big| \le 
C  \sum_{j=1}^\ell \int_{(j-1)\delta}^{j \delta}  |X_j^{\delta} - X_t| \de t + C \delta.\]
Squaring and taking expectations,
\begin{align*}
\E\big[(\Delta^X_{\ell})^2\big] &\le 2C^2 \E\Big(\sum_{j=1}^\ell \int_{(j-1)\delta}^{j \delta}  |X_j^{\delta} - X_t| \de t\Big)^2 + 2C^2 \delta^2\\
&\le 2C^2 \ell \delta \sum_{j=1}^\ell \int_{(j-1)\delta}^{j\delta}  \E|X_j^{\delta} - X_t|^2 \de t + 2C^2 \delta^2.
\end{align*}
Furthermore, $\E|X_j^{\delta} - X_t|^2 \le 2 \E|X_j^{\delta} - X_{\delta j}|^2+2 \E|X_{\delta j} -X_t|^2$. It is easy to show that $\E|X_t -X_s|^2 \le C|t-s|$ for all $t,s$. Therefore 
\[\E\big[(\Delta^X_{\ell})^2\big] \le 4C^2 \ell \delta^2 \sum_{j=1}^\ell \E\big[(\Delta^X_{j})^2\big] + 4C^3 \ell \delta  \sum_{j=1}^\ell\int_{(j-1)\delta}^{j\delta} (t - (\ell-1)\delta)\de t + 2C^2 \delta^2.\]
The middle term is proportional to $\ell^2 \delta^3$. Using $\ell \delta \le 1$ we obtain that for $\delta$ smaller than an absolute constant, it holds that 
\[\E\big[(\Delta^X_{\ell})^2\big] \le C \delta \sum_{j=1}^{\ell-1} \E\big[(\Delta^X_{j})^2\big] + C\delta,\]
for a different absolute constant $C$. This implies $\E\big[(\Delta^X_{\ell})^2\big] \le C \delta$ as desired.

Next, we show~\eqref{eq:error-M}. Using the relation~\eqref{eq:coupling} we have 
\begin{align}\label{eq:bound_martingale_disc}
\E\big[\big(M^{\delta}_\ell - M_{\delta \ell}\big)^2\big] &= \E\Big[\Big(\sum_{j=0}^{\ell-1}  u_j^{\delta}(X^{\delta}_{j})(Z^{\delta}_{j+1}-Z^{\delta}_{j}) - \int_0^{\delta \ell} \sqrt{\xi''(t)} u(t,X_t)\de B_t\Big)^2\Big]\nonumber\\
&= \E\Big[\Big(\sum_{j=0}^{\ell-1}\int_{j\delta}^{(j+1)\delta}  \big(u_j^{\delta}(X^{\delta}_{j}) -  u(t,X_t)\big)\sqrt{\xi''(t)}\de B_t\Big)^2\Big]\nonumber\\
&= \sum_{j=0}^{\ell-1}\int_{j\delta}^{(j+1)\delta}  \E\big[\big(u_j^{\delta}(X^{\delta}_{j}) -  u(t,X_t)\big)^2 \big]\, \xi''(t) \de t.
\end{align}
Recall that $u_j^{\delta}(x) = u(\delta j; x)/\Sigma^{\delta}_j$ for $j \ge 1$ where $\Sigma^{\delta}_j$ is given in Eq.~\eqref{eq:rescaling_u}. Since we have $q^{\delta}_{j} = \delta(j+1)$, the formula for $\Sigma^{\delta}_j$ reduces to
\[(\Sigma^{\delta}_j)^2 = \frac{\xi'(\delta(j+1)) - \xi'(\delta j)}{\delta} \E[u(\delta j; X^{\delta}_{j})^2].\]
 Let us first show the bound 
\begin{equation}\label{eq:sigma_bound}
\big|(\Sigma^{\delta}_j)^2 - 1\big| \le C\sqrt{\delta}
\end{equation}
for $\delta$ small enough. 
Since $u$ is bounded and $\xi'''$ is bounded on $[0,1]$, we have
\[\big|(\Sigma^{\delta}_j)^2 - \xi''(\delta j) \E[u(\delta j; X^{\delta}_{j})^2]\big| \le C\delta.\] 
Additionally, since $u$ is Lipschitz in space (and bounded), we use the bound Eq.~\eqref{eq:error-X} to obtain
\[\big|(\Sigma^{\delta}_j)^2 - \xi''(\delta j) \E[u(\delta j; X_{\delta j})^2]\big| \le C\sqrt{\delta}.\] 
Now, since $\E[M_t^2]=t$ for all $t \in [0,1]$ and $t \mapsto u(t,X_t)$ is a.s.\ continuous, we have by Lebesgue's differentiation theorem, for all $t\in[0,1]$,
\[\xi''(t) \E[u(t; X_{t})^2]  = 1,\]
and hence $\big|(\Sigma^{\delta}_j)^2 - 1| \le C\sqrt{\delta}$ for $\delta$ smaller than some absolute constant.
This implies the bound $|u^{\delta}_j(X^{\delta}_j) - u(\delta j; X^{\delta}_j)| \le C \big|\frac{1}{\Sigma^{\delta}_j} - 1\big| \le C \sqrt{\delta}$.
Now, going back to Eq.~\eqref{eq:bound_martingale_disc}, we have
\begin{align*}
\E\big[\big(M^{\delta}_\ell - M_{\delta \ell}\big)^2\big]  &\le 2\sum_{j=0}^{\ell-1}\int_{j\delta}^{(j+1)\delta}  \E\big[\big(u_j^{\delta}(X^{\delta}_{j}) -  u(\delta j; X^{\delta}_j)\big)^2\big]\, \xi''(t) \de t\\
&~~~+2\sum_{j=0}^{\ell-1}\int_{j\delta}^{(j+1)\delta}  \E\big[\big(u(\delta j; X^{\delta}_j) -  u(t,X_t)\big)^2\big] \xi''(t) \de t
\end{align*}
The first term is bounded by $C \ell \delta^{2} \le C \delta$. As for the second term,
\begin{align*}
\sum_{j=0}^{\ell-1}\int_{j\delta}^{(j+1)\delta}\E\big[\big(u(\delta j; X^{\delta}_j) -  u(t,X_t)\big)^2\big] \xi''(t) \de t&\le 
C\sum_{j=0}^{\ell-1}\int_{j\delta}^{(j+1)\delta}\E\big[\big(u(\delta j; X^{\delta}_j) -  u(\delta j,X_{\delta j})\big)^2\big]  \de t\\
&~~+C\sum_{j=0}^{\ell-1}\int_{j\delta}^{(j+1)\delta}\E\big[\big(u(\delta j; X_{\delta j}) -  u(\delta j,X_t)\big)^2\big] \de t\\
&~~+C\sum_{j=0}^{\ell-1}\int_{j\delta}^{(j+1)\delta}\E\big[\big(u(\delta j; X_t) -  u(t,X_t)\big)^2\big]   \de t\\
&= I +II + III. 
\end{align*}
Since $u$ is Lipschitz in space, the error bound Eq.~\eqref{eq:error-X} implies $I \le C \ell \delta^2$. Further, we have the continuity bound $\E[|X_t-X_s|^2] \le C|t-s|$, therefore $II \le C\ell \delta^2$. Finally, since $u$ has bounded strong total variation (Def.~\ref{def:strong_tv}) and $\ell \delta \le 1$, it follows that $III \le C\delta$. Putting the pieces together we obtain
\[\E\big[\big(M^{\delta}_\ell - M_{\delta \ell}\big)^2\big] \le C\delta,\]
 which is the desired bound.
\end{proof}


\subsection{Value achieved by the algorithm}

Throughout this section, we denote by $\<\bA,\bB\>_N$ the normalized scalar product between tensors $\bA,\bB\in(\reals^N)^{\otimes k}$.
Namely $\<\bA,\bB\>_N = \sum_{i_1,\dots,i_k\le N}A_{i_1,\dots,i_k} B_{i_1,\dots,i_k}/N$.
\begin{proposition}\label{prop:value}
There exists $\delta_0 >0$ and a constant $C>0$ such that for all $\delta \le \delta_0$ and $\ell \le \delta^{-1}$,
\begin{align*}
\Big|\plim_{N \to \infty} \frac{H_{N}(\m^{\ell})}{N}- \int_0^{\ell \delta} \xi''(t) \E[u(t,X_t)] \de t \Big| \le C\sqrt{\delta}.
\end{align*}
\end{proposition}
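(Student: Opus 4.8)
The plan is to express $H_N(\m^\ell)$ as a telescoping sum along the trajectory of the iteration and to identify its dominant contribution through state evolution. Since $H_N$ is a polynomial, for each $j$ one has the exact identity
\[
H_N(\m^{j+1}) \;=\; H_N(\m^j) + \big\langle \nabla H_N(\m^j),\, \m^{j+1}-\m^j\big\rangle \;+\; R_j,\qquad
R_j \;:=\; \sum_{k\ge 2}\frac{1}{k!}\,\nabla^k H_N(\m^j)\big[(\m^{j+1}-\m^j)^{\otimes k}\big],
\]
so that $H_N(\m^\ell)=H_N(\m^0)+\sum_{j=0}^{\ell-1}\langle\nabla H_N(\m^j),\m^{j+1}-\m^j\rangle+\sum_{j=0}^{\ell-1}R_j$. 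The term $H_N(\m^0)=H_N(\sqrt\delta\,\mathbf 1)$ has $\E[H_N(\m^0)^2]=N\xi(\delta)=O(N)$, hence $H_N(\m^0)/N\toP 0$, and the task splits into evaluating the main sum and controlling $\sum_j R_j/N$. Whenever state evolution is invoked the relevant empirical average is of a product of a single iterate coordinate $z^k$ (or a uniformly bounded Lipschitz function of the iterates) with Lipschitz functions of the iterates; such products are pseudo-Lipschitz (for fixed $\delta$ the maps $f_j$ are Lipschitz), so Proposition~\ref{prop:state_evolution} applies, after expanding every squared increment bilinearly (e.g. $\|\m^{j+1}-\m^j\|_N^2=\|\m^{j+1}\|_N^2-2\langle\m^{j+1},\m^j\rangle_N+\|\m^j\|_N^2$).

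\emph{Main sum.} The key point is that the first term of the update~\eqref{eq:general_amp} is precisely $\nabla H_N$ evaluated at $\m^j=f_j(\z^0,\dots,\z^j)$, so $\nabla H_N(\m^j)=\z^{j+1}+\sum_{i=0}^{j}d_{j,i}\,\m^{i-1}$. In the Onsager part, each summand $d_{j,i}(\langle\m^{i-1},\m^{j+1}\rangle_N-\langle\m^{i-1},\m^j\rangle_N)$ converges in probability to $d_{j,i}\,\E[M^\delta_{i-1}(M^\delta_{j+1}-M^\delta_j)]$, which vanishes since $(M^\delta_\ell)$ is a martingale for $(\cF_\ell)$ (Lemma~\ref{lem:discrete_martingale}) and $i-1\le j$; being a finite sum for fixed $\delta$, the whole Onsager contribution has $\plim$ zero. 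For the rest, $\langle\z^{j+1},\m^{j+1}\rangle_N-\langle\z^{j+1},\m^j\rangle_N\toP\E[Z^\delta_{j+1}(M^\delta_{j+1}-M^\delta_j)]$; using $M^\delta_{j+1}-M^\delta_j=u^\delta_j(X^\delta_j)(Z^\delta_{j+1}-Z^\delta_j)$ (the limiting form of~\eqref{eq:discrete_cavity_magnetization}) and that $Z^\delta_{j+1}-Z^\delta_j$ has mean zero and is independent of $\cF_j$, this equals $\E[(Z^\delta_{j+1}-Z^\delta_j)^2]\E[u^\delta_j(X^\delta_j)]=(\xi'(q^\delta_j)-\xi'(q^\delta_{j-1}))\E[u^\delta_j(X^\delta_j)]$. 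Now substitute $q^\delta_j=(j+1)\delta$ (Lemma~\ref{lem:reparametrization}), write $u^\delta_j=u(\delta j;\cdot)/\Sigma^\delta_j$ and use the estimates $|(\Sigma^\delta_j)^2-1|\le C\sqrt\delta$ and $\E|X^\delta_j-X_{\delta j}|^2\le C\delta$ from the proof of Proposition~\ref{prop:convergence}, Taylor-expand $\xi'((j+1)\delta)-\xi'(j\delta)=\xi''(j\delta)\delta+O(\delta^2)$ (valid as $\xi'''$ is bounded on $[0,1]$), and use that $u$ is spatially Lipschitz together with $\E|X_t-X_s|^2\le C|t-s|$. Summing the per-step errors, each $O(\delta^{3/2})$, over $\ell\le\delta^{-1}$ steps and recognising $\sum_j\xi''(j\delta)\E[u(\delta j;X_{\delta j})]\delta$ as a left Riemann sum of $t\mapsto\xi''(t)\E[u(t,X_t)]$ --- whose error is $O(\sqrt\delta)$, the dominant part being the $\tfrac12$-H\"older continuity in time from $\E|X_t-X_s|^2\le C|t-s|$, while the mere bounded-variation regularity of $t\mapsto u(t,x)$ (assumption {\bf(A4)}) contributes a further $O(\delta)$ --- one gets $\plim_{N\to\infty}\sum_{j=0}^{\ell-1}\langle\nabla H_N(\m^j),\m^{j+1}-\m^j\rangle_N=\int_0^{\ell\delta}\xi''(t)\E[u(t,X_t)]\de t+O(\sqrt\delta)$.

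\emph{Remainder.} It remains to show $|\sum_{j=0}^{\ell-1}R_j|/N\le C\sqrt\delta$ with high probability. We use that the iterates stay in a fixed bounded ball ($\|\m^j\|_N^2\toP q^\delta_j\le 1$), that $\|\m^{j+1}-\m^j\|_N^2\toP\delta$, and that on a fixed bounded ball $\|\nabla^k H_N(\m)\|_{\op}\le C_kN^{1-k/2}$ with high probability with $C_k/k!$ summable (a standard net bound for symmetric Gaussian tensors contracted against powers of $\m$, using $|c_k|\le c_*\alpha^k$). For the terms of order $k\ge 3$ this already gives $|\nabla^k H_N(\m^j)[(\m^{j+1}-\m^j)^{\otimes k}]|/N\le (C_k/k!)\,\delta^{k/2}$, whose total over $j\le\delta^{-1}$ and $k\ge 3$ is $O(\sqrt\delta)$. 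The Hessian term ($k=2$) is delicate: there the naive estimate only gives $O(\delta)$ per step, hence $O(1)$ overall. I handle it with the same device as the main sum. From the Taylor expansion of $\nabla H_N$ at $\m^j$,
\[
\big\langle \m^{j+1}-\m^j,\,\nabla^2 H_N(\m^j)(\m^{j+1}-\m^j)\big\rangle \;=\; \big\langle \m^{j+1}-\m^j,\,\nabla H_N(\m^{j+1})-\nabla H_N(\m^j)\big\rangle \;-\; \sum_{k\ge 2}\frac{1}{k!}\,\nabla^{k+1}H_N(\m^j)\big[(\m^{j+1}-\m^j)^{\otimes(k+1)}\big].
\]
Substituting the AMP recursion at steps $j$ and $j+1$ (running the iteration one step further if needed), $\langle\m^{j+1}-\m^j,\nabla H_N(\m^{j+1})-\nabla H_N(\m^j)\rangle_N$ becomes a sum of empirical averages covered by Proposition~\ref{prop:state_evolution}, whose limit is $0$: the Onsager pieces again vanish by the martingale property, and the two inner products $\langle\m^{j+1}-\m^j,\z^{j+2}\rangle_N$ and $\langle\m^{j+1}-\m^j,\z^{j+1}\rangle_N$ have the \emph{same} limit $\E[(Z^\delta_{j+1}-Z^\delta_j)^2]\E[u^\delta_j(X^\delta_j)]$, because the extra increment $Z^\delta_{j+2}-Z^\delta_{j+1}$ is mean zero and independent of $\cF_{j+1}$. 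The leftover sum contributes $\nabla^{k+1}H_N(\m^j)[(\m^{j+1}-\m^j)^{\otimes(k+1)}]/N=O(\delta^{(k+1)/2})=O(\delta^{3/2})$ for $k\ge 2$. Hence $\langle\m^{j+1}-\m^j,\nabla^2 H_N(\m^j)(\m^{j+1}-\m^j)\rangle_N$ converges in probability to a limit of absolute value $O(\delta^{3/2})$, and the Hessian terms contribute $O(\sqrt\delta)$ in total. Combining the three pieces, $H_N(\m^\ell)/N$ lies within $C\sqrt\delta$ of $\int_0^{\ell\delta}\xi''(t)\E[u(t,X_t)]\de t$ with high probability, which --- together with convergence in probability of $H_N(\m^\ell)/N$, itself a consequence of Gaussian concentration of the disorder-functional $\bW\mapsto H_N(\m^\ell)/N$ and the convergence of its mean --- gives the claim.

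The one genuinely delicate step --- the main obstacle --- is precisely the Hessian term: unlike all higher-order terms, the operator norm of $\nabla^2 H_N$ on a bounded ball is only $O(1)$, so a crude estimate of $\langle\m^{j+1}-\m^j,\nabla^2 H_N(\m^j)(\m^{j+1}-\m^j)\rangle$ accumulates to an $O(1)$ error rather than an $O(\sqrt\delta)$ one. The resolution is the identity above, which re-expresses this quadratic form through $\nabla H_N$ at the \emph{next} iterate $\m^{j+1}$ --- itself an output of the AMP recursion, hence amenable to state evolution --- whereupon the martingale structure of $(M^\delta_\ell)$ forces the required cancellation; the remaining ingredients (the tensor norm estimates, the Taylor and Riemann-sum bounds, and the verification that the test functions are pseudo-Lipschitz) are routine.
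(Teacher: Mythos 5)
Your proposal is correct, and at its core it runs on the same engine as the paper's proof: telescope $H_N$ along the iterates, use that the gradient at each iterate is exactly the linear term of the AMP update~\eqref{eq:general_amp} so that its inner product with the increment is computable by Proposition~\ref{prop:state_evolution}, kill the Onsager contributions by the martingale property of $(M^\delta_\ell)$ (Lemma~\ref{lem:discrete_martingale}), and control higher-order terms by tensor operator-norm bounds, finishing with the $\Sigma^\delta$ and $X$-coupling estimates of Proposition~\ref{prop:convergence} for the Riemann sum. The one organizational difference is the treatment of the quadratic term. The paper symmetrizes: it replaces the increment $A^k_p$ by the trapezoidal quantity $B^k_p$ built from the average of the gradients at $\m^{k-1}$ and $\m^k$, so that in the expansion~\eqref{eq:A-B} the $j=2$ term cancels identically (coefficient $1-j/2$), and only $j\ge 3$ terms need the crude operator-norm bound; the needed identity of the two leading limits, $\E[Z^\delta_{k+1}(M^\delta_k-M^\delta_{k-1})]=\E[Z^\delta_k(M^\delta_k-M^\delta_{k-1})]$, is exactly where the independent-increments structure enters. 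You instead expand around the left endpoint and then neutralize the Hessian term by rewriting it as $\langle\m^{j+1}-\m^j,\nabla H_N(\m^{j+1})-\nabla H_N(\m^j)\rangle$ minus third-and-higher terms, showing via state evolution that the gradient-difference inner product has vanishing limit -- which is the same cancellation, organized as a difference of the two endpoint gradients rather than their average. Two small imprecisions, neither fatal: your statement that the Hessian quadratic form ``converges in probability to a limit of absolute value $O(\delta^{3/2})$'' should be weakened to a with-high-probability bound of that size (the $j\ge3$ leftover need not converge to a constant), and your appeal to Gaussian concentration for the existence of $\plim_N H_N(\m^\ell)/N$ is asserted rather than proved (the iterates depend on the disorder); but the paper's own Lemma~\ref{lem:main_error_term} has the same structure -- a genuine $\plim$ for the main part~\eqref{eq:main_term} plus a whp bound~\eqref{eq:error_term} -- and only the high-probability bound is used downstream, so this does not affect the argument.
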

\begin{proof}
In order to compute $H_{N}(\m^{\ell})$ for large $N$, we evaluate the differences $H_{N}(\m^{k}) - H_{N}(\m^{k-1})$ for $ 1\le k\le \ell$ and sum them. We have
\[N^{-1}\big(H_{N}(\m^{k}) - H_{N}(\m^{k-1})\big) = \sum_{p} \frac{c_p}{p!} \big\langle \bW^{(p)} , (\m^{k})^{\otimes p}-(\m^{k-1})^{\otimes p}\big\rangle_N,\]
where the above inner product is of tensors of order $p$, normalized by $N$. 
We want to approximate the term 
\[A^{k}_p := \big\langle \bW^{(p)} , (\m^{k})^{\otimes p}- (\m^{k-1})^{\otimes p}\big\rangle_N\] 
with 
\[ B^{k}_p :=\Big\langle \bW^{(p)} ,  \frac{p}{2}\big((\m^{k})^{\otimes (p-1)} + (\m^{k-1})^{\otimes (p-1)} \big) \otimes (\m^{k}-\m^{k-1})\Big\rangle_N,\]
which captures the first two the terms in the binomial expansion of $A^k_p$ in $\m^{k}-\m^{k-1}$.

The result follows from the next lemma.  
\begin{lemma} \label{lem:main_error_term}
There exists $\delta_0 >0$ and a constant $C>0$ such that for all $\delta \le \delta_0$ and $\ell \le \delta^{-1}$,
\begin{align}
\Big|\plim_{N \to \infty}  \sum_{k=1}^{\ell}\sum_{p \ge 2} \frac{c_p}{p!} B^k_p -  \int_0^{\ell \delta} \xi''(t) \E[u(t,X_t)] \de t\Big|&\le C\sqrt{\delta},\label{eq:main_term}\\
\mbox{and}~~~~~\Big|\sum_{k=1}^{\ell} \sum_{p \ge 3} \frac{c_p}{p!} (A^k_p-B^k_p) \Big| &\le C \sqrt{\delta},\label{eq:error_term}
\end{align}
with probability tending to one as $N \to \infty$. 
\end{lemma}
Let us first finish the proof of Proposition~\ref{prop:value}. For $\ell \ge 1$, we have
\begin{align*} 
N^{-1}\big(H_{N}(\m^{\ell}) - H_{N}(\m^{0})\big) &= \sum_{k=1}^{\ell} N^{-1}\big(H_{N}(\m^{k}) - H_{N}(\m^{k-1})\big)\\
&= \sum_{k=1}^{\ell}\sum_{p} \frac{c_p}{p!} B^k_p + \sum_{k=0}^{\ell}\sum_{p} \frac{c_p}{p!} (A^k_p- B^k_p).
\end{align*}
Since $\m^0$ is non-random, $\plim_{N} H_{N}(\m^{0})/N = 0$, and Lemma~\ref{lem:main_error_term} yields the desired result.  
\end{proof}

\begin{proof}[Proof of Lemma~\ref{lem:main_error_term}]
We prove the two statements separately:

\noindent \textit{Proof of Eq.~\eqref{eq:main_term}}. We have
\begin{align*}
\sum_{p} \frac{c_p}{p!} B^k_p  &= \frac{1}{2} \sum_{p} c_p \big\langle \bW^{(p)}\{\m^{k}\},  \m^{k}-\m^{k-1} \big\rangle_N 
+ \frac{1}{2} \sum_{p} c_p \big\langle \bW^{(p)}\{\m^{k-1}\},  \m^{k}-\m^{k-1} \big\rangle_N\\
&:= \frac{1}{2} (S_{1,N} +S_{2,N}).
\end{align*}
By taking the scalar product of all the terms in iteration~\eqref{eq:general_amp} with $\m^{k}-\m^{k-1}$, we see that
\begin{align*}
S_{1,N}&= \langle \z^{k+1},  \m^{k}-\m^{k-1}\rangle_N + \sum_{j=0}^{k} d_{k,j} \langle \m^{j-1},  \m^{k}-\m^{k-1}\rangle_N,\\
S_{2,N} &= \langle \z^{k},  \m^{k}-\m^{k-1}\rangle_N + \sum_{j=0}^{k-1} d_{k-1,j} \langle \m^{j-1},  \m^{k}-\m^{k-1}\rangle_N.
\end{align*}
Taking $N$ to infinity and invoking Proposition~\ref{prop:state_evolution}, $S_{1,N}$ and $S_{2,N}$ converge in probability to 
\begin{align*}
\plim_{N\to\infty} S_{1,N} &=\E\big[Z^{\delta}_{k+1}(M^{\delta}_{k}-M^{\delta}_{k-1})\big] + \sum_{j=0}^{k} d_{k,j} \E\big[M^{\delta}_{j-1}(M^{\delta}_{k}-M^{\delta}_{k-1})\big],\\
\plim_{N\to\infty} S_{2,N} &=\E\big[Z^{\delta}_{k}(M^{\delta}_{k}-M^{\delta}_{k-1})\big] + \sum_{j=0}^{k-1} d_{k-1 ,j} \E\big[M^{\delta}_{j-1}(M^{\delta}_{k}-M^{\delta}_{k-1})\big],
\end{align*} 
respectively. Since $M^{\delta}$ is a martingale, the right-most terms in the above expressions vanish. Next, since $Z^{\delta}$ has independent increments, the left-most terms in the above expressions are equal, and we get
\begin{align*}
\frac{1}{2} (S_{1,N} +S_{2,N}) &= \E\big[Z^{\delta}_{k}(M^{\delta}_{k}-M^{\delta}_{k-1})\big] =\E\big[(Z^{\delta}_k -Z^{\delta}_{k-1})(M^{\delta}_{k}-M^{\delta}_{k-1})\big]\\
&= \E \big[u^{\delta}_{k-1}(X^{\delta}_{k-1}) (Z^{\delta}_k -Z^{\delta}_{k-1})^2\big].
\end{align*}
Summing over $k \in \{1,...,\ell\}$, we obtain
\begin{align*}
\plim_{N \to \infty}  \sum_{k=1}^{\ell}\sum_{p} \frac{c_p}{p!} B^k_p &=   \sum_{k=1}^{\ell}  \E \big[u^{\delta}_{k-1}(X^{\delta}_{k-1}) (Z^{\delta}_k -Z^{\delta}_{k-1})^2\big]\\
&=  \sqrt{\delta\xi'(\delta)}+ \sum_{k=2}^{\ell}  \E \big[u^{\delta}_{k-1}(X^{\delta}_{k-1})\big] \big(\xi'(q^{\delta}_{k-1})-\xi'(q^{\delta}_{k-2})\big)\\
&= \sqrt{\delta\xi'(\delta)}+ \sum_{k=2}^{\ell}  \frac{\E \big[u(\delta (k-1); X^{\delta}_{k-1})\big]}{\Sigma^{\delta}_{k-1}}\big(\xi'(\delta k)-\xi'(\delta (k-1))\big).
\end{align*}
Since $\big|\frac{1}{\Sigma^{\delta}_{k}} - 1\big| \le C\sqrt{\delta}$ (this is a consequence of Eq.~\eqref{eq:sigma_bound}) and $\xi'(\delta) \le \xi''(1)\delta$, the above is equal to
\begin{align*}
\sum_{k=2}^{\ell} &\E \big[u(\delta (k-1); X^{\delta}_{k-1})\big] \big(\xi'(\delta k)-\xi'(\delta (k-1))\big) +O(\sqrt{\delta})\\
&=\sum_{k=2}^{\ell} \E \big[u(\delta (k-1); X^{\delta}_{k-1})\big] \xi''(\delta (k-1)) \delta +O(\sqrt{\delta})\\
&=\int_0^{\ell \delta} \E[u(t, X_t)] \xi''(t) \de t +O(\sqrt{\delta}).
\end{align*}
The last equality is obtained by invoking the discretization error bound Eq.~\eqref{eq:error-X} of Proposition~\ref{prop:convergence}, and using the regularity properties of $u$, exactly as done in the proof of Eq.~\eqref{eq:error-M}.

\noindent \textit{Proof of Eq.~\eqref{eq:error_term}}. We fix $k$ and write $\m = \m^{k-1}$, $\m' = \m^{k}$ and $\balpha = \m' - \m$. 
Since the tensors $\bW^{(p)}$ are symmetric the approximation error $A^{k}_p - B^{k}_p$ is
 \begin{align}\label{eq:A-B}
 A^k_p-B^k_p &= \sum_{j =3}^p {p \choose j} \Big\langle \bW^{(p)} ,  \m^{\otimes (p-j)}\otimes \balpha^{\otimes j} \Big\rangle_N - \sum_{j =2}^{p-1} {p-1 \choose j} \Big\langle \bW^{(p)} ,   \frac{p}{2} \m^{\otimes (p-j-1)} \otimes \balpha^{\otimes (j+1)} \Big\rangle_N\nonumber\\
 &=\sum_{j =3}^p {p \choose j} (1-j/2) \Big\langle \bW^{(p)} ,  \m^{\otimes (p-j)}\otimes \balpha^{\otimes j} \Big\rangle_N.
 \end{align}
We crudely bound the above inner product as 
\[\Big|\Big\langle \bW^{(p)},  \m^{\otimes (p-j)}\otimes\balpha^{\otimes j} \Big\rangle_N\Big| \le \frac{1}{N} \big\|\bW^{(p)}\big\|_{\textup{op}} \cdot \|\m\|_2^{p-j} \cdot \|\balpha\|_2^{j}.\]
Here, $\big\|\cdot\big\|_{\textup{op}}$ is the operator (or injective) norm of symmetric tensors in the $\ell_2$ norm: for a symmetric tensor $\bT \in (\reals^N)^{\otimes k}$
\[ \big\|\bT\big\|_{\textup{op}} := \sup_{\|\bu\|_2\le1} \left\langle \bT, \bu^{\otimes k} \right\rangle.\]
The operator norm of symmetric Gaussian tensors is well understood. In particular, it is known~\cite{chen2013aizenman,auffinger2013random} that there exists a $p$-dependent constant $E_p$, known as the ground state energy of the spherical
$p$-spin model, such that $\plim_{N \to \infty} ~ N^{(p-2)/2} \cdot
\big\|\bW^{(p)}\big\|_{\textup{op}} = E_p$. A simple concentration bound \cite[Lemma 2]{richard2014statistical}  yields
\begin{align}
\prob\left(N^{(p-2)/2} \big\|\bW^{(p)}\big\|_{\textup{op}}\ge p!\sqrt{p}\right)\le e^{-Np/8}.\label{eq:UB-Inj-Norm}
\end{align} 
Furthermore, by Proposition~\ref{prop:state_evolution}, 
\begin{align*}
\plim_{N\to \infty} \,  \|\m\|_2^2 /N &= \E[(M^{\delta}_{k-1})^2] = k\delta\\
\mbox{and}~~~ \plim_{N\to \infty} \,  \|\balpha\|_2^2/N  &= \E[(M^{\delta}_{k} - M^{\delta}_{k-1})^2] = \delta.
\end{align*}
Combining the above bounds, and letting $K_p=p!\sqrt{p}$, we get
\[ \Big|\Big\langle \bW^{(p)},  \m^{\otimes (p-j)}\otimes\balpha^{\otimes j} \Big\rangle_N\Big| \le K_p \, (k \delta)^{(p-j)/2} \, \delta^{j/2},\]
for all $p$, with probability tending to one as $N \to \infty$.
Bounding $k\delta$ by 1, and plugging back into expression~\eqref{eq:A-B}, we obtain 
\[|A^k_p - B^k_p| \le K_p\sum_{j= 3}^p {p \choose j} |1-j/2| \delta^{j/2},\]
with probability tending to one as $N \to \infty$. Summing over $p$ and $k$, we obtain 
\begin{align*}
\sum_{k=1}^{\ell} \sum_{p \ge 3}  \frac{c_p}{p!} |A^k_p - B^k_p| &\le \ell \sum_{p \ge 3} \frac{c_p}{p!} K_p \sum_{j= 3}^p {p \choose j} |1-j/2| \delta^{j/2}\\
&\le \sum_{p \ge 3} \frac{c_p}{p!} K_p \sum_{j= 3}^p {p \choose j} j \delta^{(j-2)/2}\\
&\le \sum_{p \ge 3} \frac{c_p}{p!}  K_p  p^3\sqrt{\delta} e^{p\sqrt{\delta}}\\
&\le  \sqrt{\delta}\sum_{p \ge 3} c_p    p^4 e^{p\sqrt{\delta}}
\end{align*}
with probability tending to one as $N \to \infty$.
By assumption $|c_p|\le c_*\alpha^k$  for some $\alpha<1$ (since $\xi(t)<\infty$ for some $t>1$). Therefore, the 
sum is finite for $\eps$ and $\delta$ small enough, and the overall upper bound is $C \sqrt{\delta}$. This concludes the proof. 
\end{proof}

\subsection{Rounding and proof of Theorem \ref{thm:value_message_passing}}

The algorithm described in the previous section returns a sequence of vectors $\bm^{\ell}\in\reals^N$. 
In this section we describe how to round these in order to construct a feasible solution $\bsigma^{\salg}\in\{-1,+1\}^N$,
and bound the rounding error.

Fix $t_*\in [0,1]$, and let $\ell_* = \lfloor t_*/\delta\rfloor$. The rounding procedure consists in two steps:
$(i)$~Threshold the coordinates of $\bm^{\ell_*}$ to construct a vector $\hbm\in[-1,+1]^N$;
$(ii)$~Round the entries of $\hbm$ in a sequential fashion, to obtain a vector $\bsigma^{\salg}\in\{-1,+1\}^N$.

\subsubsection{Thresholding}

We define $\hbm\in [-1,+1]^N$ by thresholding entrywise $\bm^{\ell_*}$:
\begin{align*}
  \hm_i \equiv\begin{cases}
    m^{\ell_*}_i & \mbox{ if $|m^{\ell_*}_i|\le 1$,}\\
    \sign(m^{\ell_*}_i) & \mbox{ otherwise,}
    \end{cases}
\end{align*}
\begin{lemma}\label{lemma:BoundGradient}
  There exists constants $C,\eps_0>0$ such that, with high probability
  \begin{align}
    \sup \big\{\|\nabla H_N(\bx)\|_{N} :\; \|\bx\|_N\le 1+\eps_0\big\} \le C\, .\label{eq:SupGrad}
  \end{align}
\end{lemma}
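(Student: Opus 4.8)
The plan is to control $\sup\{\|\nabla H_N(\bx)\|_N : \|\bx\|_N \le 1+\eps_0\}$ by decomposing the gradient into its homogeneous pieces and bounding each via a tensor operator-norm estimate of the type already recorded in Eq.~\eqref{eq:UB-Inj-Norm}. Writing $H_N(\bx) = \sum_{p\ge 2} \frac{c_p}{p!}\langle \bW^{(p)}, \bx^{\otimes p}\rangle$, the gradient is $\nabla H_N(\bx) = \sum_{p\ge 2} \frac{c_p}{(p-1)!}\bW^{(p)}\{\bx^{\otimes(p-1)}\}$ (up to the symmetrization conventions), so that, in the normalized norm,
\begin{align*}
\|\nabla H_N(\bx)\|_N \le \sum_{p\ge 2} \frac{|c_p|}{(p-1)!}\, \big\| \bW^{(p)}\{\bx^{\otimes(p-1)}\}\big\|_N.
\end{align*}
The first step is the elementary linear-algebra bound: for a symmetric tensor $\bT\in(\reals^N)^{\otimes p}$, the map $\bu \mapsto \bT\{\bu^{\otimes(p-1)}\}$ has Euclidean norm at most $\|\bT\|_{\textup{op}}\|\bu\|_2^{p-1}$, since $\|\bT\{\bu^{\otimes(p-1)}\}\|_2 = \sup_{\|\bw\|_2\le 1}\langle \bT, \bw\otimes \bu^{\otimes(p-1)}\rangle \le \|\bT\|_{\textup{op}}\|\bu\|_2^{p-1}$. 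Converting to the $N$-normalized norms introduces only explicit powers of $N$ which cancel against the $N^{(p-2)/2}$ scaling of $\|\bW^{(p)}\|_{\textup{op}}$.

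Second, I would invoke the concentration bound Eq.~\eqref{eq:UB-Inj-Norm}: with probability at least $1 - e^{-Np/8}$ we have $N^{(p-2)/2}\|\bW^{(p)}\|_{\textup{op}} \le p!\sqrt p$. A union bound over $p\ge 2$ shows that, with probability $\ge 1 - \sum_{p\ge2} e^{-Np/8} \to 1$, this holds simultaneously for all $p$. On this event, for any $\bx$ with $\|\bx\|_N \le 1+\eps_0$, i.e.\ $\|\bx\|_2 \le (1+\eps_0)\sqrt N$, each term is bounded by a constant times $|c_p|\cdot p^{3/2}(1+\eps_0)^{p-1}$ after the $N$-powers cancel. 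Summing over $p$ and using the standing hypothesis $|c_p|\le c_*\alpha^p$ with $\alpha<1$: choosing $\eps_0>0$ small enough that $(1+\eps_0)\alpha =: \alpha' < 1$, the series $\sum_{p\ge2} c_* \alpha^p p^{3/2}(1+\eps_0)^{p-1} \lesssim \sum_p p^{3/2}(\alpha')^p < \infty$ converges to a finite constant $C$, uniformly in $\bx$ in the ball and in $N$. This yields Eq.~\eqref{eq:SupGrad}.

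The only genuinely delicate point is that the supremum over the continuum of $\bx$ in the ball is handled automatically: the operator-norm bound is already a uniform (worst-case over $\bu$) statement, so no net/chaining argument over $\bx$ is needed — the randomness enters only through the finitely-many (after truncation, or countably-many) tensors $\bW^{(p)}$, each controlled by Eq.~\eqref{eq:UB-Inj-Norm}. A minor technical care is the interchange of the infinite sum over $p$ with the supremum and the almost-sure finiteness of $\nabla H_N$, but this is controlled by the same geometric tail $|c_p|\le c_*\alpha^p$ that makes $H_N$ itself almost surely finite; one may, as the paper notes after Eq.~\eqref{eq:hamiltonian}, simply reduce to finitely many nonzero $c_p$ with negligible loss, in which case the union bound and the summation are both over a finite range and the argument is completely routine.
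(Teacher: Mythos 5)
Your proposal is correct and follows essentially the same route as the paper: both bound the gradient term-by-term via the tensor operator norm, invoke the concentration estimate of Eq.~\eqref{eq:UB-Inj-Norm} (with a union bound over $p$), and sum the resulting series using the geometric decay $|c_p|\le c_*\alpha^p$, with $\eps_0$ small enough that the series converges. The only cosmetic difference is that the paper writes the supremum in dual form as $\sup_{\|\by\|_N\le 1}\<\by,\nabla H_N(\bx)\>_N$ before applying the operator-norm bound, which is equivalent to your direct estimate on $\|\bW^{(p)}\{\bx^{\otimes(p-1)}\}\|_N$.
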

\begin{proof}
  Denoting by $B_N(\eps_0)$ the supremum on the left hand side of Eq.~\eqref{eq:SupGrad}, we have
  \begin{align*}
    B_N(\eps_0) &= \sup_{\|\by\|_N\le 1, \|\bx\|_N\le 1+\eps_0}\<\by , \nabla H_N(\bx)\>_N\\
                &\le \sup_{\|\by\|_N\le 1, \|\bx\|_N\le 1+\eps_0}\sum_{p\ge 2}\frac{c_p }{p! N} p\<\bW^{(p)},\bx^{\otimes(p-1)}\otimes \by\>_N\\
                &\le \sum_{p\ge 2}\frac{c_p N^{(p-2)/2}}{p! } p\|\bW^{(p)}\|_{\op}(1+\eps_0)^{p-1}\\
                &\stackrel{(a)}{\le } \sum_{p\ge 2}c_p p^{3/2}(1+\eps_0)^{p-1}\stackrel{(b)}{\le } C\, .
  \end{align*}
  Here, the inequality $(a)$ holds by Eq.~\eqref{eq:UB-Inj-Norm}, and $(b)$
  since $|c_p|\le c_*\alpha^k$  for some $\alpha<1$ (recall that
 $\xi(t)<\infty$ for some $t>1$).
  \end{proof}
\begin{lemma}\label{lemma:Clipping}
  There exists a constant $C$ such that
  \begin{align}
    \plim_{N\to\infty}\left|\frac{1}{N}H_N(\bm^{\ell*})-\frac{1}{N}H_N(\hbm)\right|\le C\sqrt{\delta}\,.
    \end{align}
  \end{lemma}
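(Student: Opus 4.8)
The plan is to control the change of $H_N$ under clipping by combining the uniform gradient bound of Lemma~\ref{lemma:BoundGradient} with an $L^2$ estimate on $\|\bm^{\ell_*}-\hbm\|_N$ coming from state evolution and the hypothesis $M_{t_*}\in[-1,1]$ almost surely.

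First I would reduce to a norm bound via the fundamental theorem of calculus. Since $H_N$ is a polynomial, hence smooth,
\[
\frac{1}{N}\big(H_N(\bm^{\ell_*})-H_N(\hbm)\big)=\int_0^1\big\langle\nabla H_N\big(\hbm+s(\bm^{\ell_*}-\hbm)\big),\,\bm^{\ell_*}-\hbm\big\rangle_N\,\de s\,.
\]
Clipping acts coordinatewise as a contraction toward the origin, so $|(1-s)\hm_i+s\,m^{\ell_*}_i|\le|m^{\ell_*}_i|$ for every $s\in[0,1]$ and every $i$; hence the segment joining $\hbm$ and $\bm^{\ell_*}$ lies in $\{\bx:\|\bx\|_N\le\|\bm^{\ell_*}\|_N\}$. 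Applying Proposition~\ref{prop:state_evolution} with $\psi(m)=m^2$ together with Lemma~\ref{lem:reparametrization} gives $\|\bm^{\ell_*}\|_N^2\toP q^{\delta}_{\ell_*}=(\ell_*+1)\delta\le t_*+\delta\le1+\delta$, so for $\delta$ small enough this whole segment lies in the ball $\{\|\bx\|_N\le1+\eps_0\}$ of Lemma~\ref{lemma:BoundGradient} with high probability. Then Lemma~\ref{lemma:BoundGradient} and the Cauchy--Schwarz inequality for $\langle\cdot,\cdot\rangle_N$ yield
\[
\Big|\frac{1}{N}\big(H_N(\bm^{\ell_*})-H_N(\hbm)\big)\Big|\le C\,\|\bm^{\ell_*}-\hbm\|_N
\]
with high probability.

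Next I would bound $\|\bm^{\ell_*}-\hbm\|_N$ itself. Writing $g(m):=m-\mathrm{clip}_{[-1,1]}(m)=\sign(m)\,(|m|-1)_+$, one has $m^{\ell_*}_i-\hm_i=g(m^{\ell_*}_i)$, hence $\|\bm^{\ell_*}-\hbm\|_N^2=\langle g(\bm^{\ell_*})^2\rangle_N$. The map $m\mapsto g(m)^2$ is pseudo-Lipschitz (since $g$ is $1$-Lipschitz and $|g(m)|\le|m|$), and so is its composition with the Lipschitz non-linearity $f_{\ell_*}$; Proposition~\ref{prop:state_evolution} therefore gives $\|\bm^{\ell_*}-\hbm\|_N^2\toP\E\big[g(M^{\delta}_{\ell_*})^2\big]$. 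Because $M_{t_*}\in[-1,1]$ almost surely, $g(M_{t_*})=0$, so $1$-Lipschitzness of $g$ gives $|g(M^{\delta}_{\ell_*})|\le|M^{\delta}_{\ell_*}-M_{t_*}|$. Combining the discretization estimate of Proposition~\ref{prop:convergence} (Eq.~\eqref{eq:error-M}), the $L^2$ continuity $\E[(M_t-M_s)^2]\le C|t-s|$ of $(M_t)$ (immediate from It\^o's isometry and boundedness of $\xi''$ and $u$), and $|t_*-\delta\ell_*|\le\delta$, I would conclude $\E\big[g(M^{\delta}_{\ell_*})^2\big]\le\E\big[(M^{\delta}_{\ell_*}-M_{t_*})^2\big]\le C\delta$, and hence $\|\bm^{\ell_*}-\hbm\|_N\le C\sqrt{\delta}$ with high probability. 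Intersecting with the high-probability event of the previous step then yields the claim.

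The main obstacle I anticipate is obtaining the $O(\sqrt{\delta})$ rate for $\|\bm^{\ell_*}-\hbm\|_N$ rather than something weaker: a crude estimate of the clipping error would only give $O(\delta^{1/4})$. The improvement relies precisely on the hypothesis $M_{t_*}\in[-1,1]$ a.s., which forces the limiting clipping error $g(M^{\delta}_{\ell_*})$ to vanish at $M_{t_*}$, so that it is controlled by the discretization error $M^{\delta}_{\ell_*}-M_{t_*}$, which is $O(\sqrt{\delta})$ in $L^2$ by Proposition~\ref{prop:convergence}. A secondary point requiring care is that the segment used in the mean-value computation must remain in the region where $\nabla H_N$ is uniformly bounded; this holds because clipping contracts each coordinate toward the origin, so no point of the segment exceeds $\bm^{\ell_*}$ coordinatewise in absolute value.
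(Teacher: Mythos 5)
Your proof is correct and follows essentially the same route as the paper: both bound $\|\bm^{\ell_*}-\hbm\|_N\le C\sqrt{\delta}$ by applying state evolution to the squared clipping error (your $g(m)^2$ is exactly the paper's test function $\psi$), controlling its limit via the coupling of Proposition~\ref{prop:convergence} and $M_{t_*}\in[-1,1]$ a.s., and then convert this into an energy bound by a mean-value/gradient argument using Lemma~\ref{lemma:BoundGradient}. The only differences are cosmetic (integral form of the mean value theorem and bounding the segment inside the ball via $\|\bm^{\ell_*}\|_N$ rather than via $\|\hbm\|_N\le 1$ plus the clipping distance), and you in fact spell out the $L^2$-continuity step relating $M_{\delta\ell_*}$ to $M_{t_*}$ slightly more explicitly than the paper does.
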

  \begin{proof}
 Define the test function $\psi:\reals\to\reals$,
$\psi(x) \equiv \min_{z\in [-1,+1]} (x-z)^{2}$, i.e.
\begin{align*}
  \psi(x)=\begin{cases}
    (|x|-1)^2 & \mbox{ if $|x|>1$,}\\
    0 & \mbox{ if $|x|\le 1$.}
  \end{cases}
\end{align*}
Proposition \ref{prop:state_evolution} implies
\begin{align*}
  \plim_{N\to\infty}\frac{1}{N}\sum_{i=1}^N\psi(\m_i^{\ell_*}) = \E\psi(M^{\delta}_{\ell_*})\, .
\end{align*}
On the other hand, Proposition \ref{prop:convergence} yields
\begin{align*}
  \E\psi(M^{\delta}_{\ell_*})\le \E\psi(M_{t_*}) + C\delta \le C\delta\, ,
\end{align*}
where the second inequality follows because $M_{t_*}\in [-1,+1]$ almost surely.
Note that $\|\bm^{\ell_*}-\hbm\|_N^2 = \sum_{i=1}^N\psi(\m_i^{\ell_*}) /N$, and therefore we conclude
\begin{align}
  \plim_{N\to\infty}\|\bm^{\ell_*}-\hbm\|_N \le C\sqrt{\delta}\,. \label{eq:Clipping}
 \end{align}
 Now, by the intermediate value theorem, there exists $s\in [0,1]$ such that, for
 $\tbm= (1-s)\bm^{\ell_*}+s\hbm$, 
 \begin{align*}
   \left|\frac{1}{N}H_N(\bm^{\ell*})-\frac{1}{N}H_N(\hbm)\right| &= \frac{1}{N}
                                                                   \big|\<\nabla H_N(\tbm), \bm^{\ell_*}-\hbm\>_N\big|\\
                                                                 &\le \sup_{\|\bx\|_N\le 1+C\sqrt{\delta}}\|\nabla H_N(\bx)\|_N \cdot 
                                                                   \|\bm^{\ell_*}-\hbm\|_N\\
   &\le C\sqrt{\delta}\, ,
 \end{align*}
 where we used Eq.~\eqref{eq:Clipping} and Lemma \ref{lemma:BoundGradient}.
\end{proof}

\subsubsection{Rounding}
\label{sec:Rounding}

We next round $\hbm\in [-1,+1]^N$ to $\bsigma^{\salg}\in\{-1,+1\}^N$.
In order to define the rounding, we introduce the modified Hamiltonian
\begin{align*}
  \tH_N(\bsigma) :=\sum_{k=2}^\infty c_k\sum_{i_1<\dots<i_k}W^{(k)}_{i_1,\dots,i_k}\sigma_{i_1}\cdots \sigma_{i_k}\, .
\end{align*}
\begin{lemma}\label{lemma:Multilinear}
  There exist a constant $C>0$ such that, with high probability,
  \begin{align}
    \max_{\bx\in [-1,1]^N}|H_N(\bx)-\tH_N(\bx)|\le C\sqrt{N\log N}\, .
  \end{align}
\end{lemma}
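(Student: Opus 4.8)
The plan is to expand the difference $H_N(\bx) - \tH_N(\bx)$ as a sum over $p \ge 2$ of ``diagonal'' contributions, i.e., those monomials $W^{(p)}_{i_1,\dots,i_p} x_{i_1}\cdots x_{i_p}$ in which at least two indices coincide, and then bound each such term uniformly over $\bx \in [-1,1]^N$. Concretely, $H_N(\bx) - \tH_N(\bx) = \sum_{p \ge 2} c_p \big(\tfrac{1}{p!}\<\bW^{(p)},\bx^{\otimes p}\> - \sum_{i_1<\dots<i_p} W^{(p)}_{i_1,\dots,i_p}x_{i_1}\cdots x_{i_p}\big)$, and since $\bW^{(p)}$ is symmetric, the bracketed quantity is a sum over partitions of $\{1,\dots,p\}$ with at least one block of size $\ge 2$; it can be reorganized as a bounded number of lower-order polynomials in $\bx$ whose coefficients are Gaussian tensors of order $< p$ with variances that are smaller by a factor $N^{-1}$ relative to the naive scaling. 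The key point is that such a diagonal term, after summing out the repeated index, looks like $N^{-(p-1)/2}\<\bG^{(q)},\bx^{\otimes q}\>$ for some $q \le p-1$ with an extra factor $N^{1/2}$ worse normalization than the ``honest'' order-$q$ piece — which is exactly what produces the $\sqrt{N}$ rather than $O(1)$ in the bound.

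First I would set up the decomposition precisely for a single $p$: write the diagonal part as $\sum_{\text{partitions }\pi \text{ of }[p],\,\pi \ne \{\text{singletons}\}} (\text{combinatorial factor}) \cdot \<\bW^{(p),\pi}, \bx^{\otimes |\pi|}\>$, where contracting $\bW^{(p)}$ along the coincidences of $\pi$ yields a symmetric Gaussian tensor of order $|\pi| \le p-1$ whose entries have standard deviation of order $N^{-(p-1)/2}$ (as opposed to $N^{-(|\pi|-1)/2}$ for a genuine order-$|\pi|$ tensor). Then I would bound $|\<\bW^{(p),\pi},\bx^{\otimes|\pi|}\>| \le \|\bW^{(p),\pi}\|_{\op} \cdot \|\bx\|_2^{|\pi|} \le \|\bW^{(p),\pi}\|_{\op} \cdot N^{|\pi|/2}$ on the cube, and invoke a Gaussian tensor operator-norm tail bound (of the type already used in Eq.~\eqref{eq:UB-Inj-Norm}, applied to the appropriately rescaled tensor) to get $\|\bW^{(p),\pi}\|_{\op} \le C_p N^{-(p-1)/2} \cdot N^{(|\pi|-1)/2}\sqrt{N}$ with probability $1 - e^{-cN}$ — equivalently, the rescaled order-$|\pi|$ tensor $N^{(p-1)/2-(|\pi|-1)/2}\bW^{(p),\pi}$ is a standard symmetric Gaussian tensor up to constants, whose operator norm times $N^{-(|\pi|-2)/2}$ concentrates around a constant. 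Collecting the powers of $N$: the contribution of partition $\pi$ is at most $C_p N^{-(p-1)/2} \cdot N^{(|\pi|-1)/2} \cdot N^{1/2} \cdot N^{|\pi|/2}$ divided appropriately...

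Actually the cleanest bookkeeping: contracting $d \ge 1$ pairs of indices in $\bW^{(p)}$ (so $|\pi| = p - d$ in the simplest case of $d$ transpositions) produces entries of size $N^{-(p-1)/2}$ summed over a tensor of order $p-d$, i.e., $\|\cdot\|_{\op} \lesssim N^{-(p-1)/2} N^{(p-d-2)/2}\sqrt{N} = N^{-(d+1)/2}\sqrt{N}$; multiplying by $\|\bx\|_2^{p-d} \le N^{(p-d)/2}$ and by the $c_p/p!$ weight, the worst case is $d = 1$, giving $O(\sqrt{N} \cdot N^{(p-1)/2}/N^{(p-1)/2}) = O(\sqrt{N})$ per monomial type — so I would track the combinatorics carefully to confirm $d=1$ dominates and yields exactly $\sqrt{N\log N}$ after a union bound over the (polynomially many in $N$, for each fixed $p$) choices and the sum over $p$ (which converges by $|c_p| \le c_* \alpha^p$). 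The logarithmic factor enters either from a net argument over the cube $[-1,1]^N$ combined with Gaussian concentration of $\bx \mapsto H_N(\bx)-\tH_N(\bx)$ for fixed $\bx$ (variance $O(N)$, so a union bound over an $\exp(O(N))$-size net costs $\sqrt{N \cdot N}$... too lossy), so more likely the $\sqrt{N\log N}$ comes from a direct second-moment / hypercontractivity estimate on $\sup_{\bx \in [-1,1]^N}$ of each diagonal polynomial, or from noting that the relevant supremum over the cube of a degree-$q$ Gaussian chaos with the stated normalization is $O(\sqrt{N\log N})$ by a chaining argument.

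The main obstacle I anticipate is the bookkeeping of the combinatorial reorganization: showing that every non-fully-disjoint index pattern in $\<\bW^{(p)},\bx^{\otimes p}\>$ can be grouped, using the symmetry of $\bW^{(p)}$ and the fact that $x_i^2 \le 1$ on the cube, into $O_p(1)$ many terms each of which is a genuine lower-order Gaussian polynomial with one extra power of $N^{-1/2}$ in its coefficient normalization, and that the resulting sup-over-cube bound is dominated by the ``one coincidence'' case. A secondary but real issue is choosing the right high-probability estimate for $\sup_{\|\bx\|_2 \le \sqrt N} \<\bT, \bx^{\otimes q}\>$ when $\bT$ is a rescaled Gaussian tensor: using the crude operator-norm bound gives $O(\sqrt N)$ but one must check no logarithmic loss is incurred, or else absorb the $\sqrt{\log N}$ through a union bound over $p$ and over the $d$-coincidence structure — in any case the stated $\sqrt{N\log N}$ is comfortably larger than the $O(\sqrt N)$ one expects to need, so the estimate has slack and the proof should go through with the straightforward operator-norm approach plus Eq.~\eqref{eq:UB-Inj-Norm}.
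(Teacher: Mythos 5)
Your strategy is genuinely different from the paper's, and it can be made to work, but as written it has two concrete soft spots. First, after you group a repeated index the resulting expression is not a fixed lower-order Gaussian form evaluated at $\bx^{\otimes q}$: the repeated coordinate still enters through $x_i^{m}$, so you cannot simply "absorb" $x_i^2\le 1$ inside a signed sum. You must treat $(x_i^{m})_{i\le N}$ as a separate vector and bound the contracted tensor in the multilinear (injective) norm; moreover the contracted tensors $\bigl(W^{(p)}_{i,i,i_3,\dots,i_p}\bigr)$ are neither symmetric nor normalized like a standard order-$(p-1)$ tensor (their entries carry the order-$p$ scaling $\sim N^{-(p-1)/2}$), so the bound of Eq.~\eqref{eq:UB-Inj-Norm}, which is stated for standard symmetric Gaussian tensors, does not apply off the shelf; you would need a separate (routine, but not free) operator-norm concentration estimate for these contracted tensors, plus the partition bookkeeping you acknowledge. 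If you carry this out, you actually get the stronger bound $O(\sqrt N)$ uniformly on the cube, so the slack you point to is real. Second, your stated uncertainty about where $\sqrt{\log N}$ comes from, and in particular your dismissal of the net argument, rests on a miscalculation: the pointwise variance on the cube of the \emph{difference} $G_N(\bx)=H_N(\bx)-\tH_N(\bx)$ is $O(1)$, not $O(N)$, because only the non-distinct index tuples contribute and their total weight is $\sum_k c_k^2\,|D^c(N,k)|/N^{k-1}\le \sum_k c_k^2 k^2$.

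That $O(1)$ variance is exactly the paper's key observation, and it makes the proof much softer than your route: $G_N$ is a Gaussian process independent of $\tH_N$ with uniformly bounded variance, one controls $\|\nabla G_N\|$ by $C\sqrt N$ with high probability (as in Lemma~\ref{lemma:BoundGradient}), and then a crude $1$-net of $[-1,1]^N$ of cardinality about $N^{N/2}$ plus a Gaussian tail bound at level $t=C_0\sqrt{N\log N}$ finishes the argument; the $\sqrt{\log N}$ is purely the price of $\log|\cN_N(1)|\asymp N\log N$ against an $O(1)$ variance. So the comparison is: your decomposition-plus-operator-norm approach yields a sharper $O(\sqrt N)$ bound at the cost of nontrivial tensor-contraction estimates and combinatorics, while the paper's independence/variance/net argument is shorter, needs no new tensor-norm inputs beyond the gradient bound already proved, and comfortably delivers the stated $C\sqrt{N\log N}$.
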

\begin{proof}
  Note that $\tH_N(\bx)$ is obtained from $H_N(\bx)$ by restricting the sum in Eq.~\eqref{eq:hamiltonian}
  to terms with distinct indices.
  As a consequence,  $G_N(\bx)=H_N(\bx)-\tH_N(\bx)$ is a Gaussian process independent of $\tH_N(\bx)$.
  We therefore have
  \begin{align*}
    \E\{G_N(\bx)^2\} &= \E\{H_N(\bx)^2\}-\E\{\tH_N(\bx)^2\}\\
                     & = N\xi(\|\bx\|_N^2) -\sum_{k=2}^{\infty}c_k^2\sum_{i_1<\dots<i_k}\E\{(W^{(k)}_{i_1,\dots,i_k})^2\}\, x^2_{i_1}\cdots x^2_{i_k}\\
& = N \sum_{k=2}^{\infty}c_k^2 \frac{1}{N^k}\sum_{i_1,\dots,i_k\in D^c(N,k)}x^2_{i_1}\cdots x^2_{i_k}\, ,
  \end{align*}
  where $D^c(N,k)$ is the subset of $[N]^k$ consisting of $k$-uples that are not distinct.
  A union bound yields $|D^c(N,k)|\le N^{k-1}k(k-1)/2$, whence
  \begin{align*}
    \E\{G_N(\bx)^2\} & \le N \sum_{k=2}^{\infty}c_k^2 \frac{|D^c(N,k)|}{N^k}\le \sum_{k=2}^{\infty}c_k^2 k^2\le C\, .
  \end{align*}
  Note that, with high probability, 
  $\|\nabla G_N(\bx)\|=\|\nabla H_N(\bx)\|+\|\nabla\tH_N(\bx)\|\le C_*\sqrt{N}$ for all $\bx\in [-1,+1]^N$
  (the bound for $\nabla H_N(\bx)$ is proven in Lemma \ref{lemma:BoundGradient}, and the one for
  $\nabla \tH_N(\bx)$ follows analogously).
  Let $\cN_N(\eps)$ be an $\eps$-net (with respect the ordinary Euclidean distance) of $[-1,1]^N$.
  Then, for $\eps< t/(2C_*\sqrt{N})$
  \begin{align*}
    \prob\Big\{  \max_{\bx\in [-1,1]^N}|H_N(\bx)-\tH_N(\bx)|\ge t\Big\}&\le
    \prob\Big\{  \max_{\bx\in \cN_N(\eps)}|G_N(\bx)|\ge \frac{t}{2}\Big\} + \prob\big\{\sup_{\bx\in[-1,+1]^N}
                                                                         \|\nabla G_N(\bx)\| > C_*\sqrt{N}\big\}\\
                                                                       &\le 2|\cN_N(\eps)|\, e^{-t^2/2C} +o(1)\\
    &\le 2\left(\frac{\sqrt{N}}{\eps}\right)^N e^{-t^2/2C} +o(1)\, .
  \end{align*}
  The proof is completed by taking $\eps=1$ and $t=C_0\sqrt{N\log N}$ with $C_0$ a large enough constant.
\end{proof}

We are now in position to complete our description of the rounding procedure. Notice that
$\tH_N(\bx)$ is linear in each coordinate of $\bx$. Therefore, viewed as a function of $x_i$,
it is maximized over $[-1,+1]$ at $x_i\in\{-1,+1\}$. We starts from $\hbm$ and sequentially maximize
$\tH_N$ over each coordinate.

Explicitly, we can write $\tH_N(\bx) = \tH_N^{(-i)}(\bx_{-i})+x_i\, \Delta_i \tH_{N}(\bx_{-i})$,
where $\bx_{-i}\equiv (x_j)_{j\in [N]\setminus i}$. We then define $\bx^{(j)}$, $j\in\{0,\dots,N\}$ by letting
$\bx^{(0)} = \hbm$ and, for $j\ge 1$
\begin{align*}
  x^{(j)}_i = \begin{cases}
    x^{(j-1)}_i &\mbox{ if $i\neq j$,}\\
    \sign(\Delta_i \tH_{N}(\bx^{(j)}_{-i})) &\mbox{ if $i= j$.}
    \end{cases}
\end{align*}
We then return the last vector $\bsigma^{\salg} \equiv\bx^{(N)}$.

The proof of  Theorem \ref{thm:value_message_passing} is completed by noting that
the following inequalities hold with high probability,
\begin{align*}
  \frac{1}{N}H_N(\bsigma^{\salg}) &\stackrel{(a)}{\ge} \frac{1}{N}\tH_N(\bsigma^{\salg}) -C\sqrt{\frac{\log N}{N}}\\
                                  & \stackrel{(b)}{\ge} \frac{1}{N}\tH_N(\hbm) -C\sqrt{\frac{\log N}{N}}\\
                                  & \stackrel{(c)}{\ge} \frac{1}{N}H_N(\hbm) -2C\sqrt{\frac{\log N}{N}}\\
                                  & \stackrel{(d)}{\ge} \frac{1}{N}H_N(\bm^{\ell_*}) -C\sqrt{\delta}-2C\sqrt{\frac{\log N}{N}}\, .
\end{align*}
Here $(a)$ and $(c)$ follow from Lemma \ref{lemma:Multilinear}, $(b)$ from the fact that the
$\tH_N$ is non-decreasing along the rounding procedure, and
$(d)$ from Lemma  \ref{lemma:Clipping}. Finally, the value $H_N(\bm^{\ell_*}) /N$ is lower bounded using 
Proposition \ref{prop:value}.

\section{Analysis of the variational principle and proof of Theorem \ref{thm:VarPrinciple}}
\label{sec:PropertiesVariational}

\subsection{Properties of the variational principle}

In this section we consider the function space $\cuL$ from \eqref{eq:LDef}, which we endow with the weighted  $L^1$ distance
$\|\gamma_1-\gamma_2\|_{1,\xi''} = \|\xi''(\gamma_1-\gamma_2)\|_1 =\int_0^1\xi''(t)|\gamma_1(t)-\gamma_2(t)|\de t$. 
We will write $\gamma_n\toLx \gamma$, whenever $\|\gamma_n-\gamma\|\to 0$ as $n\to\infty$.
We recall  the space of piecewise constant functions
\begin{align}
\SF_+ = \Big\{g=\sum_{i=1}^ma_i\ind_{[t_{i-1},t_i)}:\;\; 0=t_0<t_1<\cdots<t_m=1, a_i\in\reals_{\ge 0}, m\in\naturals\Big\}\, .
\end{align}

We study the PDE \eqref{eq:PDEFirst}, with a slightly more general initial 
condition
\begin{align}
\begin{split}
\partial_t \Phi(t,x)+\frac{1}{2}\xi''(t) \Big(\partial_x^2\Phi(t,x)+\gamma(t) (\partial_x\Phi(t,x))^2\Big) &= 0\, ,\label{eq:GeneralPDE}\\
\Phi(1,x) &= f_0(x) \, .
\end{split}
\end{align}
Throughout we assume $f_0$ to be convex, continuous, non-negative, with $f_0(-x) = f_0(x)\ge 0$, and differentiable for $x\neq 0$, with $0\le f_0'(x)\le 1$ for all $x>0$.
We will write $f'_0(x)$ for the weak derivative of $f_0$ (the right and left derivatives exist but are potentially different at $x=0$).
Associated to the above PDE, we consider the following stochastic differential equation driven by Brownian motion $(B_t)_{t\ge 0}$:
\begin{align}
\de X_t = \xi''(t) \gamma(t)\partial_x\Phi(t,X_t)\, \de t+ \sqrt{\xi''(t)}\, \de B_t\, ,\;\;\;\;\; X_0=0\ . \label{eq:SDE}
\end{align}
In the following we will also write $\Phi_x$, $\Phi_{xx}$ and so on for the partial derivatives of $\Phi$, and $\Phi^{\gamma}$ whenever we want to emphasize the 
dependence of $\Phi$ on $\gamma$. We write $\partial^{\pm}_t\Phi$ for the left and right derivatives of $\Phi$.

We first collect a few properties of $\Phi(t,x)$ when $\gamma\in \SF_+$.
\begin{proposition}\label{propo:SF}
\begin{itemize}
\item[$(a)$] For any $\gamma\in\SF_+$ the solution $\Phi:[0,1]\times\reals\to \reals$ of Eq.~\eqref{eq:GeneralPDE}
exists uniquely in the classical sense and is smooth for $t\in [0,1)$. Namely, 
for any $j>0$,  $\|\partial_x^j\Phi\|_{L^\infty([0,1-\eps)\times \reals)}\le C(\gamma,\eps)$, and
$\|\partial^{\pm}_t\partial_x^j\Phi\|_{L^\infty([0,1-\eps)\times \reals)}\le C(\gamma,\eps)$, with $\partial^{+}_t\partial_x^j\Phi(t,x) =\partial^{-}_t\partial_x^j\Phi(t,x)$
whenever $t$ is a continuity point of $\gamma$.
\item[$(b)$] For any $\gamma\in\SF_+$  the solution  $\Phi$ of Eq.~\eqref{eq:GeneralPDE} is 
such that $x\mapsto \partial_x\Phi(t,\,\cdot\,)$ is non-decreasing for all $t\in [0,1]$, with
$|\partial_x\Phi(t,x)|\le 1$ for all $x\in\reals$. 
\item[$(c)$] If $\gamma_1,\gamma_2\in \SF_+$ and $\Phi^{\gamma_1}$, $\Phi^{\gamma_2}$ are the corresponding solutions, then
\begin{align*}
\|\Phi^{\gamma_1}-\Phi^{\gamma_2}\|_{\infty}\le \|\xi''(\gamma_1-\gamma_2)\|_1\, .
\end{align*}
\end{itemize}
\end{proposition}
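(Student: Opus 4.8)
The plan is to reduce the Parisi PDE on each maximal sub-interval where $\gamma$ is constant to a linear backward heat equation by the Cole--Hopf change of variables, obtain explicit Gaussian-convolution representations there, and propagate all three properties (regularity, convexity/Lipschitz bound, continuous dependence on $\gamma$) backwards from the terminal time $t=1$.

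Write $\gamma=\sum_{i=1}^m a_i\ind_{[t_{i-1},t_i)}$ with $0=t_0<\dots<t_m=1$ and $a_i\ge 0$. Fix $[t_{i-1},t_i)$ and suppose $\Phi(t_i,\cdot)=h$ is already defined. If $a_i>0$, set $u=\exp(a_i\Phi)$; a direct computation turns \eqref{eq:GeneralPDE} on this interval into the backward heat equation $\partial_t u+\frac12\xi''(t)\partial_x^2 u=0$ with $u(t_i,\cdot)=e^{a_i h}$, whose solution (after the time change $\tau(t)=\xi'(t_i)-\xi'(t)$, $\de\tau=-\xi''\de t$) is the Gaussian convolution $u(t,x)=\E[e^{a_i h(x+\sqrt{\xi'(t_i)-\xi'(t)}\,Z)}]$, $Z\sim\normal(0,1)$, so $\Phi(t,x)=\frac1{a_i}\log\E[e^{a_i h(x+\sqrt{\xi'(t_i)-\xi'(t)}\,Z)}]$; if $a_i=0$ the equation is already the backward heat equation and $\Phi(t,x)=\E[h(x+\sqrt{\xi'(t_i)-\xi'(t)}\,Z)]$. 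Starting from $h=f_0$ on the last interval and iterating down the breakpoints defines $\Phi$ on $[0,1]$; since $f_0$ is continuous, non-negative, and grows at most linearly, all the expectations are finite and the gluing is continuous in $t$, giving existence. Uniqueness in the classical sense follows from heat-equation uniqueness in the Cole--Hopf variable (the relevant solutions have at most exponential growth), or from a standard comparison argument for \eqref{eq:GeneralPDE}.

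\textbf{Part (a).} Because $f_0\ge 0$, the integrands above are $\ge 1$, so $\Phi\ge 0$ at every breakpoint and each Cole--Hopf map is well defined. The representation formulas are Gaussian convolutions composed with the smooth maps $r\mapsto e^{a_i r}$ and $r\mapsto \frac1{a_i}\log r$ on the region $\{r\ge 1\}$; on $[0,1-\eps)$ the cumulative convolution variance relative to $t=1$ is at least $\xi'(1)-\xi'(1-\eps)>0$ (using $\xi''>0$ on $(0,1]$), so by the pointwise bounds on derivatives of the heat kernel together with the linear growth of $f_0$ one gets $\|\partial_x^j\Phi\|_{L^\infty([0,1-\eps)\times\reals)}\le C(\gamma,\eps)$ for every $j$. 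The one-sided time derivatives are then read off the PDE: $\partial_t^{\pm}\partial_x^j\Phi=-\frac12\xi''\,\partial_x^j\big(\partial_x^2\Phi+\gamma^{\pm}(\partial_x\Phi)^2\big)$, where $\gamma^{\pm}$ are the one-sided limits of $\gamma$; these are bounded on $[0,1-\eps)\times\reals$ and coincide whenever $t\notin\{t_i\}$, which is the asserted statement.

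\textbf{Part (b).} We propagate convexity of $\Phi(t,\cdot)$ and the bound $|\partial_x\Phi|\le 1$ backwards from $t=1$, where $\Phi(1,\cdot)=f_0$ is convex with $|f_0'|\le 1$; it suffices that each elementary map preserves these. For the $\sup$-bound: differentiating the representation, $\partial_x\Phi(t,x)$ equals $\E[h'(x+\sqrt v Z)]$ when $a_i=0$, and the weighted average $\E[h'(x+\sqrt v Z)e^{a_i h(x+\sqrt v Z)}]\big/\E[e^{a_i h(x+\sqrt v Z)}]$ when $a_i>0$; in both cases $|\partial_x\Phi(t,\cdot)|\le\|h'\|_\infty$. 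For convexity: when $a_i=0$ Gaussian convolution visibly preserves convexity; when $a_i>0$ we must show $\Phi(t,\cdot)=\frac1{a_i}\log\big((e^{a_i h})\ast k_v\big)$ is convex (with $k_v$ the variance-$v$ Gaussian density), i.e. that $e^{a_i h}\ast k_v$ is log-convex. This holds because the convolution of a log-convex function $f$ with any probability measure $\mu$ is log-convex: for $x_\lambda=\lambda x_0+(1-\lambda)x_1$, pointwise log-convexity of $f$ with the common shift $y$ gives $f(x_\lambda-y)\le f(x_0-y)^\lambda f(x_1-y)^{1-\lambda}$, and then H\"older's inequality on $\mu$ yields $(f\ast\mu)(x_\lambda)\le (f\ast\mu)(x_0)^\lambda(f\ast\mu)(x_1)^{1-\lambda}$. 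Since $e^{a_i h}$ is log-convex iff $h$ is convex, convexity propagates.

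\textbf{Part (c) and the main obstacle.} Let $D=\Phi^{\gamma_1}-\Phi^{\gamma_2}$. On each of the finitely many common sub-intervals of constancy both solutions are smooth, so subtracting the two PDEs and using
\[
\gamma_1(\partial_x\Phi^{\gamma_1})^2-\gamma_2(\partial_x\Phi^{\gamma_2})^2=\gamma_1(\partial_x\Phi^{\gamma_1}+\partial_x\Phi^{\gamma_2})\,\partial_x D+(\gamma_1-\gamma_2)(\partial_x\Phi^{\gamma_2})^2
\]
gives the linear backward parabolic equation $\partial_t D+\frac12\xi''\partial_x^2 D+\frac12\xi''\gamma_1(\partial_x\Phi^{\gamma_1}+\partial_x\Phi^{\gamma_2})\partial_x D=-\frac12\xi''(\gamma_1-\gamma_2)(\partial_x\Phi^{\gamma_2})^2$ with $D(1,\cdot)=0$. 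By Feynman--Kac (equivalently the maximum principle), $\|D(t,\cdot)\|_\infty\le\int_t^1\|R(s,\cdot)\|_\infty\de s$ for the right-hand side $R$, and $\|R(s,\cdot)\|_\infty\le\frac12\xi''(s)|\gamma_1(s)-\gamma_2(s)|$ by $|\partial_x\Phi^{\gamma_2}|\le 1$ from part (b); running this on $[0,1-\eps]$ and letting $\eps\downarrow0$ (using $\|\Phi^{\gamma_j}(1-\eps,\cdot)-f_0\|_\infty\to0$) circumvents the blow-up of $\partial_x^2\Phi$ at $t=1$. Hence $\|\Phi^{\gamma_1}-\Phi^{\gamma_2}\|_\infty\le\frac12\|\xi''(\gamma_1-\gamma_2)\|_1\le\|\xi''(\gamma_1-\gamma_2)\|_1$. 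I expect the main point to get right is part (b), namely that the Cole--Hopf maps preserve convexity; once that is reduced to ``convolution with a probability measure preserves log-convexity'', everything else is routine heat-kernel and comparison estimates.
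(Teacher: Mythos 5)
Your proposal is correct, and it follows the same backbone as the paper --- the interval-by-interval Cole--Hopf representation for $\gamma\in\SF_+$ --- but it fills in the three parts with genuinely different (and more self-contained) ingredients than the paper, which at this point mostly cites the literature. For part $(a)$ the paper simply invokes the explicit Cole--Hopf solution of Guerra/Auffinger--Chen; you reconstruct it and sketch the derivative bounds directly. For part $(b)$ the paper uses the probabilistic representation $\partial_x\Phi(t,x)=\E[f_0'(X_1)\mid X_t=x]$ along the SDE \eqref{eq:SDE} to get $|\partial_x\Phi|\le 1$, whereas you obtain the same bound from the Gibbs-average formula for $\partial_x\Phi$ and prove convexity by the elementary observation that convolution with a probability measure preserves log-convexity (H\"older), which is a clean, purely analytic substitute for the paper's appeal to Cole--Hopf. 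For part $(c)$ the paper cites Lemma~14 of Jagannath--Tobasco (an SDE-difference argument); your linearization of the difference $D=\Phi^{\gamma_1}-\Phi^{\gamma_2}$ plus a Feynman--Kac/maximum-principle bound on $[0,1-\eps]$ and the limit $\eps\downarrow 0$ is an equivalent comparison argument, and it even yields the slightly stronger constant $\tfrac12\|\xi''(\gamma_1-\gamma_2)\|_1$. The algebraic identity you use for the quadratic terms and the uniform convergence $\|\Phi^{\gamma_j}(1-\eps,\cdot)-f_0\|_\infty\to0$ both check out (the latter from Jensen and the $1$-Lipschitz terminal datum), and boundedness of $D$ makes the maximum principle on the unbounded spatial domain legitimate.

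One place where your write-up of $(a)$ is loose rather than wrong: ``pointwise bounds on derivatives of the heat kernel together with the linear growth of $f_0$'' bound the derivatives of $u=e^{a_i\Phi}$, which grow like $e^{a_i|x|}$ and are therefore not uniformly bounded in $x$; to get $\|\partial_x^j\Phi\|_{L^\infty([0,1-\eps)\times\reals)}\le C(\gamma,\eps)$ you must exploit the quotient (logarithmic-derivative) structure, i.e.\ express $\partial_x^j\Phi$ as Gibbs averages, variances and higher cumulants of $h'$ under the tilted Gaussian measure --- exactly as you already do for the first derivative in part $(b)$, and as the paper does later in Lemma~\ref{lem:regularity_Phi}. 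With that phrasing corrected, the argument is complete.
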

\begin{proof}
Point $(a)$ follows from the Cole-Hopf representation which allows to write an explicit form of the solution for $\gamma\in \SF_+$ \cite{guerra2001sum,auffinger2017parisi}.
This solution is $C^{\infty}$ except (possibly) when
$t\in \{t_1,\dots,t_{m-1}\}$, the set of discontinuity points of $\gamma$. 
As a consequence of point $(a)$, the SDE  \eqref{eq:SDE} is well defined, with unique strong solution on $[0,1]$.
Further, $\Phi$ satisfies the following representation,
for $\gamma\in\SF_+$ \cite{jagannath2016dynamic}:
\begin{align*}
\partial_x\Phi(t,x) = \E\big[f'_0(X_1) |X_t =x\big]\, .
\end{align*}
Since $\|f'_0\|_{\infty}\le 1$, this implies $|\partial_x\Phi(t,x)|\le 1$. The non-decreasing property also  follows 
again by the Cole-Hopf representation.

Finally, point $(c)$ is identical to Lemma 14 in \cite{jagannath2016dynamic} (the assumption that $\gamma$ is non-decreasing is never used there). 
\end{proof}

As a consequence of Proposition \ref{propo:SF}, we can define $\Phi^{\gamma}$ by continuity for any $\gamma\in \cuL$. Namely,
we construct a sequence $\gamma_n\in \SF_+$, $\gamma_n\toLx \gamma$ and 
\begin{align*}
\Phi^{\gamma}(t,x)  = \lim_{n\to\infty}\Phi^{\gamma_n}(t,x)\, .
\end{align*}
\begin{lemma}\label{lemma:FirstDerivativePhi}
For any $\gamma\in\cuL$,  $\Phi^{\gamma}$ constructed above is such that $\partial_x\Phi^{\gamma}$ exists in weak sense, is non-decreasing,
and $|\partial_x\Phi^{\gamma}(t,x)|\le 1$ for all $t\in[0,1]$, $x\in\reals$. Further, if $\gamma_n\in \SF_+$, $\gamma_n\toLx \gamma$,
for any $t\in [0,1]$, we have $\partial_x\Phi^{\gamma_n}(t,x) \to \partial_x\Phi^{\gamma}(t,x)$ for almost every $x$.

Finally, $\Phi = \Phi^{\gamma}$ is a weak solution of the PDE \eqref{eq:GeneralPDE}.
Namely, for any $h\in C^{\infty}_c((0,1]\times\reals)$, we have
\begin{align}
0 = \int_{(0,1]}\int_{\reals} \left\{-\Phi\partial_t h +\frac{1}{2}\xi''(t) \Big(\Phi\partial_{x}^2h+\gamma(t)(\partial_x\Phi)^2 h \Big)\right\} \de x\,\de t +
\int_{\reals}\Phi(1,x)\, f_0(x)\,\de x\, .\label{eq:Weak}
\end{align}
\end{lemma}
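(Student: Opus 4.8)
The plan is to transfer all the properties in Proposition \ref{propo:SF} from the approximating sequence $\gamma_n \in \SF_+$ to the limit $\gamma \in \cuL$ by stability under the $\toLx$ convergence, using part $(c)$ of Proposition \ref{propo:SF} as the key quantitative tool. First I would observe that part $(c)$ gives $\|\Phi^{\gamma_n}-\Phi^{\gamma_m}\|_\infty \le \|\xi''(\gamma_n-\gamma_m)\|_1 \to 0$, so $(\Phi^{\gamma_n})$ is Cauchy in $L^\infty([0,1]\times\reals)$; hence $\Phi^\gamma := \lim_n \Phi^{\gamma_n}$ exists, is independent of the choice of approximating sequence (interlace two sequences), is continuous, and inherits $\Phi^\gamma(1,x)=f_0(x)$, convexity in $x$, evenness, and nonnegativity from the $\Phi^{\gamma_n}$. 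The uniform bound $|\partial_x\Phi^{\gamma_n}(t,x)|\le 1$ from part $(b)$, together with the monotonicity of $x\mapsto\partial_x\Phi^{\gamma_n}(t,x)$, means each $\Phi^{\gamma_n}(t,\cdot)$ is convex and $1$-Lipschitz in $x$; a uniform limit of convex $1$-Lipschitz functions is convex and $1$-Lipschitz, so $\Phi^\gamma(t,\cdot)$ is convex and $1$-Lipschitz, which gives existence of the weak (indeed monotone, defined everywhere except at most countably many $x$) derivative $\partial_x\Phi^\gamma(t,\cdot)$ with $|\partial_x\Phi^\gamma(t,x)|\le 1$ and non-decreasing in $x$. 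For the pointwise-a.e.\ convergence $\partial_x\Phi^{\gamma_n}(t,x)\to\partial_x\Phi^\gamma(t,x)$: fix $t$; the functions $\partial_x\Phi^{\gamma_n}(t,\cdot)$ are monotone and uniformly bounded, so by Helly's selection theorem every subsequence has a further subsequence converging pointwise a.e.\ to some monotone limit $g$; but since $\Phi^{\gamma_n}(t,\cdot)\to\Phi^\gamma(t,\cdot)$ uniformly, any such $g$ must be a version of $\partial_x\Phi^\gamma(t,\cdot)$ (e.g.\ compare $\int_a^b$), so the full sequence converges a.e.

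For the final assertion that $\Phi=\Phi^\gamma$ is a weak solution of \eqref{eq:GeneralPDE}, I would pass to the limit in the weak formulation satisfied by each $\Phi^{\gamma_n}$. Each $\Phi^{\gamma_n}$ is a classical (hence weak) solution, so for $h\in C_c^\infty((0,1]\times\reals)$,
\begin{align*}
0 = \int_{(0,1]}\!\!\int_{\reals}\!\left\{-\Phi^{\gamma_n}\partial_t h +\tfrac12\xi''(t)\big(\Phi^{\gamma_n}\partial_x^2 h+\gamma_n(t)(\partial_x\Phi^{\gamma_n})^2 h\big)\right\}\de x\,\de t + \int_\reals \Phi^{\gamma_n}(1,x)f_0(x)\,\de x.
\end{align*}
The linear terms in $\Phi^{\gamma_n}$ converge by dominated convergence ($\Phi^{\gamma_n}\to\Phi^\gamma$ uniformly on the compact support of $h$, and $\Phi^{\gamma_n}(1,\cdot)=f_0$ is fixed). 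The only delicate term is $\int\!\int \tfrac12\xi''(t)\gamma_n(t)(\partial_x\Phi^{\gamma_n})^2 h$: write the difference from the limiting term as $\tfrac12\xi''(t)(\gamma_n-\gamma)(t)(\partial_x\Phi^{\gamma_n})^2 h + \tfrac12\xi''(t)\gamma(t)\big((\partial_x\Phi^{\gamma_n})^2-(\partial_x\Phi^\gamma)^2\big)h$; the first piece is bounded in absolute value by $\|h\|_\infty\|\xi''(\gamma_n-\gamma)\|_{L^1([0,1-\eps])}\to 0$ using $|\partial_x\Phi^{\gamma_n}|\le1$ and that $h$ is supported away from $t=1$, and the second piece goes to zero by the a.e.\ convergence $\partial_x\Phi^{\gamma_n}\to\partial_x\Phi^\gamma$ (for each $t$, for a.e.\ $x$), boundedness $|\partial_x\Phi^{\gamma_n}|\le 1$, integrability $\int_0^1\xi''\gamma<\infty$, and dominated convergence (first in $x$ for fixed $t$, then in $t$). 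This yields \eqref{eq:Weak}.

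The main obstacle I anticipate is the convergence of the quadratic term $\gamma_n(\partial_x\Phi^{\gamma_n})^2$: a priori one only controls $\gamma_n$ in the weak $L^1(\xi'')$ sense and $\partial_x\Phi^{\gamma_n}$ only a.e., so one cannot naively multiply a weakly convergent sequence by a pointwise convergent one. The resolution above exploits two facts specific to this setting — that $\gamma_n\to\gamma$ actually in \emph{norm} $\|\xi''(\gamma_n-\gamma)\|_1\to 0$ (not merely weakly), which lets us split off the "bad" factor cleanly, and that $\partial_x\Phi^{\gamma_n}$ is uniformly bounded by $1$, which provides the dominating function. A secondary technical point is justifying the a.e.-in-$x$ convergence uniformly enough in $t$ to apply Fubini/dominated convergence on $(0,1-\eps)\times\reals$; this is handled by noting that for each fixed $t$ the convergence holds for a.e.\ $x$ (Helly plus uniqueness of the limit), and the integrand is dominated by $C\,\xi''(t)\gamma(t)\,|h(t,x)|\in L^1$.
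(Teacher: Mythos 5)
Your proposal is correct and follows essentially the same route as the paper: define $\Phi^{\gamma}$ as the uniform limit via Proposition \ref{propo:SF}$(c)$, deduce that $\Phi^\gamma(t,\cdot)$ is convex and $1$-Lipschitz (hence the stated properties of $\partial_x\Phi^\gamma$, with your Helly argument being a fine substitute for the paper's terse justification of the a.e.\ convergence of derivatives), and pass to the limit in the weak formulation with exactly the paper's splitting of the quadratic term into $\xi''(\gamma_n-\gamma)(\partial_x\Phi^{\gamma_n})^2 h$ and $\xi''\gamma\big((\partial_x\Phi^{\gamma_n})^2-(\partial_x\Phi^\gamma)^2\big)h$. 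One immaterial slip: $h\in C^\infty_c((0,1]\times\reals)$ is supported away from $t=0$, not away from $t=1$, but this does not matter since $\gamma_n\toLx\gamma$ already gives $\|\xi''(\gamma_n-\gamma)\|_{L^1([0,1])}\to 0$, so no restriction to $[0,1-\eps]$ is needed.
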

\begin{proof}
Since $\Phi^{\gamma}(t,\,\cdot\,)$ is the uniform limit of convex $1$-Lipschitz functions, it is also convex $1$-Lipschitz. Hence its weak derivative
exists, is non-decreasing and is bounded as claimed. The claim $\partial_x\Phi^{\gamma_n}(t,x) \to \partial_x\Phi^{\gamma}(t,x)$ follows by dominated convergence.
 
In order to show that $\Phi$ is a weak solution, let $\Phi^n = \Phi^{\gamma_n}$ for $\gamma_n\in \SF_+$, $\gamma_n\toLx \gamma$
(hence $\|\Phi^n-\Phi\|_{\infty}\to 0$).
Since $\Phi^n$ is a classical solution corresponding to $\gamma_n$, we have
\begin{align*}
0 = \int_{(0,1]}\int_{\reals} \left\{-\Phi^n\partial_t h +\frac{1}{2}\xi''(t) \Big(\Phi^n\partial_{x}^2h+\gamma_n(t)(\partial_x\Phi^n)^2 h \Big)\right\} \de x\,\de t +
\int_{\reals}\Phi^n(1,x)\, f_0(x)\,\de x\, .
\end{align*}
Letting $\Delta$ denote the right-hand side of Eq.~\eqref{eq:Weak}, we have (since $\Phi^n(1,x)=\Phi(1,x)$ is independent of $n$)
\begin{align*}
\Delta = &\int_{(0,1]}\int_{\reals} \left\{(\Phi^n-\Phi)\partial_t h -\frac{1}{2}\xi''(t)  (\Phi^n-\Phi)\partial_{x}^2h\right\} \de x\, \de t\\
&-\int_{(0,1]}\int_{\reals} \frac{1}{2}\xi''(t) 
\Big(\gamma_n(t)(\partial_x\Phi^n)^2 -\gamma(t)(\partial_x\Phi)^2 \Big)h \,  \de x\,\de t \, .
\end{align*}
The first term vanishes as $n\to\infty$ by dominated convergence. For the second term, by the bound on $\partial_x\Phi$, $\partial_x\Phi^n$, 
we have
\begin{align*}
|\Delta| \le   \frac{1}{2}\int_{(0,1]}\int_{\reals} \xi''(t)\, |\gamma_n(t)-\gamma(t)|\, |h| \de x\, \de t+
\frac{1}{2}\int_{(0,1]}\int_{\reals} \xi''\gamma(t) \, \left|(\partial_x\Phi^n)^2- (\partial_x\Phi)^2\right|\, |h| \de x\, \de t\, .
\end{align*}
The first term vanishes as $n\to\infty$ since $\gamma_n\toLx\gamma$, and the  second vanishes by dominated convergence, using the fact that $\|\xi''\gamma\|_1<\infty$.
\end{proof}

\begin{lemma}\label{lemma:SecondDerivativePhi}
For $\gamma\in\cuL$ and any $t\in [0,1)$, the second derivative $\partial_{x}^2\Phi(t,\,\cdot\,)$ exists in weak sense, with 
$\sup_{0\le t\le 1-\eps}\|\partial_{x}^2\Phi(t,\,\cdot\,)\|_{L^2(\reals)} <\infty$ for any $\eps>0$.
\end{lemma}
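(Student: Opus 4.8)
The strategy is to establish the bound first for $\gamma\in\SF_+$, with a constant depending only on $\eps$, on $\xi$ and on $\int_0^1\xi''\gamma\,\de t$, and then to transfer it to a general $\gamma\in\cuL$ through the approximation that defines $\Phi^{\gamma}$. Fix $\eps>0$ and choose any sequence $\gamma_n\in\SF_+$ with $\gamma_n\toLx\gamma$; since the weighted $L^1$ metric controls $\int_0^1\xi''\gamma_n\,\de t\to\int_0^1\xi''\gamma\,\de t$, the quantity $\mathcal I:=1+\sup_n\int_0^1\xi''\gamma_n\,\de t$ is finite. By Proposition~\ref{propo:SF} we have $\Phi^{\gamma}=\lim_n\Phi^{\gamma_n}$, and by Lemma~\ref{lemma:FirstDerivativePhi}, $\partial_x\Phi^{\gamma_n}(t,\cdot)\to\partial_x\Phi^{\gamma}(t,\cdot)$ for a.e.\ $x$, for each fixed $t$. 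These functions are bounded by $1$, so the convergence also holds in $L^p_{\mathrm{loc}}(\reals)$ for every $p<\infty$, and hence $\partial_x^2\Phi^{\gamma_n}(t,\cdot)\to\partial_x^2\Phi^{\gamma}(t,\cdot)$ in $\mathcal D'(\reals)$ (the weak second derivatives exist because $\Phi^{\gamma_n}(t,\cdot)$, $\Phi^{\gamma}(t,\cdot)$ are convex). Consequently, once we prove $\sup_n\|\partial_x^2\Phi^{\gamma_n}(t,\cdot)\|_{L^2(\reals)}\le C(\eps,\xi,\mathcal I)$ for every $t\le 1-\eps$, weak-$\ast$ compactness in $L^2$ together with uniqueness of distributional limits forces $\partial_x^2\Phi^{\gamma}(t,\cdot)\in L^2(\reals)$ with the same bound, uniformly in $t\in[0,1-\eps]$; in particular $\partial_x^2\Phi^{\gamma}$ exists in the weak sense.

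So fix $\gamma\in\SF_+$. By Proposition~\ref{propo:SF}(a), $\Phi=\Phi^{\gamma}$ is smooth on $[0,1)\times\reals$; by Proposition~\ref{propo:SF}(b), $\partial_x^2\Phi(t,\cdot)\ge 0$ and $\int_\reals\partial_x^2\Phi(t,x)\,\de x=\partial_x\Phi(t,+\infty)-\partial_x\Phi(t,-\infty)\le 2$, so $\|\partial_x^2\Phi(t,\cdot)\|_{L^1}\le 2$. Since $\|g\|_{L^2}^2\le\|g\|_{L^1}\|g\|_{L^\infty}$, it suffices to bound $\|\partial_x^2\Phi(t,\cdot)\|_{L^\infty}$. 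For this I use the representation recalled in the proof of Proposition~\ref{propo:SF}: $\partial_x\Phi(t,x)=\E[f_0'(X_1)\mid X_t=x]=\int_\reals f_0'(y)\,p(t,x;1,y)\,\de y$, where $p(t,x;1,\cdot)$ is the transition density from $(t,x)$ to time $1$ of the diffusion $(X_s)$ solving~\eqref{eq:SDE}. For $t<1$ this density is $C^1$ in $x$ with enough decay to differentiate under the integral, so $\partial_x^2\Phi(t,x)=\int_\reals f_0'(y)\,\partial_x p(t,x;1,y)\,\de y$ and, as $\|f_0'\|_\infty\le 1$, $|\partial_x^2\Phi(t,x)|\le\int_\reals|\partial_x p(t,x;1,y)|\,\de y$.

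Thus everything reduces to bounding $\int_\reals|\partial_x p(t,x;1,y)|\,\de y$ uniformly over $x$ and over $t\le 1-\eps$. The drift of $X$ is $b(s,x)=\xi''(s)\gamma(s)\partial_x\Phi(s,x)$, so $|b(s,x)|\le\xi''(s)\gamma(s)$ and $\int_0^1\|b(s,\cdot)\|_{L^\infty}\,\de s\le\int_0^1\xi''\gamma\,\de s\le\mathcal I$. Changing time to $\tau=\xi'(s)$ (recall $\xi'(0)=0$ and $\xi''>0$ on $(0,1]$), the process $\widetilde X_\tau:=X_{(\xi')^{-1}(\tau)}$ has unit diffusion coefficient and a drift $\widetilde b$ with $\int_0^{\xi'(1)}\|\widetilde b(\tau,\cdot)\|_{L^\infty}\,\de\tau=\int_0^1\|b(s,\cdot)\|_{L^\infty}\,\de s\le\mathcal I$, i.e.\ a drift lying in the Kato class of norm at most $\mathcal I$; moreover $p(t,x;1,y)=\widetilde p(\xi'(t),x;\xi'(1),y)$. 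Standard Gaussian (Aronson--Friedman type) estimates for uniformly parabolic equations with such lower-order terms give $|\partial_x\widetilde p(\tau_0,x;\tau_1,y)|\le\frac{C(\mathcal I)}{\tau_1-\tau_0}\exp\!\big(-c(x-y)^2/(\tau_1-\tau_0)\big)$, whence $\int_\reals|\partial_x\widetilde p(\tau_0,x;\tau_1,y)|\,\de y\le C'(\mathcal I)(\tau_1-\tau_0)^{-1/2}$. Taking $\tau_0=\xi'(t)$, $\tau_1=\xi'(1)$ and using $\tau_1-\tau_0=\xi'(1)-\xi'(t)\ge\xi'(1)-\xi'(1-\eps)>0$ for $t\le 1-\eps$, we obtain
\[
\|\partial_x^2\Phi(t,\cdot)\|_{L^\infty}\le C'(\mathcal I)\,\big(\xi'(1)-\xi'(1-\eps)\big)^{-1/2},\qquad \|\partial_x^2\Phi(t,\cdot)\|_{L^2}\le\big(2C'(\mathcal I)\big)^{1/2}\big(\xi'(1)-\xi'(1-\eps)\big)^{-1/4},
\]
uniformly in $t\le 1-\eps$ and over the approximating sequence, which combined with the first paragraph proves the lemma.

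The main obstacle is this last step: one needs the pointwise Gaussian gradient bound for the transition density of the time-changed diffusion whose drift is controlled only in $L^1_tL^\infty_x$, with a constant depending \emph{solely} on the Kato norm $\mathcal I$, since that is precisely the quantity that is stable under $\gamma_n\to\gamma$. A direct energy argument does not suffice: differentiating~\eqref{eq:GeneralPDE} twice and testing against $\partial_x^2\Phi$ yields $\tfrac12\tfrac{\de}{\de t}\|\partial_x^2\Phi(t,\cdot)\|_{L^2}^2=\tfrac12\xi''\|\partial_x^3\Phi(t,\cdot)\|_{L^2}^2-\tfrac12\xi''\gamma\int_\reals(\partial_x^2\Phi)^3$, and after Gagliardo--Nirenberg and Young's inequality this only gives a Gr\"onwall bound with exponent $\propto\int_t^{1-\eps}\xi''\gamma^2$, which can diverge as $t\to 0$ whenever $\xi''(0)=0$ (e.g.\ $\xi''(s)=s$, $\gamma(s)=1/s$); the parabolic-smoothing / time-change argument above is what covers a neighbourhood of $t=0$.
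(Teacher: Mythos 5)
Your reduction to a uniform bound along an approximating sequence in $\SF_+$ is fine, but the central step is not: the ``standard Gaussian (Aronson--Friedman type) gradient estimate'' you invoke, with a constant depending \emph{only} on $\int_0^1\|b(s,\cdot)\|_{\infty}\,\de s$, is not an available result, and at that level of generality it is false. The condition $\int_0^T\|b(s,\cdot)\|_\infty\,\de s\le \mathcal I$ is not a Kato-class condition (the Kato condition carries an extra $(t-s)^{-1/2}$ weight and a smallness requirement), and for drifts controlled only in $L^1_t L^\infty_x$ even the pointwise Gaussian \emph{upper bound on the density} can fail: a drift concentrated on a short time window just before the terminal time, with unit $\int\|b\|_\infty\,\de s$ but huge spatial Lipschitz constant, acts as an instantaneous flow map that can concentrate the law and blow up $\|p(t,x;1,\cdot)\|_\infty$ while $\mathcal I$ stays fixed; gradient-in-$x$ bounds are even more demanding (they require Dini/Kato-type or Lipschitz control of $b$ itself, and proving them here via Bismut--Elworthy--Li or a parametrix iteration reintroduces $\partial_x b=\xi''\gamma\,\partial_x^2\Phi$, i.e.\ exactly the quantity you are trying to bound). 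The difficulty is not hypothetical in this model: for $\gamma\in\cuL$ the drift bound $\xi''\gamma(s)$ is allowed to be unbounded (merely integrable) as $s\to 1$, which is precisely the terminal time at which your transition density is evaluated, so the singular part of any Duhamel/parametrix iteration pairs with the uncontrolled part of the drift and the corresponding Volterra kernel need not be integrable. A telling symptom is that your final constant depends only on $\eps$, $\xi$ and $\|\xi''\gamma\|_1$ and never uses the bounded-variation hypothesis in the definition of $\cuL$; you also assert without proof that $p(t,x;1,y)$ is $C^1$ in $x$ with enough decay to differentiate under the integral, which is again the regularity in question.

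For comparison, the paper avoids transition densities altogether. After the time change $\theta(t)=(\xi'(1)-\xi'(t))/2$ it writes the Duhamel representation for $u_{xx}$, exploiting that $u_x(s,\cdot)^2$ is the (symmetrized) cumulative distribution function of a measure of mass at most one, so that $u_{xx}$ is a superposition of heat kernels and of derivatives of heat kernels integrated against sub-probability measures. The resulting time integral is then split so that the singular factor $\|G'_{\theta-s}\|_2\sim(\theta-s)^{-3/4}$ multiplies $\xi''\gamma$ only on $[0,1-\eps/2]$, where it is bounded by $\|\xi''\gamma\|_{\sTV[0,1-\eps/2]}$, while near $t=1$, where $\xi''\gamma$ is only integrable, the kernel factor is nonsingular and $\|\xi''\gamma\|_1$ suffices. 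If you want to salvage your route, you would have to prove a gradient bound for the transition density tailored to this structured drift (a nonnegative multiple of $\partial_x\Phi$, hence with nonnegative space derivative) and with explicit dependence on $\|\xi''\gamma\|_{\sTV[0,1-\eps/2]}$ as well as $\|\xi''\gamma\|_1$; as stated, the key estimate is unsupported and the proof has a genuine gap.
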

\begin{proof}
Following \cite{jagannath2016dynamic}, it is useful to introduce the the smooth time change $\theta(t) = (\xi'(1)-\xi'(t))/2$, and define $u:[0,\theta_M]\times\reals$, $\theta_M = \xi'(1)/2$,
via $u(\theta(t),x) = \Phi(t,x)$. By a simple change of variables, $u$ is a weak solution of the PDE
\begin{align*}
\partial_{\theta} u-\Delta u = m(\theta) u_x^2\, ,\;\;\;  u(0,x) = f_0(x)\, ,
\end{align*}
where $m(s) = \gamma(\theta^{-1}(s))$. The desired claim is implied by showing that the partial derivative $\partial_{x}^2u$ exists in weak sense and is bounded uniformly  over $\theta>\eps$ (for any $\eps>0$).

Again, as in \cite{jagannath2016dynamic} the fact that $u$ is a weak solution implies the Duhamel principle
\begin{align}
\begin{split}
u(\theta) &= G_{\theta}*f_0 +\int_{0}^{\theta} m(s)\, G_{\theta-s}*u_x(s)^2 \de s\, , \label{eq:Duhamel}\\
G_t(x) & \equiv \frac{1}{\sqrt{4\pi t}}e^{-x^2/4t}\, .
\end{split}
\end{align}
(Here $*$ denotes convolution and this equation  is to be interpreted in weak sense, namely, for any $h\in C_c^{\infty}(\reals)$, $\int h(x)  u(\theta,x)\, \de x$ is 
given by the convolution with $h$ of the right hand side.)
Note that by Lemma \ref{lemma:FirstDerivativePhi},  $x\mapsto u_x(s,x)^2$ is bounded between $0$ and $1$,  non-increasing in $(-\infty,0]$, non-decreasing in $[0,\infty)$
and symmetric (the value at $x=0$ is immaterial). Hence, there exists a  measure $\nu_s$ on $[0,\infty)$, with total mass $\nu_s([0,\infty))\le 1$, such that
\begin{align*}
 u_x(s,x)^2 = \nu_s([0,x))\, \ind_{x>0} +\nu_s([0,-x))\, \ind_{x<0}\, .
\end{align*}
We then obtain, from Eq.~(\ref{eq:Duhamel})
\begin{align}
u_{xx}(\theta) &= G'_{\theta}*f'_0 +\int_{0}^{\theta} m(s)\, \int_{\reals_{\ge 0}} [G'_{\theta-s}(\, \cdot\, -x) + G'_{\theta-s}(\, \cdot\, +x)] \de\nu_s(x) \, \de s\, .
\label{eq:Duhamel2}
\end{align}

 The claim follows by showing that each of the two terms on the right hand side of Eq.~\eqref{eq:Duhamel2} is a well defined function, bounded in $L^2(\reals)$.
For the first term, notice that $f'_0$ is bounded and non-decreasing. Hence there exists a  measure $\omega_0$  on $\reals$ with $\omega_0(\reals)\le 2$,
such that $G'_{\theta}*f'_0 = G_{\theta}*\de\omega_0$, whence
\begin{align*}
\|G'_{\theta}*f'_0\|_{2} = \left\|\int G_{\theta}(\,\cdot\, -x) \, \de\omega_0( x) \right\|_2 \le 2 \|G_{\theta}\|_2\le \frac{C}{\theta^{1/4}}\, ,
\end{align*}
where the upper bound follows from Jensen's inequality.
The second term  on the right-hand side of \eqref{eq:Duhamel2} can be treated analogously. Denoting it by $w(\theta)$, we have, again by Jensen with $\theta =\theta(1-\eps)$,
\begin{align*}
\|w(\theta)\|_2&\le \int_{0}^{\theta} m(s)\, \int_{\reals_{\ge 0}} \|G'_{\theta-s}(\, \cdot\, -x) + G'_{\theta-s}(\, \cdot\, +x)\|_2\de\nu_s(x) \, \de s\\
& \le C\int_{0}^{\theta} m(s)\, \frac{1}{(\theta-s)^{3/4}} \, \de s \le C'\int_{1-\eps}^1 \frac{\xi''\gamma(s)}{(\xi'(s)-\xi'(1-\eps))^{3/4}} \, \de s\, ,
\end{align*}
where the second inequality follows by $\|G'_{t} \|_2\le C\, t^{-3/4}$. 
Decomposing the last integral, we get
\begin{align*}
\|w(\theta)\|_2&\le C'\int_{1-\eps}^{1-\eps/2} \frac{\xi''\gamma(s)}{(\xi'(s)-\xi'(1-\eps))^{3/4}} \, \de s +C'\int_{1-\eps/2}^1 \frac{\xi''\gamma(s)}{(\xi'(s)-\xi'(1-\eps))^{3/4}} \, \de s\\
&\le C'\xi''\gamma(1-\eps/2)\int_{1-\eps}^{1-\eps/2} \frac{1}{(\xi'(s)-\xi'(1-\eps))^{3/4}} \, \de s +\frac{C'}{(\xi'(1-\eps/2)-\xi'(1-\eps))^{3/4}}\int_{1-\eps/2}^1 
\xi''\gamma(s)\, \de s\\
&\le C''\|\xi''\gamma\|_{\sTV[0,1-\eps/2]}  + C'' \eps^{-3/4}\, \|\xi''\gamma\|_1\, .
\end{align*}
The last expression is bounded by some $C(\eps)<\infty$ since $\gamma\in\cuL$. 
\end{proof}

\begin{lemma}\label{lemma:Smoothness}
For any $\gamma\in \cuL$, the solution $\Phi=\Phi^{\gamma}$ constructed above is continuous on $[0,1]\times \reals$, and further
satisfies the following regularity properties for any $\eps>0$
\begin{itemize}
\item[$(a)$] $\partial_x^j\Phi \in L^{\infty}([0,1-\eps];L^2(\reals)\cap L^{\infty}(\reals))$ for $j\ge 2$.
\item[$(b)$] $\partial_t\Phi \in L^{\infty}([0,1]\times \reals)$ and $\partial_t\partial_x^j\Phi\in L^{\infty}([0,1-\eps];L^2(\reals)\cap L^{\infty}(\reals))$ for $j\ge 1$.
\end{itemize}
\end{lemma}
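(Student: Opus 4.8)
The plan is to carry out every estimate in the time-changed variable $\theta$ already used in the proof of Lemma~\ref{lemma:SecondDerivativePhi}. With $\theta(t)=(\xi'(1)-\xi'(t))/2$, a smooth strictly decreasing bijection of $[0,1]$ onto $[0,\theta_M]$, $\theta_M=\xi'(1)/2$, and $u(\theta(t),x)=\Phi(t,x)$, the function $u$ is a weak solution of the semilinear heat equation $\partial_\theta u-\partial_x^2u=m(\theta)\,u_x^2$, $u(0,\cdot)=f_0$, $m(s)=\gamma(\theta^{-1}(s))$, with the Duhamel representation~\eqref{eq:Duhamel}. Every bound asked for on $[0,1-\eps]\times\reals$ becomes a bound for $u$ on $\{\theta\ge\theta(1-\eps)\}\times\reals$, i.e.\ away from the only singular time $\theta=0$ of the heat kernel. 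I will freely use $\|G_t^{(k)}\|_{L^p(\reals)}\le C_{k,p}\,t^{-k/2-(1-1/p)/2}$, Young's and H\"older's inequalities, and three facts already in hand: $|u_x|\le1$ with $u_x(t,\cdot)$ non-decreasing (Lemma~\ref{lemma:FirstDerivativePhi}), hence $u_{xx}\ge0$ and $\|u_{xx}(t,\cdot)\|_{L^1(\reals)}=u_x(t,+\infty)-u_x(t,-\infty)\le2$; and $\sup_{t\le1-\eps}\|\partial_x^2\Phi(t,\cdot)\|_{L^2(\reals)}<\infty$ for every $\eps>0$ (Lemma~\ref{lemma:SecondDerivativePhi}). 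Continuity of $\Phi$ on $[0,1]\times\reals$ is the easy part: each $\Phi^{\gamma_n}$ ($\gamma_n\in\SF_+$, $\gamma_n\toLx\gamma$) is smooth on $[0,1)\times\reals$ by Proposition~\ref{propo:SF}(a), attains $f_0$ continuously at $t=1$ by the Cole--Hopf formula, and the convergence $\Phi^{\gamma_n}\to\Phi$ is uniform by Proposition~\ref{propo:SF}(c), so $\Phi$ is a uniform limit of continuous functions.

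For part~$(a)$ I would establish the bound for $\partial_x^j\Phi$ successively for $j=2,3,\dots$, using the Duhamel formula restarted at the positive time $\theta_1:=\theta(1-\eps/2)$: on $[\theta_1,\theta_M]$ one has $u(\theta)=G_{\theta-\theta_1}*u(\theta_1)+\int_{\theta_1}^{\theta}m(s)\,G_{\theta-s}*u_x(s)^2\,\de s$, with $u_{xx}(\theta_1)\in L^2$ by Lemma~\ref{lemma:SecondDerivativePhi}. Differentiating $j$ times in $x$ and moving one derivative through each convolution gives
\[
\partial_x^j u(\theta)=G_{\theta-\theta_1}^{(j-2)}*u_{xx}(\theta_1)+\int_{\theta_1}^{\theta}m(s)\,G'_{\theta-s}*\partial_x^{j-2}\big(2u_xu_{xx}\big)(s)\,\de s ,
\]
and, by the Leibniz rule, $\partial_x^{j-2}(2u_xu_{xx})=2u_x\,\partial_x^{j}u+R_j$, where $R_j$ is a sum of products of two $x$-derivatives of $u$ of orders in $[2,j-1]$. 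For $j=2$ the last term is just $2u_xu_{xx}$, bounded in $L^2$ by Lemma~\ref{lemma:SecondDerivativePhi}, and a direct estimate (the first term is bounded since $\theta-\theta_1$ is bounded below, and $\|G'_{\theta-s}\|_{L^2}\le C(\theta-s)^{-3/4}$ is integrable, while $\|u_{xx}(s)\|_{L^2}\le C(\eps)$ and $m$ is bounded on $[\theta_1,\theta_M]$) yields $\partial_x^2\Phi\in L^\infty([0,1-\eps];L^2\cap L^\infty)$. For $j\ge3$, $R_j$ is controlled by the already-proved bounds for orders $2,\dots,j-1$, and estimating with $\|G'_{\theta-s}\|_{L^1}\le C(\theta-s)^{-1/2}$ gives
\[
\|\partial_x^j u(\theta)\|_{L^2}\le C(\eps)+C(\eps)\!\int_{\theta_1}^{\theta}\!\! m(s)(\theta-s)^{-1/2}\de s+C\!\int_{\theta_1}^{\theta}\!\! m(s)(\theta-s)^{-1/2}\|\partial_x^{j}u(s)\|_{L^2}\,\de s ;
\]
since $m$ is bounded on $[\theta_1,\theta_M]$ ($\xi''\gamma$ has bounded variation, hence is bounded, on $[0,1-\eps/2]$, and $\xi''$ is bounded below there), the first two terms are $\le C(\eps)$ and the last is a weakly singular linear Volterra term, so a generalized Gronwall inequality gives $\|\partial_x^j u(\theta)\|_{L^2}\le C(\eps)$ for $\theta\ge\theta(1-\eps)$; feeding this back with $\|G'_{\theta-s}\|_{L^2}$ in place of $\|G'_{\theta-s}\|_{L^1}$ promotes the bound to $L^\infty$. (Qualitative finiteness of $\|\partial_x^j u(s)\|_{L^2}$, needed to start the Gronwall, comes either from interior parabolic regularity on $[\theta_1,\theta_M]\times\reals$, where the source $m\,u_x^2$ is now controlled, or from running the whole argument on the approximants $\Phi^{\gamma_n}$ with constants uniform in $n$ — choosing $\gamma_n$ so that $\sup_n\|\xi''\gamma_n\|_{\sTV[0,1-\eps/2]}<\infty$ — and passing to the limit. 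When $\xi''(0)=0$ the time change degenerates at $t=0$ and the region near $\theta=\theta_M$ needs a minor extra localization, using that $m\in L^{1+\delta}$ there.)

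Part~$(b)$ is then immediate from the PDE: by the weak formulation of Lemma~\ref{lemma:FirstDerivativePhi}, $\partial_t\Phi=-\tfrac12\xi''\big(\partial_x^2\Phi+\gamma\,(\partial_x\Phi)^2\big)$, whose right-hand side is bounded on $[0,1-\eps]\times\reals$ by part~$(a)$, by $|\partial_x\Phi|\le1$, and by the local boundedness of $\xi''$ and $\xi''\gamma$; differentiating the identity $j$ times in $x$ and invoking part~$(a)$ for orders up to $j+2$ gives $\partial_t\partial_x^j\Phi\in L^\infty([0,1-\eps];L^2\cap L^\infty)$. The main obstacle is the self-referential nature of part~$(a)$: differentiating the Duhamel integral $j$ times inevitably reproduces $\partial_x^{j}u$ through the top-order term $2u_x\,\partial_x^{j}u$ of $\partial_x^{j-2}(2u_xu_{xx})$, so the bound cannot simply be read off. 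Resolving it requires, for every term, a careful choice of which $L^p$ norms to put on the kernel versus on the remaining factor so that the time kernel stays weakly singular ($(\theta-s)^{-\alpha}$, $\alpha<1$) — which forces exactly one spatial derivative onto $u_x^2$ in the top-order piece — and then closing by a Volterra--Gronwall argument; simultaneously one must use $\theta(1-\eps)>0$ and the restart at $\theta(1-\eps/2)$, where Lemma~\ref{lemma:SecondDerivativePhi} already supplies an $L^2$ bound on $u_{xx}$ and $m$ is bounded, to keep everything away from $\theta=0$. A lesser nuisance is the bookkeeping needed to make the constants uniform along $\gamma_n\to\gamma$.
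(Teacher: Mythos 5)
Your argument is essentially the paper's: the paper disposes of parts $(a)$ and $(b)$ by invoking the bootstrap of Lemma 10 of Jagannath--Tobasco applied to the PDE with boundary condition at $t=1-\eps$ and initiated with Lemma \ref{lemma:SecondDerivativePhi}, and your time-changed Duhamel/Volterra--Gronwall bootstrap (with continuity obtained as a uniform limit of continuous functions, exactly as in the paper) is a written-out version of that same scheme, with part $(b)$ read off the equation at the end. The only soft spot is your parenthetical patch for the degenerate case $\xi''(0)=0$: H\"older with $m\in L^{1+\delta}$ against the kernel $(\theta-s)^{-1/2}$ would need $\delta>1$, so the region $t$ near $0$ requires a different localization when $c_2=0$ --- but this corner case is handled by the paper only implicitly through the citation, so it does not distinguish your route from theirs.
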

\begin{proof}
Continuity follows since $\Phi^{\gamma}$ is the uniform limit of continuous functions.
Point $(a)$ and $(b)$ follows from the same proof as Lemma 10 in  \cite{jagannath2016dynamic}, applied to the PDE \eqref{eq:GeneralPDE}
with boundary condition at $t=1-\eps$, whereby we use Lemma \ref{lemma:SecondDerivativePhi} to initiate the bootstrap procedure.
\end{proof}

As a consequence of the stated regularity properties of $\Phi$, we can solve the SDE \eqref{eq:SDE}.
\begin{lemma}\label{lemma:DPhiX}
For any $\gamma\in \cuL$, let $\Phi= \Phi^{\gamma}$ be the PDE solution defined above. Then,  the stochastic differential equation 
\eqref{eq:SDE} has unique strong solution on $(X_t)_{t\in [0,1]}$, which is almost surely continuous. Further, for any $t\in [0,1]$
\begin{align}
\partial_x\Phi(t,X_t) = \int_0^t \sqrt{\xi''(s)}\, \partial_{x}^2\Phi(s,X_s)\, \de B_s\, .\label{eq:DxIntegral}
\end{align}
\end{lemma}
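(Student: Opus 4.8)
The plan is to first read off existence and uniqueness of the SDE \eqref{eq:SDE} from the regularity of $\Phi$ established above, and then to obtain \eqref{eq:DxIntegral} by applying It\^o's formula to $F(t,x)=\partial_x\Phi(t,x)$ and using the spatial derivative of the Parisi PDE \eqref{eq:GeneralPDE}. For existence and uniqueness, note that the drift $b(t,x)=\xi''(t)\gamma(t)\partial_x\Phi(t,x)$ is globally bounded by the integrable function $t\mapsto\xi''(t)\gamma(t)$ (since $|\partial_x\Phi|\le 1$ by Lemma~\ref{lemma:FirstDerivativePhi} and $\int_0^1\xi''\gamma\,\de t<\infty$ for $\gamma\in\cuL$), while by Lemma~\ref{lemma:Smoothness}$(a)$ it is Lipschitz in $x$ on each strip $[0,1-\eps]\times\reals$ with time-dependent Lipschitz constant $\xi''(t)\gamma(t)\,\|\partial_x^2\Phi(t,\cdot)\|_{\infty}\in L^1([0,1-\eps])$; the diffusion coefficient $\sqrt{\xi''(t)}$ is bounded on $[0,1-\eps]$ and independent of $x$. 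The usual contraction/Gronwall argument (with an integrable-in-time Lipschitz constant) thus gives a unique strong solution on every $[0,1-\eps]$; these are consistent and define a solution on $[0,1)$. Since $\int_0^1|b(s,X_s)|\,\de s\le\|\xi''\gamma\|_1<\infty$ a.s.\ and $\int_0^1\xi''(s)\,\de s=\xi'(1)<\infty$, so that $Z_t=\int_0^t\sqrt{\xi''(s)}\,\de B_s$ extends continuously to $[0,1]$, the process $X_t=\int_0^t b(s,X_s)\,\de s+Z_t$ has an almost sure limit as $t\to1$; setting $X_1$ equal to this limit yields an a.s.\ continuous solution on $[0,1]$, and uniqueness on $[0,1)$ forces uniqueness on $[0,1]$.

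For the identity, I would first differentiate \eqref{eq:GeneralPDE} in $x$. Using the $x$-smoothness and the $L^\infty$ bounds of Lemma~\ref{lemma:Smoothness} together with the weak formulation of Lemma~\ref{lemma:FirstDerivativePhi}, one checks that $\partial_t\Phi(t,x)=-\tfrac12\xi''(t)\big(\partial_x^2\Phi(t,x)+\gamma(t)(\partial_x\Phi(t,x))^2\big)$ for a.e.\ $t<1$ and all $x$, and differentiating in $x$ gives, for a.e.\ $t<1$ and all $x$,
\[\partial_t\partial_x\Phi(t,x)+\tfrac12\xi''(t)\,\partial_x^3\Phi(t,x)+\xi''(t)\gamma(t)\,\partial_x\Phi(t,x)\,\partial_x^2\Phi(t,x)=0.\]
Now fix $\eps>0$ and apply It\^o's formula to $F(t,x)=\partial_x\Phi(t,x)$ along $(X_t)$ on $[0,1-\eps]$: by Lemma~\ref{lemma:Smoothness}, $F(t,\cdot)\in C^2$ with derivatives bounded uniformly on $[0,1-\eps]$, and $F(\cdot,x)$ is Lipschitz in $t$ uniformly in $x$ with $\partial_tF\in L^\infty$. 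The $\de t$-coefficient produced by It\^o is exactly the left-hand side of the displayed identity and hence vanishes; the $\de B_t$-coefficient is $\sqrt{\xi''(t)}\,\partial_x^2\Phi(t,X_t)$; and $F(0,X_0)=\partial_x\Phi(0,0)=0$ because $\Phi(0,\cdot)$ is smooth and even. This proves \eqref{eq:DxIntegral} for $t\le 1-\eps$, hence for all $t<1$ by letting $\eps\downarrow 0$.

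To reach $t=1$: by Lemma~\ref{lemma:FirstDerivativePhi}, $N_t:=\partial_x\Phi(t,X_t)=\int_0^t\sqrt{\xi''(s)}\,\partial_x^2\Phi(s,X_s)\,\de B_s$ is a bounded continuous martingale on $[0,1)$, so it converges a.s.\ and in $L^2$ as $t\uparrow 1$; in particular $\E\int_0^1\xi''(s)\,\partial_x^2\Phi(s,X_s)^2\,\de s=\E N_1^2<\infty$, the stochastic integral is well defined up to $t=1$, and \eqref{eq:DxIntegral} holds there too (with $\partial_x\Phi(1,X_1)$ understood as this limit, consistently with the continuity of $\Phi$ established in Lemma~\ref{lemma:Smoothness}).

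The main obstacle is that for a general $\gamma\in\cuL$ the time-regularity of $\Phi$ supplied by Lemma~\ref{lemma:Smoothness} is only $\partial_t\Phi\in L^\infty$, not pointwise continuity in $t$, so the classical It\^o formula does not apply verbatim to $F=\partial_x\Phi$. I would handle this by a generalized It\^o formula: mollify $F$ in the time variable, apply classical It\^o to the mollified $F^\eta\in C^{1,2}$, and pass to the limit $\eta\downarrow 0$, which is routine since on $[0,1-\eps]\times\reals$ the quantities $\partial_tF$, $\partial_xF$, $\partial_x^2F$ are uniformly bounded (dominated convergence for the ordinary integrals, It\^o isometry for the stochastic one). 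Alternatively, one may first prove \eqref{eq:DxIntegral} for $\gamma\in\SF_+$, where Proposition~\ref{propo:SF}$(a)$ gives piecewise-in-$t$ smoothness and classical It\^o across the finitely many discontinuities of $\gamma$ suffices, and then pass to the limit along $\gamma_n\toLx\gamma$ using the stability estimates of Lemmas~\ref{lemma:FirstDerivativePhi}--\ref{lemma:Smoothness}.
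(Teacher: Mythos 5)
Your argument is correct and follows essentially the same route as the paper: existence/uniqueness on $[0,1-\eps]$ from the spatial Lipschitz bound on $\partial_x\Phi$, extension to $t=1$ via integrability of $\xi''\gamma$ and continuity of the Brownian integral, the differentiated (weak-in-time) PDE for $\partial_x\Phi$, and an It\^o-type formula whose $\de t$-term then vanishes. The only difference is that where you sketch a time-mollification (or $\SF_+$-approximation) proof of the generalized It\^o step, the paper simply invokes Proposition 22 of \cite{jagannath2016dynamic}, which is precisely that formula for functions smooth in space but only weakly differentiable in time.
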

\begin{proof}
Existence and uniqueness for $t\in [0,1-\eps)$ follow because $\partial_x\Phi(t,\,\cdot\,)$ is Lipschitz continuous and $\xi''\gamma$ is bounded on such interval
(see, e.g., \cite[Chapter 5]{oksendal2013stochastic}.) 
By letting $\eps\downarrow 0$, we obtain existence and uniqueness on $[0,1)$. Further $X_t$ can be extended at $t=1$, letting
\begin{align*}
X_1 = \int_{0}^1\xi''(t)\gamma(t) \partial_x\Phi(t,X_t) \de t+ \int_0^1\sqrt{\xi''(t)} \de B_t\, .
\end{align*}
It is easy to check that this extension is almost surely continuous at $t=1$, since
\begin{align*}
\big|X_1-X_t\big|\le \int_t^1\xi''\gamma(s)\de s +\int_t^1\sqrt{\xi''(t)} \de B_t\, .
\end{align*}
The first integral vanishes as $t\to 1$ since $\int_0^1\xi''\gamma(t) \,  \de t<\infty$, while the second vanishes by continuity of the Brownian motion.

Next notice that, since $\Phi_x = \partial_x\Phi$ smooth in space and weakly differentiable in time for $t\in[0,1)$ by Lemma \ref{lemma:Smoothness}, it is a weak solution of 
\begin{align*}
\partial_t\Phi_x(t,x)+\frac{1}{2}\xi''(t) \Big(\partial_x^2\Phi_x(t,x)+\gamma(t) \partial_x(\Phi_x(t,x))^2\Big) = 0\, .
\end{align*}
More precisely, for any $x\in\reals$ and any $h\in C_c^((0,1))$, we have 
\begin{align}
\int \left\{h(t)\partial_t\Phi_x(t,x)+\frac{\xi''(t)}{2} h(t)\, \Big(\partial_x^2\Phi_x(t,x)+\gamma(t) \partial_x(\Phi_x(t,x))^2\Big) \right\}\de t= 0\, .\label{eq:WeakFirstDerivative}
\end{align}
Equation~\eqref{eq:DxIntegral} is then obtained by It\^o formula (see Proposition 22 in \cite{jagannath2016dynamic})
\begin{align*}
\partial_x\Phi(t,X_t) = &\int_0^t \sqrt{\xi''(s)}\, \partial_{x}^2\Phi(s,X_s)\, \de B_s\\  &+\int_0^t
\left(\partial_s \Phi_x(s,X_s)+\frac{1}{2}\xi''(s) \Big(\partial_x^2\Phi_x(s,X_s)+\gamma(s) \partial_x(\Phi_x(s,X_s))^2\Big) \right\} \de s\, ,
\end{align*}
The second term vanishes by Eq.~\eqref{eq:WeakFirstDerivative}. 
\end{proof}

\begin{corollary}\label{coro:ED2}
For any $\gamma\in\cuL$ and any $0\le t_1<t_2<1$, 
\begin{align*}
\E\{\partial_x\Phi(t_2,X_{t_2})^2\}-\E\{\partial_x\Phi(t_1,X_{t_1})^2\} = \int_{t_1}^{t_2} \xi''(s)\, \E\big\{\big(\partial_{x}^2\Phi(s,X_s)\big)^2\big\}\, \de s\, .
\end{align*}
In particular, $t\mapsto \E\{\partial_x\Phi(t,X_{t})^2\}$ is Lipschitz continuous on $[0,1-\eps)$ for any $\eps>0$.
\end{corollary}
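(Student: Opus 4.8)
The plan is to recognize the left-hand side as the increment of the second moment of a square-integrable martingale, and then apply It\^o's isometry.

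First I would invoke Lemma~\ref{lemma:DPhiX}, which provides the representation $\partial_x\Phi(t,X_t) = \int_0^t \sqrt{\xi''(s)}\,\partial_x^2\Phi(s,X_s)\,\de B_s$ for every $t\in[0,1)$. Set $H_s := \sqrt{\xi''(s)}\,\partial_x^2\Phi(s,X_s)$. To treat $(\partial_x\Phi(t,X_t))_{t\in[0,1)}$ as a genuine $L^2$-martingale (not merely a local martingale) and to apply It\^o's isometry, I need $\E\int_0^{t}H_s^2\,\de s<\infty$ for each fixed $t<1$. Fix $\eps>0$ with $t\le 1-\eps$; by Lemma~\ref{lemma:Smoothness}$(a)$ the function $\partial_x^2\Phi$ is bounded on $[0,1-\eps]\times\reals$ by some $C(\eps)<\infty$, and $\xi''$ is continuous hence bounded on $[0,1]$ (since $\xi(1+\eps)<\infty$), so $\E\int_0^t H_s^2\,\de s \le C(\eps)^2\int_0^t\xi''(s)\,\de s<\infty$.

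With this integrability in hand, It\^o's isometry gives, for every $t\in[0,1)$,
\[
\E\{\partial_x\Phi(t,X_t)^2\} = \int_0^t \xi''(s)\,\E\{(\partial_x^2\Phi(s,X_s))^2\}\,\de s .
\]
Subtracting this identity at $t=t_1$ from the one at $t=t_2$ yields the claimed formula. For the Lipschitz statement, write $g(t):=\E\{\partial_x\Phi(t,X_t)^2\}$; the displayed formula gives $g(t_2)-g(t_1)=\int_{t_1}^{t_2}\xi''(s)\,\E\{(\partial_x^2\Phi(s,X_s))^2\}\,\de s$, and for $t_1,t_2\in[0,1-\eps)$ the integrand is bounded by $\big(\sup_{s\le 1-\eps}\xi''(s)\big)\,C(\eps)^2$, so $g$ is Lipschitz on $[0,1-\eps)$ with a constant depending only on $\eps$.

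There is no serious obstacle: the only point requiring care is confirming that the stochastic integral in Lemma~\ref{lemma:DPhiX} is a square-integrable martingale on $[0,1-\eps]$, which is immediate from the uniform $L^\infty$ bound on $\partial_x^2\Phi$ away from $t=1$ furnished by Lemma~\ref{lemma:Smoothness}. (Alternatively one could control $\E\{(\partial_x^2\Phi(s,X_s))^2\}$ using the $L^2(\reals)$ bound of Lemma~\ref{lemma:SecondDerivativePhi} together with a bound on the density of $X_s$ coming from Girsanov's theorem, since the drift of \eqref{eq:SDE} is bounded; but the $L^\infty$ route is cleaner.)
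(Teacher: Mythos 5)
Your proof is correct and follows essentially the same route as the paper, which simply cites Lemma \ref{lemma:DPhiX} (the stochastic-integral representation of $\partial_x\Phi(t,X_t)$) combined with the regularity of Lemma \ref{lemma:Smoothness}; you have merely spelled out the implicit steps (square-integrability via the $L^\infty$ bound on $\partial_x^2\Phi$ away from $t=1$, then It\^o's isometry and the resulting Lipschitz bound). No gaps.
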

\begin{proof}
This follows from Lemma \ref{lemma:DPhiX}, using the regularity properties of Lemma \ref{lemma:Smoothness}. 
\end{proof}

\begin{lemma}\label{lemma:DxxCont}
For any $\gamma\in\cuL$, the function $t\mapsto \E\{\partial_{x}^2\Phi(t,X_t)^2\}$ is continuous on $[0,1)$.
\end{lemma}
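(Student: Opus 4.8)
The plan is to deduce the claim from the regularity of $\Phi$ collected in Lemma~\ref{lemma:Smoothness} together with the path regularity of the diffusion from Lemma~\ref{lemma:DPhiX}, rather than from the integrated identity of Corollary~\ref{coro:ED2} (which only controls $t\mapsto\E\{\partial_x^2\Phi(t,X_t)^2\}$ in an a.e.\ sense). Concretely, I would fix $\eps>0$, prove continuity of $t\mapsto\E\{\partial_x^2\Phi(t,X_t)^2\}$ on $[0,1-\eps]$, and then let $\eps\downarrow0$. The core of the argument is to show that $t\mapsto\partial_x^2\Phi(t,X_t)$ is continuous in $L^2(\prob)$ on $[0,1-\eps]$: continuity of the $L^2$-norm then gives continuity of its square, which is exactly the assertion.

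First I would record that $\partial_x^2\Phi$ has a bounded, jointly continuous representative on $[0,1-\eps]\times\reals$. By Lemma~\ref{lemma:Smoothness}, $\partial_x^2\Phi$ and $\partial_x^3\Phi$ belong to $L^\infty([0,1-\eps];L^\infty(\reals))$ and $\partial_t\partial_x^2\Phi\in L^\infty([0,1-\eps];L^\infty(\reals))$; hence $\partial_x^2\Phi\in W^{1,\infty}$ on the space-time slab $[0,1-\eps]\times\reals$, and by Morrey's embedding it admits a Lipschitz-continuous representative whose sup-norm and Lipschitz constant are bounded by some $C(\eps)<\infty$. I would work with this representative; it is the same object that appears in \eqref{eq:DxIntegral} and in Corollary~\ref{coro:ED2}, since two representatives of $\partial_x^2\Phi$ agreeing Lebesgue-almost everywhere also agree $\de s\otimes\de\prob$-almost everywhere along $(s,X_s)_{s\le 1-\eps}$, the law of $X_s$ having a density for every $s>0$ by nondegeneracy of the Brownian part of \eqref{eq:SDE} (the drift being bounded on $[0,1-\eps]$).

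Next, by Lemma~\ref{lemma:DPhiX} the solution $(X_t)_{t\in[0,1]}$ of the SDE has almost surely continuous paths, so for any $t_n\to t$ in $[0,1-\eps]$ we have $X_{t_n}\to X_t$ a.s., and joint continuity of the representative gives $\partial_x^2\Phi(t_n,X_{t_n})\to\partial_x^2\Phi(t,X_t)$ almost surely. Since $|\partial_x^2\Phi(t_n,X_{t_n})|\le C(\eps)$ uniformly in $n$, bounded convergence yields $\E\{\partial_x^2\Phi(t_n,X_{t_n})^2\}\to\E\{\partial_x^2\Phi(t,X_t)^2\}$. This proves continuity on $[0,1-\eps]$, and since $\eps>0$ is arbitrary, on all of $[0,1)$.

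The step I expect to require the most care is the selection of the jointly continuous representative of $\partial_x^2\Phi$: Lemma~\ref{lemma:Smoothness} is phrased in terms of weak derivatives and $L^\infty$-in-time bounds, so one has to invoke a Sobolev embedding on the slab and then check that the continuous version is the same one entering the stochastic objects built in Lemma~\ref{lemma:DPhiX} and Corollary~\ref{coro:ED2}. Everything after that---almost sure convergence along continuous paths of $(X_t)$ and a bounded-convergence argument---is routine.
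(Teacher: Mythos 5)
Your argument is correct and is essentially the paper's own proof: the paper also deduces the claim by bounded convergence, using the a.s.\ continuity of $t\mapsto X_t$ and the regularity of $\partial_x^2\Phi$ supplied by Lemma~\ref{lemma:Smoothness}. Your extra care in selecting a jointly continuous representative of $\partial_x^2\Phi$ simply makes explicit what the paper leaves implicit, so there is no substantive difference in approach.
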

\begin{proof}
The function is continuous by an application of bounded convergence (using the continuity of
$t\mapsto X_t$ and the regularity of Lemma \ref{lemma:Smoothness}).
\end{proof}

\begin{proposition}\label{propo:DerivativeParisi}
Let $\gamma\in \cuL$, and $\delta: [0,1)\to \reals$ be such that $\|\xi''\delta\|_{\sTV[0,t]}<\infty$ for all $t\in [0,1)$,
$\|\xi''\delta\|_1<\infty$, and $\delta(t) = 0$ for $t\in (1-\eps,1]$, $\eps>0$. Further assume that $\gamma+s\delta\ge 0$ for all $s\in [0,s_0)$ for some positive $s_0$.
Then
\begin{align}
\left. \frac{\de \Par}{\de s}(\gamma+s\delta)\right|_{s=0+} = \frac{1}{2}\int_{0}^1 \xi''(t) \delta(t) \big(\E\{\partial_x\Phi(t,X_t)^2\}\, -\, t\big)\, \de t\, .\label{eq:ParisiVariation}
\end{align}
(Here  $(X_t)_{t\in [0,1]}$ is the solution of the SDE \eqref{eq:SDE}.)
\end{proposition}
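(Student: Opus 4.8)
The plan is to start from $\Par(\gamma+s\delta)=\Phi^{\gamma+s\delta}(0,0)-\tfrac12\int_0^1 t\,\xi''(t)\big(\gamma(t)+s\delta(t)\big)\,\de t$. The second term is affine in $s$ and contributes $-\tfrac12\int_0^1 t\,\xi''(t)\delta(t)\,\de t$ to the derivative (finite since dominated by $\|\xi''\delta\|_1$), producing the ``$-t$'' in \eqref{eq:ParisiVariation}; so it suffices to prove
\[
\frac{\de}{\de s}\,\Phi^{\gamma+s\delta}(0,0)\Big|_{s=0^+}=\frac12\int_0^1\xi''(t)\,\delta(t)\,\E\big\{\partial_x\Phi(t,X_t)^2\big\}\,\de t,
\]
with $\Phi=\Phi^\gamma$ and $(X_t)$ the solution of \eqref{eq:SDE} for $\gamma$. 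The engine is a short It\^o computation. Write $u:=\partial_s\Phi^{\gamma+s\delta}\big|_{s=0}$; differentiating the Parisi PDE \eqref{eq:GeneralPDE} in $s$ shows that $u$ solves the linear backward equation
\[
\partial_t u+\tfrac12\xi''(t)\Big(\partial_x^2 u+\delta(t)(\partial_x\Phi)^2+2\gamma(t)\,\partial_x\Phi\,\partial_x u\Big)=0,\qquad u(1,x)=0.
\]
Applying It\^o's formula to $u(t,X_t)$ along \eqref{eq:SDE} and substituting this equation for $\partial_t u$, the $\partial_x^2 u$ term and the $\gamma\,\partial_x\Phi\,\partial_x u$ term cancel against the second-order and drift contributions, leaving $\de u(t,X_t)=-\tfrac12\xi''(t)\delta(t)\,\partial_x\Phi(t,X_t)^2\,\de t+\sqrt{\xi''(t)}\,\partial_x u(t,X_t)\,\de B_t$. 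Taking expectations, using $u(1,\cdot)=0$ and the fact that the It\^o integral has mean zero, yields $u(0,0)=\tfrac12\int_0^1\xi''(t)\delta(t)\,\E\{\partial_x\Phi(t,X_t)^2\}\,\de t$, which is exactly the claim.

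To make the engine rigorous I would first treat the case $\gamma\in\SF_+$ with $\delta$ piecewise constant (still vanishing on $(1-\eps,1]$). Then $\gamma+s\delta\in\SF_+$ for $s\in[0,s_0)$, so by the Cole--Hopf representation (Proposition~\ref{propo:SF}(a)) $\Phi^{\gamma+s\delta}$ is jointly smooth in $(s,t,x)$ for $t<1$, hence $u$ genuinely exists and solves the displayed linear PDE; moreover $\Phi^{\gamma+s\delta}\equiv\Phi^\gamma$ on $(1-\eps,1]$ by uniqueness, so $u$ and $\partial_x u$ vanish there and are bounded on $[0,1-\eps]\times\reals$ by the regularity of $\Phi$ (Lemma~\ref{lemma:Smoothness}). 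This is what legitimizes both the It\^o expansion and the vanishing of the stochastic integral's expectation, and it is precisely why the hypothesis ``$\delta\equiv 0$ near $t=1$'' is imposed: it confines the whole analysis to $[0,1-\eps]$, away from the terminal singularity coming from $f_0$.

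For general $\gamma,\delta$ as in the statement (here $\gamma+s\delta\in\cuL$ for $s\in[0,s_0)$, since the defining conditions of $\cuL$ are stable under such perturbations), I would argue by approximation, in keeping with the way $\Phi^\gamma$ and $\Par(\gamma)$ are defined on $\cuL$. Rather than passing the $s$-derivative through a limit, I would first upgrade the smooth-case identity to the integrated form $\Par(\gamma+s\delta)-\Par(\gamma)=\int_0^s R(\gamma+r\delta)\,\de r$, where $R(\gamma')$ denotes the right-hand side of \eqref{eq:ParisiVariation} evaluated at $\gamma'$; this requires only that $r\mapsto R(\gamma+r\delta)$ be continuous, which follows from $\partial_x\Phi^{\gamma'_n}\to\partial_x\Phi^{\gamma'}$ a.e.\ (Lemma~\ref{lemma:FirstDerivativePhi}), stability of the SDE \eqref{eq:SDE} under perturbation of its coefficient, and dominated convergence (all relevant integrands being bounded by $1$ in absolute value). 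I would then take $\gamma_n\in\SF_+$ with $\gamma_n\toLx\gamma$ and piecewise-constant $\delta_n$, supported on $[0,1-\eps]$, with $\xi''\delta_n$ converging to $\xi''\delta$ in $L^1$ and in total variation on $[0,1-\eps]$ and $\gamma_n+s\delta_n\ge 0$ for $s$ small: the left-hand side of the integrated identity converges by continuity of $\Par$ in the weighted $L^1$ metric (Proposition~\ref{propo:SF}(c) together with the trivial bound on the linear term), and the right-hand side by the same ingredients plus $\delta_n\to\delta$ in weighted $L^1$. Differentiating the resulting identity at $s=0^+$, again legitimate because $r\mapsto R(\gamma+r\delta)$ is continuous, gives \eqref{eq:ParisiVariation}. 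I expect the main obstacle to be this regularity bookkeeping rather than the It\^o algebra: showing that $s\mapsto\Phi^{\gamma+s\delta}$ is differentiable with a derivative $u$ for which $\partial_x u(t,X_t)$ is square-integrable, so that It\^o applies and the martingale term drops. The two devices that carry this through are the support restriction on $\delta$, which removes the near-$t=1$ irregularity of $\Phi$, and the reduction to $\SF_+$ data plus the integrated-identity/continuity argument, which replaces delicate pointwise $s$-differentiation by a robust limiting statement.
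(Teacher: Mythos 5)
Your proposal is correct in outline but follows a genuinely different route from the paper. Your smooth-case engine --- differentiate the Parisi PDE in $s$ to get the linearized equation for $u=\partial_s\Phi^{\gamma+s\delta}|_{s=0}$, then run It\^o along \eqref{eq:SDE} so that the second-order and drift terms cancel and $u(0,0)=\tfrac12\int_0^1\xi''\delta\,\E\{\partial_x\Phi(t,X_t)^2\}\,\de t$ --- is a valid Feynman--Kac linearization, and your observation that $\delta\equiv 0$ near $t=1$ forces $u\equiv 0$ on $[1-\eps,1]$ (so the terminal singularity never enters and the martingale term is honestly integrable) is exactly the right use of that hypothesis; the only cosmetic omission is that for $\gamma\in\SF_+$ one should apply It\^o piecewise between the breakpoints of $\gamma$ and $\delta$, as the paper does elsewhere for the HJB verification. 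The paper, by contrast, never differentiates the PDE nor reduces to $\SF_+$: it works directly with a general $\gamma\in\cuL$, invoking the exact finite-difference identity of \cite[Lemma 14]{jagannath2016dynamic}, which expresses $\Phi^s(0,0)-\Phi^0(0,0)$ as $\tfrac{s}{2}\int\xi''\delta\,\E\{\partial_x\Phi^0(t,Y^s_t)^2\}\de t$ for an intermediate diffusion $Y^s$, then proves $\|\partial_x\Phi^s-\partial_x\Phi^0\|_\infty\le Cs\|\xi''\delta\|_1$ (Eq.~\eqref{eq:PerturbDx}) and a Gronwall comparison $\sup_{t\le 1-\eps_0}|Y^s_t-X_t|\le Cs$, which gives the expansion with an explicit $O(s^2)$ error. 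What the paper's route buys is precisely the quantitative stability that your route defers: your limiting step (passing the integrated identity $\Par(\gamma+s\delta)-\Par(\gamma)=\int_0^s R(\gamma+r\delta)\,\de r$ through $\gamma_n\toLx\gamma$, $\delta_n\to\delta$, and then differentiating at $s=0^+$) needs continuity of $\gamma'\mapsto\E\{\partial_x\Phi^{\gamma'}(t,X^{\gamma'}_t)^2\}$ along the perturbation, and this does not follow from Lemma \ref{lemma:FirstDerivativePhi} plus a generic appeal to ``stability of the SDE'': the drifts $\xi''\gamma_n\partial_x\Phi^{\gamma_n}$ only converge in a weighted $L^1$ sense, so you must run essentially the same Gronwall argument the paper runs (using uniform-in-$n$ bounds on $\|\xi''\gamma_n\|_{\sTV[0,1-\eps_0]}$ and the locally uniform convergence of the $1$-Lipschitz functions $\partial_x\Phi^{\gamma_n}$), and you must construct approximating pairs with $\delta_n$ supported in $[0,1-\eps]$ and $\gamma_n+r\delta_n\ge 0$ uniformly on $[0,s]$ (e.g.\ via a convex-combination construction). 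These gaps are fillable with the paper's regularity toolkit (Lemmas \ref{lemma:SecondDerivativePhi}--\ref{lemma:Smoothness}), so your plan would go through, but the ``bookkeeping'' you set aside is in fact the same comparison estimate that constitutes the technical core of the paper's shorter, direct proof.
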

\begin{proof}
Let $\gamma^s\equiv \gamma+s\delta$, $s\in [0,\eps)$, and denote by $\Phi^s$ the corresponding solution of the Parisi PDE. 
Following the proof of Lemma 14 in \cite{jagannath2016dynamic}, we get
\begin{align}
\Phi^s(0,0)-\Phi^0(0,0) = \frac{s}{2}\int_{0}^1 \xi''(t) \delta(t) \E\{\partial_x\Phi^0(t,Y^s_t)^2\}\, \de t\, ,\label{eq:Difference}
\end{align}
where $Y^s_t$ is the solution of the SDE
\begin{align}
\de Y^s_t = \frac{1}{2}\xi''(t) \gamma^s(t)\big[\partial_x\Phi^0(t,Y^s_t)+\partial_x\Phi^s(t,Y^s_t)\big]\, \de t+ \sqrt{\xi''(t)}\, \de B_t\, ,\;\;\;\;\; Y^s_0=0\ .  \label{eq:dY}
\end{align}
We also obtain (by the same argument as in \cite[Lemma 14]{jagannath2016dynamic}, using Lemma \ref{lemma:Smoothness},
and noting that $\delta(t) = 0$ for $t>1-\eps$ and $\xi''\gamma$ is bounded on $[0,1-\eps)$)
\begin{align}
\|\partial_x\Phi^s-\partial_x\Phi^0\|_{\infty}\le C(\eps,\gamma) \|\xi''\delta\|_1 \cdot s\,. \label{eq:PerturbDx}
\end{align}

Taking the difference between this Eqs.~\eqref{eq:dY}  and  \eqref{eq:SDE}, we get, for $t\in [0,1-\eps_0)$
\begin{align*}
|Y^s_t-X_t| \le&C \int_0^t\xi''(u)|\gamma^s(u)-\gamma(u)|\de u +C\int_0^t\xi''\gamma(u)
\big|\partial_x\Phi^0(u,Y^s_u)-\partial_x\Phi^s(u,Y^s_u)\big|\, \de u\\
&+ C\int_0^t\xi''\gamma(u)
\big|\partial_x\Phi^0(u,X_u)-\partial_x\Phi^0(u,Y^s_u)\big|\, \de u\\
&\le C\|\xi''(\gamma^s-\gamma^0)\|_1 + C(\eps,\gamma)\|\xi''(\gamma^s-\gamma^0)\|_1\|\xi''\gamma\|_1+
C(\eps_0) \int_0^t\xi''\gamma(u)
\big|Y^s_u-X_u\big|\, \de u\, .
\end{align*}
In the second inequality we used Eq.~\eqref{eq:PerturbDx}, and the fact that $\partial_{x}^2\Phi$ is bounded for $t\in [0,1-\eps_0)$, see Lemma \ref{lemma:Smoothness}.
Since $\xi''\gamma(u)\le \|\xi''\gamma\|_{\sTV[0,1-\eps_0]}$ for $u\in [0,1-\eps_0)$, we finally obtain
\begin{align*}
|Y^s_t-X_t| \le C(\gamma,\eps)\, s\|\xi''\delta\|_1 + C(\gamma,\eps_0) \int_0^t \big|Y^s_u-X_u\big|\, \de u\, .
\end{align*}
Therefore, we conclude by Gronwall lemma that
\begin{align*}
\sup_{t\le 1-\eps_0}\big|Y^s_t-X_t\big| \le C(\eps,\eps_0,\gamma)\|\xi''\delta\|_1\, s
\end{align*}
Using this in Eq.~\eqref{eq:Difference}, together with the fact that $\partial_x\Phi^0$ is bounded and Lipschitz, and $\delta(t)=0$ for $t>1-\eps$, we get
\begin{align*}
\Phi^s(0,0)-\Phi^0(0,0) = \frac{s}{2}\int_{0}^1 \xi''(t) \delta(t) \E\{\partial_x\Phi^0(t,X_t)^2\}\, \de t +O(s^2)\, ,
\end{align*}
whence Eq.~\eqref{eq:ParisiVariation} immediately follows.
\end{proof}

For any $\gamma \in \cuL$, we have $\|\gamma\|_{\sTV[0,t]}<\infty$ for any $t\in [0,1)$.
We can therefore modify $\gamma$ in  (at most) countably many points to obtain a right-continuous function. 
Since this modification does not change the solution $\Phi^{\gamma}$, by Proposition \ref{propo:SF}, we will hereafter assume 
that any $\gamma \in\cuL$ is right-continuous.

For $\gamma\in\cuL$, we denote by $S(\gamma)\equiv\{t\in [0,1):\, \gamma(t)>0\}$, and by $\oS(\gamma)$ the closure
of $S(\gamma)$ in $[0,1)$ (in particular, note that $1\not\in \oS(\gamma)$).
\begin{lemma}\label{lemma:Intervals}
The support is a disjoint union of countably many intervals $S(\gamma) = \cup_{\alpha\in A}I_\alpha$, where
$I_{\alpha} = (a_{\alpha},b_{\alpha})$ or $I_{\alpha} = [a_{\alpha},b_{\alpha})$, $a_{\alpha}<b_{\alpha}$, and $A$ is countable.
\end{lemma}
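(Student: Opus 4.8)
The plan is to argue purely topologically, using only two properties of $\gamma$: that $\gamma\ge 0$ (part of the definition of $\cuL$) and that, as arranged in the paragraph preceding the lemma, $\gamma$ is right-continuous. Write $S\equiv S(\gamma)=\{t\in[0,1):\gamma(t)>0\}$. The crucial consequence of right-continuity together with positivity is a one-sided local openness: for every $t\in S$ there is $\eps_t>0$ with $[t,t+\eps_t)\subseteq S$, and since $t<1$ we may take $\eps_t$ small enough that $[t,t+\eps_t)\subseteq[0,1)$.

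First I would decompose $S$ into its connected components. Every connected subset of $\reals$ is an interval, so each component $I_\alpha$ is an interval; the components are pairwise disjoint and cover $S$, giving the decomposition $S=\bigcup_{\alpha\in A}I_\alpha$. To see that $A$ is countable, observe that by the one-sided openness above, the component containing any point $t\in S$ also contains the nondegenerate interval $[t,t+\eps_t)$; hence every $I_\alpha$ has nonempty interior and therefore contains a rational number, and distinct components contain distinct rationals. In particular each $I_\alpha$ is nondegenerate, i.e. $a_\alpha<b_\alpha$ where $a_\alpha=\inf I_\alpha$ and $b_\alpha=\sup I_\alpha$.

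It remains to check that no $I_\alpha$ contains its right endpoint. Suppose for contradiction that $b_\alpha\in I_\alpha$. Then $b_\alpha\in S\subseteq[0,1)$, so $b_\alpha<1$, and the one-sided openness yields $\eps>0$ with $[b_\alpha,b_\alpha+\eps)\subseteq S$. But then $I_\alpha\cup[b_\alpha,b_\alpha+\eps)$ is again an interval (with endpoints $a_\alpha$ and $b_\alpha+\eps$), it is contained in $S$, and it strictly contains $I_\alpha$, contradicting the maximality of the connected component $I_\alpha$. Hence $b_\alpha\notin I_\alpha$, so $I_\alpha$ equals $(a_\alpha,b_\alpha)$ or $[a_\alpha,b_\alpha)$ depending on whether $a_\alpha\in I_\alpha$, which is the assertion. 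There is no genuine difficulty in this argument; the only points requiring a little care are that connected components of the (generally non-open) set $S$ are still intervals, and that the maximality of a component rules out extending it to the right — both of which are immediate once the one-sided openness is isolated.
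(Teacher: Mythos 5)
Your argument is correct and rests on exactly the same key observation as the paper's proof, namely that right-continuity and positivity give $[t_0,t_0+\delta)\subseteq S(\gamma)$ for every $t_0\in S(\gamma)$; the paper simply states this and declares the claim immediate, while you spell out the routine connected-component bookkeeping. No issues.
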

\begin{proof}
If $t_0\in S(\gamma)$, then by right continuity there exists $\delta>0$ such that $[t_0,t_0+\delta)\subseteq S(\gamma)$.
This implies immediately the claim.
\end{proof}

\begin{corollary}\label{coro:Stationarity}
Assume $\gamma_*\in \cuL$ is such that $\Par(\gamma_*) = \inf_{\gamma\in\cuL}\Par(\gamma)$. Then
\begin{align}
t\in \oS(\gamma_*) &\;\;\;\Rightarrow\;\;\; \E\{\partial_x\Phi^{\gamma_*}(t,X_t)^2\} =t \, ,\label{eq:Support_1}\\
t\in [0,1)\setminus \oS(\gamma_*) &\;\;\;\Rightarrow\;\;\; \E\{\partial_x\Phi^{\gamma_*}(t,X_t)^2\} \ge t\, . \label{eq:Support_2}
\end{align}
\end{corollary}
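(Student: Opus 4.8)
The plan is to derive both implications from first-order optimality of $\gamma_*$ over the convex cone $\cuL$, using the variational identity of Proposition~\ref{propo:DerivativeParisi}. Throughout write $\psi(t):=\E\{\partial_x\Phi^{\gamma_*}(t,X_t)^2\}-t$, where $(X_t)_{t\in[0,1]}$ solves the SDE~\eqref{eq:SDE} with $\gamma=\gamma_*$; by Corollary~\ref{coro:ED2} the map $\psi$ is continuous on $[0,1-\eps]$ for every $\eps>0$, hence on $[0,1)$. The basic mechanism is the following: whenever $\delta$ satisfies the hypotheses of Proposition~\ref{propo:DerivativeParisi} and $\gamma_*+s\delta\in\cuL$ for all small $s\ge 0$, the function $s\mapsto\Par(\gamma_*+s\delta)$ is minimized at $s=0$, so its right derivative there exists and is nonnegative; by Proposition~\ref{propo:DerivativeParisi} that derivative equals $\tfrac12\int_0^1\xi''(t)\,\delta(t)\,\psi(t)\,\de t$, so $\int_0^1\xi''(t)\,\delta(t)\,\psi(t)\,\de t\ge 0$.

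\smallskip
\noindent\textit{Step 1: $\psi\ge 0$ on all of $[0,1)$, which already yields~\eqref{eq:Support_2}.} Fix $t_0\in[0,1)$ and a closed interval $J\ni t_0$ with $\sup J<1$, and take $\delta=\ind_J$. Then $\delta\ge 0$, so $\gamma_*+s\delta\ge 0$ for every $s\ge 0$; since $\xi''$ is continuous (hence bounded and of finite total variation) on $[0,1]$, we have $\|\xi''\ind_J\|_{\sTV[0,t]}<\infty$ for all $t<1$ and $\|\xi''\ind_J\|_1<\infty$, whence $\gamma_*+s\delta\in\cuL$ and $\delta$ meets the hypotheses of Proposition~\ref{propo:DerivativeParisi}. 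Optimality gives $\int_J\xi''(t)\psi(t)\,\de t\ge 0$. Shrinking $J$ to $\{t_0\}$ and using continuity of $\psi$ together with $\xi''>0$ on $(0,1)$ (the only possible zero in $[0,1)$ is at $t=0$, which does not affect the integral) forces $\psi(t_0)\ge 0$.

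\smallskip
\noindent\textit{Step 2: $\psi\le 0$ on $S(\gamma_*)$, hence $\psi\equiv 0$ on $\oS(\gamma_*)$, which is~\eqref{eq:Support_1}.} Fix $t_0\in S(\gamma_*)$. By Lemma~\ref{lemma:Intervals}, $t_0$ lies in a component interval $I_\alpha$ of $S(\gamma_*)$, which is right-open and contained in $[0,1)$; so we may pick a closed interval $J$ with $t_0\in J\subset I_\alpha$ and $\sup J<1$, on which $\gamma_*>0$. Take the ``shrinking'' variation $\delta=-\gamma_*\ind_J$. Then $\gamma_*+s\delta=\gamma_*(1-s\ind_J)\ge 0$ for $s\in[0,1)$, and the $\xi''$-weighted total variation on $[0,t]$ ($t<1$) and the $\xi''$-weighted $L^1$ norm of both $\gamma_*+s\delta$ and $\delta$ are controlled by the corresponding norms of $\gamma_*\in\cuL$; moreover $\delta$ vanishes on a neighbourhood of $1$, so the hypotheses of Proposition~\ref{propo:DerivativeParisi} hold. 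Optimality gives $-\int_J\xi''(t)\gamma_*(t)\psi(t)\,\de t\ge 0$. Since $\xi''\gamma_*>0$ a.e. on $J$ and $\psi$ is continuous, shrinking $J$ to $\{t_0\}$ forces $\psi(t_0)\le 0$. Combined with Step~1 this gives $\psi\equiv 0$ on $S(\gamma_*)$, and by continuity of $\psi$ also on its closure $\oS(\gamma_*)$ in $[0,1)$.

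\smallskip
Beyond invoking Proposition~\ref{propo:DerivativeParisi}, the only work is the routine verification that the families $\gamma_*+s\ind_J$ and $\gamma_*(1-s\ind_J)$ stay in $\cuL$ and that $\ind_J$, $-\gamma_*\ind_J$ satisfy that proposition's technical hypotheses (finite $\xi''$-weighted total variation on $[0,t]$ for every $t<1$, $\xi''$-integrability, and vanishing near $t=1$ --- this last point being precisely where $\oS(\gamma_*)\subset[0,1)$ enters), together with the elementary localization turning ``$\int_J(\cdots)\ge 0$ for all small $J\ni t_0$'' into ``$(\cdots)(t_0)\ge 0$'' via continuity of $\psi$ and positivity of $\xi''$. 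I do not expect a genuine obstacle here; the only points requiring mild care are the passage from $S(\gamma_*)$ to its closure and the observation that the possible degeneracy $\xi''(0)=0$ (when $c_2=0$) is harmless, since it concerns a single point and only integrals of $\xi''$ are used.
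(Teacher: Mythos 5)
Your proof is correct and follows essentially the same route as the paper: first-order optimality of $\gamma_*$ applied to the perturbations $\gamma_*+s\ind_J$ and $\gamma_*(1-s\ind_J)$ through Proposition~\ref{propo:DerivativeParisi}, followed by localization using the continuity of $t\mapsto\E\{\partial_x\Phi^{\gamma_*}(t,X_t)^2\}$ (Corollary~\ref{coro:ED2}) and passage to the closure $\oS(\gamma_*)$. The only cosmetic differences are that the paper derives \eqref{eq:Support_1} from a single two-sided variation $\delta=\gamma_*\ind_{[t_1,t_2)}$ with $s\in(-1,1)$ (so the derivative vanishes) while you combine two one-sided variations, and that your Step 1 gives the inequality on all of $[0,1)$ rather than only off the support --- both harmless.
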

\begin{proof} 
First consider Eq.~\eqref{eq:Support_1}.
For any $0\le t_1<t_2< 1$, set $\delta(t) =\gamma_*(t)\ind(t\in [t_1,t_2))$. Clearly $\gamma_*+s\delta\in \cuL$
for $s\in (-1,1)$. By the optimality of $\gamma_*$, and using Proposition \ref{propo:DerivativeParisi}, we have 
\begin{align*}
0 = \left. \frac{\de \Par}{\de s}(\gamma_*+s\delta)\right|_{s=0} = \frac{1}{2}\int_{t_1}^{t_2} \xi''(t) \gamma_*(t) \big(\E\{\partial_x\Phi^{\gamma_*}(t,X_t)^2\}\, -\, t\big)\, \de t
\end{align*}
Since $t_1,t_2$ are arbitrary, and $\xi''(t)>0$ for $t\in (0,1)$ this implies
$\gamma_*(t)  (\E\{\partial_x\Phi^{\gamma_*}(t,X_t)^2\}-t)=0$ for almost every $t\in [0,1)$.
Since $\gamma_*(t)$ is right-continuous and $\E\{\partial_x\Phi^{\gamma_*}(t,X_t)^2\}$ is continuous (see Corollary \ref{coro:ED2}),
it follows that $\gamma_*(t)  (\E\{\partial_x\Phi^{\gamma_*}(t,X_t)^2\}-t)=0$ for every $t\in [0,1)$.
This in turns implies $\E\{\partial_x\Phi^{\gamma_*}(t,X_t)^2\}=t$ for every $t\in S(\gamma_*)$.
This can be extended to $t\in \oS(\gamma_*)$ again by continuity of $t\mapsto \E\{\partial_x\Phi^{\gamma_*}(t,X_t)^2\}$.

Next consider Eq.~(\ref{eq:Support_2}). Notice that, by Lemma \ref{lemma:Intervals}, $[0,1)\setminus \oS(\gamma_*)$
is a disjoint union of open intervals. Let $J$ be such an interval, and consider any $[t_1,t_2]\subseteq J$. 
Set $\delta(t) = \ind(t\in(t_1,t_2])$, and notice that $\gamma_*+s\delta \in\cuL$ for $s\ge 0$.
By Proposition \ref{propo:DerivativeParisi}, we have
\begin{align*}
0\le \left. \frac{\de \Par}{\de s}(\gamma+s\delta)\right|_{s=0} = \frac{1}{2}\int_{t_1}^{t_2} \xi''(t) \big(\E\{\partial_x\Phi(t,X_t)^2\}\, -\, t\big)\, \de t\, .
\end{align*}
Since $t_1,t_2$ are arbitrary, $\xi''(t)>0$ for $t\in(0,1)$ and $t\mapsto \E\{\partial_x\Phi(t,X_t)^2\}$ is continuous,
this implies $\E\{\partial_x\Phi(t,X_t)^2\}\ge t$ for all $t\in J$, and hence all $t\in [0,1)\setminus \oS(\gamma_*)$.
\end{proof}
\begin{corollary}\label{coro:DX2_supp}
Assume $\gamma_*\in \cuL$ is such that $\Par(\gamma_*) = \inf_{\gamma\in\cuL}\Par(\gamma)$. Then
\begin{align*}
t\in \oS(\gamma_*) &\;\;\;\Rightarrow\;\;\; \xi''(t)\E\{\partial_{x}^2\Phi^{\gamma_*}(t,X_t)^2\} =1\, .
\end{align*}
\end{corollary}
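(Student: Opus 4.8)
The plan is to combine three facts established above: the first-order stationarity condition on the support (Corollary~\ref{coro:Stationarity}, Eq.~\eqref{eq:Support_1}), the integral identity for the second moment of $\partial_x\Phi^{\gamma_*}$ along the SDE \eqref{eq:SDE} (Corollary~\ref{coro:ED2}), and the continuity of $t\mapsto\E\{\partial_x^2\Phi^{\gamma_*}(t,X_t)^2\}$ (Lemma~\ref{lemma:DxxCont}). Throughout write $\Phi=\Phi^{\gamma_*}$, and set $g(t):=\E\{\partial_x\Phi(t,X_t)^2\}$ and $h(t):=\xi''(t)\,\E\{\partial_x^2\Phi(t,X_t)^2\}$. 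By Lemma~\ref{lemma:DxxCont} together with the smoothness of $\xi''$ on $[0,1]$, the function $h$ is continuous on $[0,1)$; by Corollary~\ref{coro:ED2}, $g(t_2)-g(t_1)=\int_{t_1}^{t_2}h(s)\,\de s$ for all $0\le t_1<t_2<1$; and by Corollary~\ref{coro:Stationarity}, $g(t)=t$ for every $t\in\oS(\gamma_*)$.

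Next I would work on a single connected piece of the support. By Lemma~\ref{lemma:Intervals}, $S(\gamma_*)=\bigcup_{\alpha\in A}I_\alpha$, where each $I_\alpha$ is a nondegenerate interval contained in $\oS(\gamma_*)$. Fix $\alpha$ and any $t_1<t_2$ in $I_\alpha$. Then $g(t_1)=t_1$ and $g(t_2)=t_2$, so the increment identity yields $\int_{t_1}^{t_2}\big(h(s)-1\big)\,\de s=0$. Since $t_1<t_2$ range over all pairs in $I_\alpha$, this forces $h(s)=1$ for Lebesgue-almost every $s\in I_\alpha$; invoking the continuity of $h$, we upgrade this to $h(s)=1$ for every $s\in I_\alpha$. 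Hence $h\equiv1$ on $S(\gamma_*)=\bigcup_\alpha I_\alpha$.

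Finally, $S(\gamma_*)$ is dense in its closure $\oS(\gamma_*)$ by definition, and $h$ is continuous on $[0,1)\supseteq\oS(\gamma_*)$; therefore $h\equiv1$ on all of $\oS(\gamma_*)$, which is exactly the asserted identity $\xi''(t)\E\{\partial_x^2\Phi^{\gamma_*}(t,X_t)^2\}=1$. (If $\gamma_*\equiv0$ there is nothing to prove, since then $\oS(\gamma_*)=\emptyset$.) The only mildly delicate points are the two places where an almost-everywhere statement is promoted to a pointwise one — first from the vanishing of all the integrals $\int_{t_1}^{t_2}(h-1)$ to $h=1$ a.e.\ on $I_\alpha$, then from a.e.\ to everywhere on $I_\alpha$, and likewise the passage from $S(\gamma_*)$ to $\oS(\gamma_*)$ — but each is immediate once one has the continuity of $h$ supplied by Lemma~\ref{lemma:DxxCont}, so no new estimate is required.
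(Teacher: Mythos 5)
Your proposal is correct and follows essentially the same route as the paper: combine the stationarity identity $\E\{\partial_x\Phi^{\gamma_*}(t,X_t)^2\}=t$ on the support (Corollary~\ref{coro:Stationarity}) with the integral identity of Corollary~\ref{coro:ED2} on each interval from Lemma~\ref{lemma:Intervals}, deduce the integrand equals $1$ almost everywhere, and upgrade to every point of $\oS(\gamma_*)$ via the continuity supplied by Lemma~\ref{lemma:DxxCont}. The only cosmetic difference is that you work on the intervals of $S(\gamma_*)$ and pass to the closure by continuity, while the paper works directly with the closed intervals constituting $\oS(\gamma_*)$; the substance is identical.
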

\begin{proof}
Set $\Phi(t,x)= \Phi^{\gamma_*}(t,x)$.
By Lemma \ref{lemma:Intervals}, $\oS(\gamma_*)$ is a disjoint union of closed intervals with non-empty interior.
Let $K$ be one such intervals. Then, for any $[t_1,t_2]\in K$, we have, by Lemma \ref{coro:Stationarity}
\begin{align*}
t_2-t_1 = \E\{\partial_x\Phi(t_2,X_{t_2})^2\} - \E\{\partial_x\Phi(t_1,X_{t_1})^2\} =
\int_{t_1}^{t_2}\xi''(t)\E\{\partial_{x}^2\Phi(t,X_t)^2\}  \de t\, .
\end{align*}
Since $t_1,t_2$ are arbitrary, we get $\xi''(t) \E\{\partial_{x}^2\Phi(t,X_t)^2\}=1$ for  almost every $t\in K$.
Using  Lemma \ref{lemma:DxxCont} we get $\xi''(t) \E\{\partial_{x}^2\Phi(t,X_t)^2\}=1$ 
for every $t\in \oS(\gamma_*)$.
\end{proof}

\begin{lemma}\label{lemma:DensityLB}
Assume $\gamma\in\cuL$ to be such that $\gamma(t)=0$ for all $t\in (t_1,1)$, where $t_1<1$. Then, for any 
$t_*\in (t_1,1)$, the probability distribution of $X_{t_*}$ has a density $p_{t_*}$ with respect to the Lebesgue measure. Further, for any
$t_*\in (t_1,1)$  and any $M\in \reals_{\ge 0}$, there exists $\eps(t_*,M,\gamma)>0$ such that
\begin{align*}
\inf_{|x|\le M, t\in [t_*,1]} p_{t}(x) \ge \eps(t_*,M,\gamma)\, .
\end{align*}
\end{lemma}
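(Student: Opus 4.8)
The plan is to exploit that the drift of the SDE~\eqref{eq:SDE} vanishes on $(t_1,1)$, so that for $t\ge t_1$ the variable $X_t$ is $X_{t_1}$ plus an independent Gaussian; both claims then reduce to elementary facts about convolutions with a Gaussian kernel.

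\textbf{Step 1 (Gaussian increment representation).} Since $\gamma(t)=0$ for $t\in(t_1,1)$, the drift $\xi''(t)\gamma(t)\partial_x\Phi(t,X_t)$ vanishes for a.e.\ $t\in[t_1,1]$. By Lemma~\ref{lemma:DPhiX}, which guarantees that $X$ is well defined and a.s.\ continuous on $[0,1]$, we therefore have
\[
X_t = X_{t_1} + \int_{t_1}^t \sqrt{\xi''(s)}\,\de B_s =: X_{t_1} + G_t\, ,\qquad t\in[t_1,1]\, .
\]
By It\^o's isometry, $G_t$ is a centered Gaussian with variance $\sigma_t^2 := \int_{t_1}^t\xi''(s)\,\de s = \xi'(t)-\xi'(t_1)$; moreover $G_t$ is measurable with respect to the increments $(B_s-B_{t_1})_{s\ge t_1}$, hence independent of $X_{t_1}$. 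Since $\xi''>0$ on $(0,1)$, we have $\sigma_t^2>0$ for every $t>t_1$, and $\sigma_1^2=\xi'(1)-\xi'(t_1)\in(0,\infty)$ since $t_1<1$ and $\xi'(1)<\infty$.

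\textbf{Step 2 (existence of a density).} Write $\mu$ for the law of $X_{t_1}$ and $\varphi_{\sigma^2}(x) := (2\pi\sigma^2)^{-1/2}e^{-x^2/2\sigma^2}$. Step 1 gives, for every $t\in(t_1,1]$, that $X_t\sim \varphi_{\sigma_t^2}*\mu$, i.e.\ $X_t$ has Lebesgue density
\[
p_t(x) = \E\big[\varphi_{\sigma_t^2}(x-X_{t_1})\big]\, ,
\]
which is a bona fide density because $\sigma_t^2>0$. In particular $X_{t_*}$ has density $p_{t_*}$ for any $t_*\in(t_1,1)$.

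\textbf{Step 3 (uniform lower bound).} Fix $t_*\in(t_1,1)$ and $M\ge 0$. For $t\in[t_*,1]$ the variance $\sigma_t^2$ lies in $[\sigma_-^2,\sigma_+^2]$ with $\sigma_-^2 := \xi'(t_*)-\xi'(t_1)>0$ and $\sigma_+^2 := \xi'(1)-\xi'(t_1)<\infty$. Since $X_{t_1}$ is a.s.\ finite (Lemma~\ref{lemma:DPhiX}), there is $K=K(t_1,\gamma)<\infty$ with $\prob(|X_{t_1}|\le K)\ge \tfrac12$. For all $|x|\le M$, all $t\in[t_*,1]$, and all $\omega$ with $|X_{t_1}(\omega)|\le K$, we have $(x-X_{t_1})^2\le (M+K)^2$, hence $\varphi_{\sigma_t^2}(x-X_{t_1})\ge (2\pi\sigma_+^2)^{-1/2}e^{-(M+K)^2/2\sigma_-^2}$. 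Therefore
\[
p_t(x) \ge \E\Big[\varphi_{\sigma_t^2}(x-X_{t_1})\,\ind_{\{|X_{t_1}|\le K\}}\Big] \ge \frac{1}{2\sqrt{2\pi\sigma_+^2}}\, e^{-(M+K)^2/2\sigma_-^2} =: \eps(t_*,M,\gamma) > 0\, ,
\]
and taking the infimum over $|x|\le M$, $t\in[t_*,1]$ yields the claim.

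There is no substantive obstacle: the only points requiring (routine) care are the independence of $G_t$ from $X_{t_1}$, which follows from the adaptedness of $X$ in Lemma~\ref{lemma:DPhiX}, and the strict positivity $\xi''>0$ on $(0,1)$ (equivalently $\xi'$ strictly increasing), used to ensure $\sigma_t^2>0$ and $\sigma_-^2>0$.
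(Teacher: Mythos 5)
Your proof is correct and follows essentially the same route as the paper: on $(t_1,1)$ the drift vanishes, so $X_t$ is $X_{t_1}$ plus an independent Gaussian of variance $\xi'(t)-\xi'(t_1)$, the density is the Gaussian convolution $p_t(x)=\E[\varphi_{\sigma_t^2}(x-X_{t_1})]$, and the uniform lower bound comes from restricting to an event $\{|X_{t_1}|\le K\}$ of probability at least $1/2$. Your Step 3 just makes the uniformity in $t\in[t_*,1]$ slightly more explicit by bounding the variance between $\sigma_-^2$ and $\sigma_+^2$, which matches the paper's argument in substance.
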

\begin{proof}
Since the SDE \eqref{eq:SDE} has strong solutions, $X_{t_1}$  is a well defined random variable taking values in $\reals$. Therefore, there exists 
$C_1= C_1(\gamma)<\infty$ such that $\prob(|X_{t_1}|\le C_1) \ge 1/2$. For $t\in (t_1,1)$, $X_t$ satisfies $\de X_t = \sqrt{\xi''(t)}\, \de B_t$
and therefore the law of $X_t$ is the convolution of a Gaussian (with variance $\theta(t)^2 \equiv\xi'(t)-\xi(t_1)>0$) with the law of $X_{t_1}$, and therefore has a density.
To prove the desired lower bound on the density, let $f_G(x) = \exp(-x^2/2)/\sqrt{2\pi}$ denote the standard Gaussian density. Note that, for any $|x|\le M$,
\begin{align*}
p_t(x) &= \E\Big\{\frac{1}{\theta(t)}f_G\Big(\frac{x-X_{t_1}}{\theta(t)}\Big)\Big\}\\
& \ge \E\Big\{\frac{1}{\theta(t)}f_G\Big(\frac{x-X_{t_1}}{\theta(t)}\Big)\ind_{|X_{t_2}|\le C_1}\Big\}\\
& \ge \frac{1}{\theta(t)}f_G\Big(\frac{M+C_1}{\theta(t)}\Big)\, \prob(|X_{t_1}|\le C_1) \ge \frac{1}{2\theta(t)}f_G\Big(\frac{M+C_1}{\theta(t)}\Big)\, .
\end{align*}
The latter expression is lower bounded by $\eps(t_*,M,\gamma)>0$ for any $t\in [t_*,1]$, as claimed.
\end{proof}

\begin{lemma}\label{lemma:DPsi}
For any $\gamma\in\cuL$, let $\Phi=\Phi^{\gamma}$ be the solution of the Parisi PDE constructed above.  
Then, the following identities hold (as weak derivatives in $[0,1)$)
have
\begin{align}
\frac{\de\phantom{t}}{\de t}\E\{\Phi(t,X_{t})\}&=\frac{1}{2}\, \xi''(t)\gamma(t)\E\{\partial_{x}\Phi(t,X_t)^2\}\, \label{eq:DEPhi}\\
\frac{\de\phantom{t}}{\de t}\E\{X_{t}\partial_x\Phi(t,X_{t})\}&=\xi''(t)\gamma(t)\E\{\partial_{x}\Phi(t,X_t)^2\}+\xi''(t) \E\{\partial_{x}^2\Phi(t,X_t)\}\, . \label{eq:DEPhiX}
\end{align}
\end{lemma}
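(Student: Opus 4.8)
The plan is to obtain both identities by applying It\^o's formula to the processes $t\mapsto\Phi(t,X_t)$ and $t\mapsto X_t\,\partial_x\Phi(t,X_t)$, where $(X_t)_{t\in[0,1]}$ is the strong solution of the SDE~\eqref{eq:SDE}, and reading off the drift term after taking expectations. I would work on a fixed subinterval $[0,1-\eps]$ and let $\eps\downarrow0$ at the end, since the two displayed identities are only claimed as weak derivatives on $[0,1)$. On $[0,1-\eps]$ I would invoke: $|\partial_x\Phi|\le1$ (Lemma~\ref{lemma:FirstDerivativePhi}); $\partial_x^j\Phi,\partial_t\partial_x^j\Phi\in L^\infty([0,1-\eps];L^2(\reals)\cap L^\infty(\reals))$ for $j\ge1$ and $\partial_t\Phi\in L^\infty$ (Lemmas~\ref{lemma:SecondDerivativePhi},~\ref{lemma:Smoothness}); $X_t\in L^2$ with $\E|X_t-X_s|^2\le C|t-s|$; and the martingale representation $\partial_x\Phi(t,X_t)=\int_0^t\sqrt{\xi''(s)}\,\partial_x^2\Phi(s,X_s)\,\de B_s$ from Lemma~\ref{lemma:DPhiX}. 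Since for general $\gamma\in\cuL$ the function $\Phi$ is only a \emph{weak} solution of the Parisi PDE, the precise tool is the generalized It\^o formula used to prove Lemma~\ref{lemma:DPhiX} (Proposition~22 of~\cite{jagannath2016dynamic}), which applies exactly under this regularity.

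For~\eqref{eq:DEPhi}: applying this It\^o formula to $\Phi(t,X_t)$ and using $\de\langle X\rangle_t=\xi''(t)\,\de t$ gives
\begin{align*}
\Phi(t,X_t)-\Phi(0,0)=\int_0^t\Big(\partial_s\Phi+\xi''(s)\gamma(s)(\partial_x\Phi)^2+\tfrac12\xi''(s)\partial_x^2\Phi\Big)(s,X_s)\,\de s+\int_0^t\sqrt{\xi''(s)}\,\partial_x\Phi(s,X_s)\,\de B_s\,.
\end{align*}
Substituting the (weak) Parisi PDE $\partial_s\Phi=-\tfrac12\xi''(\partial_x^2\Phi+\gamma(\partial_x\Phi)^2)$ collapses the drift integrand to $\tfrac12\xi''(s)\gamma(s)(\partial_x\Phi(s,X_s))^2$. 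The stochastic integral has bounded integrand on $[0,1-\eps]$, hence is a genuine martingale; taking expectations and using the fundamental theorem of calculus for the Lebesgue integral (the integrand is dominated by $\xi''\gamma$, which is locally integrable on $[0,1)$) yields~\eqref{eq:DEPhi}.

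For~\eqref{eq:DEPhiX}: rather than apply It\^o directly to the linearly growing function $x\mapsto x\,\partial_x\Phi(t,x)$, I would use the It\^o product rule on $X_t\cdot N_t$ with $N_t:=\partial_x\Phi(t,X_t)$, where $\de N_t=\sqrt{\xi''(t)}\,\partial_x^2\Phi(t,X_t)\,\de B_t$ (Lemma~\ref{lemma:DPhiX}) and $\de X_t=\xi''(t)\gamma(t)\,\partial_x\Phi(t,X_t)\,\de t+\sqrt{\xi''(t)}\,\de B_t$. Then $\de(X_tN_t)=X_t\,\de N_t+N_t\,\de X_t+\de\langle X,N\rangle_t$, where $\de\langle X,N\rangle_t=\xi''(t)\,\partial_x^2\Phi(t,X_t)\,\de t$, the drift of $N_t\,\de X_t$ is $\xi''(t)\gamma(t)(\partial_x\Phi(t,X_t))^2\,\de t$ (using $N_t=\partial_x\Phi(t,X_t)$), and $X_t\,\de N_t$ together with the Brownian part of $N_t\,\de X_t$ are local martingales whose integrands lie in $L^2$ on $[0,1-\eps]$ (as $X_t\in L^2$ and $\partial_x\Phi,\partial_x^2\Phi$ are bounded there), hence true martingales of zero mean. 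Taking expectations gives the integral form of~\eqref{eq:DEPhiX} on $[0,1-\eps]$, and letting $\eps\downarrow0$ gives the weak-derivative identity on $[0,1)$.

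The main obstacle is the rigorous justification of It\^o's formula for $\Phi$ when $\gamma\in\cuL\setminus\SF_+$, where $\Phi$ solves the Parisi PDE only weakly; this is handled exactly as in Lemma~\ref{lemma:DPhiX}, via the generalized It\^o formula of~\cite{jagannath2016dynamic} combined with the regularity of Lemma~\ref{lemma:Smoothness}. The remaining points are routine: checking the integrability that lets one drop the stochastic integrals in expectation (notably for $X_t\,\de N_t$, using the Gaussian tails of $X_t$ and boundedness of $\partial_x^2\Phi$ on $[0,1-\eps]$), and the harmless degradation of regularity at $t=1$, which the $[0,1-\eps]$ localization takes care of.
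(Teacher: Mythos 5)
Your proposal is correct and follows essentially the same route as the paper: Itô's formula on $\Phi(t,X_t)$ with the weak Parisi PDE collapsing the drift for \eqref{eq:DEPhi}, and the Itô product rule on $X_t\,\partial_x\Phi(t,X_t)$ using the martingale representation of Lemma~\ref{lemma:DPhiX} for \eqref{eq:DEPhiX}, then taking expectations. The localization to $[0,1-\eps]$ and reliance on Lemma~\ref{lemma:Smoothness} for the integrability/regularity justifications match the paper's treatment of the identities as weak derivatives on $[0,1)$.
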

\begin{proof}
We will write $\Phi_t = \partial_t \Phi$, $\Phi_x = \partial_x\Phi$ and $\Phi_{xx} = \partial_{x}^2\Phi$. 
For the first identity, using  the regularity properties of Lemma \ref{lemma:Smoothness} and It\^o's formula, we get
\begin{align*}
\de \Phi(t,X_{t}) =& \Phi_t(t,X_{t}) \, \de t+ \xi''(t)\gamma(t) \Phi_x(t,X_t)^2 \de t+   \sqrt{\xi''(t)}\,\Phi_x(t,X_{t})\, \de B_t
+\frac{1}{2}\Phi_{xx}(t,X_{t})\, \xi''(t) \de t\\
=&   \frac{1}{2}\xi''(t)\gamma(t) \Phi_x(t,X_t)^2 \de t+   \sqrt{\xi''(t)}\,\Phi_{x}(t,X_{t})\, \de B_t\, ,
\end{align*}
where the equalities hold after integrating over a test function $h\in C_c^{\infty}([0,1))$ and
in the second step we used the fact that $\Phi$ is a weak solution of Eq.~\eqref{eq:GeneralPDE}. The claim \eqref{eq:DEPhi} follows 
by taking expectations. 

We proceed analogously for the second identity. Using Lemma \ref{lemma:DPhiX}, and the fact that the $(X_t)_{t\in [0,1)}$ solved the SDE \eqref{eq:SDE},
we get
\begin{align*}
\de\big(X_t \Phi_x(t,X_t)\big) = &\Phi_x(t,X_t)\de X_t+ X_t \de\big(\Phi_x(t,X_t)\big) + \xi''(t) \Phi_{xx}(t,X_t) \, \de t\\
= &\xi''(t) \gamma(t) \Phi_x(t,X_t)^2\de t+\sqrt{\xi''(t)} \Phi_x(t,X_t)\de B_t +\sqrt{\xi''(t)}  X_t \Phi_{xx}(t,X_t)\de B_t \\
&+\xi''(t) \Phi_{xx}(t,X_t) \, \de t\, .
\end{align*}
The claim \eqref{eq:DEPhi} follows again by taking expectations.
\end{proof}

\begin{theorem}\label{thm:FullSupport}
Consider the case $f_0(x) = |x|$.
Assume $\gamma_*\in \cuL$ is such that $\Par(\gamma_*) = \inf_{\gamma\in\cuL}\Par(\gamma)$. Then $\oS(\gamma_*)= [0,1)$.
\end{theorem}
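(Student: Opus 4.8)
\emph{Proof plan.} The plan is to argue by contradiction. Suppose $\gamma_*\in\cuL$ minimizes $\Par$ but $\oS(\gamma_*)\neq[0,1)$, and set $g(t):=\E\{\partial_x\Phi^{\gamma_*}(t,X_t)^2\}$, where $(X_t)$ solves the SDE~\eqref{eq:SDE} with $\gamma=\gamma_*$. I will show that on the interior of any ``gap'' of $\oS(\gamma_*)$ the function $g$ is \emph{strictly} convex, whereas Corollary~\ref{coro:Stationarity} pins $g$ to the diagonal $t\mapsto t$ at the endpoints of the gap and forces $g$ to stay on or above it inside --- and these facts are incompatible.

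First I set up notation. By Lemma~\ref{lemma:Intervals} the relatively open set $[0,1)\setminus\oS(\gamma_*)$ is a disjoint union of intervals; let $J$ be one of them, with left endpoint $a$ and right endpoint $b$, $0\le a<b\le1$. Since $\oS(\gamma_*)$ is closed in $[0,1)$ and $J$ is maximal, $a\in\oS(\gamma_*)\cup\{0\}$ and, when $b<1$, $b\in\oS(\gamma_*)$; moreover $\gamma_*\equiv0$ on $\mathrm{int}(J)$, so there the PDE~\eqref{eq:GeneralPDE} reduces to the backward heat equation $\partial_t\Phi+\tfrac12\xi''(t)\partial_x^2\Phi=0$ and~\eqref{eq:SDE} reduces to $\de X_t=\sqrt{\xi''(t)}\,\de B_t$; by Lemma~\ref{lemma:Smoothness}, $\Phi=\Phi^{\gamma_*}$ is smooth on $\mathrm{int}(J)\times\reals$ with uniform derivative bounds on $[0,1-\eps]$. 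I record the boundary values of $g$. By Corollary~\ref{coro:ED2}, $g$ is continuous on $[0,1)$. We have $g(a)=a$: if $a\in\oS(\gamma_*)$ this is~\eqref{eq:Support_1}, while if $a=0$ it follows from the $x\mapsto-x$ symmetry of $\Phi$, which gives $\partial_x\Phi(0,0)=0$. We also have $g(b^-):=\lim_{t\uparrow b}g(t)=b$: for $b<1$ this is again~\eqref{eq:Support_1}; for $b=1$ we use that $\partial_x\Phi(t,X_t)=\E[\sign(X_1)\mid\cF_t]$ (the representation of Proposition~\ref{propo:SF}(a), extended to $\cuL$ via Lemma~\ref{lemma:FirstDerivativePhi}) is a bounded martingale, hence converges in $L^2$ to $\sign(X_1)$, and $\prob(X_1=0)=0$ by Lemma~\ref{lemma:DensityLB} (applicable since $\gamma_*$ vanishes on $(a,1)$).

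The heart of the proof is the strict convexity of $g$ on $\mathrm{int}(J)$. By Corollary~\ref{coro:ED2}, $g\in C^1(\mathrm{int}(J))$ with $g'(t)=\xi''(t)\,h(t)$, $h(t):=\E\{\partial_x^2\Phi(t,X_t)^2\}$. On $\mathrm{int}(J)$ the function $\partial_x^2\Phi$ also solves the heat equation, so It\^o's formula --- legitimate by the regularity of Lemma~\ref{lemma:Smoothness}, exactly as in Lemma~\ref{lemma:DPhiX} --- gives $\de\big(\partial_x^2\Phi(t,X_t)\big)=\sqrt{\xi''(t)}\,\partial_x^3\Phi(t,X_t)\,\de B_t$; hence $h$ is absolutely continuous on $\mathrm{int}(J)$ with $h'(t)=\xi''(t)\,\E\{\partial_x^3\Phi(t,X_t)^2\}$, and therefore
\[g''(t)=\xi'''(t)\,h(t)+\xi''(t)^2\,\E\{\partial_x^3\Phi(t,X_t)^2\}\qquad\text{on }\mathrm{int}(J).\]
Both terms are nonnegative since $\xi'',\xi'''\ge0$, and the second is \emph{strictly} positive: $\xi''(t)>0$ for $t\in(0,1)$; the law of $X_t$ has a density $p_t$ strictly positive on all of $\reals$ (it is the convolution of the law of $X_a$ with a nondegenerate Gaussian of variance $\xi'(t)-\xi'(a)>0$); and $\partial_x^3\Phi(t,\cdot)\not\equiv0$, because otherwise $\partial_x^2\Phi(t,\cdot)$ would be a constant, contradicting $\partial_x^2\Phi(t,\cdot)\in L^2(\reals)$ (Lemma~\ref{lemma:SecondDerivativePhi}) together with $\partial_x^2\Phi(t,\cdot)\not\equiv0$. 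For the latter, on $\mathrm{int}(J)$ one has $\partial_x\Phi(t,\cdot)=G_{(\xi'(b)-\xi'(t))/2}*\partial_x\Phi(b,\cdot)$ (with $\partial_x\Phi(1,\cdot)=\sign(\cdot)$ when $b=1$), so $\partial_x^2\Phi(t,\cdot)=G_{(\xi'(b)-\xi'(t))/2}*\mu$ where $\mu$ is a nonnegative measure, nonzero because its mass is $2$ when $b=1$ and because Corollary~\ref{coro:DX2_supp} gives $\xi''(b)\E\{\partial_x^2\Phi(b,X_b)^2\}=1>0$ when $b<1$; since $G>0$ this makes $\partial_x^2\Phi(t,\cdot)>0$ everywhere. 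Hence $g''>0$ throughout $\mathrm{int}(J)$.

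Finally I close the contradiction. Since $g(a)=a$ and $g(t)\ge t$ on $\mathrm{int}(J)$ by~\eqref{eq:Support_2}, the right-derivative of the convex function $g$ at $a$ obeys $g_+'(a)=\inf_{t\in\mathrm{int}(J)}\frac{g(t)-a}{t-a}\ge1$; strict convexity then forces $g'(s)>g_+'(a)\ge1$ for every $s\in\mathrm{int}(J)$, so $g(t)=a+\int_a^t g'(s)\,\de s>a+(t-a)=t$ for all $t\in\mathrm{int}(J)$, and letting $t\uparrow b$ gives $g(b^-)=a+\int_a^b g'(s)\,\de s>a+(b-a)=b$, contradicting $g(b^-)=b$. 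Hence no gap exists, i.e.\ $\oS(\gamma_*)=[0,1)$. The step I anticipate as the main obstacle is making rigorous the identity $h'(t)=\xi''(t)\,\E\{\partial_x^3\Phi(t,X_t)^2\}$ and the non-degeneracy $\partial_x^3\Phi(t,\cdot)\not\equiv0$ on the gap --- that is, transferring enough regularity of $\Phi$ onto $\partial_x^2\Phi$ to justify It\^o's formula and the resulting quadratic-variation formula, and excluding the degenerate affine profile; securing $g(1^-)=1$ (via martingale convergence and $\prob(X_1=0)=0$) is the other delicate point.
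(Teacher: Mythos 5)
Your proof is correct, and it closes the argument by a genuinely different mechanism than the paper, which treats the two kinds of gaps separately. For an interior gap $(t_1,t_2)$ with $t_2<1$, the paper writes $\partial_x^2\Phi(t,X_t)=\E[\partial_x^2\Phi(t_2,X_{t_2})\mid X_t]$ (heat equation on the gap), applies Jensen together with Corollary~\ref{coro:DX2_supp} to get $\E\{\partial_x^2\Phi(t,X_t)^2\}\le 1/\xi''(t_2)$, and then integrates via Corollary~\ref{coro:ED2}, using monotonicity of $\xi''$ to contradict the pinning $\E\{\partial_x\Phi(t_2,X_{t_2})^2\}=t_2$; for a terminal gap $(t_1,1)$ it computes $\partial_x\Phi$ explicitly and uses the density lower bound of Lemma~\ref{lemma:DensityLB} to show $1-\E\{\partial_x\Phi(t,X_t)^2\}\ge C\sqrt{1-t}$, contradicting Eq.~\eqref{eq:Support_2}. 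You instead push the identity of Corollary~\ref{coro:ED2} one derivative higher on the gap (It\^o applied to $\partial_x^2\Phi$, which is routine there since the equation is the time-changed heat equation and all spatial derivatives are bounded on compact time subintervals), obtain strict convexity of $g(t)=\E\{\partial_x\Phi^{\gamma_*}(t,X_t)^2\}$ from the term $\xi''(t)^2\,\E\{(\partial_x^3\Phi(t,X_t))^2\}>0$, and close both cases at once from the boundary data $g(a)=a$, $g\ge t$ inside, and $g(b^-)=b$; at $b=1$ the last identity comes from bounded martingale convergence of $\partial_x\Phi(t,X_t)=\E[\sign(X_1)\mid\cF_t]$ plus absolute continuity of the law of $X_1$, a weaker input than the quantitative bound of Lemma~\ref{lemma:DensityLB}. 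What each approach buys: the paper's interior-gap step needs only second derivatives and Jensen, but its strict inequality leans on $\xi''$ increasing strictly somewhere in the gap, whereas your non-degeneracy term $\E\{(\partial_x^3\Phi(t,X_t))^2\}>0$ --- secured by heat-kernel positivity of $\partial_x^2\Phi$, its $L^2$ integrability from Lemma~\ref{lemma:SecondDerivativePhi}, and strict positivity of the density of $X_t$ on the gap --- gives a unified argument that works even when $\xi''$ is constant; the price is the extra (but, on the gap, easily justified) regularity needed for the third-derivative It\^o/isometry step and the limit identification $g(1^-)=1$, which are exactly the two points you flag.
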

\begin{proof}
Throughout this proof $\Phi(t,x) =\Phi^{\gamma_*}(t,x)$.

By Lemma \ref{lemma:Intervals}, $\oS^c(\gamma_*) = [0,1)\setminus\oS(\gamma_*)$ is a countable union of disjoint  intervals,
open in $[0,1)$. 
First assume that at least one of these intervals is of the form $(t_1,t_2)$ with $0<t_1<t_2<1$, or $[t_1=0,t_2)$, $t_2<1$. By Corollary
\ref{coro:Stationarity} and Corollary \ref{coro:DX2_supp} we know that
\begin{align}
  \E\{\partial_x\Phi(t_1,X_{t_1})^2\} = t_1\, ,\;\;\;\;  \xi''(t_2)\E\{\partial_{x}^2\Phi(t_2,X_{t_2})^2\} = 1\, ,\;\;\;\; i\in\{1,2\}\, , \label{eq:Ti}\\
\E\{\partial_x\Phi(t,X_t)^2\} \ge t\, \;\;\;\;\; \forall t\in(t_1,t_2)\, . \label{eq:Ti2}
\end{align}
(Notice that the first identity in Eq.~\eqref{eq:Ti} holds also for $t_1=0$ since $\partial_x\Phi(0,0)=0$ by a symmetry argument.)
Further, for $t\in (t_1,t_2)$, $\Phi$ solves the PDE $\Phi_t+(\xi''(t)/2)\partial_{x}^2\Phi=0$ which coincides with the 
heat equation, apart from a time change. We therefore obtain, for $t\in (t_1,t_2]$
\begin{align*}
\Phi(t,x) =\E\big\{\Phi(t_2,x+\sqrt{\xi'(t_2)-\xi'(t)} \, G)\big\}\, ,\;\;\;\;\; G\sim\normal(0,1)\, .
\end{align*}
Differentiating this equation, and using dominated convergence (thanks to the fact that $\partial_{x}^2\Phi(t_2,x)$
is bounded by Lemma \ref{lemma:Smoothness}), 
we get $\partial_{x}^2\Phi(t,x) = \E\big\{\partial_{x}^2\Phi(t_2,x+\sqrt{\xi'(t_2)-\xi'(t)} \, G)\big\}$.
Notice also that the SDE \eqref{eq:SDE} reads, for $t\in (t_1,t_2)$, 
$\de X_t = \sqrt{\xi''(t)}\, \de B_t$, and therefore we can rewrite the last equation as
\begin{align*}
\partial_{x}^2\Phi(t,X_t) =\E\big\{\partial_{x}^2\Phi(t_2,X_{t_2})|X_t\big\}\, .
\end{align*}
By Jensen inequality, we have
\begin{align}
\E\{\partial_{x}^2\Phi(t,X_t)^2\} \le \E\big\{\partial_{x}^2\Phi(t_2,X_{t_2})^2\} = \frac{1}{\xi''(t_2)}\, ,
\end{align}
where in the last step we used Eq.~\eqref{eq:Ti}. Using Corollary \ref{coro:ED2} we get, for $t\in [t_1,t_2]$
\begin{align*}
\E\{\partial_{x}\Phi(t,X_t)^2\}& = \E\{\partial_{x}\Phi(t_1,X_{t_1})^2\} +
\int_{t_1}^{t}  \xi''(s)\E\{\partial_{x}^2\Phi(s,X_{s})^2\} \,\de s\\
& \le t_1+\int_{t_1}^{t}  \frac{\xi''(s)}{\xi''(t_2)}\,\de s< t\, ,
\end{align*}
where in the last step we used the fact that $t\mapsto \xi''(t)$ is
monotone increasing.
The last equation is in contradiction with Eq.~\eqref{eq:Ti}, and therefore $\oS^c(\gamma_*)$
can be either empty, of consist of a single interval $(t_1,1)$.

In order to complete the proof, we need to rule out the case $\oS^c(\gamma_*)= (t_1,1)$.
Assume by contradiction that indeed $\oS^c(\gamma_*)= (t_1,1)$. For $t\in (t_1,1)$, let $r= r(t) =\xi'(1)-\xi'(t)$, and notice that 
$r(t)$ is monotone decreasing with $r(t) = \xi''(1)(1-t)+O((1-t)^2)$ as $t\to 1$. 
By solving the Parisi PDE in the interval $(t_1,1)$, we get $\partial_x \Phi(t,x) = \E\,\sign(G+x/\sqrt{r(t)})$, where $G\sim\normal(0,1)$,
whence, for $t\in(t_1,1)$,
\begin{align*}
1-\E\{\partial_x\Phi(t,X_t)^2\} &=\E Q\Big(\frac{X_t}{\sqrt{r(t)}}\Big) \, ,\\
Q(x) &\equiv 1- \E\big\{\sign(x+G)\big\}^2\, .
\end{align*}
Note that $0\le Q(x)\le 1$ is continuous, with $Q(0) = 1$. Hence, there exists a numerical constant $\delta_0\in (0,1)$ such that $Q(x) \ge 1/2$ for $|x|\le \delta_0$.
Therefore, fixing $t_*\in (t_1,1)$, for any $t\in(t_*,1)$
\begin{align*}
1-\E\{\partial_x\Phi(t,X_t)^2\} &\ge \frac{1}{2}\prob\big(|X_t|\le \delta_0\sqrt{r(t)}\big) \\
& \stackrel{(a)}{\ge} \delta_0\eps(t_*,1,\gamma)\, \sqrt{r(t)}\stackrel{(b)}{\ge} C \sqrt{1-t}\, ,
\end{align*}
where $(a)$ follows by Lemma \ref{lemma:DensityLB} and $(b)$ holds for some $C=C(\gamma)>0$.
We therefore obtain $\E\{\partial_x\Phi(t,X_t)^2\}\le 1-C\sqrt{1-t}$, which contradicts Corollary \ref{coro:Stationarity} for $t$ close enough to $1$.
\end{proof}

\subsection{Proof of Theorem \ref{thm:VarPrinciple}}

Before passing to the actual proof, we state and prove a simple lemma.
\begin{lemma}\label{lemma:SimpleTV}
Let $g:[a,b]\times\reals\to \reals$ be bounded and Lipschitz continuous in its first argument, i.e. $|g(t_1,x)-g(t_2,x)|\le L|t_1-t_2|$ for all $x\in\reals$, $t_1,t_2\in [a,b]$,
and $h:[a,b]\to \reals$ have bounded total variation. Then $f= gh$ has bounded \emph{strong} total variation.
\end{lemma}
\begin{proof}
Fix $a\le t_0<\dots<t_n\le b$ and $x_1,\dots,x_n\in\reals$. Then
\begin{align*}
\sum_{i=1}^n |f(t_i,x_i) - f(t_{i-1},x_i)| &= \sum_{i=1}^n |h(t_i)g(t_i,x_i) - h(t_{i-1})g(t_{i-1},x_i)|\\
&\le \sum_{i=1}^n |h(t_i)| |g(t_i,x_i) - g(t_{i-1},x_i)|+\sum_{i=1}^n |h(t_i)-h_(t_{i-1})| |g(t_{i-1},x_i)|\\
&\le \sum_{i=1}^n |h(t_i)| L|t_i - t_{i-1}|+\|g\|_{\infty}\sum_{i=1}^n |h(t_i)-h(t_{i-1})|\\
& \le L(b-a) \|h\|_{\infty} + \|g\|_{\infty}\|h\|_{\sTV}\, .
\end{align*}
The claim follows since $\|h\|_{\infty}\le |h(a)|+\|h\|_{\sTV}<\infty$.
\end{proof}

\begin{proof}[Proof of Theorem \ref{thm:VarPrinciple}]
Let $\gamma\in\cuL$ be such that $\Par(\gamma) = \inf_{\tilde{\gamma}\in\cuL}\Par(\tilde{\gamma})$. We denote by $\Phi(t,x)=\Phi^{\gamma}(t,x)$ 
the corresponding solution of the Parisi  PDE, as constructed in Section \ref{sec:PropertiesVariational},  and fix $t_*\in [0,1)$.
We apply Theorem \ref{thm:value_message_passing} whereby $u$, $v$ are defined as follows for $t\in[0,t_*]$:
\begin{align}\label{eq:choice_uv}
v(t,x) \equiv \xi''(t)\gamma(t)\partial_x\Phi(t,x)\, ,\;\;\;\; u(t,x) \equiv \partial_{x}^2\Phi(t,x)\, .
\end{align}
For $t\in (t_*,1]$, we simply set $v(t,x) = v(t_*,x)$, $u(t,x) = u(t_*,x)$. Notice that this choice is immaterial since the algorithm of
Theorem \ref{thm:value_message_passing} never uses $v(t,x)$, $u(t,x)$ for $t>t_*$.
We define $(X_t)_{t\in [0,1]}$ by solving the SDE \eqref{eq:firstSDE}, which coincides, for $t\in [0,t_*]$ with the SDE \eqref{eq:SDE}.

We next check that these choices satisfy Assumption \ref{ass:uv}. Notice that, by construction, it is sufficient to consider $t\in [0,t_*]$.
\begin{itemize}
\item[\bf{(A1)}] $v$ is bounded, since $\|\partial_x\Phi\|_{\infty}\le 1$ by Lemma \ref{lemma:FirstDerivativePhi}, and therefore,
for $t\in [0,t_*]$, $x\in \reals$, $|v(t,x)|\le \|\xi''\gamma\|_{\sTV[0,t_*]} <\infty$. Further, $u$ is bounded because
$\|\partial_{x}^2\Phi(t,\,\cdot\,)\|_{\infty}\le C(t_*)$ for almost all $t\le t_*$ (by Lemma \ref{lemma:Smoothness}.$(a)$), and that we can choose a 
representative of $\partial_{x}^2\Phi$ which is continuous in time by Lemma \ref{lemma:Smoothness}.$(b)$.
\item[\bf{(A2,3)}] $v$ is Lipschitz continuous in space, because 
$|v(t,x_1)-v(t,x_2)|\le \xi''\gamma(t) \|\partial_{x}^2\Phi(t,\,\cdot\,)\|_{\infty} |x_1-x_2|\le \|\xi''\gamma\|_{\sTV[0,t_*]}C(t_*)|x_1-x_2|\le 
C'(t_*)|x_1-x_2|$ where we used the fact that
$\|\partial_{x}^2\Phi(t,\,\cdot\,)\|_{\infty}\le C(t_*)$ for almost all $t\le t_*$ (by Lemma \ref{lemma:Smoothness}.$(a)$), and that we can choose a 
representative of $\partial_{x}^2\Phi$ which s continuous in time by Lemma \ref{lemma:Smoothness}.$(b)$.

Analogously $u$ is Lipschitz continuous in space, because $|u(t,x_1)-u(t,x_2)|\le\|\partial_{x}^3\Phi(t,\,\cdot\,)\|_{\infty} |x_1-x_2|$,
and using Lemma \ref{lemma:Smoothness}.
\item[\bf{(A4)}] $v$ has bounded strong total variation by applying Lemma \ref{lemma:SimpleTV}. Indeed $\xi''\gamma$ has 
bounded total variation on $[0,t_*]$, and $\partial_x\Phi$ is bounded by Lemma \ref{lemma:FirstDerivativePhi} and Lipschitz
by Lemma \ref{lemma:Smoothness} as discussed above.

Further, $u$ has bounded strong total variation because  $\partial_{x}^2\Phi$ is Lipschitz continuous on $[0,t_*]\times\reals$, again
by Lemma \ref{lemma:Smoothness}. 
\end{itemize}

Let us next check the other assumptions in  Theorem \ref{thm:value_message_passing}.
 By Lemma \ref{lemma:DPhiX}, we have $M_{t_*} = \partial_x\Phi(t_*,X_{t_*})$ and therefore, using Lemma \ref{lemma:FirstDerivativePhi},
$|M_{t_*}|\le 1$ almost surely.

Further $\E[M_t^2] = \E[\partial_x\Phi(t,X_{t})^2] = t$
by Corollary \ref{coro:Stationarity} and Theorem \ref{thm:FullSupport}.

We are left with the task of computing the value achieved by the algorithm. By Theorem \ref{thm:value_message_passing}, this is given by
\begin{align}
\cuE(u,v) = \int_{0}^{t_*}\xi''(t) \E[u(t,X_t)]\, \de t = \int_{0}^{t_*}\xi''(t) \E[\partial_{x}^2\Phi(t,X_t)]\, \de t  \, . \label{eq:EnergyAchievedProof}
\end{align}

Define $\Psi:[0,1)\times \reals\to \reals$ by $\Psi(t,x) = \Phi(t,x) -x\partial_x\Phi(t,x)$. By Lemma \ref{lemma:Smoothness}, we can assume this
to be continuous, and hence $\lim_{t\to 0}\E\Psi(t,X_t) =\E\Psi(0,X_0) =\Phi(0,0)$. We therefore get, using Lemma \ref{lemma:DPsi},
\begin{align*}
\Phi(0,0) = \E\Psi(t_*,X_{t_*})+\frac{1}{2}\int_0^{t_*}\xi''(t) \gamma(t) \E\{\partial_x\Phi(t,X_t)^2\}\, \de t
+\int_0^{t_*} \xi''(t) \E\{\partial_{x}^2\Phi(t,X_t)\}\, \de t\, .
\end{align*}
Comparing this with Eq.~\eqref{eq:EnergyAchievedProof}, we get
\begin{align*}
\Par(\gamma)-\cuE(u,v) &=  \E\Psi(t_*,X_{t_*})+\frac{1}{2}\int_0^{t_*}\xi''(t) \gamma(t) \left(\E\{\partial_x\Phi(t,X_t)^2\}-t\right)\, \de t\\
&= \E\Psi(t_*,X_{t_*})\, .
\end{align*}
where in the second step we used Corollary \ref{coro:Stationarity} and Theorem \ref{thm:FullSupport}.

The proof is completed by showing that we obtain $\Par(\gamma)-\cuE(u,v) = \E\Psi(t_*,X_{t_*})\le \eps$ by taking $t_*$ close enough to one.
In order to show this, recall that $\Phi(t,\,\cdot\, )$ is convex, so $\Phi(t,x) -x\partial_x\Phi(t,x)\le \Phi(t,0)$. Moreover, $|\partial_x\Phi(t,x)|\le 1$. Whence
\begin{align*}
\Phi(t,0)-|x|\le \Psi(t,x) \le \Phi(t,0)\, .
\end{align*}
Notice that $\Phi(t,0)\to 0$ as $t\to 1$ (because $\Phi$ is continuous on $[0,1]\times \reals$, and $\Phi(1,x) = |x|$), and therefore 
\begin{align*}
\limsup_{t_*\to 1} \E\Psi(t_*,X_{t_*}) = \limsup_{t_*\to 1} \E\{\Psi(t_*,X_{t_*})\}-\Phi(t,0) \le 0\, .
\end{align*}
\end{proof}

\subsection{Proof of Corollary \ref{coro:NoGap}}

The key tool is provided by the following lemma, which is a variant of Corollary \ref{coro:Stationarity}, and of
results from earlier literature (the difference being that we focus on the zero-temperature case).
\begin{lemma}
  Assume the no-overlap gap assumption to hold for the mixture $\xi$,
  namely there exists $\gamma_*\in\cuU$ strictly increasing in $[0,1)$ such that  $\Par(\gamma_*) = \inf_{\gamma\in \cuU} \Par(\gamma)$.
  Then, for any $t\in [0,1)$,
  \begin{align}
    \E\{\partial_x\Phi^{\gamma_*}(t,X_t)^2\} =t \, .\label{eq:StationarityNoGap}
    \end{align}
  \end{lemma}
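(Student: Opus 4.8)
The idea is to re-run the first-order variational argument behind Corollary~\ref{coro:Stationarity}, but using perturbations of $\gamma_*$ that stay inside the monotone class $\cuU$ — the perturbations $\delta=\gamma_*\ind_{[t_1,t_2)}$ used there generically destroy monotonicity, so they are not available here. Write $\Phi=\Phi^{\gamma_*}$, let $(X_t)_{t\in[0,1]}$ solve the SDE~\eqref{eq:SDE} with $\gamma=\gamma_*$, and put $q(t):=\E\{\partial_x\Phi(t,X_t)^2\}$. By Corollary~\ref{coro:ED2}, $q$ is continuous on $[0,1)$, and $q(0)=\partial_x\Phi(0,0)^2=0$ since $\Phi(t,\cdot)$ is even (terminal condition $|x|$). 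Fix $t_0\in[0,1)$; the goal is $q(t_0)=t_0$.

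\textbf{The admissible perturbations.} For $t_0<t_2<1$, consider the two directions
\begin{align*}
\delta_1(t)=\ind_{[t_0,t_2)}(t)\,\big(\gamma_*(t_2^-)-\gamma_*(t)\big)\ \ge\ 0,\qquad
\delta_2(t)=\ind_{[t_0,t_2)}(t)\,\big(\gamma_*(t_0)-\gamma_*(t)\big)\ \le\ 0.
\end{align*}
On $[t_0,t_2)$ one has $\gamma_*+s\delta_1=(1-s)\gamma_*+s\,\gamma_*(t_2^-)$ and $\gamma_*+s\delta_2=(1-s)\gamma_*+s\,\gamma_*(t_0)$, which for $s\in[0,1]$ are convex combinations of $\gamma_*$ with a constant, hence nondecreasing and nonnegative there; using $\gamma_*(t_0)\le\gamma_*(t_2^-)\le\gamma_*(t_2)$ one checks by inspection that the jumps introduced at $t_0$ and $t_2$ preserve monotonicity, so $\gamma_*+s\delta_i\in\cuU$ for all $s\in[0,1]$. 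Each $\delta_i$ is bounded, supported in $[t_0,t_2)\subset[0,1)$, and $\xi''\delta_i$ has bounded total variation on every $[0,t]$, $t<1$ (because $\xi''$ is $C^1$, and $\gamma_*$ is monotone and bounded on $[0,t_2]$, the latter since $\gamma_*(t_2)(1-t_2)\le\int_0^1\gamma_*<\infty$). Thus Proposition~\ref{propo:DerivativeParisi} applies, and optimality of $\gamma_*$ over $\cuU$ ($\Par(\gamma_*+s\delta_i)\ge\Par(\gamma_*)$ for $s\in[0,1]$) forces $\frac{\de}{\de s}\Par(\gamma_*+s\delta_i)\big|_{s=0+}\ge0$, i.e.
\begin{align*}
\int_{t_0}^{t_2}\xi''(t)\big(\gamma_*(t_2^-)-\gamma_*(t)\big)\big(q(t)-t\big)\,\de t\ \ge\ 0,\qquad
\int_{t_0}^{t_2}\xi''(t)\big(\gamma_*(t)-\gamma_*(t_0)\big)\big(q(t)-t\big)\,\de t\ \le\ 0.
\end{align*}

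\textbf{The limiting argument and conclusion.} Write $q(t)-t=(q(t_0)-t_0)+r(t)$, where by continuity $\sup_{t\in[t_0,t_2]}|r(t)|=:\omega(t_2)\to0$ as $t_2\downarrow t_0$. Set $I(t_2)=\int_{t_0}^{t_2}\xi''(t)(\gamma_*(t_2^-)-\gamma_*(t))\,\de t$ and $J(t_2)=\int_{t_0}^{t_2}\xi''(t)(\gamma_*(t)-\gamma_*(t_0))\,\de t$. This is precisely where strict monotonicity of $\gamma_*$ enters: for $t\in(t_0,t_2)$ strict increase gives $\gamma_*(t_0)<\gamma_*(t)<\gamma_*(t_2^-)$ (no flat stretch is possible), so together with $\xi''>0$ on $(0,1)$ we get $I(t_2),J(t_2)>0$. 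Substituting the decomposition of $q(t)-t$ into the two inequalities and dividing by $I(t_2)$ and $J(t_2)$ respectively yields $q(t_0)-t_0\ge-\omega(t_2)$ and $q(t_0)-t_0\le\omega(t_2)$; letting $t_2\downarrow t_0$ gives $q(t_0)=t_0$. Since $t_0\in[0,1)$ is arbitrary, this is~\eqref{eq:StationarityNoGap}. I expect the only genuine obstacle to be the first step — recognizing that the $\cuL$-perturbations of Corollary~\ref{coro:Stationarity} are unavailable and designing the two admissible one-sided replacements $\delta_1,\delta_2$ inside $\cuU$ — together with the bookkeeping of the endpoint jumps; everything after that is a routine limiting computation.
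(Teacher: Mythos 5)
Your proof is correct and takes essentially the same route as the paper: the same one-sided perturbations $\delta(t)=[\gamma_*(\text{endpoint value})-\gamma_*(t)]\ind$ on a small interval (which stay in $\cuU$ thanks to monotonicity), Proposition~\ref{propo:DerivativeParisi} plus first-order optimality over $\cuU$, strict increase of $\gamma_*$ to make the weights strictly positive, and continuity of $t\mapsto\E\{\partial_x\Phi(t,X_t)^2\}$ to conclude. The only difference is cosmetic: the paper deduces $q\le t$ and $q\ge t$ almost everywhere and then invokes continuity, while you localize at $t_0$ and let $t_2\downarrow t_0$ with an explicit modulus.
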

  \begin{proof}
    Fix $0<t_1<t_2<1$, an define $\delta(t) = [\gamma_*(t_1)-\gamma_*(t)]\ind_{(t_1,t_2)}(t)$. It is easy to see that
    this satisfies the assumptions of Proposition \ref{propo:DerivativeParisi}, with $s_0=1$, whence, letting
    $\gamma^s=\gamma_*+s\delta$,
\begin{align*}
  \left. \frac{\de \Par}{\de s}(\gamma^s)\right|_{s=0+} = -\frac{1}{2}\int_{t_1}^{t_2} \xi''(t) \big(\gamma_*(t)-\gamma_*(t_1))\, 
  \big(\E\{\partial_x\Phi(t,X_t)^2\}\, -\, t\big)\, \de t\, .
\end{align*}
(Here $\Phi=\Phi^{\gamma_*}$.)
On the other hand, $\gamma^s\in \cuU$ for $s\in [0,1]$ (since $\gamma_*$ is strictly increasing),  whence
\begin{align*}
\int_{t_1}^{t_2} \xi''(t) \big(\gamma_*(t)-\gamma_*(t_1))\, 
  \big(\E\{\partial_x\Phi(t,X_t)^2\}\, -\, t\big)\, \de t \le 0\, .
\end{align*}
for all $t_1<t_2$. Since $\gamma_*(t)-\gamma_*(t_1)>0$ strictly for all $t>t_1$, this implies $\E\{\partial_x\Phi(t,X_t)^2\} - t\le 0$
for almost every $t$, and therefore for every $t$ by Lemma \ref{lemma:DxxCont}.

The $\E\{\partial_x\Phi(t,X_t)^2\} - t\ge 0$ is proved in the same way, by using $\delta(t) = [\gamma_*(t_2)-\gamma_*(t)]\ind_{(t_1,t_2)}(t)$.
  \end{proof}

  Let $\gamma_*$ be a strictly increasing minimizer of $\Par(\,\cdot\,)$ in $\cuU$, namely
  $\Par(\gamma_*) = \inf_{\gamma\in \cuU} \Par(\gamma)$.  We claim that $\gamma_*$ minimizes $\Par(\,\cdot\,)$ over the larger
  space $\cuL$, i.e. $\Par(\gamma_*) = \inf_{\gamma\in \cuL} \Par(\gamma)$, thus proving the corollary.
  
  By the last lemma, $\gamma_*$ verifies the stationarity condition \eqref{eq:StationarityNoGap}. 
  Since $\Par:\cuL\to\reals$ is convex
  (this follows by exactly the same proof as \cite[Theorem 20]{jagannath2016dynamic}),
  the function $s\mapsto\Par((1-s)\gamma_*+s\gamma)$ is convex over the interval $[0,1]$ for any $\gamma\in\cuL$,
  whence
  \begin{align*}
   \Par(\gamma)- \Par(\gamma_*)&\ge \left.\frac{\de\Par}{\de s}(\gamma_*+s(\gamma-\gamma_*))\right|_{s=0} \\
                               &= \frac{1}{2}\int_{0}^1 \xi''(t) \big(\gamma(t)-\gamma_*(t)\big)\, \big(\E\{\partial_x\Phi^{\gamma_*}(t,X_t)^2\}\, -\, t\big)\, \de t = 0\, .
  \end{align*}
  We thus conclude that $\gamma_*$ minimizes $\Par$ over $\cuL$.
  
\section{Proof of Theorem \ref{thm:stoch_control}}
\label{sec:ProofControl}

\subsection{A candidate solution}
  
We produce a solution to HJB~\eqref{eq:HJB} via a change of variables by taking the Legendre transform of the solution to the Parisi PDE~\eqref{eq:PDEFirst} which we redisplay here: 
\begin{align}\label{eq:PDE_Parisi_nu}
\begin{split}
\partial_t \Phi_{\gamma}(t,x)+\frac{1}{2}\xi''(t) \Big(\partial_x^2\Phi_{\gamma}(t,x)+\gamma(t) (\partial_x\Phi(t,x))^2\Big) &= 0,~~ (t,x) \in [0,1)\times \reals,\\
\Phi_{\gamma}(1,x) &= |x|,~~ x \in \reals.
\end{split}
\end{align}
Since $\gamma$ is piecewise constant, the PDE~\eqref{eq:PDE_Parisi_nu} can be solved via the Cole-Hopf transform and the solution is highly regular in space as shown in Proposition~\ref{propo:SF}. 
 We define (the negative of) the Legendre transform of $\Phi_{\gamma}$ as
 \[\Phi_{\gamma}^*(t,z) := \inf_{x \in \reals} \big\{\Phi_{\gamma}(t,x) - xz\big\},\]  
 and define a candidate solution to HJB as
 \begin{equation}\label{eq:candidate}
 V(t,z) := \Phi_{\gamma}^*(t,z) - \frac{1}{2}\nu(t)z^2 - \frac{1}{2} \int_t^1 \nu(s)\de s,
 \end{equation}
where we recall that $\nu(t) = \int_t^1 \xi''(s)\gamma(s) \de s$.
\begin{proposition}\label{prop:equality}
For all $(t,z) \in [0,1]\times (-1,1)$, $ \VF_{\gamma}(t,z)=V(t,z) $, where $\VF_{\gamma}$ is defined in~\eqref{eq:valueBellman}. 
\end{proposition}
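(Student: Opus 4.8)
The plan is a standard verification argument for the stochastic control problem \eqref{eq:valueBellman}. First I would check that the candidate $V$ of \eqref{eq:candidate} is a classical solution of the HJB equation \eqref{eq:HJB} on $[0,1)\times(-1,1)$ with terminal value $0$; then establish $V\ge\VF_\gamma$ by applying It\^o's formula along an arbitrary admissible control; and finally establish $V\le\VF_\gamma$ by exhibiting a nearly optimal control built from the Parisi SDE. (The case $t=1$ is trivial: $\VF_\gamma(1,z)=0=V(1,z)$.)

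\textbf{Legendre duality and the HJB equation.} Since $\gamma\in\SF_+$, Proposition~\ref{propo:SF} gives that $\Phi_\gamma$ is smooth on $[0,1)\times\reals$, convex in $x$ with $|\partial_x\Phi_\gamma|\le1$; I would additionally use strict convexity $\partial_x^2\Phi_\gamma(t,x)>0$ for $t<1$, which follows from the Cole--Hopf representation. Then for $(t,z)\in[0,1)\times(-1,1)$ the infimum defining $\Phi_\gamma^*(t,z)$ is attained at a unique $x^*(t,z)$ with $\partial_x\Phi_\gamma(t,x^*(t,z))=z$; the map $z\mapsto x^*(t,z)$ is smooth, $\Phi_\gamma^*(t,\cdot)$ is smooth strictly concave with $\partial_z\Phi_\gamma^*=-x^*$, $\partial_z^2\Phi_\gamma^*=-1/\partial_x^2\Phi_\gamma(\cdot,x^*)$, and by the envelope identity $\partial_t\Phi_\gamma^*(t,z)=\partial_t\Phi_\gamma(t,x^*(t,z))$. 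Differentiating \eqref{eq:candidate} gives $\nu(t)+\partial_z^2V(t,z)=\partial_z^2\Phi_\gamma^*(t,z)=-1/\partial_x^2\Phi_\gamma(t,x^*)<0$, so the supremum over $\lambda$ in \eqref{eq:HJB} is finite, equals $\tfrac12\partial_x^2\Phi_\gamma(t,x^*)$, and is attained at $\lambda^*(t,z)=\partial_x^2\Phi_\gamma(t,x^*(t,z))$; using the Parisi PDE \eqref{eq:PDE_Parisi_nu} at $x=x^*$ (where $\partial_x\Phi_\gamma=z$) and $\nu'(t)=-\xi''(t)\gamma(t)$ one finds $\partial_tV(t,z)=-\tfrac12\xi''(t)\partial_x^2\Phi_\gamma(t,x^*)+\tfrac12\nu(t)$, and substitution shows the HJB equation holds identically. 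The terminal condition is immediate since $\Phi_\gamma^*(1,z)=\inf_x(|x|-xz)=0$ for $|z|<1$ and $\nu(1)=0$. I would also record, via Lemma~\ref{lemma:Smoothness}(b), the estimate $|\Phi_\gamma(s,x)-|x||\le C(1-s)$ and hence $|\Phi_\gamma^*(s,z)|\le C(1-s)$ uniformly over $z\in(-1,1)$, which controls the terminal layer below.

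\textbf{Verification.} For $V\ge\VF_\gamma$: fix admissible $u\in D[t,1]$ with $Z^u_s:=z+\int_t^s\sqrt{\xi''}u\,\de B$ and $Z^u_1\in(-1,1)$ a.s. Since $Z^u$ is a square-integrable martingale, $1\pm Z^u_s=\E[1\pm Z^u_1\mid\mathcal F_s]>0$ a.s., so $Z^u_s\in(-1,1)$ a.s. and $V(s,Z^u_s)$ is well defined. Applying It\^o on $[t,1-\eps]$ and using $\partial_tV=-\xi''\sup_\lambda\{\cdots\}+\tfrac12\nu\le-\xi''\big(u_s+\tfrac{u_s^2}{2}(\nu+\partial_z^2V)\big)+\tfrac12\nu$ (take $\lambda=u_s$), the drift of $V(s,Z^u_s)$ is at most $-\xi''u_s-\tfrac12\nu(\xi''u_s^2-1)$; after a localization handling the It\^o integral (whose integrand involves the term $\partial_zV=-x^*-\nu z$, unbounded near $z=\pm1$) this gives $V(t,z)\ge\E\int_t^{1-\eps}\big(\xi''u_s+\tfrac12\nu(\xi''u_s^2-1)\big)\de s+\E\,V(1-\eps,Z^u_{1-\eps})$, and $\eps\to0$ (last term $\to0$ by the uniform estimate) yields $V(t,z)\ge\E\big[\int_t^1\xi''u_s\,\de s+\tfrac12\int_t^1\nu(\xi''u_s^2-1)\,\de s\big]$; taking the sup gives the claim. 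For $V\le\VF_\gamma$: let $X$ solve the Parisi SDE \eqref{eq:SDE} with drift $\xi''\gamma\,\partial_x\Phi_\gamma$ started from $X_t=x^*(t,z)$; by Lemma~\ref{lemma:DPhiX} (shifted to start at $t$) the martingale $Z^*_s:=\partial_x\Phi_\gamma(s,X_s)$ equals $z+\int_t^s\sqrt{\xi''}u^*\,\de B$ with $u^*_r=\partial_x^2\Phi_\gamma(r,X_r)$, and Corollary~\ref{coro:ED2} gives $\E\int_t^1\xi''(u^*)^2\de s\le1-z^2<\infty$, so $u^*\in D[t,1]$. Since $Z^*_1=\sign(X_1)=\pm1$ a.s. the control $u^*$ is inadmissible, so I would use the truncation $u^{(\eps)}_s:=u^*_s\mathbf1_{\{s\le1-\eps\}}$, which is admissible because $|\partial_x\Phi_\gamma(1-\eps,\cdot)|<1$ strictly. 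Along $Z^*$ the control $u^*$ attains the supremum in HJB, so It\^o becomes an equality, $\E\int_t^{1-\eps}\big(\xi''u^*_s+\tfrac12\nu(\xi''(u^*_s)^2-1)\big)\de s=V(t,z)-\E\,V(1-\eps,Z^*_{1-\eps})$, whence the objective at $u^{(\eps)}$ equals $V(t,z)-\E\,V(1-\eps,Z^*_{1-\eps})-\tfrac12\int_{1-\eps}^1\nu(s)\,\de s\to V(t,z)$ as $\eps\to0$; thus $\VF_\gamma(t,z)\ge V(t,z)$.

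\textbf{Main obstacle.} The genuine difficulty lies at the terminal time: the datum $|x|$ is non-smooth at $0$, $\partial_x\Phi_\gamma\to\sign$, and $x^*(t,z)$ blows up as $z\to\pm1$, so $V$ fails to be $C^2$ up to $t=1$ and the optimal control is inadmissible. This is why the whole argument must be run on $[t,1-\eps]$ and passed to the limit, and why one needs both the uniform smallness $|\Phi_\gamma^*(s,\cdot)|\le C(1-s)$ and the $D$-integrability bound from Corollary~\ref{coro:ED2}; the accompanying localization of the It\^o stochastic integrals (since $\partial_zV$ is unbounded near $z=\pm1$) is routine but must be done with care.
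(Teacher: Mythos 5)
Your proposal is correct and follows essentially the same route as the paper: the Legendre-transform candidate $V$, verification via the Parisi PDE that it solves the HJB equation, It\^o plus the HJB inequality for $V\ge\VF_{\gamma}$, and the control $u^*_s=\partial_x^2\Phi_{\gamma}(s,X_s)$ along the Parisi SDE for the converse (your $\eps$-truncation near the terminal time cleanly handles the admissibility point that the paper treats more tersely, and your localization remark addresses the unbounded $\partial_z V$). The only technical step you elide is that, since $\gamma\in\SF_+$ has interior jump points at which $\partial_t V$ is discontinuous, It\^o's formula must be applied separately on each interval of constancy of $\gamma$ and the resulting identities telescoped, as the paper does explicitly.
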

In particular the value at $(0,0)$ is 
\begin{align*}
\VF_{\gamma}(0,0) &= \inf_{x}\Phi_{\gamma}(0,x) - \frac{1}{2} \int_0^1 \nu(s)\de s\\
&= \Phi_{\gamma}(0,0) - \frac{1}{2} \int_0^1 s\xi''(s)\gamma(s) \de s
= \Par(\gamma).
\end{align*}
 The second equality follows since $\Phi_{\gamma}(t,\,\cdot\, )$ is convex and even. This proves Proposition~\ref{prop:value_at_zero}.
 
 \subsection{Verification}
 We dedicate this section to the proof of Proposition~\ref{prop:equality}. We collect in the next lemma the regularity properties of
 $\Phi_{\gamma}$ which will be used in what follows. 
\begin{lemma}\label{lem:regularity_Phi}
For $\gamma \in \SF_+$, we have the following. 
\begin{itemize}
\item[$(a)$] $\partial_x^j \Phi_{\gamma} \in C([0,1)\times \reals)$ for all $j\ge 0$.
\item[$(b)$] $\partial_t\partial_x^{j}\Phi_{\gamma} \in C([a,b)\times \reals)$ for all $j\ge 0$ and for any interval $[a, b)$ on which
  $\gamma$ is constant.
\end{itemize}

Further, for all $t \in [0,1)$:
\begin{itemize}
\item[$(c)$] The range of the map $x \mapsto \partial_x\Phi_{\gamma}(t,x)$ is the open interval $(-1,1)$. In particular $|\partial_x\Phi_{\gamma}| < 1$.
\item[$(d)$]  $\partial_x\Phi_{\gamma}(t,\cdot)$ is strictly increasing.
\item[$(e)$]  For all $x \in \reals$, $0 < \partial_x^2\Phi_{\gamma}(t',x) \le C(t,\gamma)$ for all $t'\in [0,t]$ and some constant $C(t,\gamma)<\infty$.  
\end{itemize}
\end{lemma}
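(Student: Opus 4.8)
The plan is to use the fact that for $\gamma\in\SF_+$ the Parisi PDE \eqref{eq:PDE_Parisi_nu} is explicitly solvable slab by slab via the Cole--Hopf transform, and to propagate regularity downward from $t=1$. Let $0=t_0<t_1<\dots<t_m=1$ be the breakpoints of $\gamma$, with $\gamma\equiv a_i\ge 0$ on $(t_{i-1},t_i)$. Using the time change $\theta(t)=(\xi'(1)-\xi'(t))/2$ from Lemma~\ref{lemma:SecondDerivativePhi} and setting $u(\theta(t),x)=\Phi_\gamma(t,x)$, the PDE becomes $\partial_\theta u-\partial_x^2 u=m(\theta)\,u_x^2$ with $m$ piecewise constant, and on each slab where $m\equiv a$ the substitution $w=e^{a u}$ (or $w=u$ when $a=0$) turns this into the heat equation $\partial_\theta w=\partial_x^2 w$, so $w(\theta,\cdot)$ equals the Gaussian convolution $G_{\theta-\theta'}*w(\theta',\cdot)$ of its value at the upper endpoint $\theta'$ of the slab. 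Starting from the terminal datum $e^{a_m|x|}$ (resp. $|x|$) at $\theta=0$, which is even, convex, grows at most exponentially, and whose Gaussian convolutions are therefore finite, strictly positive and $C^\infty$ in $x$ for $\theta>0$ (and jointly smooth in $(\theta,x)$ once $\theta$ is bounded away from $0$), an induction on $i=m,m-1,\dots,1$ glues the slabs: at each interior breakpoint $t_i$ the value $\Phi_\gamma(t_i,\cdot)$ computed from the slab above is smooth (since $\theta(t_i)>0$) and is precisely the datum fed to the slab below, so $\Phi_\gamma$ and all its $x$-derivatives match continuously across the $t_i$. This gives $(a)$; and since $\partial_\theta w=\partial_x^2 w$ is finite and continuous as soon as the slab datum is $C^2$ — which is the case on every slab not abutting $t=1$, and on the top slab for all $t<1$ — it also gives $(b)$, the unique genuine singularity (the corner of $|x|$ at $t=1$) being exactly the one excluded by the half-open intervals in the statement.

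For $(d)$ and the strict lower bound in $(e)$ I would prove $g:=\partial_x^2\Phi_\gamma>0$ on $[0,1)\times\reals$; convexity of $\Phi_\gamma(t,\cdot)$ (Proposition~\ref{propo:SF}$(b)$) already yields $g\ge 0$. On the top slab, differentiating the Cole--Hopf formula twice gives the manifestly positive identity
\[\partial_x^2\Phi_\gamma(t,x)=a_m\left(1-\left(\int \sign(z)\,\mu_{t,x}(\de z)\right)^2\right)+\frac{2G_{\theta(t)}(x)}{w(\theta(t),x)},\]
where $\mu_{t,x}(\de z)\propto e^{a_m|z|}G_{\theta(t)}(x-z)\,\de z$ (and just the second term, $2G_{\theta(t)}(x)$, when $a_m=0$), so $g>0$ there. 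For the induction step, on a slab where $\gamma\equiv a_i$ one differentiates \eqref{eq:PDE_Parisi_nu} twice in $x$ and uses $\partial_x^2\big((\partial_x\Phi_\gamma)^2\big)=2g^2+2\,\partial_x\Phi_\gamma\,\partial_x g$ to get
\[\partial_t g+\tfrac12\xi''\,\partial_x^2 g+\xi''\gamma\,\partial_x\Phi_\gamma\,\partial_x g+\xi''\gamma\, g^2=0,\]
so $g$ is a nonnegative supersolution of a uniformly parabolic linear operator with bounded coefficients (the drift $\xi''\gamma\,\partial_x\Phi_\gamma$ is bounded since $\gamma$ is a bounded simple function and $|\partial_x\Phi_\gamma|\le 1$), with strictly positive datum at the slab's upper endpoint by the inductive hypothesis; the strong parabolic maximum principle then forces $g>0$ on the slab. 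For the upper bound in $(e)$ it suffices to invoke Lemma~\ref{lemma:Smoothness}$(a)$, applicable since $\SF_+\subset\cuL$, which bounds $\|\partial_x^2\Phi_\gamma\|_{L^\infty([0,t]\times\reals)}$ for every $t<1$; alternatively it follows from standard heat-kernel smoothing bounds applied to the Cole--Hopf representation.

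For $(c)$: by $(a)$ and $(d)$ the map $x\mapsto\partial_x\Phi_\gamma(t,x)$ is continuous and strictly increasing, so its range is an open interval, and it is contained in $(-1,1)$ by Proposition~\ref{propo:SF}$(b)$ (strictly, since $\partial_x\Phi_\gamma(t,x)=\E[\sign(X_1)\mid X_t=x]$, cf. the proof of Proposition~\ref{propo:SF}, is an average of $\pm1$ over a conditional law charging both $(0,\infty)$ and $(-\infty,0)$). To see the endpoints are approached, write $X_1=x+\int_t^1\xi''(s)\gamma(s)\,\partial_x\Phi_\gamma(s,X_s)\,\de s+\int_t^1\sqrt{\xi''(s)}\,\de B_s$ for the solution of \eqref{eq:SDE} started from $X_t=x$: the drift is bounded in absolute value by $\nu(0)<\infty$, hence $\{X_1\le 0\}\subseteq\{\int_t^1\sqrt{\xi''}\,\de B\le\nu(0)-x\}$, and since that stochastic integral is a centered Gaussian of variance $\xi'(1)-\xi'(t)>0$ independent of $X_t$, $\P(X_1\le 0\mid X_t=x)\to 0$ as $x\to+\infty$. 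As $X_1$ has a density (the SDE being uniformly elliptic), this gives $\partial_x\Phi_\gamma(t,x)\to 1$, and symmetrically $\partial_x\Phi_\gamma(t,x)\to-1$ as $x\to-\infty$; with continuity and strict monotonicity the range is exactly $(-1,1)$.

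The main obstacle will be the bookkeeping in this slab-by-slab induction — verifying that every space derivative, and the time derivative on each slab of constancy, extends continuously across the breakpoints of $\gamma$, and that the single genuine loss of regularity, at $t=1$ from the corner of $|x|$, is precisely what the half-open intervals in $(b)$ allow — together with making the strong maximum principle argument for $g>0$ clean in the presence of that singular terminal layer, where $g$ concentrates like a point mass at the origin as $t\uparrow 1$.
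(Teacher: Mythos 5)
Your proof is correct, and its backbone is the same as the paper's: solve the PDE slab by slab via Cole--Hopf, glue across the breakpoints of $\gamma$ to get $(a)$--$(b)$ (with the only genuine singularity at $t=1$ coming from the corner of $|x|$), use the explicit top-slab formula for positivity there, and get the range statement $(c)$ from the representation $\partial_x\Phi_\gamma(t,x)=\E[\sign(X_1)\mid X_t=x]$ together with strict monotonicity. The one place you genuinely diverge is the inductive step for $\partial_x^2\Phi_\gamma>0$ on the lower slabs: the paper differentiates the Cole--Hopf formula once more and uses the identity $\partial_x^2\Phi_\gamma(t,x)=\sE_{t,x}\big[\partial_x^2\Phi_\gamma(t_i,X)\big]+\gamma_i\,\sVar_{t,x}\big(\partial_x\Phi_\gamma(t_i,X)\big)$ under the tilted Gaussian measure, which makes strict positivity (and the upper bound $C_{i+1}+\gamma_i$) immediate, whereas you derive the parabolic equation for $g=\partial_x^2\Phi_\gamma$ and invoke the strong maximum principle. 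Your route works --- you correctly handle the singular terminal layer by treating the top slab explicitly, and on lower slabs the coefficients and $g$ are bounded so the maximum-principle/comparison argument on $[0,\infty)\times\reals$ is legitimate --- but it is heavier machinery than needed, and it does not by itself produce the quantitative upper bound in $(e)$, which you instead import from Proposition~\ref{propo:SF}$(a)$/Lemma~\ref{lemma:Smoothness} (a legitimate, non-circular citation). Likewise, for the endpoints in $(c)$ you use the SDE representation with drift bounded by $\nu(0)$, while the paper sends the tilted measure $\sP_{t,x}$ off to infinity; both are fine and essentially equivalent in effort.
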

\begin{proof}
  Set $\Phi=\Phi_{\gamma}$,
  All of these claims can be proved by direct calculus using the explicit expression for the Cole-Hopf solution. Given
  $\gamma(t)=\sum_{i=1}^m\gamma_i\ind_{[t_{i-1},t_i)}$,  $0=t_0<t_1<\cdots<t_m=1$, we let $r(t) = \xi'(1)-\xi'(t)$.
  The Cole-Hopf solution is then constructed recursively as follows. For each $i\in\{1,\dots,m\}$ and each $t\in[t_{i-1},t_{i})$, let
  \begin{align}
    \Phi(t,x) = \frac{1}{\gamma_i}\log \E\exp\big\{\gamma_i\Phi(t_i,x+\sqrt{r(t)-r(t_i)} G)\big\}\, \;\;\;\; G\sim\normal(0,1)\, ,
    \label{eq:ColeHopf}
  \end{align}
  (with $\Phi(t,x)=|x|$.)
  Claims $(a)$, $(b)$ follow by standard properties of convolutions (they are also a special case of  Lemma \ref{lemma:Smoothness}).

  Claim $(c)$, $(d)$, $(e)$ can be proved by differentiating \eqref{eq:ColeHopf}. For $t\in [t_{i-1},t_{i})$ define $\sP_{t,x}$ to the probability distribution
  with density
  \begin{align*}
    \sp_{t,x}(x') \equiv \frac{1}{\E\big\{ e^{\gamma_i\Phi(t_i,x+\sqrt{r(t_i)-r(t)} G)}\big\} }\, \exp\Big\{-\frac{(x'-x)^2}{2(r(t)-r(t_i))} +\gamma_i\Phi(t_i,x')\Big\}
    \, .
  \end{align*}
  Let $\sE_{t,x}$, and $\sVar_{t,x}$ denote expectation and variance with respect to this density.
Consider first  $t\in [t_{m-1},t_m=1)$,
  \begin{align*}
    \partial_x\Phi(t,x) &= \sE_{t,x}\sign(X)\, ,\\
    \partial^2_x\Phi(t,x) &= 2\, \sp_{t,x}(0)+ \gamma_m\big\{1-\sE_{t,x} (\sign(X))^2\big\}\, .
  \end{align*}
  The last expression yields $0<\partial^2_x\Phi(t,x)<C(t_*,\gamma)$ for all $t<t_*<1$ (notice indeed that $p_{t,x}(0)$
 is bounded and non-negative for all $t<t_*$), which is claim $(e)$. In particular, this implies that $x\mapsto \partial_x\Phi(t,x)$ is strictly
  increasing (claim $(d)$). Further  $|\partial_x\Phi(t,x)|<1$, because $p_{t,x}$ is strictly positive mass on $(-\infty,0)$ and on $(0,+\infty)$.
  Finally, $\lim_{x\to\pm \infty}\partial_x\Phi(t,x)=\pm 1$ because $\sP_{t,x}((-\infty,a])\to 0$ for all $a\in\reals$ as $x\to +\infty$,
    $\sP_{t,x}([a,+\infty])\to 0$ for all $a\in\reals$ as $x\to -\infty$.

  Next, for $t\in [t_{i-1},t_i)$, $i<m$, we have
  \begin{align*}
    \partial_x\Phi(t,x) &= \sE_{t,x}\partial_x\Phi(t_i,X)\, ,\\
    \partial^2_x\Phi(t,x) &= \sE_{t,x}\partial_{x}^2\Phi(t_i,X)+ \gamma_i \Var_{t,x} (\partial_x\Phi(t_i,X))\, ,
    \end{align*}
    Claims $(c)$-$(e)$ are proved buy induction using arguments similar to the above. In particular, if $0<\partial_{x}^2\Phi(t_i,x)<C_{i+1}$
    the last equation implies $0<\partial_{x}^2\Phi(t,x)<C_{i+1}+\gamma_i$ for $t\in [t_{i-1},t_i)$.    
\end{proof}
We now prove that $\VF_{\gamma}$ is a solution to HJB~\eqref{eq:HJB}. 
 
\begin{lemma}\label{lem:HJB-sol}
For $\gamma \in \SF_+$, the function $V$ defined in Eq.~\eqref{eq:candidate} is a solution to the HJB equation~\eqref{eq:HJB} on $[0,1] \times (-1,1)$.
\end{lemma}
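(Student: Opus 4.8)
The plan is to use the Legendre duality between $V$ and the Parisi solution $\Phi_{\gamma}$: the envelope theorem lets one express every derivative of $V$ through derivatives of $\Phi_{\gamma}$ evaluated at the (unique) minimizer in $\Phi_{\gamma}^*$, after which the HJB equation collapses to the Parisi PDE~\eqref{eq:PDE_Parisi_nu}. First I would fix an interval $[t_{i-1},t_i)$ on which $\gamma\equiv\gamma_i$ is constant. By Lemma~\ref{lem:regularity_Phi}, for each $t$ in this interval $\Phi_{\gamma}(t,\cdot)$ is smooth and strictly convex and $x\mapsto\partial_x\Phi_{\gamma}(t,x)$ is a strictly increasing bijection of $\reals$ onto $(-1,1)$. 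Hence for every $z\in(-1,1)$ the infimum defining $\Phi_{\gamma}^*(t,z)$ is attained at the unique point $x^*=x^*(t,z)$ solving $\partial_x\Phi_{\gamma}(t,x^*)=z$; by the implicit function theorem, using $\partial_x^2\Phi_{\gamma}>0$ together with Lemma~\ref{lem:regularity_Phi}(a),(b), the map $x^*(t,z)$ is $C^1$ and $\Phi_{\gamma}^*$ is smooth in $z$ and $C^1$ in $t$ on $[t_{i-1},t_i)\times(-1,1)$. The envelope identities are then $\partial_z\Phi_{\gamma}^*(t,z)=-x^*$, $\partial_t\Phi_{\gamma}^*(t,z)=\partial_t\Phi_{\gamma}(t,x^*)$, and (differentiating $\partial_x\Phi_{\gamma}(t,x^*)=z$ in $z$) $\partial_z^2\Phi_{\gamma}^*(t,z)=-1/\partial_x^2\Phi_{\gamma}(t,x^*)$.

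Next I would differentiate the definition $V=\Phi_{\gamma}^*-\tfrac12\nu z^2-\tfrac12\int_t^1\nu$, using $\nu'(t)=-\xi''(t)\gamma(t)$ and $\partial_t\int_t^1\nu(s)\,\de s=-\nu(t)$, to get $\partial_t V(t,z)=\partial_t\Phi_{\gamma}(t,x^*)+\tfrac12\xi''(t)\gamma(t)z^2+\tfrac12\nu(t)$ and $\partial_z^2V(t,z)=-1/\partial_x^2\Phi_{\gamma}(t,x^*)-\nu(t)$, so that $\nu(t)+\partial_z^2V(t,z)=-1/\partial_x^2\Phi_{\gamma}(t,x^*)$. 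This last quantity is strictly negative by Lemma~\ref{lem:regularity_Phi}(e); this is the one point that genuinely uses that bound, and it is what guarantees the supremum $\sup_{\lambda}\{\lambda+\tfrac{\lambda^2}{2}(\nu(t)+\partial_z^2V)\}$ in~\eqref{eq:HJB} is finite. A one–line computation then gives that the supremum is attained at $\lambda^*=\partial_x^2\Phi_{\gamma}(t,x^*)$ with value $\tfrac12\partial_x^2\Phi_{\gamma}(t,x^*)$. Substituting, the left-hand side of~\eqref{eq:HJB} becomes $\partial_t\Phi_{\gamma}(t,x^*)+\tfrac12\xi''(t)\bigl(\partial_x^2\Phi_{\gamma}(t,x^*)+\gamma(t)z^2\bigr)$, and since $z=\partial_x\Phi_{\gamma}(t,x^*)$ this is precisely the left-hand side of the Parisi PDE~\eqref{eq:PDE_Parisi_nu} evaluated at $(t,x^*)$, hence equal to $0$.

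Finally I would check the terminal condition: at $t=1$ the last two terms of~\eqref{eq:candidate} vanish and $\Phi_{\gamma}(1,\cdot)=|\cdot|$, so $V(1,z)=\inf_x\{|x|-xz\}=0$ for $z\in(-1,1)$. I would close with a remark handling the finitely many breakpoints of $\gamma$: $\Phi_{\gamma}$, $\partial_x\Phi_{\gamma}$ and $\partial_x^2\Phi_{\gamma}$ are continuous across them by Lemma~\ref{lem:regularity_Phi}(a), so $x^*(t,z)$ and $V$ are continuous on all of $[0,1]\times(-1,1)$ and $V$ solves~\eqref{eq:HJB} classically off the breakpoints (with one-sided time derivatives there), which is exactly what the verification argument for Proposition~\ref{prop:equality} requires. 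The only real care needed is keeping the envelope-theorem bookkeeping straight and tracking the sign of $\nu(t)+\partial_z^2V$; there is no deeper obstacle.
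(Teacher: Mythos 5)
Your proposal is correct and follows essentially the same route as the paper's proof: Legendre duality with the envelope identities $\partial_z\Phi^*_\gamma=-x^*$, $\partial_z^2\Phi^*_\gamma=-1/\partial_x^2\Phi_\gamma(t,x^*)$, the observation that $\nu+\partial_z^2V=-1/\partial_x^2\Phi_\gamma<0$ makes the supremum in the HJB equation equal to $\tfrac12\partial_x^2\Phi_\gamma(t,x^*)$, and reduction to the Parisi PDE at $(t,x^*)$, together with the terminal condition from $\nu(1)=0$. Your remark about the breakpoints of $\gamma$ matches how the paper handles them (there, in the verification argument for Proposition~\ref{prop:equality}), so nothing is missing.
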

\begin{proof}
First, since $\nu(1)=0$, it is clear that $V$ satisfies the terminal condition $V(1,z) = 0$ for $|z| < 1$. Next, let $t<1$. Since $\Phi_{\gamma}(t,\cdot)$ is twice continuously differentiable and strictly convex, there exists a continuous strictly increasing map $z \in (-1,1) \mapsto x_t^*(z)$ defined as the unique root $x$ of the equation $\partial_x\Phi_{\gamma}(t,x) = z$. Furthermore, the envelope theorem implies that $\partial_z \Phi^*_{\gamma}(t,z) = -x_t^*(z)$ and $\partial_z^2\Phi^*_{\gamma}(t,z) = -1\big/\partial_x^2\Phi_{\gamma}(t,x_t^*(z))$ for all $z \in (-1,1)$. 

Exploiting Eq.~\eqref{eq:candidate}, we have
\begin{align*}
\partial_t V(t,z) &= \partial_t \Phi_{\gamma}(t,x_t^*(z)) + \frac{1}{2}\xi''(t)\gamma(t) z^2 + \frac{1}{2}\nu(t),\\
\partial^2_z V(t,z) &= -\frac{1}{\partial_x^2\Phi_{\gamma}(t,x_t^*(z))} - \nu(t).
\end{align*}
Given that $\Phi_{\gamma}$ satisfies the Parisi PDE, we have for all $z \in (-1,1)$
\[\partial_t V(t,z)  - \frac{1}{2}\xi''(t)\gamma(t) z^2 - \frac{1}{2}\nu(t) + \frac{\xi''(t)}{2}\Big( \gamma(t) z^2 - \frac{1}{\partial^2_z V(t,z)  + \nu(t)}\Big) = 0.\]
Simplifying the quadratic term in $z$, we obtain
\[\partial_t V(t,z)  - \frac{1}{2}\nu(t)  - \frac{\xi''(t)}{2\big(\partial^2_z V(t,z)  + \nu(t)\big)} = 0.\]
Since $\partial_x^2\Phi_{\gamma}>0$ we have $\partial^2_z V(t,z)  + \nu(t)<0$ hence 
\[\sup_{\lambda \in \reals}\Big\{  \lambda + \frac{\lambda^2}{2} \big(\nu(t)+\partial_z^2V(t,z)\big) \Big\} =-\frac{1}{2\big(\partial^2_z V(t,z)  + \nu(t)\big)}.\]
Therefore $V$ is a solution to HJB~\eqref{eq:HJB} on $[0,1) \times (-1,1)$ with the right terminal condition at $t=1$, for any function $\gamma \in \SF_+$.
\end{proof}

\begin{proof}[Proof of Proposition~\ref{prop:equality}]
We closely follow the proof of Theorem 4.1 in the textbook~\cite{touzi2012optimal}.  
We recall the expression of $\VF_{\gamma}$:
\begin{align}\label{eq:valueBellman2}
\begin{split}
\VF_{\gamma}(t,z) := \sup_{u\in D[t,1]}~~& \E\left[ \int_t^1 \xi''(s) u_s \de s  + \frac{1}{2}  \int_t^1 \nu(s) \big( \xi''(s) u_s^2 - 1\big) \de s\right],\\
\mbox{s.t.} ~~&~ z+\int_t^1 \sqrt{\xi''(s)}u_s \de B_s  \in (-1,1) ~~\mbox{a.s.},
\end{split}
\end{align}
where $\nu(t) := \int_t^1 \xi''(s)\gamma(s) \de s$.

Let us first prove the bound $V \ge \VF_{\gamma}$. Lemma~\ref{lem:regularity_Phi} implies that $V \in C^{1,2}([a,b)\times (-1,1))$ whenever $\gamma$ is constant on $[a,b)$. 

We momentarily assume that $\gamma$ is constant on $[0,1]$. Let $(t,z) \in [0,1) \times (-1,1)$, and let $(u_{s})_{s \ge t} \in D[t,1]$.
Consider the process $M^{u}$ defined by $\de M_s^{u} = \sqrt{\xi''(s)} u_s \de B_s$, $s \ge t$ with initial condition $M^{u}_t = z$, and recall that $M^{u}_1 = z+\int_t^1 \sqrt{\xi''(s)} u_s \de B_s \in (-1,1)$ a.s. Since $(M^{u}_s)_{s \ge t}$ is a martingale (w.r.t.\ the filtration of Brownian motion $\mathcal{F}_t$ we have $M^{u}_t = \E[M^{u}_1 | \mathcal{F}_t]$, and therefore $M^{u}_s \in (-1,1)$ for all $s \in [t,1]$ a.s. 

By It\^o's formula we have for $ t \le \theta<1$,
\begin{align}
\E_{t,z}[V(\theta,M^{u}_{\theta})] - V(t,z) &= \E_{t,z} \int_t^{\theta} \Big(\partial_{z}V(s,M^{u}_s) + \frac{1}{2}\xi''(s) u_s^2 \partial_{z}^2V(s,M^{u}_s)\Big)\de s \nonumber\\
&\le   \E_{t,z} \int_t^{\theta}  \Big(\partial_tV(s, M^{u}_s) + \xi''(s) \sup_{u \in \reals}\big\{  u + \frac{u^2}{2} \big(\nu(s)+\partial_z^2V(s,M^{u}_s)\big)\big\}\Big) \de s \label{eq:ineq_HJB}\\
&~~~~~~ - \E_{t,z} \int_t^{\theta} \big(\xi''(s) u_s + \frac{1}{2}\xi''(s)\nu(s)u_s^2 \big) \de s \nonumber\\
&= \E_{t,z} \int_t^{\theta} \big(\frac{1}{2}\nu(s) - \xi''(s) u_s - \frac{1}{2}\xi''(s)\nu(s)u_s^2 \big) \de s. \nonumber
\end{align}
The first inequality follows by taking a supremum over $u_s \in \reals$, and the inequality follows since $V = \VF_{\gamma}$ is a solution to HJB~\eqref{eq:HJB} as shown in Lemma~\ref{lem:HJB-sol}.

Next we have $\E[V(\theta,M^{u}_{\theta})] \to 0$ as $\theta \to 1$.  Indeed notice that $M^{u}$ is continuous, $M^{u}_1 \in (-1,1)$ almost surely,
and $V(\theta,x)$ is continuous on $[0,1]\times (-1,1)$. Therefore, for  $W_\theta \equiv V(\theta,M^u_{\theta})$, we have
$W_{\theta}\to W_1=0$ almost surely as $\theta\to 1$. Further, we claim that $W_{\theta}$ is bounded, whence the
claim $\E[W_{\theta}]=\E[V(\theta,M^{u}_{\theta})] \to \E[W_1]=0$ follows by dominated convergence. In order to show that
$W_{\theta}$ is bounded, note that $\Phi_{\gamma}(t,x)\ge |x|$ for $t\in [0,1]$
by Eq.~\eqref{eq:ColeHopf} and Jensen inequality. This implies that $0\le \Phi^{*}_{\gamma}(t,z) \le\Phi_{\gamma}(t,0)$,
and therefore $V(\theta,z)$ bounded in $[0,1]\times (-1,1)$.

Since $u$ in $L^1 \cap L^2$ we obtain that
 \[V(t,z) \ge \E_{t,z}  \int_t^{1} \big(\frac{1}{2}\nu(s)\big(\xi''(s) u_s^2 -1)  + \xi''(s) u_s \big)\de s,\] 
 for all processes $u \in D[t,1]$ satisfying $M^{u}_1 \in (-1,1)$ a.s. Therefore $V(t,z) \ge \VF_{\gamma}(t,z)$. 
 
 Returning to the general case, if $\gamma$ has $0<t_1<\cdots<t_m<1$ points of discontinuity then It\^o's formula and the above argument can be applied inside every interval $[t_i,\theta_i]$ with $\theta_i <t_{i+1}$. Letting $\theta_i \to t_{i+1}$ and applying the dominated convergence theorem, then summing over $i$, the left-hand side in Eq.~\eqref{eq:ineq_HJB} telescopes and we obtain the desired result.  
 
 Now we show the converse bound. Fix $(t,z) \in [0,1) \times (-1,1)$ and consider the control process  
 \[u^*_s :=  \partial_x^2\Phi_{\gamma}(s, X_s)~~~\mbox{for}~ s \in [t,1),~~~\mbox{and}~ u^*_1 := 0,\] 
where $(X_s)_{s \ge t}$ solves the SDE  
\[\de X_s = \xi''(s)\gamma(s) \partial_x\Phi_{\gamma}(s,X_s) \de s+ \sqrt{\xi''(s)} \de B_s,\]
with initial condition $X_{t} = x$.  
This is the same SDE as in Eq.~\eqref{eq:firstSDE} with drift $v(t,x) =  \xi''(t)\gamma(t) \partial_x\Phi_{\gamma}(t,x)$ which is bounded and Lipschitz in space for $\gamma \in \SF_+$, therefore a strong solution exists. Further, since $\frac{\de }{\de s} \E\big[\partial_x\Phi_{\gamma}(s, X_s)^2\big] = \xi''(s)\E\big[\partial_x^2\Phi_{\gamma}(s, X_s)^2\big]$ (Corollary~\ref{coro:ED2}) and $|\partial_x\Phi_{\gamma}|\le 1$ then $u^*$ is an admissible control on $[t,1]$: $u^* \in D[t,1]$.

Legendre duality implies that $u^*_s$ can also be written as
\[u^*_s =  -\frac{1}{\big(\partial^2_z V(s,M^*_s)  + \nu(s)\big)},~~~\mbox{with}~~~M^*_s := \partial_{x}\Phi_{\gamma}(s,X_s).\]
Since $\Phi_{\gamma}$ is a solution to the Parisi PDE, an application of It\^o's formula reveals that $M^*$ is a martingale which is represented by the stochastic integral
\[\de M^*_s = \sqrt{\xi''(s)} \partial_x^2\Phi_{\gamma}(s,X_s)\de B_s = \sqrt{\xi''(t)} u^*_s \de B_s,\]
with initial condition $M^*_{t} = \partial_x\Phi_{\gamma}(t,x)$. 
Further, observe that $|M^*_1| \le 1$ a.s.\ and that by surjectivity of $\partial_{x}\Phi_{\gamma}(t,\cdot)$, we can choose $x$ such that $M^*_t = z$. 
We repeat the above execution of It\^o's formula with $M^*$ and $u^*$ replacing $M^{u}$ and $u$ respectively. We see that the crucial step~\eqref{eq:ineq_HJB} holds with equality, as $u^*_s$ achieves the supremum displayed inside the integral. Hence equality $V(t,z)  = \VF_{\gamma}(t,z)$, and this conclude our proof.    
\end{proof}

\section*{Acknowledgements}

This work was partially supported by grants NSF CCF-1714305, IIS-1741162, and ONR N00014-18-1-2729.

\appendix



\section{State evolution: Proof of Proposition \ref{prop:state_evolution}}
\label{sec:ProofSE}

In this and the following appendices we prove Proposition
\ref{prop:state_evolution}. Throughout, we denote by $\bW^{(k)}\in (\reals^N)^{\otimes k}$, $k\ge 2$ a sequence of standard Gaussian tensors as defined in
Section \ref{sec:Introduction}. We also write $\bA^{(k)} = c_k \bW^{(k)}$ for the rescaled tensors, and $\xi(t) = \sum_{k\ge 2}c_k^2t^k$.
Recall the notation $\bA^{(p)}\{\bu\}\in\reals^N$, for a symmetric tensor $\bA^{(p)}\in(\reals^N)^{\otimes p}$:
\begin{align}
\bA^{(p)}\{\bu\}_i = \frac{1}{(p-1)!}\sum_{1\le i_1,\cdots,i_{p-1}\le N}A^{(p)}_{i,i_1,\cdots,i_{p-1}}u_{i_1}\cdots u_{i_{p-1}}.
\end{align}
Analogously, if $\bT\in(\reals^N)^{\otimes (p-1)}$,  $\bA^{(p)}\{\bT\}\in\reals^N$ is the vector with components
\begin{align}
\bA^{(p)}\{\bT\}_i = \frac{1}{(p-1)!}\sum_{1\le i_1,\cdots,i_{p-1}\le N}A^{(p)}_{i,i_1,\cdots,i_{p-1}}T_{i_1\dots i_{p-1}}.\label{eq:TensorTensor}
\end{align}
%

We will use the notation $\< \bv\>_N=N^{-1}\sum_{i\le N} v_i$ and $\< \bu,\bv\>_N= N^{-1}\sum_{i\le N}u_iv_i$ when $\bu,\bv\in\reals^N$ are vectors.
The corresponding norm is $\|\bu\|_{N}= \<\bu,\bu\>_N^{1/2}$. 
We will write $a_N\psim b_N$ to mean that $a_N-b_N$ converges in probability to $0$. 
Analogously, for two vectors $\bu_N, \bv_N$, we write $\bu_N\psim \bv_N$ when $\|\bu_N-\bv_N\|_N$ converges in probability to $0$. 
When $f:\reals^{k+1}\to\reals$ is a function of $k+1$
 variables, and $\bv^0,\bv^1,\dots,\bv^k\in\reals^{N}$ are $k+1$, we define $f(\bv^0,\bv^1,\dots,\bv^k)\in\mathbb R^N$ component-wise via
\begin{align}
f(\bv^0,\bv^1,\dots,\bv^k)_i=f(v^0_i,\dots,v^k_i).
\end{align}
Finally, for a sequence of vectors $\bx^0,\bx^1,\dots$, we write $\bx^{\le t} = (\bx^0,\bx^1,\dots,\bx^t)$.

To deduce the state evolution result for mixed tensors, we analyze a slightly more general iteration where each homogenous
$p$-tensor is tracked separately, while restricting ourselves to the case where the mixture $\xi$ has finitely many components: $c_k = 0$ for all $k \ge \km +1$ for some fixed $\km \ge 2$. We then proceed by an approximation argument to extend the convergence to the general case $\km = \infty$.

We begin by introducing the Gaussian process that captures the asymptotic behavior of AMP.
For each $t\in\naturals$, let $f_t:\reals^{t+1}\to \reals$ be a Lipschitz function. 
Let  $(U^{0,k})_{2\le k\le \km}$ a collection of random variables with bounded second moment, and $(U^{t,k})_{1\le t\le T,k\le \km}$ 
a centered Gaussian process, independent of $(U^{0,k})_{2\le k\le \km}$, with covariance defined by:
\begin{enumerate}
    \item $U^{t,k},U^{s,k'}$ are independent whenever $k\neq k'$ 
    \item For each $k$, the covariance of $(U^{t,k})_{t\le T}$ is defined recursively via
\begin{align}
\E[U^{t+1,p}U^{s+1,p}]& = pc^2_p\E\left\{ f_t\left(X^0,\dots,X^t\right)f_s\left(X^0,\dots,X^s\right)\right\}^{p-1}\, ,\\
X^t & \equiv  \sum_{k=2}^{\km} U^{t,k}\, .
\end{align}
\end{enumerate}

We are now in position to define the AMP algorithm. 
For each iteration $t$, the state of the algorithm is given by vectors
$\bx^t\in \reals^N$, and $\bz^{t,k}\in\reals^N$, with $k\in\{2,\dots,\km\}$. (In the following we will often omit mentioning explicitly that $k$ starts from $2$ and simply write $k\le \km$.)
We define the AMP mapping via 
\begin{align}
\AMP_t\left(\bx^0,\dots,\bx^t\right)_{p}&:=\bA^{(p)}\{f_t(\bx^0,\dots,\bx^t)\}-\sum_{s\leq t} d_{t,s,p} f_{s-1}(\bx^0,\dots, \bx^{s-1})\, , \label{eq:AMP-def1}\\
d_{t,s,p} :=c^2_p \cdot p(p-1) &\E\left\{ f_t\left(X^0,\dots,X^t\right)f_{s-1}\left(X^0,\dots,X^{s-1}\right)\right\}^{p-2} \E\Big\{ \frac{\partial f_t}{\partial x^s}(X^0,X^1,\dots, X^t)\Big\}\, .\label{eq:AMP-def2}
\end{align}
The \emph{tensor AMP iteration} then reads
\begin{align}
\bx^t=\sum_{p=2}^{\km} \bz^{t,k}\, ,\;\;\;\;\;\;
\bz^{p,t+1}=\AMP_t\left(\bx^0,\dots,\bx^t\right)_{p}\, .\label{eq:TensorAMP}
\end{align}
\begin{theorem}[State Evolution for AMP]
\label{thm:mixedAMP}
Let $\{\bW^{(k)}\}_{k\ge 2}$ be independent standard  Gaussian tensors with $\bW^{(k)}\in(\reals^N)^{\otimes k}$, and define $\bA^{(k)} = c_k\bW^{(k)}$,
$\xi(t)=\sum_{k=2}^{\km} c^2_kt^k$. Let $f_0,f_1,\dots,$ be a sequence of Lipschitz functions $f_k:\reals^{k+1}\to\reals$.
 Let $\bz^{0,2},\cdots \bz^{0,\km}\in\reals^N$ be deterministic vectors and $\bx^0 =\sum_{k=2}^{\km} \bz^{0,k}$.
Assume that, the empirical distribution of the vectors  $(z_i^{0,2},\cdots z_i^{0,\km})$, $i\le N$ converges 
in $W_2$ distance to the law of the vector $(U^{0,k})_{2\le k\le \km}$.

Let $\bx^{t}, \bz^{t,k}$, $t\ge 1$ be given by the \emph{tensor AMP} iteration. Then, for any  $T\ge 1$ and for any pseudo-Lipschitz function $\psi:\reals^{T\times \km}\to \reals$, we have
\begin{align}
\plim_{N\to\infty}\frac{1}{N}\sum_{i=1}^N\psi((z_i^{t,k})_{t\le T, k\le \km}) = \E\big\{\psi\big((U^{t,k})_{t\leq T,k\leq \km}\big)\big\}\, .
\end{align}
where $(U^{t,k})_{1\le t\le T,k\le \km}$ is a centered Gaussian process, independent of $(U^{0,k})_{2\le k\le \km}$, with covariance defined above.
\end{theorem}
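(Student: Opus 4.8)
The plan is to follow the proof scheme of \cite{berthier2019state}, adapting it so that the \emph{nonlinear} action of a Gaussian tensor is handled. Since Theorem~\ref{thm:mixedAMP} already tracks each homogeneous component $\bz^{t,p}$ separately, no preliminary reduction is needed, and we may run a single induction over the iteration index $t$ that controls all of $(\bz^{s,p})_{s\le t,\,p\le\km}$ at once: the tensors $\bW^{(p)}$ are independent, and they are coupled only through $\bx^t=\sum_p\bz^{t,p}$. The inductive claim $\mathsf{H}(t)$ I would carry is: (i) for every pseudo-Lipschitz $\psi$, $\frac1N\sum_i\psi\big((z_i^{s,p})_{s\le t,\,p\le\km}\big)\toP\E\big\{\psi\big((U^{s,p})_{s\le t,\,p\le\km}\big)\big\}$; and (ii) a stronger ``conditional'' statement, of the kind used in \cite{berthier2019state}, describing the law of the next increment given the past, which is what actually propagates (i). The base case $t=0$ is immediate from the assumed $W_2$-convergence of the empirical law of $(z_i^{0,p})_p$.

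For the inductive step I would condition on the $\sigma$-algebra $\cS_t$ generated by $(\bz^{s,p})_{s\le t,\,p\le\km}$ --- equivalently, by $\bx^{\le t}$ together with the contractions $\bW^{(p)}\{f_s(\bx^{\le s})\}$, $s\le t-1$. The point is that $\bW\mapsto\bW^{(p)}\{\bu\}$ is linear in $\bW$ for fixed $\bu$, so conditionally on $\cS_t$ (and on the event that the past iterates equal prescribed values, which renders the vectors $f_s(\bx^{\le s})$ deterministic) each $\bW^{(p)}$ equals its conditional mean --- an orthogonal projection onto the affine space of symmetric tensors consistent with the observed contractions --- plus an independent symmetric Gaussian tensor $\tilde\bW^{(p)}$ on the orthogonal complement. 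Substituting this into \eqref{eq:AMP-def1}, $\bz^{t+1,p}$ splits as a conditional-mean (``memory'') part, a linear combination of past iterates with $\cS_t$-measurable coefficients, plus a ``fresh'' part $c_p\,\tilde\bW^{(p)}\{f_t(\bx^{\le t})\}$. Coordinatewise Gaussian integration by parts supplies the derivative factor $\E\{\partial f_t/\partial x^s\}$ and the prefactor $p$ (from the $p$ symmetric slots the free index may occupy); together with $\mathsf{H}(t)$, used to replace empirical inner products by their limits, the memory part coincides up to $o_N(1)$ in $\|\cdot\|_N$ with the Onsager correction $\sum_{s\le t}d_{t,s,p}f_{s-1}(\bx^{\le s-1})$ of \eqref{eq:AMP-def1}--\eqref{eq:AMP-def2}, so the two cancel.

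It then remains to analyze the fresh term. Conditionally on $\cS_t$, $c_p\,\tilde\bW^{(p)}\{f_t(\bx^{\le t})\}$ is a mean-zero Gaussian vector whose coordinates are i.i.d.\ up to $o_N(1)$ with variance $p\,c_p^2\langle f_t,f_t\rangle_N^{\,p-1}$ and which is correlated with the earlier fresh terms $c_p\,\tilde\bW^{(p)}\{f_s(\bx^{\le s})\}$ through $p\,c_p^2\langle f_t,f_s\rangle_N^{\,p-1}$; invoking $\mathsf{H}(t)$ to send $\langle f_t,f_s\rangle_N$ to $\E\{f_t f_s\}$ identifies these with the prescribed $\E[U^{t+1,p}U^{s+1,p}]$, and summing over $p$ recovers $\E[X^{t+1}X^{s+1}]=\xi'(\E\{f_tf_s\})$, consistently with \eqref{eq:cov_amp}. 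The powers $p-1$ and the prefactor $p$ are exactly the signature of $\bW^{(p)}\{\cdot\}$ being a degree-$(p-1)$ form. Passing from this conditional description back to the unconditional statement (i) of $\mathsf{H}(t+1)$ is done as in \cite{berthier2019state}: smooth $\psi$, use conditional Gaussian concentration, and control the residual terms. The a priori inputs are (a) the tensor operator-norm estimate \eqref{eq:UB-Inj-Norm} and its higher-moment refinements, which bound fluctuations of tensor contractions, and (b) propagation of the Lipschitz constants of the $f_s$ along the induction. Finally, the extension from finite $\km$ to $\km=\infty$ needed for Proposition~\ref{prop:state_evolution} would be handled by truncating $\xi$ at order $\km$, bounding the dropped tail via $|c_k|\le c_*\alpha^k$ with $\alpha<1$, and letting $\km\to\infty$.

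The step I expect to be the main obstacle is the tensor conditioning of the second paragraph. For matrices this is a one-line linear-algebra identity, but for $p\ge3$ one has to (i) verify that the conditional-mean part genuinely reduces to a linear combination of the past $f_s$ with coefficients concentrating on $d_{t,s,p}$ --- which forces one to control cross terms in which \emph{products} of distinct empirical inner products $\langle f_a,f_b\rangle_N$ appear, rather than a single power; and (ii) show that a fresh Gaussian tensor contracted against a vector has coordinates behaving like i.i.d.\ Gaussians in the pseudo-Lipschitz sense, a concentration statement considerably more delicate for $p\ge3$ than the Hanson--Wright bounds available for quadratic forms, and precisely where the tensor operator-norm estimates carry the weight of the argument.
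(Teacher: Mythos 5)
Your overall plan (Bolthausen-style conditioning, cancellation of the memory term against the Onsager correction, fresh Gaussian part giving the prescribed covariance, truncation in $\km$) is the right skeleton, but the step you yourself flag as ``the main obstacle'' is precisely where the proposal has a genuine gap, and it is not a technicality. For $p\ge 3$ the conditional expectation of a \emph{symmetric} Gaussian tensor given the contractions $\bA^{(p)}\{\f_s\}=\by_{p,s+1}$, $s<t$, is not a regression onto rank-one tensors built from the $\f_s$ with explicit coefficients: by Lagrange duality it has the form of Eq.~\eqref{eq:SymmRegression}, where the matrix $\hZZ_{p,t}$ is only defined implicitly as the solution of the linear system $\hZZ_{p,t}+(p-1)\cT_{p,t}(\hZZ_{p,t})=\bY_{p,t}$ (Lemma~\ref{lem:symregression}). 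Identifying the ``memory'' part of $\bz^{t+1,p}$ with $\sum_s d_{t,s,p}f_{s-1}$ therefore requires (a) inverting the operator $\cL_{p,t}=\bfone+(p-1)\cT_{p,t}$ and showing $\hZZ_{p,t}\psim$ the past iterates, and (b) a Stein-lemma computation as in Eq.~\eqref{eq:Stein}; neither step is available from your induction hypothesis alone, because both require quantitative non-degeneracy: the Gram matrices $\bG_{p,t}$ and the operator $\cL_{p,t}$ must be well-conditioned. This can genuinely fail for arbitrary Lipschitz $f_t$ (e.g.\ asymptotically linearly dependent $f_s$), and your proposal offers no mechanism to rule it out. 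The paper spends an entire subsection removing these assumptions by perturbing the nonlinearities $f_t\mapsto f_t+\eps\varphi_t$ (Lemma~\ref{lemma:Perturbation}, with Lemmas~\ref{lem:perturbinvertible}, \ref{lem:perturbsurjective}, \ref{lem:perturbdone}, the last using an algebraic argument about rational functions on the PSD cone) and then letting $\eps\to0$; without some substitute for this, your cancellation of the Onsager term is unjustified.

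Structurally you also diverge from the paper in a way worth noting: rather than conditioning directly inside the AMP recursion, the paper first introduces the auxiliary LAMP iteration \eqref{eq:LAMP1}--\eqref{eq:LAMP2}, for which the conditional law is exact by construction (Theorem~\ref{thm:SELAMP}$(a)$), proves state evolution for LAMP, and only then shows $\|\ZZ_t-\bQ_t\|_N\psim0$ (Lemma~\ref{lem:ampequalslamp}); this is where the implicit linear system and the operator $\cL_{p,t}$ actually enter. A direct conditioning argument of the kind you sketch could in principle be made to work, but it would have to reproduce the same two ingredients you are missing: the tensor conditional-expectation formula and the non-degeneracy/perturbation argument. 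Conversely, the second difficulty you anticipate --- that a fresh tensor contracted against a (conditionally) fixed vector has Gaussian coordinates --- is in fact easy: for deterministic $\bu$ or $\bT$, $\bW^{(p)}\{\bu\}$ and $\bW^{(p)}\{\bT\}$ are exactly Gaussian with explicit covariance (Lemma~\ref{lem:4}), and the only subtle point there is replacing the projected tensor by a fresh one, $\cP_t^{\perp}(\tbA^{(p)})\{(\f_t^{\otimes p-1})_{\perp}\}\psim\tbA^{(p)}\{(\f_t^{\otimes p-1})_{\perp}\}$ (Lemma~\ref{lem:LemmaPerp}), which again leans on the same well-conditioning hypotheses.
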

In the above Proposition, $W_2$ refers to the Wasserstein, or optimal transport, distance between probability measures on $\R^{D}$ with quadratic cost $c(\bx,\by) = \|\bx -\by\|_2^2$.    

Proposition \ref{prop:state_evolution} in the special case $c_k =0$ for all $k\ge \km+1$ follows immediately from this theorem by considering the special case in which
$\psi((z^{t,k})_{t\le T, k\le \km})$ is only a function of $(\sum_{k\le \km}z^{t,k})_{t\le T}$. We extend Proposition \ref{prop:state_evolution} to the general case $D = \infty$ in Section~\ref{sec:extension_infinite_mixture}. 

\subsection{Further definitions}

We now define the notations
\begin{align*}
\bX_t&=[\bx_0|\bx_1|\;\cdots\;|\bx_t]\, ,\\
\bZ_{p,t}^k&=[\bz_{p,0}^{\otimes k}|\bz_{p,1}^{\otimes k}|\cdots\;|\bz_{p,t}^{\otimes k}]\, .
\end{align*}
Given a $N\times (t+1)$ matrix, such as $\bX_t$, and a tensor $\bA^{(p)}\in(\reals^{N})^{\otimes p}$, we write 
$\bA^{(p)}\{\bX_t\}$ for the $N\times (t+1)$ matrix with columns $\bA^{(p)}\{\bx_0\}$, \dots, $\bA^{(p)}\{\bx_t\}$:
\begin{align*}
\bA^{(p)}\{\bX_t\}&=\Big[\bA^{(p)}\{\bx_0\}\Big|\bA^{(p)}\{\bx_1\}\Big|\;\cdots\;\Big|\bA^{(p)}\{\bx_t\}\Big]\, .
\end{align*}

When $k=1$ we omit $k$, e.g. $\ZZ_{p,t}^1=\ZZ_{p,t}$. We will write $f_t(\X_t)=f_t(\x^0,\dots,\x^t)$, and we also set 
\begin{align}
\by_{p,t+1}(\ZZ_{p,t})&=\A_p\left\{f_t(\ZZ_{p,t})\right\} = \bz^{p,t+1}+\sum_{s\leq t} d_{t,s,p} f_{s-1}(\bx^0,\dots, \bx^{s-1})\, ,\label{eq:Ydef1}\\
\bY_{p,t} &= [\by_{p,1}|\;\cdots\;|\by_{p,t}]\, ,\;\;\;\;\;\; \by_{t}(\ZZ_{p,t})=\sum_p \by_{p,t}(\ZZ_{p,t})\, . \label{eq:Ydef2}
\end{align}
%

For any positive integer $k$ and $p\times T$ matrix $\M$ of length $n$ vectors we define $\F_t^{k}(\M)$ to be the length $t+1$ vector of $k$-tensors 
\begin{align}
\F_t^k(\M)=[f_0(\M)^{\otimes k}|f_1(\M)^{\otimes k}|\;\cdots\;|f_t(\M)^{\otimes k}].
\end{align}
We also define an associated $(t+1)\times (t+1)$ Gram matrix $\bG_t^k=\bG_t^k(\bM)$ via $(\bG_t^k(\bM))_{i,j}=\< f_i(\bM),f_j(\bM)\>^k_N$.
The matrix $\bG_t^k$ can be represented by the following tensor network diagram:
\begin{center}
\begin{tikzpicture}[thick]
    \node[draw,shape=circle] (v0) at (0,0) {$\F_t^{\otimes k}$};
    \node[draw,shape=circle] (v1) at (3,0) {$\F_t^{\otimes k}$};
    \draw  (v0.east) -- (1.5,0) node  {}; 
    \draw (v1) -- (1.5,0) node [label=above :{$N_{[k]}$}] {};
   
      \draw (v0.west) -- (-1.3,0) node [midway,label= above:{$t$}] {};

        \draw (v1.east) -- (4.5,0) node [midway,label= above:{$t$}] {};
         \node (Gt) at (-2,0) {$\bG_t^k$};

         \draw [double equal sign distance] (Gt.east) to (-1.45,0);
 
\end{tikzpicture}
\end{center}
We recall that in tensor networks, tensors correspond to vertices, and edges joining them to indices contracted between tensors. 
We use the convention of labeling vertices by the corresponding tensors, and edges by the dimension of the corresponding index.
Since we often have indices with dimension $N$, we label the edges by $N_1,N_2,\dots$ and so on. When two tensors are contracted along multiple indices of the same dimension (say $N$), we draw a single line between them labelled $N_{S}$ where $S$ is the set of contracted indices. For example, the middle edge in the above figure represents $k$ edges with labels $N_1,\cdots,N_k$. 

Finally, we let $\cF_t$ denote the $\sigma$-algebra generated by all iterates up to time $t$:
\begin{align}
\cF_t=\sigma\big(\{\bz_{p,s}\}_{p\le \km,s\le t}\big)=\sigma(\{\bz_{p,s},\bx_s,\f_s\}_{p\le \km,s\le t})\, .
\end{align}

\subsection{Preliminary lemmas}

\begin{lemma}\label{lem:4}
For any deterministic $\bu,\bv\in \reals^N$ and standard Gaussian symmetric $p$-tensor $\bW^{(p)}\in(\reals^N)^{\otimes p}$ we have:
\begin{enumerate}
    \item Letting $g_0\sim\normal(0,1)$ independently of $\bg\sim\normal(0,\id_N)$, we have
\begin{align}
\bW^{(p)}\{\bu\}\ed \sqrt{p} \|\bu\|_{N}^{p-1} \, \bg+  \sqrt{p(p-1)}  \|\bu\|_{N}^{p-2}\frac{\bu}{\sqrt{N}} \, g_0\, .
\end{align}
\item Letting $g_0,g_1\sim\normal(0,1)$ independent,  we have
\begin{align}
\sqrt{N}\<\bv,\bW^{(p)}\{\bu\}\>_N\ed \sqrt{p}  \|\bu\|_{N}^{p-1}\|\bv\|_{N} \, g_1+  \sqrt{p(p-1)}  \|u\|_{N}^{p-2}\<\bu,\bv\>_N \, g_0\, .
\end{align}
\item $\<\bW^{(p)}\{\bu\},\bW^{(p)}\{\bv\}\>_N\psim p\< \bu,\bv\>_N^{p-1}$.
\item For a deterministic symmetric tensor $\bT\in(\reals^N)^{\otimes p-1}$, the vector
$\bW^{(p)}\{\bT\}$ is Gaussian, with zero mean and covariance
\begin{align}
  \E\{\bW^{(p)}\{\bT\}_i\bW^{(p)}\{\bT\}_j\} = \frac{p}{N^{p-1}}\, \|\bT\|^2_{F} +\frac{p(p-1)}{N^{p-1}}\, \sum_{i_1,\dots,i_{p-2}=1}^NT_{i,i_1,\dots,i_{p-1}}T_{j,i_1,\dots,i_{p-1}}\, . 
\end{align}
\item Let $\bP\in\reals^{N\times N}$ be the orthogonal projection onto a $d$-dimensional subspace $S\subseteq\reals^N$.
$\|\bP\bW^{(p)}\{\bu\} - \bW^{(p)}\{u\}\|_2 /\|\bW^{(p)}\{u\}\|_2\psim 0$.
\item Recall that the operator (injective) norm of a tensor is given by $\|\bW^{(p)}\|_{\op}\equiv \max_{|\bu\|\le 1} \<\bW^{(p)},\bu^{\otimes p}\>$
or, equivalently,  by $\|\bW^{(p)}\|_{\op}\equiv \max_{\|\bu_1\|\le 1,\dots, \|\bu_p\|\le 1} \<\bW^{(p)},\bu_1\otimes \cdots\otimes \bu_p\>$.
If $\xi(t)<\infty$ for some $t>1$, then there exists a constant $C=C(\xi)$ such that, with probability at least $1-2e^{-N}$, 
\begin{align}
\|\bA\|_{\op}\equiv\sum_{k=2}^{\infty}\frac{N^{k/2}}{k!}\|\bA^{(k)}\|_{\op} =
\sum_{k=2}^{\infty}\frac{c_k N^{k/2}}{k!}\|\bW^{(k)}\|_{\op}\le C N\, .\label{eq:UpperBoundOpNorm}
\end{align}
\end{enumerate}
\end{lemma}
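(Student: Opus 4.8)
The plan is to treat the six items largely independently: I would prove item~(4) first by an explicit moment computation, obtain items~(1),(2),(3),(5) as consequences of it, and establish item~(6) separately from the operator-norm tail bound~\eqref{eq:UB-Inj-Norm}. For item~(4), write $\bW^{(p)}\{\bT\}_i=\frac{1}{(p-1)!}\sum_{i_1,\dots,i_{p-1}}W^{(p)}_{i,i_1,\dots,i_{p-1}}T_{i_1,\dots,i_{p-1}}$, which is a linear functional of the i.i.d.\ array $\bG^{(p)}$ and hence a centered Gaussian vector. For the covariance, use that for multi-indices $\ba,\bb\in[N]^p$ one has $\E\{W^{(p)}_{\ba}W^{(p)}_{\bb}\}=N^{-(p-1)}\#\{(\pi,\sigma)\in S_p^2:\ \ba\circ\pi=\bb\circ\sigma\}$, which is nonzero only when $\ba$ and $\bb$ coincide as multisets. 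Substituting into $\E\{\bW^{(p)}\{\bT\}_i\bW^{(p)}\{\bT\}_j\}$ and using the symmetry of $\bT$ to collapse the sums over the contracted legs, only two types of contribution survive: those in which the distinguished index $i$ is matched with $j$ (producing $\frac{p}{N^{p-1}}\|\bT\|_F^2$, the factor $p$ coming from the $p$ slots of the second copy that can carry $i$, after cancelling with $1/((p-1)!)^2$), and those in which $i$ is matched with one of the $p-1$ contracted legs of the second copy while $j$ is matched with one of the first (producing the $\frac{p(p-1)}{N^{p-1}}\sum_{i_1,\dots,i_{p-2}}T_{i,i_1,\dots,i_{p-2}}T_{j,i_1,\dots,i_{p-2}}$ term). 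Since the permutation count is exact, repeated-index terms are automatically accounted for.

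Items~(1) and~(2) are the specialization $\bT=\bu^{\otimes(p-1)}$: then $\|\bT\|_F^2=(N\|\bu\|_N^2)^{p-1}$ and $\sum_{i_1,\dots,i_{p-2}}T_{i,\cdot}\,T_{j,\cdot}=u_iu_j(N\|\bu\|_N^2)^{p-2}$, so item~(4) shows that $\bW^{(p)}\{\bu\}$ is a centered Gaussian vector with exactly the covariance of $\sqrt p\,\|\bu\|_N^{p-1}\bg+\sqrt{p(p-1)}\,\|\bu\|_N^{p-2}\bu\,g_0/\sqrt N$ with $\bg\sim\normal(0,\id)$ independent of $g_0\sim\normal(0,1)$, which is item~(1). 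Item~(2) follows by applying $\sqrt N\langle\bv,\,\cdot\,\rangle_N$ to this representation and noting that $N^{-1/2}\langle\bv,\bg\rangle\sim\normal(0,\|\bv\|_N^2)$ is independent of $g_0$, while $N^{-1}\langle\bv,\bu\rangle=\langle\bv,\bu\rangle_N$.

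For item~(3), the bilinear version of item~(4) gives $\E\langle\bW^{(p)}\{\bu\},\bW^{(p)}\{\bv\}\rangle_N=p\langle\bu,\bv\rangle_N^{p-1}+O(1/N)$, and since $\langle\bW^{(p)}\{\bu\},\bW^{(p)}\{\bv\}\rangle_N$ is a degree-two polynomial in the entries of $\bG^{(p)}$, a direct fourth-moment estimate (equivalently Hanson--Wright, or the Gaussian Poincar\'e inequality) bounds its variance by $O(1/N)$, yielding $\psim$ convergence. Item~(5) follows from the representation in item~(1): the rank-one part of $\bW^{(p)}\{\bu\}$ has Euclidean norm $O(\|\bu\|_N^{p-1})$, whereas the isotropic part $\sqrt p\,\|\bu\|_N^{p-1}\bg$ has norm concentrated near $\sqrt{pN}\,\|\bu\|_N^{p-1}$ and its image under the fixed rank-$d$ projection $\bP$ has norm $O(\sqrt d\,\|\bu\|_N^{p-1})$, so the ratio in item~(5) is $O\big(\sqrt{d/N}+N^{-1/2}\big)\to0$. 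For item~(6), \eqref{eq:UB-Inj-Norm} gives $N^{(k-2)/2}\|\bW^{(k)}\|_{\op}\le k!\sqrt k$ off an event of probability $\le e^{-Nk/8}$, hence on the intersection over $k\ge2$ of these events $\frac{c_kN^{k/2}}{k!}\|\bW^{(k)}\|_{\op}\le c_k\sqrt k\,N$ for every $k$; a union bound controls the failure probability as stated, and on the good event $\|\bA\|_{\op}\le N\sum_{k\ge2}c_k\sqrt k\le CN$ because $|c_k|\le c_*\alpha^k$ with $\alpha<1$ (a consequence of $\xi(1+\eps)<\infty$), so the series converges.

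I expect the only genuinely delicate points to be the combinatorial bookkeeping in item~(4) --- keeping track of the permutation count $\#\{(\pi,\sigma)\}$ in the presence of repeated indices and verifying that the displayed formula is exact --- together with the variance estimate underlying the concentration in item~(3); the remaining items are routine Gaussian calculus or a union bound.
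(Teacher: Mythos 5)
Your proposal is correct and follows essentially the same route as the paper, which dismisses items (1)--(5) as ``elementary Gaussian calculations'' --- your exact permutation count for item (4) is right (and it also supplies the $\delta_{ij}$ that is evidently missing from the first term of the displayed covariance), your estimate for item (5) establishes the intended claim (as printed, with a rank-$d$ projection, the displayed ratio would tend to one; what is meant, and what you prove, is that the component along a fixed $d$-dimensional subspace is negligible) --- and it proves item (6) exactly as the paper does, from \cite[Lemma 2]{richard2014statistical} together with $|c_k|\le c_*\alpha^k$. The only slack is quantitative: summing the tails $e^{-Nk/8}$ of Eq.~\eqref{eq:UB-Inj-Norm} gives probability $1-2e^{-N/4}$ rather than the stated $1-2e^{-N}$; the paper reaches the stated exponent by applying the concentration inequality with deviation parameter $s=k$, giving per-$k$ tails $e^{-Nk/2}$, a one-line adjustment to your union bound.
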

\begin{proof}
All of these statements are the elementary Gaussian calculations. The only exception is the upper bound \eqref{eq:UpperBoundOpNorm},
which follows from the  concentration bound
\[\P\Big(N^{(k-2)/2} \cdot \|\W^{(k)}\|_{\textup{op}}  \ge k! \sqrt{\log k} + \frac{k!}{\sqrt{k}} s\Big) \le e^{-N s^2/2k} ~~~ \forall s \ge 0.\]
The above is a restatement of~\cite[Lemma 2]{richard2014statistical}. We conclude by using the fact $|c_k|\le c_*\alpha^k$ for some $\alpha<1$ and letting $s = k$.
\end{proof}

We next develop  a formula for the conditional expectation of a Gaussian tensor $\bA^{(p)}$ given
a collection of  linear observations.
We set $\bD$ to be the $t\times t\times t$ tensor with entries $D_{ijk}=1$ if $i=j=k$ and $D_{ijk}=0$ otherwise.
\begin{lemma}\label{lem:symregression}
Let $\E\{\bA^{(p)}|\cF_t\}$ be the conditional expectation of $\bA^{(p)}$ given the $\sigma$-algebra 
$\cF_t=\sigma(\{\bz_{p,s},\bx_s,\f_s\}_{p\le \km,s\le t})$ generated by observations up to time $t$.
Equivalently $\E\{\bA^{(p)}|\cF_t\}$ is the conditional expectation of $\bA^{(p)}$ given the $t$ linear (in $\bA^{(p)}$) observations
\begin{align}
\bA^{(p)}\{\f_s\}=\by_{p,s+1} \, \;\;\; \mbox{ for $s\in\{0,\dots,t-1\}$.} \label{eq:LinearConstraint}
\end{align}
Then we have for $i_1,i_2,\dots,i_p\leq n$,  
%
\begin{align}
\E[\bA^{(p)}|\cF_t]_{i_1,i_2,\dots,i_p}= 
\frac{1}{p}\sum_{j=1}^p \sum_{0\leq r,s\leq t-1} (\hZZ_{p,t})_{i_j,s}\cdot (\bG^{-1}_{p-1,t-1})_{s,r}\cdot (\f_{r,i_1}\cdots \f_{r,i_{j-1}}\f_{r,i_{j+1}}\cdots \f_{r,i_{p}})\, .\label{eq:SymmRegression}
\end{align}
Here, the matrix $\hZZ_{p,t}\in\reals^{N\times t}$ is defined as the solution of a system of linear equations as follows.
Define the linear operator $\cT_{p,t}:\reals^{N\times t}\to\reals^{N\times t}$ by letting, for $i\leq N$, $0\leq s\leq t-1$:
%
\begin{align}
[\cT_{p,t}(\bZ)]_{i,s}&= \sum_{j\le N}\sum_{0\leq r,r'\leq t-1} (\f_{r'})_{i} (\f_{s})_{j} (\bG_{p-1,t-1}^{-1})_{r',r} (\bG_{p-2,t-1})_{r',s} (\bZ)_{j,r}\, ,\label{eq:Tdef}
\end{align}
Then $\hZZ_{p,t}$ is the unique solution of the following linear equation (with $\bY_{p,t}$ defined as per Eq.~\eqref{eq:Ydef1})
\begin{align}
\hZZ_{p,t}+(p-1)\cT_{p,t}(\hZZ_{p,t}) =\bY_{p,t}.\label{eq:ZZ-eq}
\end{align}

(Here, $\hZZ_{p,t} = [\hat{\bz}_{p,0},\cdots,\hat{\bz}_{p,t-1}]$ and $\bY_{p,t} = [\hat{\by}_{p,1},\cdots,\hat{\by}_{p,t}]$ have dimensions $N \times t$.)
\end{lemma}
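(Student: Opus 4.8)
The statement is a Gaussian‑conditioning (``Bolthausen conditioning'') computation for the symmetric tensor $\bA^{(p)}$, and I would prove it in three stages. \emph{Stage (i): reduce $\cF_t$ to linear observations.} On the event $\cF_t$ the vectors $\f_0,\dots,\f_{t-1}$ are determined: $\f_0=f_0(\bx^0)$ is deterministic, and inductively $\f_s=f_s(\bx^0,\dots,\bx^s)$ is a measurable function of $\{\bz_{p',r}\}_{r\le s,\,p'\le\km}$, hence — via Eq.~\eqref{eq:Ydef1} — of the linear observations $\bA^{(p')}\{\f_{r-1}\}=\by_{p',r}$, $r\le s$. By the standard sequential‑conditioning argument used throughout the AMP literature, the conditional law of $\bA^{(p)}$ given $\cF_t$ equals that of $\bA^{(p)}$ conditioned on the linear system \eqref{eq:LinearConstraint} with the (now frozen) vectors $\f_0,\dots,\f_{t-1}$; and because the tensors $\{\bA^{(p')}\}_{p'\le\km}$ are mutually independent, only the observations involving $\bA^{(p)}$ matter. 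Thus it suffices to compute $\E[\bA^{(p)}\mid \bA^{(p)}\{\f_s\}=\by_{p,s+1},\ 0\le s\le t-1]$ for deterministic $\f_s$.

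\emph{Stage (ii): covariance operator and the ansatz.} Writing $\bW^{(p)}=N^{-(p-1)/2}\sum_{\pi\in S_p}(\bG^{(p)})^\pi$ and using that $\langle(\bG^{(p)})^\pi,\bS\rangle=\langle\bG^{(p)},\bS\rangle$ for every symmetric $\bS$, one gets $\E[\langle\bA^{(p)},\bS\rangle\langle\bA^{(p)},\bS'\rangle]=c_p^2(p!)^2N^{-(p-1)}\langle\bS,\bS'\rangle$ for all symmetric $\bS,\bS'$; equivalently the covariance operator of $\bA^{(p)}$ is $c_p^2(p!)^2N^{-(p-1)}$ times the orthogonal projection onto symmetric tensors. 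The $i$‑th component of the observation $\bA^{(p)}\{\f_s\}$ is $\langle\bA^{(p)},T_{s,i}\rangle$ with $T_{s,i}$ proportional to the symmetrization of $\be_i\otimes\f_s^{\otimes(p-1)}$, so by the Gaussian conditioning formula $\E[\bA^{(p)}\mid\cF_t]$ is the unique element of $\mathrm{span}\{\,\text{symmetrization of }\be_i\otimes\f_s^{\otimes(p-1)}:i\le N,\ s\le t-1\,\}$ that reproduces the observed values $\by_{p,s+1}$. Every element of this span has the form $\sum_{r\le t-1}$ (symmetrization of $\bv_r\otimes\f_r^{\otimes(p-1)}$) for vectors $\bv_0,\dots,\bv_{t-1}\in\reals^N$; after the linear change of variables $\hZZ_{p,t}:=[\bv_0\,|\cdots|\,\bv_{t-1}]\,\bG_{p-1,t-1}$ (so that $\bv_r=\sum_s(\hZZ_{p,t})_{\cdot,s}(\bG_{p-1,t-1}^{-1})_{s,r}$) this is exactly the right‑hand side of Eq.~\eqref{eq:SymmRegression}. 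It remains to pin down $\hZZ_{p,t}$.

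\emph{Stage (iii): matching the constraints.} I would impose $\E[\bA^{(p)}\mid\cF_t]\{\f_{s'}\}=\by_{p,s'+1}$ for each $s'\le t-1$. Contracting a symmetrized term built from $\bv_r\otimes\f_r^{\otimes(p-1)}$ against $\f_{s'}^{\otimes(p-1)}$ produces two kinds of contributions according to whether the distinguished slot is left free or contracted: the first kind gives $\langle\f_r,\f_{s'}\rangle_N^{\,p-1}\bv_r=(\bG_{p-1,t-1})_{r,s'}\bv_r$, which after summing over $r$ and using the change of variables contributes $(\hZZ_{p,t})_{\cdot,s'}$; the second kind gives $(p-1)$ times a sum of terms $\langle\bv_r,\f_{s'}\rangle_N\langle\f_r,\f_{s'}\rangle_N^{\,p-2}\f_r=\langle\bv_r,\f_{s'}\rangle_N(\bG_{p-2,t-1})_{r,s'}\f_r$, which — after substituting $\bv_r$ in terms of $\hZZ_{p,t}$ and using the symmetry of $\bG_{p-1,t-1}^{-1}$ — is precisely $(p-1)[\cT_{p,t}(\hZZ_{p,t})]_{\cdot,s'}$ with $\cT_{p,t}$ as in Eq.~\eqref{eq:Tdef}. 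Collecting the two, the constraint system becomes $\hZZ_{p,t}+(p-1)\cT_{p,t}(\hZZ_{p,t})=\bY_{p,t}$, i.e.\ Eq.~\eqref{eq:ZZ-eq}, and $\hZZ_{p,t}$ is its unique solution since $\mathrm{id}+(p-1)\cT_{p,t}$ is the Gram operator of the observation functionals restricted to the ansatz subspace, hence invertible on the event that $\bG_{p-1,t-1}$ is non‑degenerate. Substituting the solution back into the ansatz yields Eq.~\eqref{eq:SymmRegression}.

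\emph{Main obstacle.} The conceptual steps are standard; the real work is the bookkeeping in Stage (iii) — correctly tracking the combinatorial factors, the $N$‑normalizations (the $1/(p-1)!$ in $\bA^{(p)}\{\cdot\}$, the $N^{-(p-1)/2}$ inside $\bW^{(p)}$, and the $\langle\cdot,\cdot\rangle_N$ hidden in the Gram matrices $\bG^k_t$), and the placement of $\bG_{p-1,t-1}^{-1}$ versus $\bG_{p-2,t-1}$ — so that the ``distinguished‑slot‑contracted'' contribution lands exactly on the operator $\cT_{p,t}$ of Eq.~\eqref{eq:Tdef} and the right‑hand side is exactly $\bY_{p,t}$. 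A secondary point to nail down is the invertibility of $\mathrm{id}+(p-1)\cT_{p,t}$, which I would obtain either from positive‑definiteness of the associated Gram quadratic form or, more cheaply, from the a.s.\ existence and uniqueness of the conditional expectation together with non‑degeneracy of the Gram matrices arising along the iteration.
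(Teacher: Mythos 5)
Your proposal is correct and follows essentially the same route as the paper: identify $\E[\bA^{(p)}|\cF_t]$ as an element of the span of symmetrized tensors $\bv_r\otimes\f_r^{\otimes(p-1)}$ (the paper gets this via minimum Frobenius norm plus Lagrange multipliers, you via the covariance operator being a multiple of the symmetric projection — the same Gaussian regression fact), then reparametrize through $\bG_{p-1,t-1}^{-1}$ and match the constraints, where the ``free slot'' and ``contracted slot'' contributions give exactly $\hZZ_{p,t}+(p-1)\cT_{p,t}(\hZZ_{p,t})=\bY_{p,t}$. Your treatment of invertibility of $\mathrm{id}+(p-1)\cT_{p,t}$ is no weaker than the paper's, which likewise defers it to the well-conditioning assumptions used when the lemma is applied.
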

%

The above formulas for $\E\{\bA^{(p)}|\cF_t\}$ and $\cT_{p,t}$ are somewhat difficult to parse. It is therefore useful
to draw the associated tensor networks 

\begin{center}
\begin{tikzpicture}[thick]
\node (Ap) at (-4,0) {\large$\mathbb E[\bA^{(p)}|\cF_t]=$} ;
\node (sum) at (-2,0) {\Large $\frac{1}{p}\sum_j$} ;
    \node[draw,shape=circle] (Xt) at (0,0) {$\hat \ZZ_{p,t}$};
    \node[draw,shape=circle] (Gp-1) at (2,0) {$\bG_{p-1,t-1}^{-1}$};
    \node[draw,shape=circle] (Vp) at (4,0) {$\F_{t-1}^{p-1}$};

  \draw (Xt.west) -- node[above] {$N_j$} ++ (-0.8,0);
  \draw  (Xt.east) -- node[above] {$t$} ++ (0.48,0) ;
  \draw  (Gp-1.east) -- node[above]{$t$} ++ (0.39,0) ;
  \draw  (Vp.east) -- node[above] {$N_{[p]\setminus j}$}  ++ (1.5,0);%
\end{tikzpicture}
\end{center}
The operator $\cT_{p,t}$ is represented by the following diagram, with input on the left and output on the right.
\begin{center}
\begin{tikzpicture}[thick]
\node (F) at (-5,-3) {\Large $\cT_{p,t}=$} ;
    \node[draw,shape=circle] (Gp-1) at (0,0) {$\bG_{p-1,t-1}^{-1}$};
    \node[draw,shape=circle] (D0) at (0,-2) {$\bD$};
    \node[draw,shape=circle] (Gp-2) at (0,-4) {$\bG_{p-2,t-1}$};
    \node[draw,shape=circle] (D1) at (0,-6) {$\bD_t$};
    \node[draw,shape=circle] (V0) at (2,-2) {$\F_{t-1}$};
    \node[draw,shape=circle] (V1) at (-2,-6) {$\F_{t-1}$};
  
  \draw  (Gp-1.west) -- node[above] {$t$} ++ (-1.6,0) ;
  \draw  (Gp-1.south) -- node[left]{$t$} ++ (0,-0.68) ;
  \draw  (D0.east) -- node[above] {$t$}  ++ (.98,0);
  \draw  (D0.south) -- node[left]{$t$} ++ (0,-0.68) ;
  \draw  (Gp-2.south) -- node[left]{$t$} ++ (0,-0.6) ;
  \draw  (D1.west) -- node[above]{$t$} ++ (-.9,0) ;
  \draw (V1.west) -- node[above]{$N$} ++ (-1.3,0);
  \draw (D1.east)--node[above]{$t$} ++ (1.3,0);
  \draw (V0.east)--node[above]{$N$} ++ (1.3,0);
\end{tikzpicture}
\end{center}

\begin{proof}[Proof of Lemma \ref{lem:symregression}]
Let $\cV_{p,t}$ be the affine space of symmetric tensors satisfying the constraint \eqref{eq:LinearConstraint}.
The conditional expectation $\E[\bA^{(p)}|\cF_t]$ is the tensor with minimum Frobenius norm  in the affine space $\cV_{p,t}$.
By Lagrange multipliers, there exist vectors $\bm_1,\dots,\bm_t\in\reals^N$ such that $\E[\bA^{(p)}|\cF_t]=\hbA^{(p)}$ takes the form
\begin{align}
\hbA^{(p)}_t :=\sum_{s=0}^{t-1}\sum_{j=1}^p \underbrace{\f_s\otimes \cdots \otimes \f_s}_{\mbox{$j-1$ times}}\otimes\bm_s \otimes
\underbrace{\f_s\otimes \cdots \otimes \f_s}_{\mbox{$p-j$ times}}\, .
\end{align}
Further, again by duality, if a tensor $\hbA^{(p)}$ of this form (i.e., a choice of vectors $\bm_1,\dots,\bm_t$) satisfies the constraints $\hbA^{(p)}\{\f_s\}=\by_{p,s+1}$
for $s< t$, then such a tensor is unique, and corresponds to $\E[\bA^{(p)}|\cF_t]$.
Without loss of generality, we write 
\begin{align}
\bm_r =  \sum_{s=0}^{t-1}(\bG^{-1}_{p-1,t-1})_{r,s}\hbz_{s}\, , \;\;\;\; \hZZ_{p,t}= [\hbz_1|\;\cdots\;|\hbz_t]\, .
\end{align}
By direct calculation we obtain
\begin{align}
\hbA^{(p)}_t\{\f_s\} &= \sum_{r=0}^{t-1} (\bG_{p-1,t-1})_{s,r}\bm_r+(p-1)  \sum_{r=0}^{t-1} (\bG_{p-2,t-1})_{s,r}\<\f_s,\bm_r\>\f_r\\
&= \hbz_s+(p-1)  \sum_{r=0}^{t-1} (\bG_{p-2,t-1})_{s,r}\<\f_s,\bm_r\>\f_r\, .\label{eq:SumT}
\end{align}
We next stack these vectors as columns of an $N\times t$ matrix. The first term obviously yields $\hZZ_{p,t}$. We claim that the second term
coincides with $(p-1)\cT_{p,t}(\hZZ_{p,t})$ so that overall we get
\begin{align}
\big[\hbA^{(p)}_t\{\f_{0}\},\cdots,\hbA^{(p)}_t\{\f_{t-1}\}\big] & =  \hZZ_{p,t} + (p-1)\cT_{p,t}(\hZZ_{p,t})\, .
\end{align}
This in turns implies that the equation determining $\hZZ_{p,t}$ takes the form \eqref{eq:ZZ-eq}.
 The desired claim is simply obtained by rearranging the order of sums in Eq.~\eqref{eq:SumT}.
%
%
\end{proof}

\subsection{Long AMP}
\label{sec:LAMP}

As an intermediate step towards proving Theorem \ref{thm:mixedAMP}, we introduce a new iteration that we call Long AMP (LAMP), following 
\cite{berthier2019state}. This iteration is less compact but simpler to analyze. For each $p\le \km$, let  
$\cS_{p,t}\subseteq (\reals^N)^{\otimes p}$ be the linear subspace of tensors $\bT$ that are symmetric and such that
$\bT\{\f_s\}=0$ for all $s<t$.   We denote by $\cP_t^{\perp}(\bA^{(p)})$ be the projection of $\bA^{(p)}$ 
onto $\cS_{p,t}$. 
We then define the LAMP mapping 
\begin{align}
\LAMP_t\left(\vv^{\le t}\right)_{p}&:=\cP_t^{\perp} (\bA^{(p)})\{f_t(\vv^0,\dots,\vv^t)\}+\sum_{0\leq s\leq t}  h_{t,s-1,p}\qq^{p,s},\label{eq:LAMP1}\\
h_{t,s,p}&:= \sum_{0\le r\le t-1}\big[\bG_{p-1,t-1}^{-1}\big]_{s,r}\big[\bG_{p-1,t}\big]_{r,t},~~~ h_{t,-1,p}=0. \label{eq:LAMP2}
\end{align}
Here we use the same notations $\f_t = f_t(\VV_t)$ and $\bG_{k,t}= \bG_{k,t}(\VV_t) = (\<\f_s,\f_r\>^k)_{s,r\le t}$ that we introduced for the case of AMP, however, these quantities are now different: they are computed using the vectors $\bv^0,\dots,\bv^t$. 
%
%
\begin{align}
\vv^t=\sum_{p=2}^D \qq^{p,t}\, ,\;\;\;\;\;\;\;
\qq^{p,t+1}=\LAMP_t\left(\vv^{\le t}\right)_{p}\, . 
\end{align}

Our proof strategy will be similar to the one of \cite{berthier2019state}, and proceed along the following steps:
\begin{enumerate}
  \item Prove state evolution for LAMP, under a non-degeneracy assumption.
  \item Deduce state evolution for AMP, under the previous non-degeneracy assumption.
  \item Deduce general state evolution for AMP, by perturbing the functions $f_t$ slightly to give a non-degenerate instance. 
\end{enumerate}

We will use notations analogous to the ones introduced for AMP. In particular:
\begin{align}
\VV_{t}&=[\vv_{1}|\vv_{2}|\dots|\vv_{t}]\\
\bQ_{p,t}&=[\qq_{p,1}^{\otimes p}|\qq_{p,2}^{\otimes p}|\dots|\qq_{p,t}^{\otimes p}].
\end{align}

\subsection{State Evolution for LAMP}

\begin{theorem}\label{thm:SELAMP}
  Under the assumptions of Theorem \ref{thm:mixedAMP}, let $\qq^{0,2},\cdots \qq^{0,\km}\in\reals^N$
be deterministic vectors and $\bv^0 =\sum_{k=2}^{\km} \bq^{0,k}$.
Assume that, the empirical distribution of the vectors  $(q_i^{0,2},\cdots, q_i^{0,\km})$, $i\le N$ converges 
in $W_2$ distance to the law of the vector $(U^{0,k})_{2\le k\le \km}$.

Further assume that there exist a constant $C<\infty$ such that, for all $t\le T$,
\begin{itemize}
\item[$(i)$] The matrices $\bG_{p,t}= \bG_{p,t}(\VV)$ are
  well-conditioned, i.e., $C^{-1}\le \sigma_{\min}(\bG_{p,t})\le \sigma_{\max}(\bG_{p,t})\le C$ for all $p\le \km$, $t\le T$.
\item[$(ii)$] Let the linear operator $\cT_{p,t}:\reals^{N\times t}\to\reals^{N\times t}$ be defined as per Eq.~\eqref{eq:Tdef}, with $\bG_{p,t} = \bG_{p,t}(\VV)$,
and  $\f_t=f_t(\VV)$, and define $\cL_{p,t} = {\boldsymbol 1}+(p-1)\cT_{p,t}$. Then $C^{-1}\le \sigma_{\min}(\cL_{p,t})\le \sigma_{\max}(\cL_{p,t})\le C$.
\end{itemize}

Then the following statements hold for any $t\le T$ and sufficiently large $N$:
\begin{enumerate}[label=(\alph*)]
\item Correct conditional law: 
\begin{equation}\label{eq:conditional}
\qq^{p,t+1}|_{\mathcal F_t}\ed \E[\qq^{p,t+1}|\mathcal F_t] +  \cP_t^{\perp}(\tbA^{(p)}) \{f_t(\VV_{t})\}\, .
\end{equation}
where  $\tbA^{(p)}$ is a symmetric tensor distributed identically to $\bA^{(p)}$ and independent of everything else,  and
$\cP_{t}^{\perp}$ is the projection onto the subspace $\cS_{p,t}$ defined in Section \ref{sec:LAMP}.
Further 
\begin{align}
\E[\qq^{p,t+1}|\mathcal F_t]= \sum_{0\leq s\leq t}  h_{t,s-1,p}\qq^{p,s}\, .\label{eq:CondExp}
\end{align}
Moreover, the vectors $(\qq^{p,t+1})_{p\leq \km}$ are conditionally independent given $\mathcal F_t$. 
\item Approximate isometry: we have
 \begin{align}
\< \qq^{p,r+1},\qq^{p,s+1}\>_N &\psim pc_p^2\< f_r(\VV_{r}),f_s(\VV_s)\>_N^{p-1}\, ,\label{eq:c1}\\
\<\vv^{r+1},\vv^{s+1}\>_N&\psim \xi'\left(\langle f_r(\VV^r),f_s(\VV^s)\rangle_N\right). \label{eq:c3}
\end{align}
with both sides converging in probability to constants as $N\to\infty$.
Moreover for $p\neq p'$,    
\begin{equation}\label{eq:c2}
\< \qq^{p,r+1},\qq^{p',s+1}\>_N\psim 0.
\end{equation}
\item  For any pseudo-Lipschitz function 
$\psi:\reals^{T\times \km}\to \reals$, we have
\begin{align}
\plim_{N\to\infty}\frac{1}{N}\sum_{i=1}^N\psi((q_i^{t,p})_{t\le T, p\le \km}) = \E\big\{\psi\big((U^{t,p})_{t\leq T,p\leq \km}\big)\big\}\, .\label{eq:ConvergenceLAMP}
\end{align}
where $(U^{t,p})_{1\le t\le T,p\le \km}$ is a centered Gaussian process, independent of $(U^{0,k})_{2\le k\le \km}$, 
as defined in the statement of  Theorem \ref{thm:mixedAMP}.
\end{enumerate}
\end{theorem}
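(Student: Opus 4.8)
The plan is to prove parts (a), (b), (c) of Theorem~\ref{thm:SELAMP} simultaneously by induction on $t$, following the conditioning technique of Bolthausen~\cite{bolthausen2014iterative} in the Long-AMP form of~\cite{berthier2019state}, adapted to tensors. The base case $t=0$ is immediate: $\cP_0^\perp$ is the identity on symmetric tensors and $f_0(\bv^0)$ is deterministic, so by Lemma~\ref{lem:4}.4 each $\qq^{p,1}=\bA^{(p)}\{f_0(\bv^0)\}$ is an exactly Gaussian vector whose coordinates are i.i.d.\ up to a rank-one perturbation that is negligible for empirical distributions, with per-coordinate variance $pc_p^2\langle f_0,f_0\rangle_N^{p-1}$, independent across $p$; combined with the assumed $W_2$ convergence of the $(q_i^{0,k})_k$, this yields (a)--(c) at $t=0$.

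For the inductive step, assume (a)--(c) hold through iteration $t-1$; in particular the Gram matrices $\bG_{k,s}(\VV)$ converge to deterministic limits and, together with hypotheses $(i)$--$(ii)$, all the linear-algebraic objects ($\bG_{p,t}$, $\cT_{p,t}$, $\cL_{p,t}$) appearing below are invertible with uniformly bounded condition number. The crux is part (a). By the preceding induction steps the non-linearities $\f_0,\dots,\f_{t-1}$ are functions of the earlier iterates, hence $\cF_t$-measurable, and one checks that $\cF_t$ is generated, up to the deterministic initial data, by the linear functionals $\{\bA^{(p)}\{\f_s\}\}_{p\le\km,\,s<t}$. Since $\cS_{p,t}$ is exactly the space of symmetric tensors annihilating $\f_0,\dots,\f_{t-1}$, its orthogonal complement within symmetric tensors is spanned by the tensors $\mathrm{Sym}(\bw\otimes\f_s^{\otimes(p-1)})$ — precisely the shape of the Lagrange-multiplier regression $\E[\bA^{(p)}\mid\cF_t]$ of Lemma~\ref{lem:symregression} — so $\cP_t^\perp(\bA^{(p)})=\bA^{(p)}-\E[\bA^{(p)}\mid\cF_t]$ is independent of $\cF_t$ and equal in law to $\cP_t^\perp(\tbA^{(p)})$ for a fresh copy $\tbA^{(p)}$; the $\bA^{(p)}$ being independent across $p$ gives the conditional independence. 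Substituting this into~\eqref{eq:LAMP1} and using $\cF_t$-measurability of $\f_t$ and of the $\qq^{p,s}$ yields the conditional law~\eqref{eq:conditional} with conditional mean~\eqref{eq:CondExp}.

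Granting (a), parts (b) and (c) follow by explicit computation against the exact conditional Gaussian law. Conditioning on $\cF_t$, write $\qq^{p,t+1}=\sum_{s\le t}h_{t,s-1,p}\qq^{p,s}+\cP_t^\perp(\tbA^{(p)})\{f_t\}$; the first summand is fixed and its inner products with earlier iterates are controlled by the inductive hypothesis, and the precise choice~\eqref{eq:LAMP2} of $h_{t,s,p}$ is exactly what makes the telescoping identity $\sum_{s}h_{t,s-1,p}(\bG_{p-1,t-1})_{s-1,r}=(\bG_{p-1,t})_{r,t}$ hold, so that the ``past'' contribution to $\langle\qq^{p,r+1},\qq^{p,t+1}\rangle_N$ matches the state-evolution prediction $pc_p^2\langle f_r,f_t\rangle_N^{p-1}$; for $r=t$ the remaining piece $\E[\|\cP_t^\perp(\tbA^{(p)})\{f_t\}\|_N^2\mid\cF_t]$ supplies the difference, using Lemma~\ref{lem:4}.4 together with Lemma~\ref{lem:4}.5 (the latter bounding the effect of the projection off the $O(t)$-codimensional space $\cS_{p,t}^\perp$). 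This gives~\eqref{eq:c1}--\eqref{eq:c2}, and summing over $p$ with $\xi'(x)=\sum_p pc_p^2 x^{p-1}$ gives~\eqref{eq:c3}. Finally, since conditionally on $\cF_t$ the increment $\cP_t^\perp(\tbA^{(p)})\{f_t\}$ is a centered Gaussian vector with nearly i.i.d.\ coordinates of the correct variance, asymptotically independent across $p$, a conditional pseudo-Lipschitz / Lindeberg argument as in~\cite{berthier2019state} combines it with the inductive hypothesis~\eqref{eq:ConvergenceLAMP} at time $t-1$ to upgrade the joint empirical law of $(q_i^{s,p})_{s\le t,\,p\le\km}$ to the claimed Gaussian law, closing the induction.

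I expect the main obstacle to be executing the conditioning step (a) rigorously: one must pin down that $\cF_t$ coincides with conditioning the independent Gaussian tensors $\bA^{(p)}$ on the explicit linear observations $\{\bA^{(p)}\{\f_s\}\}_{s<t}$ — handling the circular dependence of the $\f_s$ on the iterates through the preceding induction steps — and verify that $\cS_{p,t}^\perp$ is exactly the span in which the regression of Lemma~\ref{lem:symregression} lives (whose non-degeneracy is hypothesis $(ii)$, via $\cL_{p,t}$), so that no residual correlation between $\cP_t^\perp(\bA^{(p)})$ and $\cF_t$ survives. A secondary but substantial burden is the Gram-matrix closure in (b) — checking the telescoping identity above and that the projected fresh increment supplies precisely the missing variance — together with making all the $o(1)$ errors (from the projections and from replacing Gram matrices by their limits) uniform over the $O(1)$ iterations and over the class of pseudo-Lipschitz test functions.
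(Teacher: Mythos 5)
Your overall architecture is the same as the paper's: induction over $t$, with part $(a)$ obtained by noting that $\cP_t^{\perp}(\bA^{(p)})$ is independent of $\cF_t$ (so a fresh copy $\tbA^{(p)}$ can be substituted into the LAMP update, and the conditional mean is just the $\cF_t$-measurable part of \eqref{eq:LAMP1}), part $(b)$ by computing inner products against the representation from $(a)$, and part $(c)$ by matching the coefficients of the conditional mean with the state-evolution prediction and invoking a Gaussian concentration argument as in \cite{berthier2019state}. Your identification of $\cF_t$ with the $\sigma$-algebra of the linear observations $\bA^{(p)}\{\f_s\}$, $s<t$, is also correct and is what the paper uses. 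However, there is a genuine gap in your treatment of the diagonal term $r=s=t$ in part $(b)$, i.e.\ in the claim that the projected increment $\cP_t^{\perp}(\tbA^{(p)})\{\f_t\}$ is ``nearly i.i.d.\ Gaussian with the correct variance,'' which you propose to justify by Lemma~\ref{lem:4}, point 5, as a projection off an ``$O(t)$-codimensional'' space.

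That step does not go through as stated. The subspace $\cS_{p,t}$ is cut out by the $tN$ scalar constraints $\bT\{\f_s\}=0$, $s<t$, so $\cS_{p,t}^{\perp}$ has dimension of order $tN$ inside the space of symmetric tensors, not $O(t)$; moreover the constraint directions are random, $\cF_t$-measurable, and strongly correlated with the contraction direction $\f_t$. Lemma~\ref{lem:4}.5 concerns the projection of the vector $\bW^{(p)}\{\bu\}$ in $\reals^N$ relative to a fixed low-dimensional subspace and does not apply to this tensor-space projection. The missing ingredient is exactly the paper's Lemma~\ref{lem:LemmaPerp}: after splitting $\f_t^{\otimes(p-1)}$ into its components parallel and perpendicular to ${\rm span}(\f_s^{\otimes(p-1)})_{s<t}$ (the parallel part is annihilated since $\cP_t^{\perp}(\tbA^{(p)})\{\f_s\}=0$), one must show $\cP_t^{\perp}(\tbA^{(p)})\{(\f_t^{\otimes p-1})_{\perp}\}\psim \tbA^{(p)}\{(\f_t^{\otimes p-1})_{\perp}\}$, which is not automatic because $(\f_t^{\otimes p-1})_{\perp}$ is not orthogonal to all of $\cS_{p,t}^{\perp}$. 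The paper proves this by writing the projection through Lagrange multipliers $\blambda_s$, solving a linear system governed by $\cL_{p,t}=\bfone+(p-1)\cT_{p,t}$, and using hypothesis $(ii)$ (well-conditioning of $\cL_{p,t}$) together with Lemma~\ref{lem:4} to show the correction is negligible. This is where assumption $(ii)$ actually enters the proof of Theorem~\ref{thm:SELAMP}; your proposal instead invokes $(ii)$ only in connection with the conditioning step $(a)$, which does not need it. Once this lemma is supplied, your variance accounting (parallel part matching the conditional mean via the approximate unitary equivalence of $\sqrt{p}\,c_p\F_{t-1}^{p-1}$ and $\bQ_{p,t}$, perpendicular part supplying the rest) closes the induction exactly as in the paper.
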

Note that the conditional expectation, as given by Eqs.~\eqref{eq:LAMP2}, \eqref{eq:CondExp} can be represented by the following tensor network:
\begin{center}
    \begin{tikzpicture}[thick]
\node (xt+1) at (-4,0) {$\mathbb E[\qq^{p,t+1}|\mathcal F_t]=$} ;
    \node[draw,shape=circle] (V) at (-.5,0) {$\bQ_{p,t}$};
    \node[draw,shape=circle] (D) at (1.2,-2) {\small{$\bG_{p-1,t-1}(\VV_{t-1})^{-1}$}};
    \node[draw,shape=circle] (Vk-1) at (4,0) {\small{$\F_{t-1}(\VV_{t-1})^{p-1}$}};
      \node[draw,shape=circle] (vt+1) at (8,0) {\small{$\f_{t}(\VV_t)^{\otimes p-1}$}};
  \draw  (V.south east) -- node[above] {$t$} ++ (0.35,-0.35) ;
  \draw  (Vk-1.south west) -- node[above]{$t$} ++ (-0.4,-0.4) ;
    \draw  (V.west) -- node[above]{$N$} ++ (-1.3,0) ;
        \draw  (Vk-1.east) -- node[above]{$N_{[p-1]}$} ++ (1.5,0);
\end{tikzpicture}
\end{center}
In the next section, we will prove these statements by induction on $t$. The crucial point we exploit is the representation $(a)$.

 As a preliminary remark, we emphasize that the iteration number $t$ is bounded as $N\to\infty$, and therefore all numerical quantities not depending on $N$ (but possibly on $t$) will be treated as constants. Further we will refer to the condition 
$C_T^{-1}\le \sigma_{\min}(\bG_{k.t})\le \sigma_{\max}(\bG_{k,t})\le C_T$ simply by saying that the matrices $\bG_{k,t}$
are `well conditioned'.

\subsection{Proof of Theorem~\ref{thm:SELAMP}}

The proof will be by induction over $t$. The base case is clear, so we focus on the inductive step. We assume the statements above for $t-1$ and prove them for $t$. 

\subsubsection{Proof of $(a)$}

Note that $\cP_t^{\perp}(\bA^{(p)})$ is by construction independent of $\cF_t$, and therefore we can replace $\bA^{(p)}$
by a fresh independent matrix in Eq.~\eqref{eq:LAMP1}, whence we get the desired expression.

\subsubsection{Proof of $(b)$: Approximate isometry}

We will repeatedly apply Lemma~\ref{lem:4}. We start with  Eq.~\eqref{eq:c1}. As we are inducting on $t$, we may limit ourselves to considering inner products
 $\< \qq^{p,t+1},\qq^{p,u+1}\rangle_N$, for $u\le t$. Using Lemma~\ref{lem:4} (point 2), we get,  for $u<t$,
\begin{align*}
\< \qq^{p,t+1},\qq^{p,u+1}\>_N \psim \<\E[\qq^{p,t+1}|\cF_t],\qq^{p,u+1}\>_N. 
\end{align*}

We next use the formula in $(a)$ for $\E[\qq^{p,t+1}|\cF_t]$ (together with the expression in Eq.~\eqref{eq:LAMP1}):
\begin{align}
\<\mathbb E[\qq^{p,t+1}|\mathcal F_t],\qq^{p,u+1}\rangle_N &\stackrel{p}{\simeq} \left\langle \sum_{0\leq r,s\leq t-1} \qq^{p,s+1}(\bG_{p-1,t-1}^{-1})_{s,r} \langle \f_r,\f_{t}\rangle_N^{p-1},\qq^{p,u+1}\right\rangle_N\\
&= \sum_{0\leq r,s\leq t-1} \langle \qq^{p,s+1},\qq^{p,u+1}\rangle_N (\bG_{p-1,t-1}^{-1})_{s,r} \langle \f_r,\f_{t}\rangle_N^{p-1}\\
&\stackrel{p}{\simeq} pc_p^2\sum_{0\leq r,s\leq t-1} (\bG_{p-1,t-1})_{s,u} (\bG_{p-1,t-1}^{-1})_{s,r} \langle \f_r,\f_{t}\rangle_N^{p-1}\\
&=pc_p^2\langle \f_u,\f_t\rangle_N^{p-1}.
\end{align}

The third equality was obtained by the induction hypothesis. We next prove Eq.~\eqref{eq:c1} when $u=t$.
 We set $(\f_{t}^{\otimes p-1})_{\parallel}$ to be the projection of $\f_{t}^{\otimes p-1}$ onto ${\rm span}(\f_s^{\otimes p-1})_{s< t}$ and $
(\f_{t}^{\otimes p-1})_{\perp}=\f_{t}^{\otimes p-1}-(\f_{t}^{\otimes p-1})_{\parallel}$. We then have
\begin{align*}
\cP_t^{\perp} (\tbA^{(p)})\{\f_t\}=\cP_t^{\perp} (\tbA^{(p)})\{(\f_t^{\otimes p-1})_{\perp}\} \, ,
\end{align*}
where the right-hand side is defined according to Eq.~\eqref{eq:TensorTensor}.
Next we will use the following lemma.
\begin{lemma}\label{lem:LemmaPerp}
We have
\begin{align*}
 \cP_t^{\perp} (\tbA^{(p)})\{(\f_t^{\otimes p-1})_{\perp}\} \psim
  \tbA^{(p)}\{(\f_t^{\otimes p-1})_{\perp}\}\, .
  \end{align*}
  \end{lemma}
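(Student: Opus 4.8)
The goal is to show that projecting the Gaussian tensor onto the subspace $\cS_{p,t}$ (tensors annihilating all $\f_s$, $s<t$) has negligible effect when we subsequently contract against the \emph{perpendicular} direction $(\f_t^{\otimes p-1})_\perp$. Write $\cP_t^\perp(\tbA^{(p)}) = \tbA^{(p)} - \cP_t^{\parallel}(\tbA^{(p)})$, where $\cP_t^{\parallel}$ is the orthogonal projection onto the span of the constraint tensors (symmetrizations of $\f_s\otimes\cdots\otimes\bm\otimes\cdots\otimes\f_s$, $s<t$), exactly as in the Lagrange-multiplier description from the proof of Lemma~\ref{lem:symregression}. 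So it suffices to prove that
\[
 \bigl\| \cP_t^{\parallel}(\tbA^{(p)})\{(\f_t^{\otimes p-1})_\perp\}\bigr\|_N \psim 0 .
\]
The first step is to write out $\cP_t^{\parallel}(\tbA^{(p)})$ explicitly: it is a sum over $s<t$ and over the $p$ "slots" $j$ of terms $\f_s^{\otimes(j-1)}\otimes \bm_s \otimes \f_s^{\otimes(p-j)}$ for suitable dual vectors $\bm_s\in\reals^N$ (as in Lemma~\ref{lem:symregression}), where the $\bm_s$ are obtained by solving a linear system whose matrix is well-conditioned by hypothesis $(i)$--$(ii)$ of Theorem~\ref{thm:SELAMP}. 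Contracting such a term against $(\f_t^{\otimes p-1})_\perp$ produces, for each slot $j$, a vector that is a linear combination (with coefficients that are $O(1)$ inner products, controlled by the well-conditioning hypotheses) of the $\bm_s$, each multiplied by a factor of the form $\langle \f_s^{\otimes(p-2)}, (\text{partial contraction of }(\f_t^{\otimes p-1})_\perp)\rangle$ — but the key point is that every such contraction involves pairing a power $\f_s^{\otimes k}$ ($k\ge 1$) against a factor of $(\f_t^{\otimes p-1})_\perp$, and by construction $(\f_t^{\otimes p-1})_\perp \perp \mathrm{span}(\f_s^{\otimes(p-1)})_{s<t}$. So I need to track which contractions survive: only the "mixed" contractions (where $(\f_t^{\otimes p-1})_\perp$ is paired with fewer than $p-1$ copies of a single $\f_s$) can be nonzero.

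\textbf{Key estimate.} The heart of the matter is therefore a norm bound: I must show that these surviving mixed contractions are $o_P(1)$. Here I would use that $\|(\f_t^{\otimes p-1})_\perp\|_N^2 = \|\f_t\|_N^{2(p-1)} - \|(\f_t^{\otimes p-1})_\parallel\|_N^2$, and by the induction hypothesis (part $(b)$) combined with the well-conditioning of $\bG_{p-1,t-1}$, the parallel component captures the full norm up to $o_P(1)$, i.e. $\|(\f_t^{\otimes p-1})_\perp\|_N \psim 0$. Wait — that is not quite right in general, so the cleaner route is: the vectors $\f_s$ have bounded norm (by $f_s$ Lipschitz and finite-second-moment input plus the induction hypothesis), and $(\f_t^{\otimes p-1})_\perp$ has bounded Frobenius norm; each dual vector $\bm_s$ has $\|\bm_s\|_N = O(1)$ by the well-conditioning of the linear system; hence each term in $\cP_t^{\parallel}(\tbA^{(p)})\{(\f_t^{\otimes p-1})_\perp\}$ is $O(1)$ times a scalar coefficient which is an inner product of the form $\langle \f_s^{\otimes m}, (\text{something built from }(\f_t^{\otimes p-1})_\perp)\rangle_N$ with $m\ge 1$. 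I would then argue that, after expanding, the only terms that do not vanish identically by orthogonality are those in which $(\f_t^{\otimes p-1})_\perp$ is contracted in a way that leaves a "dangling" copy of $\f_t$ paired against copies of $\f_s$; collecting these and using Cauchy--Schwarz together with the crude operator-norm bound $\|\tbA^{(p)}\|_{\op} \le C N \cdot N^{-p/2}\cdot k!$ from Lemma~\ref{lem:4}(6), one sees each contributes $O_P(N^{-1/2})$ or is exactly zero. Summing the bounded number (depending on $t$, $p$) of such terms gives the $o_P(1)$ conclusion.

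\textbf{Main obstacle.} I expect the main difficulty to be the bookkeeping in the expansion of $\cP_t^{\parallel}(\tbA^{(p)})\{(\f_t^{\otimes p-1})_\perp\}$: symmetrization means one must sum over all $p$ slots in which the dual vector $\bm_s$ can sit, and for each slot decide which of the remaining $p-1$ copies of $\f_s$ get contracted against which of the $p-1$ tensor factors of $(\f_t^{\otimes p-1})_\perp$. One must check carefully that in \emph{every} term either (i) exactly $p-1$ copies of a single $\f_s$ hit all $p-1$ factors, in which case the term is $\langle \f_s^{\otimes(p-1)}, (\f_t^{\otimes p-1})_\perp\rangle \bm_s = 0$ by the very definition of $(\cdot)_\perp$, or (ii) the contraction is "mixed," and then the scalar coefficient is controlled as above. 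I would isolate this combinatorial claim as a sub-lemma and then invoke the norm estimates mechanically. The technique mirrors the analogous step in \cite{berthier2019state} for the matrix case; the only genuinely new feature is the tensor structure, which is exactly what the tensor-network diagrams in the preceding subsections are set up to handle. A cleaner alternative worth attempting first: show directly that $\cP_t^{\parallel}(\tbA^{(p)})\{\bT\}$, as a function of a symmetric tensor $\bT \in (\reals^N)^{\otimes(p-1)}$, depends on $\bT$ only through its contractions with the $\f_s^{\otimes(p-1)}$ and $\f_s^{\otimes(p-2)}$ for $s<t$ (this is visible from the tensor-network diagram for $\E[\bA^{(p)}|\cF_t]$), so that taking $\bT = (\f_t^{\otimes p-1})_\perp$ kills the leading contractions and leaves only the lower-order $\f_s^{\otimes(p-2)}$-type contractions, which are $O_P(N^{-1/2})$ by Lemma~\ref{lem:4}; if that observation goes through, the proof is short.
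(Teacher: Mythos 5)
Your overall route is the same as the paper's: write $\cP_t^{\perp}(\tbA^{(p)})=\tbA^{(p)}-\bQ$ with $\bQ$ in Lagrange-multiplier form (dual vectors $\bm_s$, one per constraint $s<t$, symmetrized over the $p$ slots), contract against $(\f_t^{\otimes p-1})_{\perp}$, observe that the terms in which the dual vector carries the free index are killed exactly because $(\f_t^{\otimes p-1})_{\perp}\perp\spn(\f_s^{\otimes(p-1)})_{s<t}$, and reduce the lemma to showing that the remaining $(p-1)$ mixed terms, whose scalar coefficients are $c_s\propto N^{-(p-1)}\langle \bm_s\otimes\f_s^{\otimes(p-2)},(\f_t^{\otimes p-1})_{\perp}\rangle$, vanish in probability. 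Up to that point your plan matches the paper.

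The gap is in the key estimate for these mixed coefficients. Knowing only $\|\bm_s\|_N=O(1)$ and applying Cauchy--Schwarz together with the operator-norm bound of Lemma~\ref{lem:4}, point 6, gives $c_s=O_P(1)$, not $O_P(N^{-1/2})$: writing $(\f_t^{\otimes p-1})_{\perp}=\sum_{r\le t}\beta_r\f_r^{\otimes(p-1)}$ with bounded $\beta_r$, one gets $c_s=\sum_r\beta_r\langle\f_s,\f_r\rangle_N^{p-2}\langle\bm_s,\f_r\rangle_N$, and every factor here is of order one, so no power of $N^{-1/2}$ appears unless one knows more about $\bm_s$ than its norm. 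The paper supplies exactly the two ingredients your plan omits. First, it solves the multiplier system approximately, proving $\bm_s\psim\sum_{r<t}(\bG_{p-1,t-1}^{-1})_{s,r}\,\tbA^{(p)}\{\f_r\}$, i.e.\ the component of $\bm_s$ lying in $\spn((\f_r)_{r<t})$ is negligible; this step uses the well-conditioning of $\cL_{p,t}=\bfone+(p-1)\cT_{p,t}$ (assumption $(ii)$ of Theorem~\ref{thm:SELAMP}), not merely that of the Gram matrices, together with $\langle\f_s,\tbA^{(p)}\{\f_q\}\rangle_N\psim 0$. Second, with that identification in hand, the coefficients reduce to quantities of the form $\langle\tbA^{(p)}\{\f_q\},\f_r\rangle_N$ times bounded factors, and these vanish because $\tbA^{(p)}$ is a fresh tensor independent of $\cF_t$, so conditionally the inner product is a centered Gaussian of variance $O(1/N)$ (Lemma~\ref{lem:4}, point 2) --- a concentration fact, not an operator-norm fact; the crude bound of point 6 can never do better than $O(1)$ here. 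Without the first ingredient an $O(1)$ component of $\bm_s$ along some $\f_r$ would make $c_s=\Theta(1)$ and the conclusion false, so this is not bookkeeping one can defer: it is the step where assumption $(ii)$ enters. The same objection applies to your ``cleaner alternative'': the surviving $\f_s^{\otimes(p-2)}$-type contractions are not small ``by Lemma~\ref{lem:4}'' as stated; they become small only after the multipliers are identified, up to a vanishing correction, with images of the fresh Gaussian tensor.
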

Using this result and Lemma~\ref{lem:4} (point 4), we have
\begin{align}
\big\|\cP_t^{\perp} (\tbA^{(p)})\{\f_t\}\big\|_{N}^2\psim \frac{pc_p^2}{N^{p-1}}\|(\f_{t}^{\otimes p-1})_{\perp}\|^{2}\, .\label{eq:PAnorm}
\end{align}
Further, again using $\cP_t^{\perp} (\tbA^{(p)})\{(\f_t^{\otimes
  p-1})_{\perp}\} \psim \tbA^{(p)}\{(\f_t^{\otimes p-1})_{\perp}\}$,
and Lemma~\ref{lem:4} (point 2)
we obtain
\begin{align}
\<\cP_t^{\perp} (\tbA^{(p)})\{\f_t\},\E[\qq^{p,t+1}|\cF_t]\>_N \psim 0\, .\label{eq:ApproxOrth}
\end{align}
We next claim that 
\begin{align}
\big\|\E[\qq^{p,t+1}|\mathcal F_t]\big\|_N^2\stackrel{p}{\simeq} \frac{pc^2_p}{N^{p-1}} \|(\f_t^{\otimes p-1})_{\parallel}\|^2\, .
\end{align}
In order to prove this, recall  the expression for $\E[\qq^{p,t+1}|\cF_t]$ from part $(a)$,
and the corresponding tensor network diagram which we reproduce here

\begin{center}
    \begin{tikzpicture}[thick]
\node (xt+1) at (-4,0) {$\mathbb E[\qq^{p,t+1}|\mathcal F_t]=$} ;
    \node[draw,shape=circle] (V) at (-.5,0) {$\bQ_{p,t}$};
    \node[draw,shape=circle] (D) at (1.2,-2) {\small{$\bG_{p-1,t-1}(\VV_{t-1})^{-1}$}};
    \node[draw,shape=circle] (Vk-1) at (4,0) {\small{$\F_{t-1}(\VV_{t-1})^{p-1}$}};
      \node[draw,shape=circle] (vt+1) at (8,0) {\small{$\f_{t}(\VV_t)^{\otimes p-1}$}};
  \draw  (V.south east) -- node[above] {$t$} ++ (0.35,-0.35) ;
  \draw  (Vk-1.south west) -- node[above]{$t$} ++ (-0.4,-0.4) ;
    \draw  (V.west) -- node[above]{$N$} ++ (-1.3,0) ;
        \draw  (Vk-1.east) -- node[above]{$N_{[p-1]}$} ++ (1.5,0);
\end{tikzpicture}
\end{center}

Further, by the formula for simple linear regression, we have
\begin{align}
(\f_{t}^{\otimes p-1})_{\parallel}& =\sum_{0\leq s\leq t-1} \alpha_{s,t} \f_s^{\otimes p-1},\\
& \alpha_{s,t}=\sum_{0\leq r\le t-1} (\bG_{p-1,t-1}^{-1})_{s,r} \< \f_r,\f_{t}\>_N^{p-1}\, .
 \end{align}
This can be represented by a tensor network as follows:

\begin{center}
\begin{tikzpicture}[thick]
\node (xt+1) at (-3.3,0) {$(\f_{t}^{\otimes p-1})_{||}=$} ;
    \node[draw,shape=circle] (V) at (0,0) {$\F_{t-1}^{p-1}$};
    \node[draw,shape=circle] (D) at (2.5,0) {$\bG_{p-1,t-1}^{-1}$};
    \node[draw,shape=circle] (Vk-1) at (5,0) {$\F_{t-1}^{p-1}$};
      \node[draw,shape=circle] (vt+1) at (7.8,0) {$\f_{t}^{\otimes p-1}$};

  \draw  (V.east) -- node[above] {$t$} ++ (.9,0) ;
  \draw  (Vk-1.west) -- node[above]{$t$} ++ (-.9,0) ;
    \draw  (V.west) -- node[above]{$N_{[p-1]}$} ++ (-1.3,0) ;
        \draw  (Vk-1.east) -- node[above]{$N_{[p-1]}$} ++ (1.4,0);
\end{tikzpicture}
\end{center}

However by part $(b)$ of the inductive step, $\sqrt{p}c_p\F_{t-1}^{\otimes p-1}$ and $\bQ_{p,t}$ are approximately unitarily equivalent in that 
$pc_p^2\langle \f_r,\f_s\rangle_N^{p-1}\psim \langle \qq_{p,r+1},\qq_{p,s+1}\rangle_N$. Therefore the above expressions have approximately the same norm 
up to the factor $p^{1/2}c_p$, since they are linear combinations with the same coefficients:
\begin{align}
\big\|\E[\qq^{p,t+1}|\mathcal F_t]\big\|_N^2\stackrel{p}{\simeq} \frac{pc_p^2}{N^{p-1}} \|(\f_{t}^{\otimes p-1})_{\parallel}\|^2. \label{eq:CEnorm}
\end{align}
Using together Eqs.~\eqref{eq:PAnorm}, \eqref{eq:ApproxOrth}, and \eqref{eq:CEnorm}, we get
\begin{align*}
\< \qq^{p,t+1},\qq^{p,t+1}\>_N &\psim \|\E[\qq^{p,t+1}|\mathcal F_t]\|_N^2+ \frac{pc_p^2}{N^{p-1}}\|(\f_t^{\otimes p-1})_{\perp}\|_N^2\\
&\stackrel{p}{\simeq} \frac{pc_p^2}{N^{p-1}}\< \f_t^{\otimes p-1},\f_t^{\otimes p-1}\>_N\\
&=pc^2_p\< \f_t,\f_t\>_N^{p-1}\, 
 \end{align*}
finishing the proof of Eq.~\eqref{eq:c1}.

Next consider Eq.~\eqref{eq:c2}, i.e., approximate orthogonality of $\qq^{p,r}$ and $\qq^{p',r}$ for $p\neq p'.$ This follows easily from the representation
in point $(a)$ which, together with Lemma~\ref{lem:4}, inductively implies that the iterates $\qq^{s,p}$ for different $p$ are approximately orthogonal.
Finally, Eq.~\eqref{eq:c3}  follows directly from Eq.~\eqref{eq:c1} and \eqref{eq:c2}.
We now prove Lemma~\ref{lem:LemmaPerp}.

 \begin{proof}[Proof of Lemma~\ref{lem:LemmaPerp}]
    For simplicity of notation, we let $\tbA= \tbA^{(p)}$. By Lagrange multipliers, there exists
    $(\blambda_s)_{s \le t-1}$ vectors in $\reals^N$ such that $\cP_t^{\perp} (\tbA) = \tbA - \bQ$, where 
\begin{align*}
  \bQ = \frac{(p-1)!}{N^{p-1}}\sum_{s=0}^{t-1} \sum_{j=1}^p  \underbrace{\f_s\otimes \cdots \otimes \f_s}_{\mbox{$j-1$ times}}\otimes \blambda_s \otimes \underbrace{\f_s\otimes \cdots \otimes \f_s}_{\mbox{$p-j$ times}}.
\end{align*}
The vectors $(\blambda_s)_{s \le t-1}$ are determined by the set of equations $\cP_t^{\perp} (\tbA) \{\f_s\} = 0$ for all $s\le t-1$
which are equivalent to
\begin{align*}
  \sum_{r<t} (\bG_{p-1,t-1})_{s,r}\blambda_r+(p-1)\sum_{r<t}(\bG_{p-2,t-1})_{s,r}\<\f_s,\blambda_r\>_N \f_r= \tbA\{\f_s\}\, .
\end{align*}
Multiplying these equations by $\bG_{p-1,t-1}^{-1}$ (recall that we assume $\bG_{p-1,t-1}$ well conditioned with high probability),
we obtain
\begin{align}
  \blambda_s+(p-1) \sum_{r',r<t}(\bG_{p-1,t-1}^{-1})_{s,r'} (\bG_{p-2,t-1})_{r',r}\<\f_s,\blambda_r\>_N \f_r=
  \sum_{r<t}(\bG_{p-1,t-1}^{-1})_{s,r}\tbA\{\f_r\}\, . \label{eq:LambdaEq}
\end{align}
This in particular implies that
\begin{align*}
  \blambda_s &=   \blambda^0_s+ \blambda^{\parallel}_s\, ,\;\;\;\;  \blambda^0_s\equiv\sum_{r<t}(\bG_{p-1,t-1}^{-1})_{s,r}\tbA\{\f_r\} \, ,
\end{align*}
where $\blambda^{\parallel}_s\in \spn((\f_r)_{r<t})$. We claim that $\|\blambda^{\parallel}\|_N\psim 0$, i.e.,
$\blambda_s\psim   \blambda^0_s$. Indeed, letting $\bLambda\in\reals^{N\times t}$ be the matrix with columns
$(\blambda_{s})_{s<t}$, and $\bLambda^0$ the matrix with columns $(\blambda^0_{s})_{s<t}$
Eq.~\eqref{eq:LambdaEq} can be written as
\begin{align*}
  \cL_{p,t}^{\sT}(\bLambda) =\bLambda^0\, .
\end{align*}
Here we recall $\cL_{p,t}=\bfone+(p-1)\cT_{p.t}$ and $\cT_{p,t}\in\reals^{Nt\times Nt}$ is defined in Eq.~\eqref{eq:Tdef}.
Substituting the decomposition $\bLambda = \bLambda^0+\bLambda^{\parallel}$ in the above, we obtain
\begin{align*}
  \cL_{p,t}^{\sT}(\bLambda^{\parallel}) =-(p-1)\cT^{\sT}_{p,t}(\bLambda^0)\, .
\end{align*}
Since by assumption $\cL_{p,t}$ is well conditioned, it is sufficient to prove that $\cT^{\sT}_{p,t}(\bLambda^0)\psim 0$. 
Since $\cT^{\sT}_{p,t}(\bLambda^0)\in  \spn((\f_r)_{r<t})$, and the Gram matrix $\bG_{1,t-1}=(\<\f_r,\f_s\>)_{r,s<t}$ is well conditioned,
it is sufficient to check that $\<\f_s,\cT^{\sT}_{p,t}(\bLambda^0)\>_N\psim 0$ for each $s<t$. This is in turn equivalent to
\begin{align*}
  \sum_{r',r<t}(\bG_{p-1,t-1}^{-1})_{s,r'} (\bG_{p-2,t-1})_{r',r}\<\f_s,\blambda^0_r\>_N \<\f_s,\f_r\>_N\psim 0\, .
\end{align*}
Finally, this last claim follows by substituting the expression  for $\blambda^0_r$, and using the fact that  $\<\f_s,\tbA\{\f_q\}\>_N\psim 0$
for all $r,q\le t$, by Lemma \ref{lem:4}.

We are now in position to prove the claim of this lemma.
Note that $\tbA\{(\f_t^{\otimes p-1})_{\perp}\} -\cP_t^{\perp} (\tbA)\{(\f_t^{\otimes p-1})_{\perp}\} \psim \bQ\{(\f_t^{\otimes p-1})_{\perp}\}$ and
\begin{align*}
  \bQ\{(\f_t^{\otimes p-1})_{\perp}\} = \frac{(p-1)}{N^{p-1}}\sum_{s<t}\<\lambda_s\otimes \f_s^{\otimes (p-2)},(\f_t^{\otimes (p-1)})_{\perp}\>\, \f_s
  \equiv(p-1)\sum_{s\le t}c_s\f_s\, .
\end{align*}
Since the Gram matrix $\bG_{1,t-1} = (\<\f_s,\f_r\>)_{s,r<t}$ is well conditioned, in order to show $\|\bQ\{(\f_t^{\otimes p-1})_{\perp}\}\|_N\psim 0$,
  it is sufficient to check that each of the coefficients $c_s\psim 0$ for each $s$. Notice
  that $(\f_t^{\otimes (p-1)})_{\perp}  = \sum_{r\le t}\beta_r\f_r^{\otimes (p-1)}$, where the $\beta_s$ are bounded
  thanks to the fact that $\bG_{p-1,t-1}$ is well conditioned. Using $\blambda_s\psim\blambda^0_s$, we get
\begin{align*}
  c_s &\psim  \frac{1}{N^{p-1}} \<\blambda^0_s\otimes \f_s^{\otimes (p-2)},(\f_t^{\otimes p-1})_{\perp}\>\\
      & =  \frac{1}{N^{p-1}}\sum_{r\le t}\sum_{q<t}\beta_r (\bG_{p-1,t-1}^{-1})_{s,r'}\<\tbA\{\f_{q}\}\otimes \f_s^{\otimes(p-2)}, \f_r^{\otimes(p-1)}\>_N\\
  & = \sum_{r\le t}\sum_{q<t}\beta_r (\bG_{p-1,t-1}^{-1})_{s,r'}\<\tbA\{\f_{q}\},\f_t\>_N\<\f_s, \f_r\>_{N}^{p-2} \psim 0\, ,
\end{align*}
    where in the last step we used $\<\tbA\{\f_{q}\},\f_t\>_N\psim 0$, thanks to Lemma \ref{lem:4}.
\end{proof}

\subsubsection{Proof of $(c)$: State evolution}

Recall that the process $(U^{p,t})_{t\ge 1}$ is Gaussian by construction, and independent of $U^{p,0}$, 
Define $C_{r,s} \equiv \E\{U^{p,r}U^{p,s}\}$ and $\bC_{\le t}\equiv (C_{r,s})_{r,s\le t}$. We then have
\begin{align}
\E[U^{p,t+1}| U^{p,0},\dots,U^{p,t}]&=\sum_{s=1}^t \tilde{\alpha}_s U^{p,s}\, ,\\
\tilde\alpha_s & = \sum_{r=1}^t(\bC^{-1}_{\le t})_{s,r}C_{r,t+1}\, .
\end{align}

On the other hand, from point $(a)$, we know that
\begin{align}
\E[\qq^{p,t+1}|\mathcal F_t]& = \sum_{1\leq s\leq t}  \alpha_s \qq^{s,p}\, ,\\
\alpha_s & = \sum_{r=1}^t(\bG^{-1}_{p-1,t-1})_{s-1,r-1} (\bG_{p-1,t})_{r-1,t}\, .
\end{align}
Moreover, by the induction hypothesis we know that, for $r,s\le t$
\begin{align*}
(\bG_{p-1,t})_{r,s} &\psim \E\{f_{r}(X^0,\dots,X^{r}) f_{t}(X^0,\dots,X^{s})\}^{p-1}\, ,
\end{align*}
where we recall that $X^t \equiv \sum_{p\le \km} U^{p,t}$. 
Therefore, using the definition of the process $(U^{p,t})_{t\ge 0}$ we obtain $(\bG_{p-1,t})_{r,s} \psim C_{r+1,s+1}/(pc_p^2)$ for $r,s\le t$, whence
$\alpha_s\psim \tilde\alpha_s$ (where we used the fact that $\bG_{p-1,t}$ is well conditioned by assumption). 
Therefore we also have 
\begin{align}
\Big\|\E[\qq^{p,t+1}|\mathcal F_t] - \sum_{s=1}^t \tilde{\alpha}_s \qq^{p,s}\Big\|_N^2 &= \Big\|\sum_{s=1}^t (\alpha_s-\tilde{\alpha}_s) \qq^{p,s}\Big\|_N^2 \nonumber\\ 
&= \sum_{s,r=1}^t (\alpha_s-\tilde{\alpha}_s) (\alpha_r-\tilde{\alpha}_r) \langle \qq^{p,s},\qq^{p,r}\rangle_N \nonumber\\
&\psim \sum_{s,r=1}^t (\alpha_s-\tilde{\alpha}_s) (\alpha_r-\tilde{\alpha}_r) C_{r,s} \psim 0.
\end{align}
Moreover, Lemma~\ref{lem:4} (point 4) shows that $\cP^{\perp}_t(\tbA^{(p)})\{\f_t\}\psim\tbA^{(p)}\{(\f_{t}^{\otimes p-1})_{\perp}\}$ 
has entries which are approximately independent Gaussian with variance $\sigma^2_t\equiv pc^2_p \|(\f_{t}^{\otimes p-1})_{\perp}\|^2/N^{p-1}$, even conditionally on $\cF_t$. Therefore
\begin{align}
  \qq^{p,t+1} &\ed \sum_{s=1}^t \tilde{\alpha}_s \qq^{p,s} +\sigma_t \bg + \bfe^{p,t+1}\, ,\label{eq:ReprSE}
\end{align}
where $\|\bfe\|_N\psim 0$ and $\bg\sim\normal(\bfzero,\id_N)$ is independent of everything else.
From here on, the rest of the argument for state evolution for pseudo-Lipschitz functions is exactly the same as in Lemma 5 (b) in \cite{berthier2019state}.
As proved in the previous point, for any $s\le t$,
\begin{align*}
  \<\qq^{p,t+1} ,\qq^{p,s+1}\>_N^2&\psim pc_p^2\<\f_t,\f_s\>_N^{p-1}\psim \E\{U^{p,t+1}U^{p,s+1}\}\, .
\end{align*}
Therefore, in order to  prove Eq.~\eqref{eq:ConvergenceLAMP},it is sufficient to consider $\psi:\reals^{D(t+1)}\to\reals$  Lipschitz.
Using the representation \eqref{eq:ReprSE}, and focusing for simplicity on a single $p$, we get
\begin{align*}
  \frac{1}{N}\sum_{i=1}^N\psi(\qq_i^{p,\le t}, q_i^{p,t+1}) &\psim \frac{1}{N}\sum_{i=1}^N\psi\left(\qq_i^{p,\le t},
  \sum_{s=1}^t \tilde{\alpha}_s \qq^{p,s} +\sigma_tg_i\right)\\
  &\psim \frac{1}{N}\sum_{i=1}^N\E\psi\left(\qq_i^{p,\le t},
  \sum_{s=1}^t \tilde{\alpha}_s \qq^{p,s} +\sigma_tG\right)\, ,
\end{align*}
where the second equality follows by Gaussian concentration. At this point we apply the induction hypothesis.

\subsection{Asymptotic equivalence of Tensor AMP and Tensor LAMP}

Here we show that tensor AMP and tensor LAMP produce approximately the same iterates. 
\begin{lemma}\label{lem:ampequalslamp}
Let $\{\bW^{(p)}\}_{p\le \km}$ be standard Gaussian tensors, and $\bA^{(p)} = c_p \bW^{(p)}$ for $p\ge 2$. Consider the corresponding AMP
iterates $\ZZ_{t}\equiv (\bz^{p,s})_{p\le\km,s\le t}$ and LAMP iterates $\bQ_{t}\equiv (\qq^{p,s})_{p\le\km,s\le t}$,
from the same initialization  initialization $\ZZ_0=\bQ_0$ satisfying the assumptions of Theorem \ref{thm:mixedAMP}
and Theorem \ref{thm:SELAMP}.

Let $\f_t = f_t(\VV_t)$, $t\ge 0$ be the nonlinearities applied to LAMP iterates and $(\bG_{p,t}(\VV))_{r,s}=\<\f_t,\f_s\>^p$ be the corresponding Gram matrices.
Further assume that there exist a constant $C<\infty$ such that, for all $t\le T$,
\begin{itemize}
\item[$(i)$] The LAMP Gram matrices $\bG_{p,t} = \bG_{p,t}$ are
  well-conditioned, i.e., $C^{-1}\le \sigma_{\min}(\bG_{p,t})\le \sigma_{\max}(\bG_{p,t})\le C$ for all $p\le \km$, $t\le T$.
\item[$(ii)$] Let the linear operator $\cT_{p,t}:\reals^{N\times t}\to\reals^{N\times t}$ be defined as per Eq.~\eqref{eq:Tdef}, with $\bG_{p,t} = \bG_{p,t}(\VV)$,
and  $\f_t=f_t(\VV)$, and define $\cL_{p,t} = {\boldsymbol 1}+(p-1)\cT_{p,t}$. Then $C^{-1}\le \sigma_{\min}(\cL_{p,t})\le \sigma_{\max}(\cL_{p,t})\le C$.
\end{itemize}
Then, for any $t\le T$, we have
\begin{align}
\|\ZZ_{t} - \bQ_{t}\|_N\psim 0 \, .
\end{align}
\end{lemma}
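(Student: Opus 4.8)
The plan is to prove $\|\ZZ_t - \bQ_t\|_N \psim 0$ by induction on $t$, exactly in the spirit of \cite{berthier2019state}. The base case $t=0$ is the assumed common initialization. For the inductive step, assume $\|\ZZ_s - \bQ_s\|_N \psim 0$ for all $s\le t-1$; since the nonlinearities $f_s$ are Lipschitz, this immediately gives $\|f_s(\bX_s) - f_s(\VV_s)\|_N \psim 0$ for $s\le t-1$, and hence (by Proposition \ref{prop:state_evolution} / Theorem \ref{thm:SELAMP}(b), applied to LAMP) all the scalar quantities built from these vectors --- the Gram matrices $\bG_{p,s}^{k}$ along both iterations, the AMP coefficients $d_{t,s,p}$, and the LAMP coefficients $h_{t,s,p}$ --- agree up to $o_P(1)$. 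The key structural fact is the representation in Theorem \ref{thm:SELAMP}(a): for LAMP one has
\[
\qq^{p,t+1} \;=\; \cP_t^{\perp}(\bA^{(p)})\{f_t(\VV_t)\} \;+\; \sum_{0\le s\le t} h_{t,s-1,p}\,\qq^{p,s}\, ,
\]
and we want to compare this to
\[
\bz^{p,t+1} \;=\; \bA^{(p)}\{f_t(\bX_t)\} \;-\; \sum_{s\le t} d_{t,s,p}\, f_{s-1}(\bX_0,\dots,\bX_{s-1})\, .
\]

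The core of the argument is the identity relating the Onsager term in AMP to the difference between the full tensor contraction and its projection. By Lemma \ref{lem:symregression}, $\E[\bA^{(p)}\mid \cF_t^{\mathrm{LAMP}}]$ is supported on the span determined by $(\f_s)_{s<t}$, and by Lemma \ref{lem:4} (point 5) and Lemma \ref{lem:LemmaPerp}, applying $\bA^{(p)}$ versus $\cP_t^{\perp}(\bA^{(p)})$ to $f_t$ differs (in $\|\cdot\|_N$) only by the contribution of $\E[\bA^{(p)}\mid \cF_t]\{f_t\}$. The first step is therefore to show that, after writing $f_t(\VV_t)^{\otimes(p-1)} = (f_t^{\otimes(p-1)})_\parallel + (f_t^{\otimes(p-1)})_\perp$, we have
\[
\bA^{(p)}\{f_t(\VV_t)\} \;-\; \cP_t^{\perp}(\bA^{(p)})\{f_t(\VV_t)\} \;\psim\; \sum_{0\le s\le t-1} \tilde h_{t,s,p}\,\qq^{p,s+1}
\]
for the coefficients $\tilde h_{t,s,p}$ obtained by expanding $\E[\bA^{(p)}\mid\cF_t]\{f_t\}$ through the network diagram for $\E[\bz^{p,t+1}\mid\cF_t]$, using the approximate isometry $pc_p^2\langle\f_r,\f_s\rangle_N^{p-1}\psim\langle\qq^{p,r+1},\qq^{p,s+1}\rangle_N$ from Theorem \ref{thm:SELAMP}(b). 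One then checks by direct comparison of the two tensor-network formulas --- the $\hZZ_{p,t}$ from the AMP regression (Lemma \ref{lem:symregression}) versus the $\bG_{p-1,t-1}^{-1}\bG_{p-1,t}$ weighting in $h_{t,s,p}$ --- that $\tilde h_{t,s,p} \psim h_{t,s-1,p}$ and, crucially, that replacing $\bX$ by $\VV$ throughout introduces only $o_P(1)$ error; this is where the well-conditioning hypotheses $(i)$, $(ii)$ are used (so that $\cL_{p,t}^{-1}$, $\cT_{p,t}$, $\bG_{p,t}^{-1}$ are all $O_P(1)$ operators and errors do not blow up). Combining, the LAMP recursion becomes
\[
\qq^{p,t+1} \;\psim\; \bA^{(p)}\{f_t(\VV_t)\} \;-\; \Big(\text{Onsager term with $\VV$-Gram matrices}\Big)\, ,
\]
which matches the AMP recursion with $\bX$ replaced by $\VV$ up to $o_P(1)$; applying the inductive hypothesis $\|f_t(\bX_t)-f_t(\VV_t)\|_N\psim 0$ and $\|\bA^{(p)}\|_{\op}=O_P(N)$ (Lemma \ref{lem:4}, point 6) to replace $\VV$ by $\bX$ in the tensor contraction, then summing over $p\le\km$, yields $\|\bz^{p,t+1}-\qq^{p,t+1}\|_N\psim 0$ for every $p$, hence $\|\ZZ_{t+1}-\bQ_{t+1}\|_N\psim 0$.

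The main obstacle is the middle step: establishing that $\bA^{(p)}\{f_t\} - \cP_t^{\perp}(\bA^{(p)})\{f_t\}$ equals the Onsager correction up to $o_P(1)$ \emph{and} that this correction, computed with the LAMP-side quantities, coincides asymptotically with the AMP Onsager term $\sum_s d_{t,s,p} f_{s-1}$. This requires carefully tracking the two different-looking but asymptotically equal tensor-network expressions (one coming from conditioning $\bA^{(p)}$ on the LAMP $\sigma$-algebra, the other being the defining Onsager term of AMP), and controlling all the $\bG^{-1}$ and $\cL^{-1}$ factors uniformly --- precisely the content of the well-conditioning assumptions. Once this bookkeeping is done, everything else is a routine Lipschitz-plus-operator-norm propagation of the inductive hypothesis, exactly as in Lemma 6 of \cite{berthier2019state}.
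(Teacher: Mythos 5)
Your overall architecture is the same as the paper's: induction on $t$, with the step split into (a) replacing AMP iterates by LAMP iterates inside the AMP map (Lipschitz $f_t$ plus the tensor operator-norm bound of Lemma \ref{lem:4}), and (b) comparing the AMP and LAMP maps evaluated on the \emph{same} LAMP iterates, where the discrepancy is $\cP_t^{\parallel}(\bA^{(p)})\{\f_t\} - \ons_{p,t+1} - \sum_{s} h_{t,s,p}\,\qq^{p,s+1}$ with $\cP_t^{\parallel}=\bfone-\cP_t^{\perp}$. However, your treatment of (b) -- which you yourself flag as the main obstacle -- contains a genuine gap, and as written one step is incorrect. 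Your displayed claim is that $\bA^{(p)}\{\f_t\}-\cP_t^{\perp}(\bA^{(p)})\{\f_t\}$ is asymptotically a combination of the $\qq^{p,s+1}$ alone, with coefficients $\tilde h_{t,s,p}\psim h_{t,s-1,p}$; in your closing paragraph you instead say this difference equals the Onsager correction. Neither is right: the projected piece $\cP_t^{\parallel}(\bA^{(p)})\{\f_t\}$ must be shown to split into \emph{both} the $h$-weighted combination of past LAMP iterates \emph{and} the AMP Onsager term $\sum_s d_{t,s,p}\f_{s-1}$ (a combination of the vectors $\f_s$, which in general do not lie in the span of the $\qq^{p,s}$). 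If the difference were only the $\qq$-combination with $\tilde h\psim h$, then $\bA^{(p)}\{\f_t\}\psim \qq^{p,t+1}$ and the AMP and LAMP outputs would differ by the full Onsager term, so the lemma would be false; the two-part decomposition is exactly what cancels both extra terms in (b).

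The second, related gap is your assertion that the identification can be done by "direct comparison of the two tensor-network formulas," i.e.\ algebraic bookkeeping controlled by the well-conditioning hypotheses. It cannot: the AMP coefficients $d_{t,s,p}$ contain the expected partial derivatives $\E[\partial f_t/\partial x^{s}]$, while the regression/projection coefficients are built from inner products such as $\<\qq^{p,r+1},\f_t\>_N$ weighted by $\bG_{p-1,t-1}^{-1}$, and these only coincide asymptotically through a probabilistic input, namely the state-evolution limit for LAMP (Theorem \ref{thm:SELAMP}) combined with Stein's lemma (Gaussian integration by parts), as in Eq.~\eqref{eq:Stein}. The same input is needed for the key intermediate claim, absent from your sketch, that the regression matrix of Lemma \ref{lem:symregression} satisfies $\hZZ_{p,t}\psim\bQ_t$: one verifies $(p-1)\cT_{p,t}\bQ_t\psim\ONS_{p,t}$ (again via Eq.~\eqref{eq:Stein}) so that $\cL_{p,t}\bQ_t\psim\bY_{p,t}$, and only then does the well-conditioning of $\cL_{p,t}$ let you invert and conclude. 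Once this mechanism is supplied, the remaining steps are indeed the Lipschitz-plus-operator-norm propagation you describe.
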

\begin{proof}
Throughout the proof we will write $f_t(\bX_t)$ or $f_t(\bV_t)$ to distinguish AMP and LAMP iterates, and analogously for
$\bG_{p,t}(\bX_t)$ or $\bG_{p,t}(\bV_t)$.
The proof is by induction over the iteration number, so we will assume it to hold at iteration $t$, 
and prove it for iteration  $t+1$. We prove the induction step by establishing the following two facts:
\begin{align}
\big\|\AMP_{t+1}(\ZZ_{t})_p-\AMP_{t+1}(\bQ_{t})_p\big\|_N&\psim 0 \, ,\label{eq:LAMPapprox1}\\
\big\|\AMP_{t+1}(\bQ_{t})_p-\LAMP_{t+1}(\bQ_{t})_p\big\|_N &\psim 0\, . \label{eq:LAMPapprox2}
\end{align}

Let us first consider the claim \eqref{eq:LAMPapprox1}, and note that
\begin{align*}
\AMP_{t+1}(\ZZ_{t})_p-\AMP_{t+1}(\bQ_{t})_p = \bA^{(p)}\{f_t(\bX_t)\}-\bA^{(p)}\{f_t(\bV_t)\} - \sum_{s\leq t} d_{t,s,p} \big[f_{s-1}(\bX_{s-1})-f_{s-1}(\bV_{s-1})\big]\, ,
\end{align*}
where we wrote $d_{t,s,p}$ for the coefficients of Eq.~\eqref{eq:AMP-def2}, with AMP iterates replaced by LAMP iterates. 
We then have 
\begin{align}
\big\|\AMP_{t+1}(\ZZ_{t})_p&-\AMP_{t+1}(\bQ_{t})_p\big\|_N \le D_{1,t}+D_{2,t}\, ,\\
D_{1,t} & \equiv \big\| \bA^{(p)}\{f_t(\bX_t)\}-\bA^{(p)}\{f_t(\bV_t)\}\big\|_N\, ,\\
D_{2,t} &\equiv \sum_{s\leq t} |d_{t,s,p}|\|f_{s-1}(\bX_{s-1})- f_{s-1}(\bV_{s-1})\|_N\, .
\end{align}
Notice that, by the induction assumption (and recalling that $f_t$ is Lipschitz continuous and acts component-wise): 
\begin{align}
\big\|f_t(\bX_t)-f_t(\bV_t)\big\|_N\le C_T\sum_{s\le t,p\le \km}\|\bx^{p,s}-\bv^{p,s}\|_N \psim 0\, .\label{eq:InductionF}
\end{align}
Further, for any tensor $\bT\in(\reals^{N})^{\otimes p}$, and any vectors $\bv_1,bv_2\in\reals^N$,
\begin{align}
\|\bT\{\bv_1\}-\bT\{\bv_{2}\}\|_N\le (N^{\frac{p-2}{2}}\|\bT\|_{\op})  (\|\bv_1\|_N+\|\bv_2\|_N)^{p-2} \|\bv_1-\bv_2\|_N
\end{align}
Using Lemma~\ref{lem:4}, this implies that the following bound holds with high probability for a constant $C$:
\begin{align}
D_{1,t} &\le C  (\|f_t(\bX_t)\|_N+\|f_t(\bV_t)\|_N)^{p-2} \|f_t(\bX_t)-f_t(\bV_t)\|_N\\
& \le C  (2\|f_t(\bV_t)\|_N+\|f_t(\bX_t) -f_t(\bV_t)\|_N)^{p-2} \|f_t(\bX_t) -f_t(\bV_t)\|_N\psim 0
\end{align}
Where the last step follows from Eq.~\eqref{eq:InductionF} and Theorem~\ref{thm:SELAMP}, which implies (using the fact that $f_t$ is Lipschitz)
$\|f_t(\bV_t)\|_N\le C$ with high probability. Notice that the same argument implies $\|f_t(\bX_t)\|_N \le C$ with high probability.

Similarly, $D_{2,t}\psim 0$ follows since $\|f_{s-1}(\bX_{s-1})- f_{s-1}(\bV_{s-1})\|_N\psim 0$ and $|d_{t,s,p}|\le C_T$ by construction, thus yielding the desired claim
\eqref{eq:LAMPapprox1}.

We now turn to proving Eq.~\eqref{eq:LAMPapprox2}. Comparing Eq.~\eqref{eq:AMP-def2} and \eqref{eq:LAMP1}, and letting 
$\cP_t^{\parallel} = \bfone-\cP_t^{\perp}$ we obtain 
\begin{align}
\AMP_{t+1}(\bQ_{t})_p-\LAMP_{t+1}(\bQ_{t})_p &= \cP_t^{\parallel} (\bA^{(p)})\{f_t(\bV_t)\}
-\ons_{p,t+1}-\sum_{0\leq s\leq t-1}  h_{t,s,p}\qq^{p,s+1}\, ,\label{eq:AMP-LAMP}\\
\ons_{p,t+1} & = \sum_{s\leq t} d_{t,s,p} f_{s-1}(\bV_{s-1})
\end{align}
Note that $\cP_t^{\parallel} (\bA^{(p)})=\E\{\bA^{(p)}|\cF_t\}$, where $\cF_t$ is the $\sigma$-algebra generated by 
$\{\bq ^{p,s}\}_{s\le t, p\le \km}$. Equivalently, this is the conditional expectation of $\bA^{(p)}$ given the linear constraints
\begin{align}
\A^{(p)}\{f_s(\bV_{s})\}&=\by_{p,s+1}\, ,\;\;\;\;\;\mbox{ for } s\in \{0,\dots, t-1\}\, ,
\end{align}
Also notice that, by the induction hypothesis, and the definition of $\by_{p,s}$, Eq.~\eqref{eq:Ydef1}, we have for all $s\le t$, 
\begin{align}
\by_{p,s} & \psim  \qq^{p,s}+\ons_{p,s}\, .\label{eq:Y-ons}
\end{align}
Lemma~\ref{lem:symregression} implies that $\cP_t^{\parallel} (\bA^{(p)})$ takes the form of Eq.~\eqref{eq:SymmRegression} for a suitable matrix
$\hZZ_{p,t}\in\reals^{N\times t}$. 
The key claim is that
\begin{align}
\hZZ_{p,t} \psim \bQ_t\, .\label{eq:ZequalsQ}
\end{align}
In order to establish this claim, we show that,  under the inductive hypothesis,
 \[(\bfone+(p-1)\cT_{p,t})\bQ_t\psim \bY_{p,t}.\] 
 Since $\cL_{p,t}=\bfone +(p-1)\cT_{p,t}$ is well-conditioned by assumption, Eq.~\eqref{eq:ZZ-eq} implies $\hZZ_{p,t}\psim \bQ_t$. Notice that, by Eq.~\eqref{eq:Y-ons}  in order to prove this claim, it is sufficient to show that 
$(p-1)\cT_t\bQ_t\psim \ONS_{p,t}:=[\ons_{p,1}|\cdots|\ons_{p,t}]$.

In order to prove this claim, we use Theorem~\ref{thm:SELAMP}.
Recall $C_{r,s} = \E\{U^{p,r}U^{p,s}\}$, $X^r = \sum_{p} U^{p,r}$ and $\bC_{\le t} = (C_{r,s})_{r,s\le t}$. By Theorem~\ref{thm:SELAMP}, $C_{r+1,s+1} \psim \<\qq^{p,r+1},\qq^{p,s+1}\> 
\psim pc_p^2 (\bG_{p-1,t}(\bV))_{r,s}$ for $r,s\le t$.  This implies for any $0 \le r\le t-1$,
\begin{align}
\sum_{j=0}^{t-1}(\bG_{p-1,t-1}^{-1})_{rj}\< \qq^{p,j+1},f_{t-1}(\VV_{t-1})\>_N &\psim pc_p^2 
\sum_{j=0}^{t-1} (\bC_{\le t}^{-1})_{r+1,j+1} \E\{U^{p,j+1}f_{t-1}(X^0,\dots,X^{t-1})\} \nonumber\\
& = pc_p^2\, \E\left\{\frac{\partial f_{t-1}}{\partial x^{r+1}} (X^0,\dots,X^{t-1})\right\} \bfone_{r\le t-2}\, , \label{eq:Stein}
\end{align}
%
where we used Stein's lemma in the second equality. 
Using this last expression and the definition \eqref{eq:AMP-def2} 
allows to check we conclude $(p-1)\cT_{p,t}\bQ_t\psim \ONS_{p,t}$ as claimed. Indeed we have
\begin{align*}
(p-1)\big[\cT_{p,t}\bQ_t\big]_t &= \sum_{r=0}^{t-1} (\bG_{p-2,t-1})_{r,t-1} \f_r  \Big(\sum_{r'=0}^{t-1} (\bG_{p-1,t-1}^{-1})_{r,r'} \langle \qq^{p,r'+1}, \f_{t-1}\rangle \Big)\\
&\psim p(p-1)c_p^2 \sum_{r=0}^{t-2}\langle \f_r, \f_{t-1}\rangle_N^{p-2} \f_r \cdot \E\left\{\frac{\partial f_{t-1}}{\partial x^{r+1}} (X^0,\dots,X^{t-1})\right\}\\
&= \ons_{p,t}.
\end{align*}

Having established  Eq.~\eqref{eq:ZequalsQ}, we can use the representation of  $\cP^{\parallel}_t(\bA^{(p)})=\E\{\bA^{(p)}|\cF_t\}$ given in Eq.~\eqref{eq:SymmRegression} to get
\begin{align}
 \cP^{\parallel}_{t}(\bA^{(p)})\{\f_t\}&\psim \sum_{s\leq t}\alpha_s\qq^{p,s}+(p-1) \sum_{s\leq t}\beta_s \f_s\, ,\label{eq:AParallel}\\
 \alpha_s &= \sum_{0\leq r\le t-1} (\bG_{p-1,t-1}^{-1})_{s,r} \langle f_r(\VV_r),f_{t}(\VV_t)\rangle_N^{p-1}\, ,\\
\beta_s &= \Big(\sum_{0\le r\leq t-1} (\bG_{p-1,t-1}^{-1})_{s,r} \langle \qq^{p,r},\f_{t} \rangle_N\Big) \langle \f_s,\f_{t}\rangle_N^{p-2} .
\end{align}

On the other hand, using again Eq.~\eqref{eq:Stein}, we obtain
\begin{align}
(p-1) \sum_{s\leq t}\beta_s \f_s &\psim  \sum_{s \le t-1}  d_{t,s,p} \f_{s-1}  =\ons_{p,t+1},\\
\mbox{and}~~~\sum_{s\leq t}\alpha_s\qq^{p,s} &\psim \sum_{0\leq s\leq t-1}  h_{t,s,p}\qq^{p,s+1}.
\end{align}
%
%
%
%
%
%
%

We therefore conclude, from Eq.~\eqref{eq:AMP-LAMP}, that $\|\AMP_{t+1}(\bQ_{t})_p-\LAMP_{t+1}(\bQ_{t})_p \|_N\psim 0$, and this finishes our proof.
\end{proof}

\subsection{Reduction to the well-conditioned case}

Theorem \ref{thm:SELAMP} and Lemma \ref{lem:ampequalslamp} imply the conclusion of the main statement Theorem \ref{thm:mixedAMP},
under the additional assumptions in points $(i)$ and $(ii)$ of Lemma \ref{lem:ampequalslamp}. 
Here we show how to approximate an arbitrary AMP algorithm with one satisfying those conditions, completing the proof of Theorem \ref{thm:mixedAMP}.
This strategy was already employed in \cite{javanmard2013state,berthier2019state}, and we refer to these references for further background.
\begin{lemma}\label{lemma:Perturbation}
  Let $(f_t)_{t\ge 0}$, with $f_t:\reals^{t+1}\to \reals$, be any sequence of Lipschitz functions.
  Then for any $\eps>0$ there exists a sequence of smooth  functions $\varphi_{t}:\reals^{t+1}\to \reals$, with $\|\varphi_t\|_{L^\infty}\le 1$,
  $\|\nabla \varphi_t\|_{L^\infty}\le 1$,  such that the following holds.
  Defining $f^{\eps}_t =f_t+\eps\varphi_t$, the sequence of functions  $(f^{\eps}_t)_{t\ge 0}$ 
  satisfies conditions $(i)$ and $(ii)$ of Lemma \ref{lem:ampequalslamp}.
\end{lemma}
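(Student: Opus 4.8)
\textbf{Proof plan for Lemma~\ref{lemma:Perturbation}.}

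The plan is to regularize each $f_t$ (mollification plus a small genuinely nonlinear bump) and then show that conditions $(i)$ and $(ii)$ of Lemma~\ref{lem:ampequalslamp} hold for the perturbed algorithm. First I would fix $\eps>0$ and take $\varphi_t$ to be a smooth, bounded function with bounded gradient that is ``nondegenerate'' in the following sense: I want the perturbed iterates $\bv^{p,s}$ (and their images $\f_s^\eps = f_s^\eps(\VV_s)$) to span, after passing to the $N\to\infty$ state evolution limit, a linear space of full dimension at each step. Concretely, $\varphi_t$ should depend genuinely on the last coordinate $x^t$ (e.g. $\varphi_t(x^0,\dots,x^t) = \sin(x^t)$ or a smoothed, truncated version with the stated $L^\infty$ and Lipschitz bounds), so that in the limiting Gaussian process the nonlinearity $f_t^\eps(X^0,\dots,X^t)$ is \emph{not} almost surely a linear combination of $f_0^\eps(X^0),\dots,f_{t-1}^\eps(X^{0},\dots,X^{t-1})$. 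This is the standard ``generic perturbation makes the orbit nondegenerate'' device from \cite{javanmard2013state,berthier2019state}, and the key is simply to exhibit one such $\varphi_t$ and observe that the construction can be done inductively in $t$.

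Next I would translate nondegeneracy into the two quantitative conditions. For condition $(i)$, the Gram matrices $\bG_{p,t}(\VV)$ have entries $\<\f_r,\f_s\>_N^p$; by state evolution (Theorem~\ref{thm:SELAMP}, applied to the perturbed nonlinearities, or equivalently by Proposition~\ref{prop:state_evolution}) these converge in probability to $\E[f_r^\eps(X^0,\dots,X^r)f_s^\eps(X^0,\dots,X^s)]^p$, the $p$-th Hadamard power of the limiting covariance $\bGamma_t = (\E[f_r^\eps f_s^\eps])_{r,s\le t}$. The boundedness $\|f_t^\eps\|_\infty$ is not assumed, but $f_t^\eps$ is Lipschitz and the iterates have bounded second moments, so the entries are finite; the upper bound $\sigma_{\max}(\bG_{p,t})\le C$ follows from finiteness of these limits. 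For the lower bound, I would show that $\bGamma_t\succ 0$ strictly: if it were singular, some $f_t^\eps(X^0,\dots,X^t)$ would be an a.s.\ linear combination of the earlier ones, which contradicts the genuine nonlinearity built into $\varphi_t$ (the map $x^t\mapsto \varphi_t(\cdot,x^t)$ is nonlinear and $X^t$ has a nondegenerate Gaussian component conditionally on $X^0,\dots,X^{t-1}$, because the state-evolution variances $\E[(\Delta^\delta_\ell)^2]>0$ by strict monotonicity of $\xi'$, cf.\ Lemma~\ref{lem:reparametrization}). A Schur-complement / Hadamard-power positivity argument then gives $\sigma_{\min}(\bG_{p,t})\ge c(\eps,t)>0$ with high probability. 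Since $t\le T$ is bounded, finitely many such bounds combine into a single constant $C$.

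For condition $(ii)$, recall $\cL_{p,t} = \bfone + (p-1)\cT_{p,t}$ with $\cT_{p,t}$ given by Eq.~\eqref{eq:Tdef}. I would analyze $\cL_{p,t}$ through its action, as in the proof of Lemma~\ref{lem:symregression}: $\cL_{p,t}$ is exactly the operator appearing in Eq.~\eqref{eq:ZZ-eq}, i.e.\ the map $\hZZ_{p,t}\mapsto \big[\hbA^{(p)}_t\{\f_0\},\dots,\hbA^{(p)}_t\{\f_{t-1}\}\big]$ where $\hbA^{(p)}_t$ is the minimum-Frobenius-norm symmetric tensor consistent with the imposed linear constraints. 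Positivity and a lower bound on $\sigma_{\min}(\cL_{p,t})$ will follow from the fact that the Gram matrices $\bG_{p-1,t-1}$, $\bG_{p-2,t-1}$, and the Gram matrix of the $\f_s$ are all well-conditioned (just established), which pins down $\cT_{p,t}$ as a bounded operator with a controlled structure; one checks that $\cL_{p,t}^{\sT}\cL_{p,t}\succeq c(\eps,t)\,\bfone$ by the same Schur-complement bookkeeping used in \cite{berthier2019state}. I expect \emph{this} to be the main obstacle: making the dependence of $\sigma_{\min}(\cL_{p,t})$ on the (now nondegenerate) Gram data fully explicit requires carefully unwinding the tensor-network definition of $\cT_{p,t}$ and verifying invertibility does not silently fail at some intermediate step. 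The rest — mollification, the $L^\infty$/Lipschitz bookkeeping on $\varphi_t$, and invoking state evolution — is routine. Finally, since all constants produced depend on $\eps$ and $T$ but not on $N$, and $T$ is fixed, the perturbed sequence $(f_t^\eps)$ satisfies $(i)$ and $(ii)$ with a single constant $C$, completing the proof.
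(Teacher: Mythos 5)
Your treatment of condition $(i)$ is essentially the paper's argument: an inductive choice of $\varphi_t$ forcing $f^{\eps}_t(X^0,\dots,X^t)$ out of the $L^2$-span of the earlier nonlinearities (cf.\ Lemma~\ref{lem:perturbinvertible}), followed by Hadamard-power positivity to pass from $\bG^{\infty}_{1,t}$ to $\bG^{\infty}_{p,t}$, and state evolution to transfer the limiting bound to the finite-$N$ Gram matrices. (One small caution: a single fixed choice such as $\sin(x^t)$ is not guaranteed to push $f_t+\eps\sin(x^t)$ outside the span for the given $\eps$; the paper instead selects $\varphi_t$ from a family of $t+1$ linearly independent bounded functions of $x^t$, which is the clean way to make the induction go through.)

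The genuine gap is condition $(ii)$. You assert that well-conditioning of $\cL_{p,t}=\bfone+(p-1)\cT_{p,t}$ ``will follow'' from well-conditioning of the Gram matrices by Schur-complement bookkeeping, and you flag this as the main obstacle without resolving it — but this implication is exactly what is \emph{not} available. Nondegenerate Gram matrices only make $\cT_{p,t}$ a bounded operator; they do not prevent $-1/(p-1)$ from being (close to) an eigenvalue of $\cT_{p,t}$, so $\sigma_{\min}(\cL_{p,t})$ can a priori degenerate even under condition $(i)$. The paper's proof of $(ii)$ needs two additional ideas that are absent from your plan. First, a rank reduction: since $\cT_{p,t}$ has rank at most $t^2$, all but $t^2$ singular values of $\cL_{p,t}$ equal $1$, and the remaining ones are, in the $N\to\infty$ limit, controlled by a fixed finite matrix $\cW^{\infty}_{p,t}$ built from $\bG^{\infty}_{1,t-1}$ and the limiting version of $\cU_{p,t}$ in Eq.~\eqref{eq:Tdef}. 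Second, a genericity-plus-surjectivity argument: $\prod_{p\le\km}\det\big(\cW^{\infty}_{p,t}(\bG)\big)$ is a rational function of the Gram matrix $\bG$ which is not identically zero (it is nonzero at $\bG=\id$), so by Lemma~\ref{lem:AG} it is nonzero at some PSD matrix $\bG_*$ arbitrarily close to $\bG^{\infty}_{1,t}$; and because the derivative of the map $\beps\mapsto\bG^{\infty}_{1,t}(\beps)$ is surjective onto $\Sym_{t}$ (Lemma~\ref{lem:perturbsurjective}), a further small perturbation of the nonlinearities realizes $\bG_*$ exactly, yielding $\sigma_{\min}(\cW^{\infty}_{p,t})>0$ and hence $(ii)$ (Lemma~\ref{lem:perturbdone}). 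Without the finite-dimensional reduction and this avoidance-of-a-proper-algebraic-set step, your proposal does not establish $(ii)$, and the lemma is not proved.
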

The proof of this lemma is presented in the next two subsections, considering first condition $(i)$, and then condition $(ii)$.
Before presenting this proof, we show that this lemma indeed allows to prove Theorem~\ref{thm:mixedAMP}.
\begin{proof}[Proof of Theorem~\ref{thm:mixedAMP}]
  Let $(f^\eps_t)_{t\in\naturals}$ be a sequence of functions as per Lemma \ref{lemma:Perturbation}, and denote by $\bz^{\eps,p,t}$
  the corresponding iterates, and $\bZ^{\eps}_t =(\bz^{\eps,p,s})_{p\le \km,s\le t}$. We instead use $\bZ_t = (\bz^{p,s})_{p\le \km,s\le t}$
  for the unperturbed AMP iteration. 
  Using the same argument as in the proof of Lemma \ref{lem:ampequalslamp} (in particular, the argument to prove
  Eq.~\eqref{eq:LAMPapprox1})  we obtain, for every fixed $t$,
  \begin{align}
    \plim_{\eps\to 0}\limsup_{N\to\infty}\|\bZ_t-\bZ^{\eps}_t\|_N=0\, .\label{eq:Zpert}
  \end{align}
  On the other hand, for any $\eps>0$, the iterates satisfy the non-degeneracy
  conditions $(i)$ and $(ii)$ of Lemma \ref{lem:ampequalslamp}. We can therefore apply this lemma, and Theorem \ref{thm:SELAMP}
  to conclude that, for any test pseudo-Lipschitz function $\psi:\reals^{T\times \km}\to\reals$, we
  have
\begin{align}
  \plim_{N\to\infty}\frac{1}{N}\sum_{i=1}^N\psi((z_i^{\eps,p,t})_{t\le T, p\le \km}) = \E\big\{\psi\big((U^{\eps,t,p})_{t\leq T,p\leq \km}\big)\big\}\, .
  \label{eq:LimUeps}
\end{align}
Here $(U^{\eps,t,p})_{t\ge 0, p\le \km}$ is the Gaussian process associated to the nonlinearities $(f^{\eps}_t)_{t\ge 0}$, namely with covariance
determined recursively via
\begin{align}
\E[U^{\eps,t+1,p}U^{\eps,s+1,p}]& = pc^2_p\E\left\{ f^{\eps}_t\left(X^{\eps,0},\dots,X^{\eps,t}\right)f^{\eps}_s\left(X^{\eps,0},\dots,X^{\eps,s}\right)\right\}^{p-1}\, ,\\
X^{\eps,t} & \equiv  \sum_{k=2}^{\km} U^{\eps,t,k}\, .
\end{align}
Recalling that $f_t^{\eps}=f_t+\eps\varphi_t$ with $\varphi_t$ bounded, with bounded gradient, it is immediate to show by
induction that $\E[U^{\eps,t,p}U^{\eps,s,p}]\to \E[U^{t,p}U^{s,p}]$ as $\eps\to 0$. In particular,
it is possible to couple $(U^{\eps,t,p})_{t\ge 0, p\le \km}$ and $(U^{t,p})_{t\ge 0, p\le \km}$ so that $\E\{(U^{\eps,t,p}-U^{t,p})^2\}\to 0$
for any $t,p$. 
We thus conclude that
\begin{align*}
  \plim_{N\to\infty}\frac{1}{N}\sum_{i=1}^N\psi((z_i^{p,t})_{t\le T, p\le \km}) \stackrel{(a)}{=}
  \lim_{\eps\to 0}\plim_{N\to\infty}\frac{1}{N}\sum_{i=1}^N\psi((z_i^{\eps,p,t})_{t\le T, p\le \km}) \\
  \stackrel{(b)}{=}
  \lim_{\eps\to 0}\E\big\{\psi\big((U^{\eps,t,p})_{t\leq T,p\leq \km}\big)\big\}
\stackrel{(c)}{=}\E\big\{\psi\big((U^{\eps,t,p})_{t\leq T,p\leq \km}\big)\big\}
  \,
\end{align*}
where $(a)$ follows from Eq.~\eqref{eq:Zpert}, $(b)$ from Eq.~\eqref{eq:LimUeps}, and $(c)$ from the remark
that $\E\{(U^{\eps,t,p}-U^{t,p})^2\}\to 0$.
\end{proof}

\subsubsection{Condition $(i)$: Control of $\bG_{p,t}$}
\label{sec:Condition1}

We begin with condition $(i)$  which requires  $C^{-1}\le \sigma_{\min}(\bG_{p,t})\le \sigma_{\max}(\bG_{p.t})\le C$ with high probability for
some constant $C$ independent of $N$. Note that 
Lemma \ref{lem:ampequalslamp}  requires these bounds to hold for a finite collections of values of $p$, $t$. Since this collection is fixed independently of $N$,
it is sufficient to consider a single pair  $(p,t)$.
By Theorem~\ref{thm:SELAMP}, we know that 
\begin{align}
  \plim_{N\to\infty} (\bG_{p,t})_{r,s} = (\bG_{p,t}^{\infty})_{r,s}\:= \big(\E\{f_r(X_0,\dots,X_r)f_s(X_0,\dots,X_s)\}\big)^p\,.
\end{align}
It is therefore sufficient to prove $\sigma_{\min}(\bG_{p,t}^{\infty})>0$ for all $p,t$.
Note that $\sigma_{\min}(\bG_{p,t}^{\infty})<\infty$ is immediate since $\bG_{p,t}^{\infty}$ has finite entries, and is a matrix
of fixed dimensions $t+1\times t+1$.

Recall that Hadamard product preserves positive-semidefinite (PSD) ordering:  if $\bA_1\succeq \bB_1\succeq\bfzero$ and
$\bA_2\succeq \bB_2\succeq\bfzero$, then $\bA_1\odot\bA_2\succeq \bB_1\odot\bB_2$.
(This follows from decomposing any PSD matrices as a sum of rank-one PSD matrices.)
In particular, $\bG_{1,t}^{\infty}\succeq C\id$ implies $\bG_{p,t}^{\infty}\succeq C^p\id$. It is therefore
sufficient to prove $\sigma_{\min}(\bG_{1,t}^{\infty})>0$, which we do in the next lemma
\begin{lemma}\label{lem:perturbinvertible}
  Under the assumptions of Lemma \ref{lemma:Perturbation}, there exist functions
  $\varphi_{t}:\reals^{t+1}\to \reals$, with $\|\varphi_t\|_{L^\infty}\le 1$,
  $\|\nabla \varphi_t\|_{L^\infty}\le 1$, and an $\eps_0>0$ such that the following holds.
  Letting $\bG_{1,t}^{\infty}$ denote  the Gram matrices associated to $(f^{\eps}_t)_{t\ge 0}$
  we have $\sigma_{\min}(\bG_{1,t}^{\infty})>0$ for $\eps<\eps_0$.
\end{lemma}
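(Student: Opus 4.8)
The plan is to recast positive-definiteness of $\bG_{1,t}^\infty$ as $L^2$-linear independence of finitely many random variables, exploit a decoupling in the state-evolution recursion that lets one treat a single index $t$ at a time, and then perturb ``generically'' by induction on $t$.

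\textit{Reformulation and decoupling.} By Theorem~\ref{thm:SELAMP}, $\bG_{1,t}^\infty(\eps)$ is the $L^2$-Gram matrix of the variables $Y_r^\eps:=f_r^\eps(X^{\eps,0},\dots,X^{\eps,r})$, $0\le r\le t$, where $(X^{\eps,0},\dots,X^{\eps,t})$ has the law $\nu_\eps$ arising from the perturbed state evolution; thus $\sigma_{\min}(\bG_{1,t}^\infty(\eps))>0$ if and only if $Y_0^\eps,\dots,Y_t^\eps$ are linearly independent in $L^2(\nu_\eps)$. (This suffices for the lemma: $\bG_{1,t}^\infty\succeq c\,\id$ forces $\bG_{p,t}^\infty\succeq c^p\id$ by positive-definiteness of Hadamard products, and the finite-$N$ Gram matrices then inherit the two-sided bounds by Theorem~\ref{thm:SELAMP}.) The key structural fact is that $\nu_\eps$ is built from $f_0^\eps,\dots,f_{t-1}^\eps$ alone through the recursion --- $f_t^\eps$ does not enter --- and in fact $\nu_\eps=\Law(X^0)\otimes\normal(0,\Sigma_\eps)$, with $\Sigma_\eps$ the covariance of the (Gaussian, $X^0$-independent) vector $(X^{\eps,1},\dots,X^{\eps,t})$. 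Hence, once $\varphi_0,\dots,\varphi_{t-1}$ are fixed, $\nu_\eps$ is determined, and choosing $\varphi_t$ affects only $Y_t^\eps=f_t(X^{\eps,\le t})+\eps\,\varphi_t(X^{\eps,\le t})$.

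\textit{Induction.} We construct $\varphi_0,\varphi_1,\dots$ (each bounded by $1$ with gradient bounded by $1$) and thresholds $\eps_0\ge\eps_1\ge\cdots>0$ so that, for every $t$ and every $\eps\in(0,\eps_t)$, the variables $Y_0^\eps,\dots,Y_t^\eps$ are $L^2(\nu_\eps)$-independent. For $t=0$, take $\varphi_0$ to be a fixed everywhere-positive bounded $C^1$ function, rescaled so that $\|\varphi_0\|_\infty\vee\|\nabla\varphi_0\|_\infty\le1$; then $\E[(f_0+\eps\varphi_0)^2(X^0)]$ equals $\E[f_0(X^0)^2]+O(\eps)$ if $\E[f_0(X^0)^2]>0$, and equals $\eps^2\E[\varphi_0(X^0)^2]>0$ if $f_0\equiv0$ a.s., so it is positive on an interval $(0,\eps_0)$, i.e.\ $\bG_{1,0}^\infty(\eps)>0$ there. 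This also makes $\Var(X^{\eps,1})=\sum_{p\ge2}pc_p^2\,\E[(f_0^\eps)^2(X^0)]^{p-1}>0$ on $(0,\eps_0)$ (using that some $c_p\neq0$, since $\xi\not\equiv0$). For the inductive step, assume $\varphi_{<t}$ and $\eps_{t-1}$ are in hand and fix any $\eps^\star\in(0,\eps_{t-1})$. In $L^2(\nu_{\eps^\star})$ the subspace $V:=\spn\{f_j^{\eps^\star}(X^{\eps^\star,\le j}):j\le t-1\}$ has dimension $\le t$, and $f_t^{\eps^\star}(X^{\eps^\star,\le t})\in V$ exactly when $\varphi_t(X^{\eps^\star,\le t})$ belongs to the affine subspace $\tfrac1{\eps^\star}\bigl(V-f_t(X^{\eps^\star,\le t})\bigr)$, of affine dimension $\le t$. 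Taking $\varphi_t(x)=(1+|\beta|)^{-1}\sin(\beta x_1)$ with a parameter $\beta\in\reals$ (which has $\|\varphi_t\|_\infty\le1$ and $\|\nabla\varphi_t\|_\infty<1$), the family $\{\varphi_t(X^{\eps^\star,\le t})\}_{\beta}$ is linearly independent in $L^2(\nu_{\eps^\star})$ --- because the $X^{\eps^\star,1}$-marginal of $\nu_{\eps^\star}$ is a nondegenerate Gaussian (by the base step) and $\{\sin(\beta\,\cdot)\}_\beta$ is linearly independent in $L^2$ of a nondegenerate Gaussian --- hence its span is not contained in any subspace of dimension $\le t+1$, so some $\beta$ gives $Y_t^{\eps^\star}\notin V$, i.e.\ $\det\bG_{1,t}^\infty(\eps^\star)>0$.

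\textit{Uniformity in $\eps$, and the main obstacle.} It remains to pass from the single value $\eps^\star$ to a whole interval $(0,\eps_t)$. On $(0,\eps_{t-1})$ the map $\eps\mapsto\bG_{1,t}^\infty(\eps)$ is real-analytic: $f_j^\eps=f_j+\eps\varphi_j$ is affine in $\eps$; $\Sigma_\eps=\sum_{p\ge2}pc_p^2(\bG_{1,t-1}^\infty(\eps))^{\circ(p-1)}$ is analytic in $\eps$ (carrying analyticity of $\bG_{1,t-1}^\infty$ along the induction) and strictly positive definite on $(0,\eps_{t-1})$ by the Schur product theorem; and integrating the (polynomial-in-$\eps$) integrands $f_r^\eps f_s^\eps$ against the analytically-varying nondegenerate Gaussian $\normal(0,\Sigma_\eps)$ produces analytic functions of $\eps$. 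Being analytic, nonnegative, and not identically zero on $(0,\eps_{t-1})$, $\det\bG_{1,t}^\infty$ has isolated zeros there; the only delicate point is to prevent zeros from accumulating at the endpoint $0$, which is settled by the one-sided expansion of $\bG_{1,t}^\infty(\eps)$ as $\eps\downarrow0$ --- carrying $f_j^\eps$ and the state-evolution covariances to leading order in $\eps$, where the ``genericity'' of $\varphi_0,\dots,\varphi_t$ enters, gives $\det\bG_{1,t}^\infty(\eps)=c\,\eps^{m}\bigl(1+o(1)\bigr)$ with $c>0$, $m\ge0$, hence $\bG_{1,t}^\infty(\eps)\succ0$ on some $(0,\eps_t)\subseteq(0,\eps_{t-1}]$. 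Taking $\eps_0:=\min_{t\le T}\eps_t$ over the finitely many indices used in Lemma~\ref{lem:ampequalslamp} completes the argument. The main obstacle is exactly this uniformity as $\eps\downarrow0$, where the unperturbed recursion may be severely degenerate; the reformulation, decoupling and genericity steps are routine, and they already give the weaker statement --- sufficient for the application in Lemma~\ref{lemma:Perturbation} --- in which $\varphi_t$ is allowed to depend on $\eps$.
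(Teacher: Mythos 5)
Your core construction coincides with the paper's: reduce positivity of $\sigma_{\min}(\bG_{1,t}^{\infty})$ to $L^2$ linear independence of $\{f^{\eps}_s(X^{\eps,0},\dots,X^{\eps,s})\}_{s\le t}$, use that the law of $(X^{\eps,0},\dots,X^{\eps,t})$ is determined by $f^{\eps}_0,\dots,f^{\eps}_{t-1}$ alone (product of the initial law with a non-degenerate Gaussian, by the inductive hypothesis), and then escape the at most $t$-dimensional span $\spn\{f^{\eps}_s\}_{s\le t-1}$ by a generic bounded smooth perturbation --- the paper uses $t+1$ linearly independent bounded smooth functions of $x_t$, you use the family $\sin(\beta x_1)/(1+|\beta|)$; either works, since the relevant Gaussian marginal is non-degenerate with full support. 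Up to here your argument is sound, but it only produces non-degeneracy at the single value $\eps^{\star}$ at which the escape is performed, whereas the lemma asks for one fixed $\varphi_t$ and an interval $(0,\eps_0)$.

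The genuine gap is the uniformity step that you yourself call the main obstacle. Your proposed fix rests on two unproven claims: (i) that $\eps\mapsto\det\bG_{1,t}^{\infty}(\eps)$ admits a one-sided expansion $c\,\eps^{m}(1+o(1))$ at $\eps=0$, and (ii) that $c>0$. Neither is established. For (i), the entries of $\bG_{1,t}^{\infty}(\eps)$ are Gaussian expectations of products of functions that are merely Lipschitz, taken under a covariance $\Sigma_\eps$ that may degenerate as $\eps\downarrow 0$ (this is exactly the situation the perturbation is meant to repair); already in one dimension $\E|{\sigma Z}|=\sigma\sqrt{2/\pi}$ is not analytic in the variance at $0$, so analyticity on $(0,\eps_{t-1})$ does not extend to $\eps=0$ and zeros of the determinant are not prevented from accumulating there. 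For (ii), the positivity of the leading coefficient is precisely where the ``genericity'' of $\varphi_0,\dots,\varphi_t$ would have to do quantitative work, and your write-up only asserts that it does. Your closing remark --- that an $\eps$-dependent choice of $\varphi_t$ with uniform $C^1$ bounds suffices for the downstream use in the proof of Theorem \ref{thm:mixedAMP} --- is correct, but it proves a weaker statement than the lemma. By contrast, the paper's proof stays with the direct escape argument (choosing $\varphi_t$ inside a $(t+1)$-dimensional space of functions of $x_t$ so that $f_t+\eps\varphi_t$ leaves the at most $t$-dimensional span) and never routes through an analytic continuation in $\eps$; to prove the lemma as stated you would either have to make your expansion at $\eps=0$ rigorous with an explicit lower bound on $c$, or argue directly that a single generic $\varphi_t$ avoids the bad affine subspaces for all small $\eps$ simultaneously.
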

\begin{proof}
  We construct $\varphi_t$ satisfying the claim inductively in $t$. The base case is clear: $\bg_{1,0}^{\infty} = \E\{f_0(X_0)\}^2 >0$ for $f_0$ non vanishing.  Assuming we have constructed
  these functions up to $\varphi_{t-1}$, we know that the vector $(X^{\eps}_1,X^{\eps}_2,\dots,X^{\eps}_t)$
  defined by state evolution (for nonlinearities $f^{\eps}_t$) is a non-degenerate Gaussian.
  
  In order to prove our claim, we need to construct $\varphi_{t}$ so that the vector
  $\{f^{\eps}_s(X_0^{\eps},\dots,X_{s}^{\eps})\}_{s\le t}$ has non-degenerate covariance. Since we know already that
  $\{f^{\eps}_s(X_0^{\eps},\dots,X_{s}^{\eps})\}_{s\le t-1}$ is non-degenerate, it is sufficient to show that,
  for any coefficients $(\alpha_s)_{s \le t}$,
  \begin{align}
    \E\Big\{\Big(f^{\eps}_t(X_0^{\eps},\dots,X_{t}^{\eps})-\sum_{s\le t-1}\alpha_s f^{\eps}_s (X_0^{\eps},\dots,X_{s}^{\eps})\Big)^2\Big\}>0\, .\label{eq:NonDegG}
  \end{align}
  It is always possible to choose $\varphi_t$ so that this is the case. Indeed, the space of functions spanned by $f^{\eps}_s$ for $s\le t$
  has dimension at most $t$. Therefore, we can take any $t+1$ linearly independent bounded smooth functions of $x_t$ only,
and choose $\varphi_t$ to be a linear combination of these that is outside the span of $(f^{\eps}_s)_{s\le t-1}$.
Since non-degenerate Gaussians have full support, this implies the non-degeneracy condition \eqref{eq:NonDegG}
and therefore the induction claim.
\end{proof}

In preparation for the next part, we argue that when the Gram matrices $\bG_{1,t}^{\infty}$ are non-degenerate, we can
perturb the nonlinearities $(f_t)_{t\ge 0}$ to induce any desired small change in $\bG_{1,t}^{\infty}$. (Below
$\Sym_m$ denotes the space of $m\times m$ symmetric matrices.)
\begin{lemma}\label{lem:perturbsurjective}
  Under the assumptions of Lemma \ref{lemma:Perturbation}, assume the nonlinearities $(f_t)_{t\ge 0}$ are such that $\bG_{1,t}^{\infty}$
  is non-degenerate. Then there exists finite sets of functions $A_s= \{\varphi_{s,1},\dots,\varphi_{s,n(s)}\}$ of
  smooth functions $\varphi_{s,j}:\reals^s\to\reals$, with $\|\varphi_{s,j}\|_{L^\infty}\le 1$,
  $\|\nabla \varphi_{s,j}\|_{L^\infty}\le 1$, such that the following is true.
  For $\beps =(\eps_{s,j})_{j\le n(s), s\le t}\in\reals^{n_*}$, $n_*:=\sum_{s\le t} n(s)$, consider the nonlinearities
$(f_s^{\beps})_{s\le t}$ defined by $f^{\beps}_s=f_s+\sum_{j\le n(s)} \eps_{s,j}\varphi_{s,j}$, and let
  $\bG^{\infty}_{1,t}(\beps)$ to be the corresponding (asymptotic) Gram matrix.
  If $\bcG_t:\reals^{n_*}\to \Sym_{t}$ is the mapping $\bcG_t:\beps \mapsto \bG^{\infty}_{1,t}(\beps)$,
  then its derivative $D\bcG_t|_{\beps=\bfzero}$ is surjective. 
\end{lemma}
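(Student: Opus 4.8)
The plan is to show surjectivity of $D\bcG_t|_{\beps=\bzero}$ by exhibiting, for each pair $r\le s \le t$, a direction $\beps$ in parameter space whose image under $D\bcG_t$ is (approximately) the elementary symmetric matrix $\bE_{rs}= \bfe_r\bfe_s^{\sT}+\bfe_s\bfe_r^{\sT}$ (or $\bE_{rr}$ when $r=s$), so that the image of $D\bcG_t$ contains a spanning set of $\Sym_t$. To compute $D\bcG_t$, I would differentiate the recursion defining the Gaussian process. Writing $F_s \equiv f_s(X^0_0,\dots,X^0_s)$ for the unperturbed limit random variables and $(\bG^\infty_{1,t})_{r,s}=\E\{F_rF_s\}$, perturbing $f_s\mapsto f_s+\eps_{s,j}\varphi_{s,j}$ changes $F_s$ to $F_s+\eps_{s,j}\varphi_{s,j}(X_0,\dots,X_s)+O(\eps^2)$, but one must also track how the law of $(X^0,\dots,X^t)$ itself depends on $\beps$ through the covariance recursion \eqref{eq:cov_amp} (equivalently via $\xi'$). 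This makes $D\bcG_t$ a composition of an "explicit" part (direct dependence of $F_s$ on $\varphi_{s,j}$) and an "implicit feedback" part (change of the underlying Gaussian). The key observation is that the implicit part is strictly \emph{lower-triangular} in the time index: a perturbation at time $s$ only affects $X^{s'}$, and hence $\bG^\infty_{1,s'}$, for $s'>s$; there is no feedback within or below level $s$. Consequently, if one processes the entries in the order $s=0,1,2,\dots,t$ (and within level $s$ the entries $(\bG)_{r,s}$ for $r\le s$), the Jacobian is block lower-triangular, and it suffices to check that the diagonal blocks are invertible.

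The diagonal block at level $s$ is the map sending the perturbations $(\varphi_{s,j})_{j\le n(s)}$ of $f_s$ (with the law of $(X^0,\dots,X^s)$ held fixed at its unperturbed value) to the vector of inner products $\big(\E\{\varphi_{s,j}(X^0,\dots,X^s)\,F_r\}+\E\{\varphi_{s,j}(X^0,\dots,X^s)\,F_s\}\ind_{r<s}\big)_{r\le s}$ — more precisely, to the first variation of $\big((\bG)_{r,s}\big)_{r\le s}$, which is $\delta(\bG)_{r,s}=\E\{\delta F_s\cdot F_r\}$ for $r<s$ and $\delta(\bG)_{s,s}=2\,\E\{\delta F_s\cdot F_s\}$, with $\delta F_s=\sum_j\eps_{s,j}\varphi_{s,j}$. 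So I need: given that $(F_0,\dots,F_{s-1})$ is a non-degenerate Gaussian-\emph{functional} family (which follows from non-degeneracy of $\bG^\infty_{1,s}$, since the $F_r$ are then linearly independent in $L^2$), choose finitely many bounded smooth $\varphi_{s,j}:\reals^{s+1}\to\reals$ such that the linear map $\varphi\mapsto(\E\{\varphi F_0\},\dots,\E\{\varphi F_s\})\in\reals^{s+1}$ is surjective onto $\reals^{s+1}$ when restricted to $\spn\{\varphi_{s,j}\}$. This is elementary: the $F_r$ span an $(s+1)$-dimensional subspace $V\subset L^2$ of the non-degenerate Gaussian $(X^0,\dots,X^s)$; pick $\varphi_{s,j}$ to be smooth bounded approximations of an $L^2$-basis of $V$ (e.g. Hermite-type polynomials truncated and mollified, which are dense), so that the Gram-type matrix $\big(\E\{\varphi_{s,j}F_r\}\big)$ has full rank $s+1$; set $n(s)=s+1$. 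Since $\bE_{rr}$ and $\bE_{rs}$ with $r<s$ for this level are recovered from $\delta(\bG)_{s,s}$ and $\delta(\bG)_{r,s}$, the diagonal block is onto the level-$s$ coordinates of $\Sym_t$. Running this over all $s\le t$ and assembling shows $D\bcG_t|_{\bzero}$ is surjective, with $n_*=\sum_{s\le t}(s+1)$.

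The main obstacle I anticipate is making the "block lower-triangular" structure fully rigorous, i.e. carefully disentangling the explicit and implicit dependence and verifying there is genuinely no within-level feedback: one has to track that the covariance $(\bG_{1,s})_{r,r'}$ for $r,r'\le s$ — which drives $X^0,\dots,X^s$ and hence $F_0,\dots,F_s$ — depends on $\varphi_{s,j}$ only through terms that enter the computation of $\bG$ at strictly later times, because $X^s$ is built from $U^{s',k}$ with $s'\le s$ whose covariances involve only $f_0,\dots,f_{s-1}$, while $f_s$ first enters $U^{s+1,k}$. (This is exactly the triangular structure already used implicitly in the state-evolution induction.) Once that is pinned down, and combined with the elementary $L^2$-density argument for the diagonal blocks, surjectivity follows; the normalization $\|\varphi_{s,j}\|_{L^\infty}\le 1$, $\|\nabla\varphi_{s,j}\|_{L^\infty}\le1$ is harmless since we may scale each $\varphi_{s,j}$ and absorb the constant into $\eps_{s,j}$ (surjectivity of a linear map is scale-invariant in each coordinate).
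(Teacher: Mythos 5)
Your proposal is correct and follows essentially the same route as the paper: identify the block-triangular structure of $D\bcG_t$ with respect to the time decomposition (a perturbation of $f_s$ affects the law of the process only at strictly later times, so the diagonal block is the purely explicit variation of column $s$), and then use the $L^2$ linear independence of $f_0,\dots,f_s$ (guaranteed by non-degeneracy of $\bG^\infty_{1,s}$) to choose bounded smooth $\varphi_{s,j}$ whose inner products with the $f_r$ form a full-rank system, making each diagonal block surjective. Your extra remarks on the factor $2$ for diagonal entries, the rescaling to meet the norm constraints, and the explicit justification of the no-within-level-feedback claim are consistent with, and slightly more detailed than, the paper's argument.
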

\begin{proof}
 Note that $\Sym_{t}\cong \reals\times \reals^2\times\cdots\times \reals^t$,
 by identifying $\bM\in\Sym_t$ which  a list of columns $M_{11}$, $(M_{1,2},M_{2,2})$,  \dots, $(M_{j,t})_{j\le t}$.
 Also $\reals^{n_*}\cong \reals^{n(1)}\times\cdots\times \reals^{n(t)}$, by identifying $\beps = (\beps_1,\dots,\beps_t)$,
 $\beps_s=(\eps_{s,j})_{j\le n(s)}$. The matrix $D\bcG_t|_{\beps=\bfzero}$ is block-triangular with respect to this decomposition.
 By an induction argument, it is therefore sufficient to show that $A_t$ can be constructed so that  the last diagonal block
 $D\bcG_t|_{\beps=\bfzero}:\reals^{n(t)}\to\reals^t$ is surjective.

 Note that $\bcG_t$ is  the map that takes as input $\eps_t$, and outputs the last column of the
 asymptotic Gram matrix corresponding to the nonlinearities $f_1,\dots, f_{t-1}$ and
 $f^{\beps}_t=f_t+\sum_{j\le n(t)} \eps_{t,j}\varphi_{t,j}$.
 Since by assumption $\bG_{t,1}$ is non-degenerate, the  functions $f_1,\dots,f_t$ are linearly independent
 (viewed as vectors in the $L^2$ space associated to the joint  distribution of $(\bX_s)_{s\le t}$).
 We can therefore construct functions $(\varphi_{t,s})_{s\le t}$ such that $\E\{\varphi_{t,s}(X_0,\dots,X_t)f_r(X_0,\dots,X_r)\} = 0$
 if $r\neq s$, and $>0$ if $r=s$. It is then immediate to show that the resulting map $D\bcG_t|_{\beps=\bfzero}$ is surjective.
\end{proof}

\subsubsection{Condition $(ii)$: Control of $\cL_{p,t}$}

We are left with the task of showing that --after a small perturbation of the nonlinearities $(f_t)_{t\ge 0}$-- condition $(ii)$ of
Lemma \ref{lem:ampequalslamp} holds, namely  $C^{-1}\le \sigma_{\min}(\cL_{p,t})\le \sigma_{\max}(\cL_{p,t})\le C$ for all $p\le\km$,
$t\le T$, with high probability.
Given the results of the previous section \ref{sec:Condition1}, we can assume without loss of generality that
$C^{-1}\le \sigma_{\min}(\bG_{p,t}^{\infty})\le \sigma_{\max}(\bG_{p,t}^{\infty})\le C$ for all $p,t$. Indeed, if this is not the case,
we can modify the nonlinearities as described above, as to satisfy this condition. Also, as before, we can consider a single pair $(p,t)$
since we only are interested in a finite (independent of $N$) collection of such pairs.

Recall that $\cL_{p,t} = \bfone+(p-1)\cT_{p,t}$, and, by Eq.~\eqref{eq:Tdef},
\begin{align}
(\cT_{p,t})_{is;jr} = \sum_{r'=0}^{t-1}F_{ir'}F_{js}(\bG_{p-1,t-1}^{-1})_{r',r} (\bG_{p-2,t-1})_{r',s}\, ,
\end{align}
where  $F_{is} = (\bF_{t-1})_{is}= (\f_s)_i$ for $0\le s\le t-1$, $\bF_{t-1}\in\reals^{N\times t}$ (for consistency,
we index the columns of $\bF_{t-1}$ as $0,\dots,t-1$). This implies that $\cT_{p,t}$ has rank at most $t^2$ since
\begin{align}
  (\cT_{p,t})_{is;jr} &= \sum_{a,b=0}^{t-1} (\cU_{p,t})_{as;br}F_{ir'}F_{js}\, ,\\
  (\cU_{p,t})_{as;br} & := (\bG_{p-1,t-1}^{-1})_{ra} (\bG_{p-2,t-1})_{sa}\delta_{b,s}\, ,
\end{align}
or, in matrix notation
\begin{align}
  \cT_{p,t} = (\id_t\otimes \bF_{t-1})\cU_{p,t}(\id_t\otimes \bF^{\sT}_{t-1})\, .
\end{align}
It follows that the $(N-t)t$ singular values of $\cL_{p,t}$ are equal to $1$, and the other $t^2$ singular values coincide with the
ones of $\tcL_{p,t} = \bfone_{t^2} + (p-1)\tcT_{p,t}$, where
\begin{align}
  \tcT_{p,t} = (\id_t\otimes \bG_{1,t-1}^{-1/2}) \cU_{p,t} (\id_t\otimes \bG_{1,t-1}^{-1/2})\, .
\end{align}
Indeed $\tcT_{p,t}$ is unitarily equivalent to $\cT_{p,t}$ (when the latter is restricted to its range), using the fact that
$\bF_{t-1}^{\sT}\bF_{t-1}/N = \bG_{1,t-1}$.

We now proceed by induction over the iteration number. Assuming the claim to hold up to iteration $t-1$, we need to
to show that (for a suitable perturbation of the nonlinearities)
$C^{-1}\le \sigma_{\min}(\tcL_{p,t})\le\sigma_{\max}(\tcL_{p,t})\le C$ with high probability.
By using the induction hypothesis Theorem \ref{thm:SELAMP} and  Lemma \ref{lem:ampequalslamp} we know that $\bG_{p,t}$
converges in probability to the deterministic limit $\bG^{\infty}_{p,t}$ which is non-degenerate. Therefore,
it is sufficient to prove that (again, for a suitable perturbation of the nonlinearities)
$C^{-1}\le \sigma_{\min}(\tcL^{\infty}_{p,t})\le\sigma_{\max}(\tcL^{\infty}_{p,t})\le C$, where
$\tcL^{\infty}_{p,t} = \bfone_{t^2} + (p-1) \tcT^{\infty}_{p,t}$, and $\tcT^{\infty}_{p,t}$ is obtained from $\tcT_{p,t}$
by replacing $\bG_{k,s}$ by its asymptotic version $\bG_{k,s}^{\infty}$ everywhere. Since the resulting matrix
$\tcL^{\infty}_{p,t}$ is finite (and of dimension independent of $N$), it is sufficient to prove that
$\sigma_{\min}(\tcL^{\infty}_{p,t})>0$. Since $\bG_{1,t-1}^{\infty}$ is non-degenerate,
it is sufficient to prove $\sigma_{\min}(\cW^{\infty}_{p,t})>0$, where
\begin{align}
  \cW^{\infty}_{p,t}&:= \id_t\otimes \bG^{\infty}_{1,t-1}+(p-1) \cU^{\infty}_{p,t}\, , \label{eq:Wdef1}\\
   (\cU^{\infty}_{p,t})_{as;br} & := ((\bG^{\infty}_{p-1,t-1})^{-1})_{ra} (\bG^{\infty}_{p-2,t-1})_{sa}\delta_{b,s}\, .\label{eq:Wdef2}
  \end{align}

  In order to prove the desired non-degeneracy bound for $\cW^{\infty}_{p,t}$, it is useful to introduce a piece of terminology.
\begin{definition}
We say a subset $S\subseteq\reals^d$ is \emph{locally full} if for any open set $U\subseteq R^d$ with $U\cap S\neq\emptyset$ we have $\lambda(U\cap S)>0$ (with $\lambda$ denoting the Lebesgue measure on $\reals^d$).
\end{definition}
For instance, a full-dimensional convex set is locally full.

\begin{lemma}
\label{lem:AG}
Let $K\subseteq \mathbb R^d$ be locally full and $R:\reals^d\to\reals$ a rational function which is not identically zero or infinity.
For any $\eps>0$ and $\bx\in K$ there is $\bx'\in K$ with $\|\bx-\bx'\|\le \eps$ and $R(\bx')\not\in\{0,\pm\infty\}$. 
\end{lemma}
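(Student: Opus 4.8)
The statement to prove is Lemma~\ref{lem:AG}: given a locally full set $K \subseteq \reals^d$, a rational function $R$ not identically $0$ or $\infty$, and $\bx \in K$, $\eps > 0$, there exists $\bx' \in K$ with $\|\bx - \bx'\| \le \eps$ and $R(\bx') \notin \{0, \pm\infty\}$.

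\begin{proof}[Proof proposal]
The plan is to exploit the fact that a rational function is locally constant-free unless it is degenerate, so its zero set and pole set are both contained in proper algebraic hypersurfaces, which have Lebesgue measure zero. First I would write $R = P/Q$ with $P, Q$ polynomials, $Q \not\equiv 0$, and $P \not\equiv 0$ (the latter because $R$ is not identically zero). The set of points where $R \in \{0, \pm\infty\}$ is contained in $Z := \{P = 0\} \cup \{Q = 0\}$, the vanishing locus of the polynomial $PQ$, which is not identically zero. A standard fact (provable by induction on $d$ using that a nonzero univariate polynomial has finitely many roots, together with Fubini) is that the zero set of a nonzero polynomial in $d$ variables has Lebesgue measure zero in $\reals^d$.

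Next I would use the hypothesis that $K$ is locally full. Consider the open ball $U = B(\bx, \eps)$. Since $\bx \in K \cap U$, we have $K \cap U \neq \emptyset$, so by local fullness $\lambda(K \cap U) > 0$. On the other hand $\lambda(Z) = 0$, hence $\lambda\big((K \cap U) \setminus Z\big) = \lambda(K \cap U) > 0$, so in particular $(K \cap U) \setminus Z$ is nonempty. Pick any $\bx'$ in this set: then $\bx' \in K$, $\|\bx' - \bx\| < \eps$, and $\bx' \notin Z$, so $P(\bx') \neq 0$ and $Q(\bx') \neq 0$, whence $R(\bx') \in \reals \setminus \{0\}$, i.e.\ $R(\bx') \notin \{0, \pm\infty\}$. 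This completes the argument.

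The only step requiring any care is the measure-zero claim for polynomial zero sets; I expect this to be the main (mild) obstacle, though it is entirely standard. One proves it by induction on $d$: for $d = 1$ a nonzero polynomial has finitely many roots, hence its zero set has measure zero. For the inductive step, write the polynomial as $P(x_1, \dots, x_d) = \sum_{j=0}^m a_j(x_1, \dots, x_{d-1}) x_d^j$ with not all $a_j$ identically zero; for each fixed $(x_1, \dots, x_{d-1})$ outside the measure-zero set where every $a_j$ vanishes, the slice in $x_d$ is a nonzero univariate polynomial and thus has measure-zero zero set, so by Fubini the full zero set has measure zero. Everything else is immediate from the definition of locally full and the decomposition $R = P/Q$.
\end{proof}
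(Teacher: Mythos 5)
Your proposal is correct and follows essentially the same route as the paper, which simply invokes the fact that a nontrivial polynomial vanishes on a Lebesgue-null set and intersects the ball $B(\bx,\eps)$ with $K$ using local fullness. Writing $R=P/Q$ and spelling out the Fubini/induction argument for the measure-zero claim just fills in the standard details the paper leaves implicit.
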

\begin{proof}
Simply recall that any nontrivial polynomial vanishes on a measure zero set.
\end{proof}

We are now in position to show that the nonlinearities $(f_s)_{0\le s\le t}$ can be modified so that the
resulting matrix $\cW^{\infty}_{p,t}$ has $\sigma_{\min}(\cW^{\infty}_{p,t})>0$, thus completing the proof.
\begin{lemma}\label{lem:perturbdone}
  Under the assumptions of Lemma \ref{lemma:Perturbation}, further assume the nonlinearities $(f_s)_{s\ge 0}$
  to be such that $\sigma_{\min}(\bG_{p,t}^{\infty})>0$ for all $p\le \km$, $t\le T$. Then, for any $\eps>0$  there exist functions
  $\varphi_{s}:\reals^{s+1}\to \reals$, with $\|\varphi_s\|_{L^\infty}\le 1$,
  $\|\nabla \varphi_s\|_{L^\infty}\le 1$, such that the following holds.

  Let $\cW^{\infty}_{p,t} (\eps)$ the matrix defined in Eqs.~\eqref{eq:Wdef1}, \eqref{eq:Wdef2},
  for nonlinearities $f^{\eps}_s = f_s+\eps\varphi_s$, $s\le t$. Then, for  any $p\le\km$ and $t \le T$,
  $\sigma_{\min}(\cW^{\infty}_{p,t}(\eps))>0$.
\end{lemma}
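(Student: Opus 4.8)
The plan is to reduce the statement to the non-vanishing of a single rational function of the entries of the limiting Gram matrix, and then to combine the perturbative surjectivity of Lemma~\ref{lem:perturbsurjective} with the genericity Lemma~\ref{lem:AG}.

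First, note that every matrix entering~\eqref{eq:Wdef1}--\eqref{eq:Wdef2} is determined by the single symmetric matrix $\bG := \bG^{\infty}_{1,T}$: the matrix $\bG^{\infty}_{1,t-1}$ is a principal submatrix of $\bG$, and $\bG^{\infty}_{k,t-1}$ is its $k$-th Hadamard power. Hence, by Cramer's rule, $R_{p,t}(\bG) := \det\cW^{\infty}_{p,t}$ is a rational function of the entries of $\bG$, whose denominator is a product of powers of $\det\bG^{\infty}_{k,t-1}$ with $k\ge1$ (the zeroth Hadamard power is never inverted). Put $R := \prod_{p\le\km}\prod_{t\le T}R_{p,t}$; it suffices to show that $R$ is a nontrivial rational function.

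To see this I would evaluate $R$ at the test point $\bG = 2\,\id$. There $\bG^{\infty}_{k,t-1}=2^{k}\id$ for $k\ge1$, while $\bG^{\infty}_{0,t-1}$ is the all-ones matrix. A direct computation then gives: for $p\ge3$, $\cW^{\infty}_{p,t}$ is diagonal in the standard basis $\{E_{a,s}\}$ of $\reals^{t\times t}$, with strictly positive diagonal entries $2$ (off-diagonal slots) and $2+\tfrac{p-1}{2}$ (diagonal slots); for $p=2$, $\cW^{\infty}_{2,t}$ is the map $M\mapsto 2M+\tfrac12 M^{\sT}$, which acts as $\tfrac52$ on symmetric and $\tfrac32$ on antisymmetric tensors. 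In every case $\det\cW^{\infty}_{p,t}\big|_{\bG=2\id}\ne0$, and the denominators of the $R_{p,t}$ are likewise nonzero there, so $R\not\equiv0$ and $R\not\equiv\pm\infty$. (The point $\bG=\id$ does \emph{not} work for $p=2$: the all-ones matrix $\bG^{\infty}_{0,t-1}$ forces $\cW^{\infty}_{2,t}$ to annihilate antisymmetric tensors; any positive diagonal $\bG^{\infty}_{1,t-1}=\diag(g_i)$ with $g_ig_j\ne1$ cures this, and $2\id$ is the simplest uniform choice.) This test-point computation is the only genuinely non-routine ingredient of the proof.

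Next, since by hypothesis $\bG_0 := \bG^{\infty}_{1,T}$ is non-degenerate, Lemma~\ref{lem:perturbsurjective} (at level $t=T$) supplies finitely many smooth functions $\varphi_{s,j}$ with $\|\varphi_{s,j}\|_{L^\infty}\vee\|\nabla\varphi_{s,j}\|_{L^\infty}\le1$ for which the map $\bcG_T:\beps\mapsto \bG^{\infty}_{1,T}\big(f_s+\textstyle\sum_j\eps_{s,j}\varphi_{s,j}\big)$ is a submersion at $\beps=0$. The submersion theorem then yields a smooth local section $\bsigma$, defined on a ball $V$ around $\bG_0$, with $\bsigma(\bG_0)=0$ and $\bcG_T\circ\bsigma=\mathrm{id}$; shrinking $V$ we may assume every $\bG'\in V$ is positive definite, so that all $\bG^{\infty}_{p,t}$ remain non-degenerate and $\cW^{\infty}_{p,t}$ is well defined on $V$. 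The ball $V$ is convex and full-dimensional, hence locally full, and the nontrivial $R$ cannot vanish on the open set $V$; Lemma~\ref{lem:AG} therefore produces, for any $\rho>0$, a point $\bG'\in V$ with $\|\bG'-\bG_0\|\le\rho$ and $R(\bG')\notin\{0,\pm\infty\}$, i.e.\ $\det\cW^{\infty}_{p,t}\ne0$ for all $p\le\km$, $t\le T$ when the limiting Gram matrix is $\bG'$. Finally set $\beps':=\bsigma(\bG')$: taking $\rho$ small makes $\|\beps'\|$ small, so the induced perturbations $\delta_s:=\sum_j\eps'_{s,j}\varphi_{s,j}$ of $f_s$ satisfy $\max_s\big(\|\delta_s\|_{L^\infty}\vee\|\nabla\delta_s\|_{L^\infty}\big)\le\eps$. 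Putting $\varphi_s:=\delta_s/\eps$ (and $\varphi_s:=0$ when $\delta_s\equiv0$) gives smooth functions with $\|\varphi_s\|_{L^\infty}\vee\|\nabla\varphi_s\|_{L^\infty}\le1$, and $f_s^{\eps}=f_s+\eps\varphi_s$ has limiting Gram matrix $\bG'$; hence $\det\cW^{\infty}_{p,t}(\eps)\ne0$, and since these matrices have fixed size independent of $N$, $\sigma_{\min}(\cW^{\infty}_{p,t}(\eps))>0$ for all $p\le\km$, $t\le T$, as claimed.
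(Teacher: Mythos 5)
Your proof is correct and follows essentially the same route as the paper: reduce the claim to the non-vanishing of the rational function $\bG\mapsto\prod_{p,t}\det\cW^{\infty}_{p,t}(\bG)$ of the limiting Gram matrix, invoke Lemma~\ref{lem:AG} on a locally full set of positive (semi)definite matrices to find a nearby non-degenerate point, and realize that point by a small perturbation of the nonlinearities via Lemma~\ref{lem:perturbsurjective} combined with the submersion/implicit function theorem. The one substantive difference is the test point: the paper asserts non-triviality by evaluating at $\bG=\id$, which, as you correctly observe, fails for $p=2$ and $t\ge 2$ (there $\cW^{\infty}_{2,t}$ acts as $M\mapsto M+M^{\sT}$ and annihilates antisymmetric matrices), whereas your computation at $\bG=2\,\id$ yields invertible operators for every $p$, so your version actually repairs the paper's stated justification while leaving the overall argument unchanged. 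Your explicit local section and the rescaling $\varphi_s=\delta_s/\eps$ to meet the normalization $\|\varphi_s\|_{L^\infty}\vee\|\nabla\varphi_s\|_{L^\infty}\le 1$ are also a slightly more careful rendering of the paper's final step.
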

\begin{proof}
  Notice that $\cW^{\infty}_{p,t}$ is a function of the matrix $\bG^{\infty}_{1,t}$ (the matrices $\bG^{\infty}_{p,t}$
  being themselves Hadamard
  powers of $\bG_{1,t}$). With a slight abuse of notation, we will write $\cW^{\infty}_{p,t}=\cW_{p,t}^{\infty}(\bG^{\infty}_{1,t})$.
  Define  $R: \Sym_{t+1}\to\reals$ to be the function that takes as input a $t+1\times t+1$ symmetric matrix $\bG$
  and outputs
  \begin{align}
    R(\bG) \equiv\prod_{p\le \km}\det(\cW_{p,t}^{\infty}(\bG))\, .
  \end{align}
  By checking Eqs.~\eqref{eq:Wdef1}, \eqref{eq:Wdef1}, we see that this is a rational function on
  $\Sym_t\cong \reals^{\binom{t+1}{2}}$. Further, it is not identically zero or infinity, as it can be checked by
  computing $\cW_{p,t}^{\infty}(\id)$. Applying Lemma \ref{lem:AG}  to the set of PSD matrices,
  which is locally full in $\reals^{\binom{t}{2}}$, and the rational function $R$, we obtain that, for any $\xi>0$, there exists
  $\bG_*\succeq\bfzero$, with $\|\bG_*-\bG^{\infty}_{p,t}\|_F\le \xi$, and  $R(\bG_*)\not\in \{0,\pm\infty\}$,
  which implies $\sigma_{\min}(\cW_{p,t}^{\infty}(\bG_*))>0$ for all $p\le \km$.

  Finally, using Lemma \ref{lem:perturbsurjective} and the implicit function theorem, we
  conclude that we can find a perturbation $(\varphi_s)_{s\le t}$, and $\eps_0>0$ such that $\bG_{1,t}(\eps)= \bG_*$.
  By taking $\xi$ sufficiently small, we can ensure that $\eps$ can also be arbitrarily small.  
\end{proof}

\subsection{Extension to the case $\km = \infty$}
\label{sec:extension_infinite_mixture}
Here we extend the state evolution result proved for finite mixtures to the general case where $\xi$ has infinitely many components. The proof proceeds by induction over the number of iterations, and is similar to previous arguments. 
Let us write $\tilde{\xi}(x) := \sum_{ k \le D} c_k^2 x^k$ while $\xi(x) = \sum_{ k =2}^{\infty} c_k^2 x^k$. Denote by $(\tilde{X}^{0},\cdots,\tilde{X}^{\ell})$ the state evolution Gaussian process  corresponding to $\tilde{\xi}$, and $(X^{0},\cdots,X^{\ell})$ the one based on $\xi$. 
First, using the fact that $f_\ell$ is Lipschitz, it is easy to show
by induction over $\ell$ that there exists a coupling such that
$\E[(\tilde{X}^\ell - X^\ell)^2] = o_D(1)$ (throughout this section, $o_D(1)$ is a term independent of $N$ that vanishes as $\km\to\infty$). We deduce from this that $\tilde{d}_{\ell,j} - d_{\ell, j} = o_D(1)$ for all $\ell, j$. (Here, $\tilde{d}_{\ell,j}$ is defined similarly to $d_{\ell ,j}$, based on the mixture $\tilde{\xi}$.)

Next we show that the AMP iterates are close. Let $\tilde{\bz}^0,\cdots \tilde{\bz}^\ell$ be the AMP iterates based on $\tilde{\xi}$ and $\bz^0,\cdots \bz^\ell$ those based on $\xi$. Let $\tilde{\bz}^0 = \z^0 = \bfzero$ and assume $\lim_{D \to \infty} \plim_{N\to \infty} \|\tilde{\bz}^j - \bz^j\|_N = 0$ for all $j \le \ell$. Further let $\tilde{\f}_\ell = f_{\ell}(\tilde{\bz}^0,\cdots,\tilde{\bz}^\ell)$. Then
\begin{align}
\big\|\tilde{\bz}^{\ell+1} - \bz^{\ell+1}\big\|_N &\le \Big\| \sum_{p=2}^D \frac{c_p}{p!} \W^{(p)} \{\tilde{\f}_\ell\} - \sum_{p=2}^{\infty} \frac{c_p}{p!} \W^{(p)} \{\f_\ell\}\Big\|_N + \Big\| \sum_{j=0}^\ell \tilde{d}_{\ell,j}\tilde{\f}_{j-1} - d_{\ell, j} \f_j\Big\|_N\\
&=: E_1 + E_2. 
\end{align}
We have 
\[E_1 \le   \sum_{p \ge \km+1} \frac{c_p}{p!} \big\|\W^{(p)} \{\tilde{\f}_\ell\}\big\|_N + \sum_{p=2}^{\infty} \frac{c_p}{p!} \big\|\W^{(p)} \{\f_\ell\}-\W^{(p)} \{\tilde{\f}_\ell\}\big\|_N.\]
The first in the above is bounded by 
\[\sum_{p \ge \km+1} \frac{c_p}{p!} N^{(p-2)/2} \|\W^{(p)}\|_{\textup{op}} \cdot \|\tilde{\f}_\ell\|_N^{p-1}.\] 
Using Theorem~\ref{thm:SELAMP}, $\|\tilde{\f}_\ell\|_N \le C$ with high  probability. Lemma~\ref{lem:4} then implies that the above is $o_D(1)$  with high probability. Next, the second term in $E_1$ is similarly bounded by 
\[\sum_{p =2}^\infty \frac{c_p}{p!} N^{(p-2)/2} \|\W^{(p)}\|_{\textup{op}} \cdot (\|\tilde{\f}_\ell\|_N+\f_\ell\|_N)^{p-2}\|\tilde{\f}_\ell - \f_\ell\|_N.\]    
Since $f_\ell$ is Lipschitz, and using the induction hypothesis, similar considerations show that this term converges to zero in probability as $N \to \infty$. 
Next,
\[E_2 \le  \sum_{j=0}^\ell (\tilde{d}_{\ell,j}- d_{\ell, j})\|\tilde{\f}_{j-1}\|_N +   \sum_{j=0}^\ell  |d_{\ell, j}| \|\tilde{\f}_{j-1} - \f_{j-1} \|_N
\psim o_D(1).  \]
This implies \[\lim_{D \to \infty} \plim_{N\to \infty} \big\|\tilde{\bz}^{\ell+1} - \bz^{\ell+1}\big\|_N =0,\]
which concludes the inductive argument. 
Finally, for $\psi$ a pseudo-Lipschitz function, we have
\begin{align}
\plim_{N\to \infty} \frac{1}{N}\sum_{i=1}^N \psi(\bz^0,\cdots,\bz^\ell) &= \plim_{N\to \infty} \frac{1}{N}\sum_{i=1}^N \psi(\tilde{\bz}^0,\cdots,\tilde{\bz}^\ell) + o_D(1)\\
&= \E [\psi(\tilde{X}^{0},\cdots,\tilde{X}^{\ell})] + o_D(1)\\
&= \E [\psi(X^{0},\cdots,X^{\ell})] +o_D(1).
\end{align}
This concludes our proof of state evolution, Proposition~\ref{prop:state_evolution}.


\bibliographystyle{amsalpha}

\newcommand{\etalchar}[1]{$^{#1}$}
\providecommand{\bysame}{\leavevmode\hbox to3em{\hrulefill}\thinspace}
\providecommand{\MR}{\relax\ifhmode\unskip\space\fi MR }
\providecommand{\MRhref}[2]{%
  \href{http://www.ams.org/mathscinet-getitem?mr=#1}{#2}
}
\providecommand{\href}[2]{#2}


\end{document}